\documentclass[a4paper,10pt,twoside]{article}

\usepackage{cmap} 

\usepackage[french,ngerman,english]{babel}
\usepackage[OT2,T1]{fontenc}
\usepackage[latin1]{inputenc}
\usepackage{amsmath,amsfonts,amssymb,amsthm,amscd}
\usepackage{mathrsfs}
\usepackage[all,cmtip]{xy}
\usepackage{tikz}
\usetikzlibrary{matrix}
\usepackage{pdflscape}

\usepackage[babel]{csquotes}

\usepackage[pdftex,
	hyperindex=true, bookmarks=true, bookmarksopen=true, bookmarksnumbered=true,
    pdfauthor={Dr. Timo Keller},
    pdftitle={On the Tate-Shafarevich group of Abelian schemes over higher dimensional bases over finite fields},
    pdfsubject={},
    pdfkeywords={Abelian varieties, Tate-Shafarevich groups, Brauer groups, higher dimensional function fields},
    linkcolor=blue
]{hyperref}


\usepackage{enumitem}
\usepackage{relsize}
\usepackage{fancyhdr}


\usepackage{vmargin}
\setpapersize{A4}
\setmargins{2.1cm}{1.5cm} 
           {17cm}{23.5cm}   
           {2em}{2em}     
           {0pt}{0em}

\usepackage{ae,aecompl}
	\usepackage[step=1,stretch=30,shrink=30,selected=true]{microtype}




\DeclareTextFontCommand\textmathbf{\bfseries\boldmath} 

\newcommand{\N} {\ensuremath{\mathbf{N}}}
\newcommand{\Z} {\ensuremath{\mathbf{Z}}}
\newcommand{\Q} {\ensuremath{\mathbf{Q}}}
\newcommand{\R} {\ensuremath{\mathrm{R}}}
\newcommand{\RR} {\ensuremath{\mathbf{R}}}
\newcommand{\G} {\ensuremath{\mathbf{G}}}
\newcommand{\A} {\ensuremath{\mathbf{A}}}
\newcommand{\Ccal} {\ensuremath{\mathscr{C}}}
\newcommand{\Ocal} {\ensuremath{\mathscr{O}}}
\newcommand{\Ecal} {\ensuremath{\mathscr{E}}}
\newcommand{\Lcal} {\ensuremath{\mathscr{L}}}

\newcommand{\F} {\ensuremath{\mathbf{F}}}
\newcommand{\p} {\ensuremath{\mathfrak{p}}}

\renewcommand{\H} {\ensuremath{\mathrm{H}}}
\newcommand{\PPic} {\ensuremath{\mathbf{Pic}}}

\renewcommand{\epsilon}{\varepsilon}
\renewcommand{\theta}{\vartheta}
\renewcommand{\phi}{\varphi}



\newcommand*\defn[1]{\textup{\textmathbf{#1}}}

\providecommand{\pfister}[1]{\langle\kern-0.2em\langle#1\rangle\kern-0.2em\rangle}

\providecommand{\iso}{\ensuremath{\cong}}

\providecommand{\isoto}{\xrightarrow{~\sim~}}

\DeclareMathOperator{\im}{im}
\DeclareMathOperator{\infl}{inf}

\DeclareMathOperator{\End}{End}
\DeclareMathOperator{\Hom}{Hom}

\DeclareMathOperator{\Gal}{Gal}

\DeclareMathOperator{\id}{id}

\DeclareMathOperator{\codim}{codim}
\DeclareMathOperator{\PGL}{PGL}
\DeclareMathOperator{\colim}{\varinjlim}
\DeclareMathOperator{\res}{res}

\DeclareMathOperator{\Char}{char}
\DeclareMathOperator{\Br}{Br}
\DeclareMathOperator{\Ext}{Ext}

\DeclareMathOperator{\Spec}{Spec}
\DeclareMathOperator{\Spf}{Spf}

\DeclareMathOperator{\pr}{pr}

\DeclareMathOperator{\GL}{GL}
\DeclareMathOperator{\rk}{rk}

\DeclareMathOperator{\Pic}{Pic}
\DeclareMathOperator{\coker}{coker}
\DeclareMathOperator{\Quot}{Quot}

\DeclareMathOperator{\Tors}{Tors}

\providecommand{\Acal}{\ensuremath{\mathscr{A}}}

\providecommand{\Ecal}{\ensuremath{\mathscr{E}}}
\providecommand{\Fcal}{\ensuremath{\mathscr{F}}}

\providecommand{\Hcal}{\ensuremath{\mathscr{H}}}
\providecommand{\Ical}{\ensuremath{\mathscr{I}}}

\providecommand{\Lcal}{\ensuremath{\mathscr{L}}}

\providecommand{\Ocal}{\ensuremath{\mathscr{O}}}

\providecommand{\EExt}{\ensuremath{\mathscr{E}\mathrm{xt}}}
\providecommand{\HHom}{\ensuremath{\mathscr{H}\mathrm{om}}}

\providecommand{\Het}{\ensuremath{\H_\mathrm{\acute{e}t}}}

\providecommand{\piet}{\ensuremath{\pi_1^\mathrm{\acute{e}t}}}

\newcommand{\Div} {\ensuremath{\mathrm{Div}}}

\newcommand{\cd} {\ensuremath{\mathrm{cd}}}

\providecommand{\Zar}{\ensuremath{\mathrm{Zar}}}
\providecommand{\et}{\ensuremath{\mathrm{\acute{e}t}}}
\providecommand{\fppf}{\ensuremath{\mathrm{fppf}}}

\DeclareSymbolFont{cyrletters}{OT2}{wncyr}{m}{n}
\DeclareMathSymbol{\Sha}{\mathalpha}{cyrletters}{"58}


\newtheorem{theorem}{Theorem}[section]
\newtheorem{lemma}[theorem]{Lemma}
\newtheorem{corollary}[theorem]{Corollary}
\newtheorem{definition}[theorem]{Definition}
\newtheorem{proposition}[theorem]{Proposition}

\theoremstyle{definition}

\newtheorem{remark}[theorem]{Remark}
\usepackage{cleveref}

\crefname{theorem}{Theorem}{Theorems}
\crefname{lemma}{Lemma}{Lemmata}
\crefname{corollary}{Corollary}{Corollaries}
\crefname{proposition}{Proposition}{Propositions}
\crefname{definition}{Definition}{Definitions}
\crefname{conjecture}{Conjecture}{Conjectures}
\crefname{example}{Example}{Examples}
\crefname{algorithm}{Algorithm}{Algorithms}
\crefname{remark}{Remark}{Remarks}

\numberwithin{equation}{section}

\addto\captionsngerman{%
	\renewcommand{\refname}{Bibliography}%
}

\allowdisplaybreaks[1]

\fancyhead{}
\fancyhead[LO,RE]{} 
\fancyhead[C]{\footnotesize\textsc \leftmark}
\fancyhead[LE,RO]{\footnotesize \thepage}
\fancyfoot[C]{}


\newcounter{marginlabelcounter}


\hyphenation{}

\begin{document}
\nonfrenchspacing


\pagestyle{fancy}

\title{On the Tate-Shafarevich group of Abelian schemes over higher dimensional bases over finite fields}
\author{Timo Keller}
\maketitle\thispagestyle{empty}
\begin{abstract}
We study analogues for the Tate-Shafarevich group for Abelian schemes with everywhere good reduction over higher dimensional bases over finite fields.
\\
\\
{\bf Keywords:} Étale cohomology, higher regulators, zeta and $L$-functions, Brauer groups of schemes
\\
\\
{\bf MSC 2010:} 19F27, 14F22
\end{abstract}

\markright{}


\section{Introduction}

The Tate-Shafarevich group $\Sha(\Acal/X)$ of an Abelian scheme $\Acal$ over a scheme $X$ is of great importance for the arithmetic of $\Acal$.  It classifies everywhere locally trivial $\Acal$-torsors.  If $X$ is the spectrum of the ring of integers of a number field or a smooth projective geometrically connected curve over a finite field, so that the function field $K = K(X)$ of $X$
 is a global field, one has
\[
    \Sha(\Acal/X) = \ker\Big(\H^1(K,A) \to \prod_v\H^1(K_v,A)\Big),
\]
where $v$ runs over all places of $K$ and $K_v$ is the completion of $K$ with respect to $v$.  The aim of this article is to generalise this definition to the case of a \emph{higher dimensional basis} $X/\F_q$ and prove some properties for this group.

In section~3, we show that an Abelian scheme $\Acal/X$ over $X$ regular, Noetherian, integral and separated satisfies the Néron mapping property, namely that $\Acal = g_*g^*\Acal$ on the smooth site of $X$, where $g: \{\eta\} \hookrightarrow X$ denotes the inclusion of the generic point.  In section~4.1, we define the Tate-Shafarevich group for Abelian schemes $\Acal$ over higher dimensional bases $X$ as $\Sha(\Acal/X) := \Het^1(X,\Acal)$ and show:
\[
	\H^1(X, \Acal) = \ker\Big(\H^1(K, \Acal) \to \prod_{x \in S} \H^1(K_x^{nr}, \Acal) \Big),
\]
where $K_x^{nr} = \Quot(\Ocal_{X,x}^{sh})$, and $S$ is either (a) the set of all points of $X$, or (b) the set $|X|$ of all closed points of $X$, or (c) the set $X^{(1)}$ of all codimension-$1$ points of $X$, and $\Acal = \PPic^0_{\Ccal/X}$ for a relative curve $\Ccal/X$ with everywhere good reduction admitting a section, and $X$ is a variety over a finitely generated field.  Here, one can replace $K_x^{nr}$ by $K_x^h = \Quot(\Ocal_{X,x}^h)$ if $\kappa(x)$ is finite, and $K_x^{nr}$ and $K_x^h$ by $\Quot(\hat{\Ocal}_{X,x}^{sh})$ and $\Quot(\hat{\Ocal}_{X,x}^h)$, respectively, if $x \in X^{(1)}$.  The obvious conjecture is that the Tate-Shafarevich group $\Sha(\Acal/X)$ is finite.

In section~2 and~4.2, for a (split) relative curve $\Ccal/X$ we relate the Brauer groups of $X$ and $\Ccal$ to the Tate-Shafarevich group of $\PPic^0_{\Ccal/X}$:  There is an exact sequence
\[
    0 \to \Br(X) \stackrel{\pi^*}{\to} \Br(\Ccal) \to \Sha(\PPic^0_{\Ccal/X}/X) \to 0.
\]
This generalises results of Artin and Tate~\cite{Artin-Tate}.

In section~4.3, we show that finiteness of an $\ell$-primary component of the Tate-Shafarevich group descents under generically étale $\ell'$-alterations.  This is used in~\cite{KellerHeight}, Theorem~4.18 and Remark~4.19 to prove the finiteness of the Tate-Shafarevich group for isotrivial Abelian schemes over finite fields under mild conditions.  In section~4.4, we show that finiteness of an $\ell$-primary component of the Tate-Shafarevich group is invariant under étale isogenies.  In section~4.5, we construct a Cassels-Tate pairing $\Sha(\Acal/X)[\ell^\infty] \times \Sha(\Acal^\vee/X)[\ell^\infty] \to \Q_\ell/\Z_\ell$ in some cases.  In~\cite{KellerHeight}, we also use the results of this article for studying the Birch-Swinnerton-Dyer conjecture for Abelian schemes over higher dimensional bases over finite fields.

\paragraph{Notation.}
All rings are commutative with 1.  For an Abelian group $A$, let $\Tors{A}$ be the torsion subgroup of $A$.  Denote the cokernel of $A \stackrel{n}{\to} A$ by $A/n$ and its kernel by $A[n]$, and the $p$-primary subgroup $\colim_n A[p^n]$ by $A[p^\infty]$ for any prime $p$.  Canonical isomorphisms are often denoted by ``$=$''.  If not stated otherwise, all cohomology groups are taken with respect to the étale topology.  By the $\ell$-cohomological dimension $\cd_\ell(X)$ of a scheme $X$ we mean the smallest integer $n$ (or $\infty$) such that $\H^q(X,\Fcal) = 0$ for all $i > n$ and all $\ell$-torsion sheaves $\Fcal$.  The ind-étale sheaf $\mu_{\ell^\infty}$ is defined as $\colim_n \mu_{\ell^n}$ with $\mu_{\ell^n}$ the étale sheaf of all $\ell^n$-th roots of unity, so $\H^q(X,\mu_{\ell^\infty}) = \colim_n\H^q(X,\mu_{\ell^n})$.  We denote Pontryagin duals, duals of $R$-modules and Abelian schemes by $(-)^\vee$.  It should be clear from the context which one is meant.  Pontryagin dual of a locally compact Abelian group $A$ is defined as $A^\vee = \Hom_{cont}(A,\RR/\Z)$ endowed with the compact-open topology.  The Henselisation of a local ring $A$ is denoted by $A^h$ and the strict Henselisation by $A^{sh}$.  By a finitely generated field we mean a field finitely generated over its prime field.  If $A$ is an integral domain, we denote by $\Quot(A)$ its quotient field.



\section{Brauer groups, Picard groups and cohomology of $\G_m$}
The main new technical result of this section is~\cref{thm:H2bijective}, which is needed to prove the main theorem~\cref{thm:RqGm=0q>1} for the $p$-torsion (the prime-to-$p$-torsion can be treated much easier) and~\cref{cor:LangeExakteSequenzMitGm} and~\cref{cor:BrauerUndShaExakteSequenz}.

We collect some well-known results on the cohomology of $\G_m$.
\begin{lemma} \label[lemma]{thm:KurzeExakteSequenzAusKummerSequenz}
Let $X$ be a scheme and $\ell$ a prime invertible on $X$. Then there are exact sequences
\[
	0 \to \H^{i-1}(X,\G_m) \otimes_\Z \Q_\ell/\Z_\ell \to \H^i(X,\mu_{\ell^\infty}) \to \H^i(X,\G_m)[\ell^\infty] \to 0
\]
for each $i \geq 1$.
\end{lemma}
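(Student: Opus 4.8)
The plan is to obtain these sequences by passing to a filtered colimit over the sequences coming from the finite-level Kummer sequences. Since $\ell$ is invertible on $X$, the Kummer sequence
\[
0 \to \mu_{\ell^n} \to \G_m \xrightarrow{\ell^n} \G_m \to 0
\]
is exact on the small \'etale site of $X$ for every $n \geq 1$. Taking \'etale cohomology and splitting the resulting long exact sequence at the boundary maps yields short exact sequences
\[
0 \to \H^{i-1}(X,\G_m)/\ell^n \to \H^i(X,\mu_{\ell^n}) \to \H^i(X,\G_m)[\ell^n] \to 0
\]
for every $i \geq 1$ (the hypothesis $i \geq 1$ is what makes the term $\H^{i-1}$ available).

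Next I would organise these short exact sequences into a direct system indexed by $n$, with transition maps induced by the inclusions $\mu_{\ell^n} \hookrightarrow \mu_{\ell^{n+1}}$, and identify the transition maps on the outer terms. Lifting $\mu_{\ell^n} \hookrightarrow \mu_{\ell^{n+1}}$ to a morphism of Kummer sequences --- the identity on the source copy of $\G_m$ and the $\ell$-th power map $\G_m \xrightarrow{\ell} \G_m$ on the target copy --- one checks that the induced transition map on $\H^i(X,\G_m)[\ell^n]$ is simply the inclusion $\H^i(X,\G_m)[\ell^n] \hookrightarrow \H^i(X,\G_m)[\ell^{n+1}]$, while the one on $\H^{i-1}(X,\G_m)/\ell^n$ is induced by multiplication by $\ell$, i.e.\ $x \bmod \ell^n \mapsto \ell x \bmod \ell^{n+1}$.

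Finally I would pass to the colimit over $n$. Filtered colimits are exact, so the colimit of the above short exact sequences is again short exact. In the middle term $\colim_n \H^i(X,\mu_{\ell^n}) = \H^i(X,\mu_{\ell^\infty})$, as recorded in the Notation; on the right $\colim_n \H^i(X,\G_m)[\ell^n] = \H^i(X,\G_m)[\ell^\infty]$ by the definition of the $\ell$-primary torsion subgroup; and on the left the system $\big(\H^{i-1}(X,\G_m)/\ell^n,\ \cdot\,\ell\big)$ is precisely the one computing $\H^{i-1}(X,\G_m) \otimes_\Z \Q_\ell/\Z_\ell$, since $\Q_\ell/\Z_\ell = \colim_n (\Z/\ell^n,\ \cdot\,\ell)$ and tensor products commute with colimits. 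Combining the three identifications gives the asserted sequence.

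I do not anticipate a real obstacle; the only thing requiring care is the bookkeeping of the transition maps on the $\G_m$-terms --- in particular getting the multiplication-by-$\ell$ on the cokernel terms correct, which is exactly what turns the left-hand colimit into the tensor product with $\Q_\ell/\Z_\ell$ rather than something degenerate --- together with the fact that \'etale cohomology commutes with the filtered colimit defining $\mu_{\ell^\infty}$, which is harmless (it holds for quasi-compact quasi-separated $X$ and is assumed in the paper's conventions).
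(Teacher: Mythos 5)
Your argument is correct and is exactly the route the paper takes: its one-line proof (``follows from the long exact sequence induced by the Kummer sequence'') is precisely the finite-level Kummer sequences, split at the boundary maps and passed to the colimit over $n$, with $\H^i(X,\mu_{\ell^\infty}) = \colim_n \H^i(X,\mu_{\ell^n})$ holding by the paper's definition of $\mu_{\ell^\infty}$. Your careful identification of the transition maps (inclusion on the torsion terms, multiplication by $\ell$ on the cokernel terms, yielding $\otimes_\Z \Q_\ell/\Z_\ell$) just makes the omitted bookkeeping explicit.
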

\begin{proof}
This follows from the long exact sequence induced by the Kummer sequence.
\end{proof}

\begin{definition}
A \defn{variety} is a separated scheme of finite type over over a field $k$.
\end{definition}

Recall that the \defn{Brauer group} $\Br(X)$ of a scheme $X$ is the group of equivalence classes of Azumaya algebras on $X$ (see e.\,g.~\cite{MilneÉtaleCohomology}, IV.2, p.~140\,ff.).

\begin{definition} \label[definition]{def:BrauerPrimeH2Tors}
$\Br'(X) := \Tors\H^2(X,\G_m)$ is called the \defn{cohomological Brauer group}.
\end{definition}

\begin{theorem} \label{thm:BrStrichGleichH2} \label{thm:BrauerGabber}
(a) There is an injection $\Br(X) \hookrightarrow \Br'(X)$.

(b) One has $\Br(X) = \Br'(X)$ if $X$ has an ample invertible sheaf.

(c) $\Br'(X) = \H^2(X,\G_m)$ if $X$ is a regular integral quasi-compact scheme.
\end{theorem}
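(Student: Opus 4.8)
I would prove the three parts separately. Part~(a) is classical (Grothendieck): an Azumaya algebra $\Acal$ of rank $n^2$ on $X$ is by definition \'etale-locally isomorphic to $\M_n(\Ocal_X)$, hence is classified by a class in the non-abelian cohomology set $\H^1(X,\PGL_n)$, and the exact sequence of sheaves $1 \to \G_m \to \GL_n \to \PGL_n \to 1$ furnishes a boundary map $\H^1(X,\PGL_n) \to \H^2(X,\G_m)$ whose image is annihilated by $n$ (the connecting map factors, via $\mu_n \hookrightarrow \G_m$, through $\H^2(X,\mu_n)$). Assigning to the Brauer class of $\Acal$ this cohomology class is well defined and additive, giving a homomorphism $\Br(X) \to \Tors\H^2(X,\G_m) = \Br'(X)$; it is injective because two Azumaya algebras are Brauer-equivalent precisely when their associated $\PGL$-torsors agree after the $\GL$-twisting, which is exactly the condition that they determine the same class in $\H^2(X,\G_m)$.

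For part~(b) I would invoke the theorem of Gabber: if $X$ carries an ample invertible sheaf, then every torsion class in $\H^2(X,\G_m)$ arises from an Azumaya algebra, so $\Br(X) = \Br'(X)$. I would cite the published proof of this (due to de~Jong) and treat it as a \emph{black box}; this is the one genuinely deep ingredient. The classical cases settled by Grothendieck --- $X$ affine, or a union of two affine opens, or of dimension $\leq 1$ carrying an ample sheaf --- already suffice for many applications, but we want the statement in full generality.

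For part~(c) there is something to do, namely to show that $\H^2(X,\G_m)$ is a \emph{torsion} group when $X$ is regular, integral and quasi-compact; then $\Br'(X) = \Tors\H^2(X,\G_m) = \H^2(X,\G_m)$ automatically. Let $\eta$ be the generic point, $K = \kappa(\eta)$, $j\colon \Spec K \hookrightarrow X$ the inclusion, and for $x \in X^{(1)}$ let $i_x\colon \overline{\{x\}} \hookrightarrow X$ be the closed immersion. Since $X$ is regular, every local ring $\Ocal_{X,x}$, and hence its strict henselisation, is a unique factorisation domain, so the divisor exact sequence holds already at the level of \'etale sheaves:
\[ 0 \to \G_m \to j_*\G_{m,K} \to \bigoplus_{x \in X^{(1)}} (i_x)_*\Z \to 0. \]
Closed immersions have no higher \'etale pushforward, and $\H^1(\overline{\{x\}},\Z) = \Hom_{cont}(\piet(\overline{\{x\}}),\Z) = 0$ since $\overline{\{x\}}$ is connected and $\Z$ is discrete; as $X$ is quasi-compact, cohomology commutes with the direct sum, whence $\H^1\big(X,\bigoplus_x (i_x)_*\Z\big) = 0$. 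The long exact cohomology sequence then yields an injection $\H^2(X,\G_m) \hookrightarrow \H^2(X,j_*\G_{m,K})$, and it remains to see that the target is torsion. I would read this off the Leray spectral sequence $\H^p(X,\R^q j_*\G_m) \Rightarrow \H^{p+q}(K,\G_m)$: for $q \geq 1$ the sheaf $\R^q j_*\G_m$ is torsion, because its stalk at a geometric point over $x \in X$ is $\H^q$ of $\G_m$ on the generic fibre of $\Spec\Ocal_{X,x}^{sh}$, which is the spectrum of a separable algebraic extension of $K$, and Galois cohomology of a field in positive degree is torsion; hence, using quasi-compactness once more, $\H^p(X,\R^q j_*\G_m)$ is torsion for all $p$ whenever $q \geq 1$. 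Since the abutment $\H^2(K,\G_m) = \Br(K)$ is torsion, the graded piece $E_\infty^{2,0}$ of its Leray filtration is torsion; but $E_\infty^{2,0}$ is the quotient of $E_2^{2,0} = \H^2(X,j_*\G_{m,K})$ by the image of $d_2\colon \H^0(X,\R^1 j_*\G_m) \to \H^2(X,j_*\G_{m,K})$, and this image is torsion because $\H^0(X,\R^1 j_*\G_m)$ is. Hence $\H^2(X,j_*\G_{m,K})$ is torsion. Alternatively one may quote the torsionness of $\H^2(X,\G_m)$ for regular $X$ directly from Grothendieck's ``Le groupe de Brauer~II''.

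The serious obstacle is entirely in part~(b), whose proof --- Gabber's theorem --- I would not reproduce. In part~(c) the only mildly delicate points are the exactness of the divisor sequence as a sequence of \'etale sheaves (this really uses regularity, via factoriality of the strictly henselian local rings) and the spectral-sequence bookkeeping, both routine.
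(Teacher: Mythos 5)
The paper's own ``proof'' of this theorem is nothing but citations (Milne IV.2.5 for (a), Gabber/de~Jong for (b), Milne III.2.22 for (c)), so your parts (a) and (b) --- the classical $\check{\H}^1(X,\PGL_n)\to\H^2(X,\G_m)$ construction and Gabber's theorem as a black box --- are exactly the content the paper appeals to and are fine as sketches. The problem is in your self-contained argument for (c).

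The gap is in the divisor sequence and the vanishing $\H^1\big(X,\bigoplus_x (i_x)_*\Z\big)=0$ as you set them up, with $i_x\colon\overline{\{x\}}\hookrightarrow X$ the closed immersion and $\Z$ the constant sheaf on the closure. First, with that choice the sequence $0\to\G_m\to j_*\G_{m,K}\to\bigoplus_x (i_x)_*\Z\to 0$ is not exact in general: the stalk of $(i_x)_*\Z$ at a geometric point $\bar y$ is $\H^0$ of $\overline{\{x\}}\times_X\Spec\Ocal_{X,\bar y}^{sh}$, which is the spectrum of a quotient of a local ring, hence connected, so it contributes a single copy of $\Z$; but the divisor group of $\Spec\Ocal_{X,\bar y}^{sh}$ has one copy of $\Z$ for \emph{each} height-one prime over $\mathfrak{p}_x$, and there are several whenever $\overline{\{x\}}$ has several analytic branches at $y$ (e.g.\ a nodal curve on a regular surface). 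A rational function with opposite orders along two such branches then lies in the kernel of your surjection without being a unit, so exactness in the middle fails. Secondly, even granting the sequence, $\H^1(\overline{\{x\}},\Z)=\Hom_{cont}(\piet(\overline{\{x\}}),\Z)$ is not a valid identification: it holds for \emph{finite} constant coefficients, not for $\Z$, and $\overline{\{x\}}$ need not be normal even though $X$ is regular; the paper's own remark after \cref{lemma:Kohomologie konstanter Garben} (the nodal cubic, with $\H^1(X,\Z)=\Z$ while $\Hom_{cont}(\piet(X),\Z)=0$) is a direct counterexample to this step.

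The standard repair (Grothendieck, Milne) is to define the divisor sheaf with $i_x\colon\Spec\kappa(x)\to X$, the inclusion of the \emph{generic point} of the divisor, not of its closure. Then the stalks are exactly the Weil divisor groups of the strictly henselian local rings (regularity, i.e.\ factoriality of $\Ocal_{X,x}^{sh}$, gives exactness), and the Leray spectral sequence for $i_x$ yields $\H^1(X,(i_x)_*\Z)\hookrightarrow\H^1(\kappa(x),\Z)=\Hom_{cont}(G_{\kappa(x)},\Z)=0$, with no normality assumption on the closure. Your second half can also be simplified: for $X$ regular the stalk of $\R^1 j_*\G_{m}$ at $\bar y$ is $\H^1(\Quot(\Ocal_{X,\bar y}^{sh}),\G_m)$ by \cref{lemma:HenselisationOfRegularLocalRing}, which vanishes by Hilbert~90, so $\R^1 j_*\G_m=0$ and the five-term exact sequence gives $\H^2(X,j_*\G_{m,K})\hookrightarrow\H^2(K,\G_m)=\Br(K)$ directly, avoiding the $d_2$ bookkeeping. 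Your fallback of simply quoting Grothendieck's Brauer~II (or Milne III.2.22) is of course legitimate --- and is precisely what the paper does.
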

\begin{proof}
(a) See \cite{MilneÉtaleCohomology}, p.~142, Theorem~2.5.  (b) See \cite{GabberBrauerGroup}.  (c) See \cite{MilneÉtaleCohomology}, p.~106\,f., Example~2.22.
\end{proof}

\begin{corollary} \label[corollary]{cor:BrFuerVarietyOverFiniteField}
Let $X/k$ be a regular quasi-projective geometrically connected variety.  Then $\Br(X) = \Br'(X) = \H^2(X,\G_m)$.
\end{corollary}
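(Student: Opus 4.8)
The idea is simply to check that the hypotheses of the three parts of Theorem \ref{thm:BrauerGabber} are all satisfied under the assumptions of the corollary, so that the chain of inclusions collapses to equalities. First I would note that a quasi-projective variety over a field has, by definition of quasi-projectivity, an ample invertible sheaf: it admits a locally closed immersion into some $\PP^n_k$, and the pullback of $\Ocal(1)$ along (the composition with the open immersion of its schematic closure, then restriction) is ample on $X$. Hence part (b) applies and gives $\Br(X) = \Br'(X)$. Next, $X$ is assumed regular; being a variety it is of finite type over a field, hence Noetherian and in particular quasi-compact; and ``geometrically connected'' in particular implies connected, which together with regular (hence normal, hence locally integral) forces $X$ to be integral. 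Therefore part (c) applies and gives $\Br'(X) = \H^2(X,\G_m)$. Combining, $\Br(X) = \Br'(X) = \H^2(X,\G_m)$, which is the assertion.

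The only genuinely substantive point is the deduction that a connected regular scheme is integral: regularity implies that the local rings are regular local rings, hence integral domains, so $X$ is reduced and locally irreducible; a connected scheme that is locally irreducible is irreducible; reduced plus irreducible is integral. Everything else is a matter of matching vocabulary — ``variety over $k$'' $\Rightarrow$ finite type over a field $\Rightarrow$ Noetherian $\Rightarrow$ quasi-compact, and ``quasi-projective'' $\Rightarrow$ existence of an ample invertible sheaf.

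I do not anticipate a real obstacle here; this corollary is a bookkeeping consequence of the theorem it follows, and the proof should be two or three sentences. If one wanted to be economical one could even omit the middle term and just write that (b) and (c) of Theorem \ref{thm:BrauerGabber} together yield $\Br(X) = \H^2(X,\G_m)$, with $\Br'(X)$ appearing only as the common intermediate value.
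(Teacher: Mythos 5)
Your argument is exactly the intended one: the paper states this corollary without proof as an immediate consequence of Theorem~\ref{thm:BrauerGabber}, with quasi-projectivity supplying the ample invertible sheaf for part~(b) and regularity together with connectedness (hence integrality) and quasi-compactness supplying the hypotheses of part~(c). Your extra remark that a connected regular scheme is integral is the right justification for the one hypothesis that is not purely a matter of vocabulary, so the proposal is correct and matches the paper's route.
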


\begin{theorem} \label{thm:LichtenbaumBrauer}
Let $X$ be a smooth projective geometrically connected variety over a finite field $k = \F_q$, $q = p^n$.

(a) $\H^i(X,\G_m)$ is torsion for $i \neq 1$, finite for $i \neq 1,2,3$ and trivial for $i > 2\dim(X) + 1$.  For $\H^i(X,\G_m)$ being torsion for $i > 1$, $k$ can be any field.

(b) For $\ell \neq p$ and $i = 2,3$, one has $\H^i(X,\G_m)[\ell^\infty] \iso (\Q_\ell/\Z_\ell)^{\rho_{i,\ell}} \oplus C_{i,\ell}$, where $C_{i,\ell}$ is finite and trivial for all but finitely many $\ell$, and $\rho_{i,\ell}$ a non-negative integer.

(c)  Let $\ell \neq p$ be prime.  Then one has
\[
    \H^i(X,\mu_{\ell^\infty}) \isoto \H^i(X,\G_m)[\ell^\infty]
\]
for $i \neq 2$.
\end{theorem}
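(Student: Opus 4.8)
The plan is to reduce everything over $\F_q$ to the geometric situation over $\bar\F_q$, and feed in the known structure of the latter. Writing $\bar X:=X\times_{\F_q}\bar\F_q$, the Hochschild--Serre (Leray) spectral sequence $\H^p(\hat\Z,\H^q(\bar X,\G_m))\Rightarrow\H^{p+q}(X,\G_m)$ together with $\cd(\hat\Z)=1$ yields, for every $i$, a short exact sequence
\[
    0\to\H^1(\hat\Z,\H^{i-1}(\bar X,\G_m))\to\H^i(X,\G_m)\to\H^i(\bar X,\G_m)^{\hat\Z}\to 0 ,
\]
and likewise with $\mu_{\ell^n}$, $\mu_{\ell^\infty}$ or $\Z_\ell(1):=\varprojlim_n\mu_{\ell^n}$ in place of $\G_m$. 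On the geometric side I would use the classical facts that $\H^0(\bar X,\G_m)=\bar\F_q^\times$, $\H^1(\bar X,\G_m)=\Pic(\bar X)$, that $\H^i(\bar X,\G_m)$ is torsion for $i\geq 2$ and vanishes for $i>2\dim X$, and that for $\ell\neq p$ the group $\H^j(\bar X,\Z_\ell(1))$ is a finitely generated $\Z_\ell$-module, torsion-free for all but finitely many $\ell$, on which geometric Frobenius $\sigma$ acts with eigenvalues of absolute value $q^{(j-2)/2}$ by Deligne, so that $\sigma-1$ is invertible unless $j=2$. The one genuinely deep ingredient --- and the step I expect to be the main obstacle --- is the $p$-primary part, where the $\ell$-adic input has to be replaced by flat cohomology and the logarithmic de Rham--Witt sheaves, using $\Het^\bullet(-,\G_m)=\Hfl^\bullet(-,\G_m)$, $R^q\varepsilon_*\mu_{p^n}=0$ for $q\neq 1$, $R^1\varepsilon_*\mu_{p^n}=W_n\Omega^1_{X,\mathrm{log}}$ (with $\varepsilon$ the projection from the flat to the \'etale site) and $\cd_p(\bar X)\leq\dim X$; concretely, one needs Milne's finiteness results for $\H^\bullet(X,\G_m)$ on a smooth projective $X/\F_q$, which I would cite rather than reprove. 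Throughout, the passage between $\G_m$ and $\mu_{\ell^\infty}$ is supplied by~\cref{thm:KurzeExakteSequenzAusKummerSequenz}, which on its own settles part~(c).

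\emph{Part (a).} For $i=0$, $\H^0(X,\G_m)=\F_q^\times$ is finite, hence torsion. That $\H^i(X,\G_m)$ is torsion for $i\geq 2$ uses only that $X$ is regular, hence holds over an arbitrary base field: from the Leray sequence for the generic point $g$ of $X$ and the divisor exact sequence $0\to\G_m\to g_*\G_{m,K}\to\bigoplus_x(i_x)_*\Z\to 0$, using that $R^qg_*\G_m$ has torsion stalks for $q\geq 1$ (the higher cohomology of a profinite group is torsion, and $R^1g_*\G_m=0$ by Hilbert~90) and that $\H^q$ of the skyscraper $\Z$'s is torsion for $q\geq 2$ and vanishes for $q=1$. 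For finiteness when $i\geq 4$ I argue one prime at a time. For $\ell\neq p$: by Deligne's bounds $\sigma-1$ is invertible on $\H^j(\bar X,\Q_\ell(1))$ for $j\neq 2$, so $\H^0(\hat\Z,-)$ and $\H^1(\hat\Z,-)$ of the finitely generated module $\H^j(\bar X,\Z_\ell(1))$ are finite for $j\neq 2$; by Hochschild--Serre $\H^i(X,\Z_\ell(1)):=\varprojlim_n\H^i(X,\mu_{\ell^n})$ is then finite for $i\neq 2,3$, hence $\H^i(X,\mu_{\ell^\infty})$ is finite for $i\geq 4$ (via the cohomology sequence of $0\to\Z_\ell(1)\to\Q_\ell(1)\to\Q_\ell/\Z_\ell(1)\to 0$), hence by~\cref{thm:KurzeExakteSequenzAusKummerSequenz} so is its quotient $\H^i(X,\G_m)[\ell^\infty]$; moreover $\H^i(X,\G_m)[\ell^\infty]=0$ for every $\ell$ outside a finite set (those for which $\H^\bullet(\bar X,\Z_\ell)$ is torsion-free and $\sigma-1$ is invertible on it), so that $\H^i(X,\G_m)=\bigoplus_\ell\H^i(X,\G_m)[\ell^\infty]$ is finite. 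The $p$-primary part of this finiteness is the Milne input mentioned above. Vanishing for $i>2\dim X+1$ is then immediate from the displayed short exact sequence together with $\H^i(\bar X,\G_m)=0$ for $i>2\dim X$.

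\emph{Part (b).} From Hochschild--Serre for $\mu_{\ell^\infty}$ and the standard exact sequences $0\to\H^j(\bar X,\Z_\ell(1))\otimes_{\Z_\ell}\Q_\ell/\Z_\ell\to\H^j(\bar X,\mu_{\ell^\infty})\to\H^{j+1}(\bar X,\Z_\ell(1))[\ell^\infty]\to 0$, the group $\H^i(\bar X,\mu_{\ell^\infty})$ is cofinitely generated over $\Z_\ell$, i.e.\ its Pontryagin dual is a finitely generated $\Z_\ell$-module, and $\H^0(\hat\Z,-)$ and $\H^1(\hat\Z,-)$ preserve this property; hence $\H^i(X,\mu_{\ell^\infty})$ is cofinitely generated. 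By~\cref{thm:KurzeExakteSequenzAusKummerSequenz}, $\H^i(X,\G_m)[\ell^\infty]$ is the quotient of $\H^i(X,\mu_{\ell^\infty})$ by the divisible subgroup $\H^{i-1}(X,\G_m)\otimes_\Z\Q_\ell/\Z_\ell$, so it is itself cofinitely generated, i.e.\ $\cong(\Q_\ell/\Z_\ell)^{\rho_{i,\ell}}\oplus C_{i,\ell}$ with $C_{i,\ell}$ finite. Running the same argument integrally and using that $\H^\bullet(\bar X,\Z_\ell)$ is torsion-free and $\Pic(X)[\ell]=0$ for all but finitely many $\ell$ shows $C_{i,\ell}=0$ for almost all $\ell$.

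\emph{Part (c).} Here~\cref{thm:KurzeExakteSequenzAusKummerSequenz} does all the work: it yields the exact sequence
\[
    0\to\H^{i-1}(X,\G_m)\otimes_\Z\Q_\ell/\Z_\ell\to\H^i(X,\mu_{\ell^\infty})\to\H^i(X,\G_m)[\ell^\infty]\to 0 ,
\]
so it suffices to show $\H^{i-1}(X,\G_m)\otimes_\Z\Q_\ell/\Z_\ell=0$ for $i\neq 2$. For $i\leq 1$ this is clear: $\H^{i-1}(X,\G_m)$ is $0$ or the finite group $\F_q^\times$, and a finite group tensored with the divisible group $\Q_\ell/\Z_\ell$ vanishes. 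For $i\geq 3$, $\H^{i-1}(X,\G_m)$ is torsion by~(a); its prime-to-$\ell$ part is uniquely $\ell$-divisible, and its $\ell$-primary part is of the form $(\Q_\ell/\Z_\ell)^{\rho}\oplus(\text{finite})$ by~(b) (in the cases $i=3,4$) or finite by~(a) (when $i\geq 5$), so in every case $-\otimes_\Z\Q_\ell/\Z_\ell$ annihilates it. Hence $\H^i(X,\mu_{\ell^\infty})\isoto\H^i(X,\G_m)[\ell^\infty]$ for $i\neq 2$. For $i=2$ the left-hand term equals $\Pic(X)\otimes_\Z\Q_\ell/\Z_\ell\cong(\Q_\ell/\Z_\ell)^{\rk\Pic(X)}$, which is nonzero because $\NS(X)$ has positive rank; this is precisely why $i=2$ must be excluded.
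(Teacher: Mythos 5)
Your proposal is correct in substance, but it takes a genuinely different route from the paper: the paper disposes of (a) and (b) by citing Lichtenbaum's Proposition~2.1 and obtains (c) exactly as you do, from \cref{thm:KurzeExakteSequenzAusKummerSequenz} together with (a) (any torsion group tensored with $\Q_\ell/\Z_\ell$ vanishes). You instead reconstitute the cited result: Hochschild--Serre over $\Gal(\bar{\F}_q/\F_q)\cong\hat{\Z}$, Weil~II weight bounds making $\sigma-1$ invertible on $\H^j(\bar{X},\Q_\ell(1))$ for $j\neq 2$, the divisor sequence $0\to\G_m\to g_*\G_{m,K}\to\bigoplus_x (i_x)_*\Z\to 0$ for the torsionness over an arbitrary field, Gabber's torsion-freeness plus $\ell$-independence of characteristic polynomials for the ``almost all $\ell$'' statements, and a citation to Milne for the $p$-primary part (which is also how the cited source handles it). What your version buys is a self-contained mechanism (weights) behind the finiteness; what the paper's citation buys is brevity. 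The ``$C_{i,\ell}=0$ for almost all $\ell$'' sentence is your thinnest step and should say explicitly that it uses Gabber together with the fact that $\det(\sigma-1)$ on $\H^j(\bar{X},\Q_\ell(1))$, $j\neq 2$, is a fixed nonzero rational number, hence an $\ell$-adic unit for almost all $\ell$.

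One assertion is genuinely false as stated: the claim that Hochschild--Serre degenerates, for \emph{every} $i$ and with $\G_m$-coefficients, into
\[
0\to\H^1(\hat{\Z},\H^{i-1}(\bar{X},\G_m))\to\H^i(X,\G_m)\to\H^i(\bar{X},\G_m)^{\hat{\Z}}\to 0
\]
``because $\cd(\hat{\Z})=1$''. That cohomological-dimension statement applies only to torsion coefficient modules, and $\H^1(\bar{X},\G_m)=\Pic(\bar{X})$ is not torsion; the term $E_2^{2,1}=\H^2(\hat{\Z},\Pic(\bar{X}))$ genuinely contributes. Concretely, for a smooth projective geometrically connected curve $C/\F_q$ one has $\H^2(\bar{C},\G_m)=\H^3(\bar{C},\G_m)=0$ while $\H^3(C,\G_m)\cong\Q/\Z$, so your sequence fails at $i=3$. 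Fortunately you only invoke the $\G_m$-version in the range $i>2\dim X+1$, where it can be repaired: for $q\geq 2$ the coefficients $\H^q(\bar{X},\G_m)$ are torsion, so $E_2^{p,q}=0$ for $p\geq 2$, and the rows $q=0,1$ contribute only via $p=i-q\geq 3$, which vanish since $\mathrm{scd}(\hat{\Z})=2$; alternatively, argue one prime at a time using that $\H^i(X,\G_m)$ is torsion for $i\geq 2$, $\cd_\ell(X)\leq 2\dim X+1$ for $\ell\neq p$, $\cd_p(X)\leq\dim X+1$, and \cref{lemma:cdn}. With torsion coefficients ($\mu_{\ell^n}$, $\mu_{\ell^\infty}$), as in your parts (a)--(b), the degeneration is correct; just restrict the blanket claim accordingly.
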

\begin{proof}
(a) and (b):  See \cite{Lichtenbaums=1}, p.~180, Proposition~2.1\,a)--c), f) (the assertion about $\H^i(X,\G_m)$ being torsion for $i > 1$ follows from the proof given there).  (c) follows from~\cref{thm:KurzeExakteSequenzAusKummerSequenz} and (a).
\end{proof}

\begin{lemma} \label[lemma]{lemma:AzumayaTrivialised}
Let $X$ be a quasi-compact scheme, quasi-projective over an affine scheme.  Assume $\underline{A} \in \check{\H}^1(X,\PGL_n)$ is an Azumaya algebra trivialised by $\underline{A} \iso \underline{\End}(\underline{V})$ with $\underline{V}$ a locally free $\Ocal_X$-sheaf of rank $n$.  Then every other such $\underline{V}'$ differs from $\underline{V}$ by tensoring with an invertible sheaf.
\end{lemma}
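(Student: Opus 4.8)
The plan is to exploit the standard dictionary between Azumaya algebras split by a given vector bundle and the geometry of the associated Brauer–Severi/projective bundle, but the most direct route is purely via the exact sequence of sheaves of groups
\[
    1 \to \G_m \to \GL_n \to \PGL_n \to 1
\]
on the étale (equivalently, since we will only be comparing two bundles of the same rank, even the Zariski) site of $X$. First I would recall that an isomorphism $\underline{A} \iso \underline{\End}(\underline{V})$ exhibits $\underline{V}$ as a ``splitting module'': the class $[\underline{A}] \in \check{\H}^1(X,\PGL_n)$ is the image of the class $[\underline{V}] \in \check{\H}^1(X,\GL_n)$ under the connecting-type map $\GL_n \to \PGL_n$. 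Given a second locally free sheaf $\underline{V}'$ of rank $n$ with $\underline{A} \iso \underline{\End}(\underline{V}')$, the classes $[\underline{V}]$ and $[\underline{V}']$ in $\H^1(X,\GL_n)$ have the same image in $\H^1(X,\PGL_n)$.

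The key step is then to run the long exact cohomology sequence of pointed sets associated to $1 \to \G_m \to \GL_n \to \PGL_n \to 1$ and to use that the difference of two $\GL_n$-torsors with the same image in $\H^1(X,\PGL_n)$ is measured by an element of $\H^1(X,\G_m) = \Pic(X)$. Concretely: pick a common étale (or Zariski) cover $\{U_i\}$ of $X$ trivialising $\underline{V}$, $\underline{V}'$ and $\underline{A}$; let $(g_{ij})$, $(g'_{ij})$ be the $\GL_n$-cocycles of $\underline{V}$, $\underline{V}'$. The hypothesis says $g_{ij}$ and $g'_{ij}$ define the same $\PGL_n$-cocycle, i.e. after possibly refining the cover there are units $\lambda_{ij} \in \Ocal^\times(U_{ij})$ with $g'_{ij} = \lambda_{ij} g_{ij}$. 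Substituting into the cocycle condition for $(g'_{ij})$ and using the cocycle condition for $(g_{ij})$ forces $\lambda_{ij}\lambda_{jk} = \lambda_{ik}$ on triple overlaps (here one uses that $g_{ij}$ is invertible so it can be cancelled), so $(\lambda_{ij})$ is a $1$-cocycle with values in $\G_m$ and hence defines an invertible sheaf $\Lcal$ on $X$. Comparing transition functions shows $\underline{V}' \iso \underline{V} \otimes_{\Ocal_X} \Lcal$, which is exactly the assertion.

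The main obstacle — and the reason for the quasi-compactness and quasi-projectivity hypotheses — is purely descent-theoretic bookkeeping: one must be sure that the $\PGL_n$-equality of cocycles can be realised on a \emph{common} cover, and that the resulting $(\lambda_{ij})$ really glue to an honest invertible sheaf rather than merely a cohomology class in some Brauer-type obstruction group; this is where one invokes that on a quasi-compact scheme quasi-projective over an affine scheme, $\H^1$ with $\G_m$-coefficients computed on étale covers agrees with $\Pic(X)$ (every $\G_m$-torsor is represented by a line bundle, by Hilbert's Theorem 90 together with the equivalence of Zariski and étale $\H^1(-,\G_m)$). Once that identification is in place the computation above is routine. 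Alternatively, and perhaps more cleanly, I would phrase the whole argument representably: $\underline{\End}(\underline{V}) \iso \underline{\End}(\underline{V}')$ induces an $\Ocal_X$-algebra isomorphism, hence a $\PGL_n$-equivariant isomorphism of the sheaves $\underline{\mathrm{Isom}}$; the sheaf $\Lcal := \HHom_{\Ocal_X\text{-}\underline{\End}(\underline{V})\text{-mod}}(\underline{V}, \underline{V}')$ of module homomorphisms intertwining the two $\underline{A}$-actions is, locally on $X$, free of rank $1$ (a local computation: homomorphisms of the simple module over a matrix algebra form a $1$-dimensional space), hence invertible, and evaluation $\underline{V} \otimes_{\Ocal_X} \Lcal \to \underline{V}'$ is an isomorphism by checking it locally. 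This second formulation avoids cocycles entirely and makes the hypotheses on $X$ essentially unnecessary beyond what is needed to speak of $\Pic(X)$; I would present whichever is shorter given the conventions already fixed in the paper.
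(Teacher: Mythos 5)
Your primary argument is essentially the paper's proof: the paper also runs the exact sequence of pointed sets in \v{C}ech cohomology attached to $1 \to \G_m \to \GL_n \to \PGL_n \to 1$, identifies $\check{\H}^1(X,\G_m)$ with $\Pic(X)$, and then, since $\G_m$ is central, invokes the twisting statement (the \'etale analogue of Serre, \emph{Galois Cohomology}, Prop.~42) to conclude that two lifts $\underline{V},\underline{V}'$ of the same class in $\check{\H}^1(X,\PGL_n)$ differ by an element of $\Pic(X)$. Your explicit cocycle computation is just that citation unwound; the only imprecision is that equality of the classes of $\underline{\End}(\underline{V})$ and $\underline{\End}(\underline{V}')$ gives the $\PGL_n$-cocycles only up to a coboundary, so before writing $g'_{ij}=\lambda_{ij}g_{ij}$ you must refine the cover and lift that coboundary locally along $\GL_n \twoheadrightarrow \PGL_n$ (which changes $\underline{V}'$ only within its isomorphism class) --- exactly what the centrality argument in Serre's proposition does, so this is bookkeeping, not a gap. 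Your second, Morita-style argument --- taking $\Lcal := \HHom_{\underline{A}}(\underline{V},\underline{V}')$, checking locally that it is invertible and that evaluation $\underline{V}\otimes_{\Ocal_X}\Lcal \to \underline{V}'$ is an isomorphism --- is a genuinely different and cleaner route, and your observation that it needs essentially none of the quasi-compactness/quasi-projectivity hypotheses is correct: in the paper those hypotheses serve to identify \v{C}ech with derived cohomology (and $\Br$ with $\Br'$), which your sheaf-theoretic argument bypasses entirely.
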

For the definition of \v{C}ech non-Abelian étale cohomology see~\cite{MilneÉtaleCohomology}, p.~120\,ff., III.4.
\begin{proof}
Note that $\underline{A}$ corresponds to an element of $\check{\H}^1(X,\PGL_n)$ and $\underline{V}$ corresponds to an element in $\H^1(X,\GL_n)$.  For $n\in \N$, consider the central extension of étale sheaves on $X$ (see~\cite{MilneÉtaleCohomology}, p.~146)
\[
    1 \to \G_m \to \GL_n \to \PGL_n \to 1.
\]
By~\cite{MilneÉtaleCohomology}, p.~143, Step~3, this induces a long exact sequence in (\v{C}ech) cohomology of pointed sets
\[
    \Pic(X) = \check{\H}^1(X,\G_m) \stackrel{g}{\to} \check{\H}^1(X,\GL_n) \stackrel{h}{\to} \check{\H}^1(X,\PGL_n) \stackrel{f}{\to} \check{\H}^2(X,\G_m).
\]
Note that by assumption and~\cite{MilneÉtaleCohomology}, p.~104, Theorem~III.2.17, $\check{\H}^1(X,\G_m) = \H^1(X,\G_m) = \Pic(X)$ and $\check{\H}^2(X,\G_m) = \H^2(X,\G_m)$.  Further, $\Br(X) = \Br'(X)$ since a scheme quasi-compact and quasi-projective over an affine scheme has an ample line bundle (\cite{Liu2006}, p.~171, Corollary~5.1.36), so~\cref{thm:BrauerGabber} applies and $\Br'(X) \hookrightarrow \H^2(X,\G_m)$.  Since $\underline{A}$ is an Azumaya algebra, $f(\underline{A}) = [\underline{A}] \in \Br(X) \hookrightarrow \H^2(X,\G_m)$.  Therefore $f$ factors through $\Br(X) \hookrightarrow \H^2(X,\G_m)$.  

Assume the Azumaya algebra $\underline{A} \in \H^1(X,\PGL_n)$ lies in the kernel of $f$, i.\,e.\ there is a $\underline{V}$ such that $\underline{A} \iso \underline{\End}(\underline{V})$.  Then it comes from $\underline{V} \in \H^1(X,\GL_n)$ by~\cite{MilneÉtaleCohomology}, p.~143, Step~2 ($h$ is the morphism $\underline{V} \mapsto \underline{\End}(\underline{V})$).  So, since $\G_m$ is central in $\GL_n$, by the analogue of~\cite{SerreGaloisCohomology}, p.~54, Proposition~42 for étale \v{C}ech cohomology, if $\underline{V}' \in \H^1(X,\GL_n)$ also satisfies $\underline{A} \iso \underline{\End}(\underline{V}')$, they differ by an invertible sheaf.
\end{proof}

\begin{lemma} \label[lemma]{lemma:BrauerProjlim}
Let $f: \Ccal \to X$ be a projective smooth morphism with $X = \Spec(A)$ the spectrum of a Henselisation of a variety at a regular point.  Let $X_n = \Spec(A/\mathfrak{m}^{n+1})$ with $\mathfrak{m}$ the maximal ideal of $A$ and $\Ccal_n = \Ccal \times_X X_n$ with maps $X_n \hookrightarrow X_{n+1}$ and $\Ccal_n \hookrightarrow \Ccal_{n+1}$.  Suppose the transition maps of $(\Pic(\Ccal_n))_{n \in \N}$ are surjective (in fact, the Mittag-Leffler condition would suffice).  Then the canonical homomorphism
\[
    \Br(\Ccal) \to \varprojlim_{n \in \N}\Br(\Ccal_n)
\]
is injective.
\end{lemma}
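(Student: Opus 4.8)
The plan is to prove that an Azumaya algebra $\underline{A}$ on $\Ccal$ whose class dies in every $\Br(\Ccal_n)$ already has trivial class in $\Br(\Ccal)$. Write $\hat{A}$ for the $\mathfrak{m}$-adic completion of $A$, set $\hat{X}=\Spec\hat{A}$ and $\hat{\Ccal}=\Ccal\times_X\hat{X}$; then $\hat{\Ccal}\to\hat{X}$ is projective over the complete Noetherian local ring $\hat{A}$, and since $\Ccal_n=\hat{\Ccal}\times_{\hat{X}}X_n$ the formal scheme $\mathfrak{C}:=\varinjlim_n\Ccal_n$ is the completion of $\hat{\Ccal}$ along its closed fibre $\Ccal_0$. (Arguing on connected components, we may assume $\underline{A}$ has constant rank $r^2$.)

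First I would produce a \emph{formal} trivialisation. For each $n$, vanishing of $[\underline{A}|_{\Ccal_n}]$ gives a locally free $\Ocal_{\Ccal_n}$-module $\underline{V}_n$ of rank $r$ with an isomorphism $\underline{\End}(\underline{V}_n)\iso\underline{A}|_{\Ccal_n}$; here I use that $\Ccal_n$ is quasi-compact and projective over the affine scheme $X_n$, so that \cref{lemma:AzumayaTrivialised} applies to it. The crucial point is to choose the $\underline{V}_n$ (and the trivialising isomorphisms) \emph{compatibly} under the closed immersions $\Ccal_n\hookrightarrow\Ccal_{n+1}$: by \cref{lemma:AzumayaTrivialised} any two choices on a given $\Ccal_n$ differ by tensoring with a line bundle, so the obstruction to rigidifying the system --- equivalently, to nonemptiness of the relevant inverse limit --- is governed by the tower $\bigl(\Pic(\Ccal_n)\bigr)_n$ and is killed by a $\varprojlim^{1}$-type argument once this tower satisfies Mittag-Leffler. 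The outcome is a coherent sheaf $\hat{\mathcal{W}}$ on $\mathfrak{C}$, locally free of rank $r$, together with an isomorphism $\underline{\End}(\hat{\mathcal{W}})\iso\underline{A}^{\wedge}$ onto the completion of $\underline{A}|_{\hat{\Ccal}}$.

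Next I would algebraise. By Grothendieck's existence theorem, completion along $\Ccal_0$ is an equivalence from coherent sheaves on $\hat{\Ccal}$ (proper over the complete local ring $\hat{A}$) to coherent sheaves on $\mathfrak{C}$, so $\hat{\mathcal{W}}$ is the completion of a coherent sheaf $\underline{\mathcal{W}}$ on $\hat{\Ccal}$. As $\underline{\mathcal{W}}$ is $\hat{A}$-flat by the local criterion of flatness (its reductions being the flat $\underline{V}_n$) and restricts to a locally free sheaf on the closed fibre, and $\hat{\Ccal}$ is proper over the local ring $\hat{A}$, $\underline{\mathcal{W}}$ is locally free of rank $r$; and by full faithfulness of the completion functor the isomorphism $\underline{\End}(\hat{\mathcal{W}})\iso\underline{A}^{\wedge}$ descends to a morphism $\underline{\End}(\underline{\mathcal{W}})\to\underline{A}|_{\hat{\Ccal}}$, which is an isomorphism because it becomes one after completion. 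Hence $[\underline{A}|_{\hat{\Ccal}}]=0$ in $\Br(\hat{\Ccal})$.

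Finally I would descend from $\hat{A}$ back to $A$. Fix a relatively ample sheaf $\Ocal_\Ccal(1)$ on $\Ccal/X$ and, for suitable $m,N$, let $M\to X$ be the scheme of pairs consisting of a locally free rank-$r$ quotient of $\Ocal_\Ccal(-m)^{\oplus N}$ on the fibres of $\Ccal/X$, with the Hilbert polynomial of $\underline{\mathcal{W}}$, flat over the base, together with an algebra isomorphism of its endomorphism sheaf onto $\underline{A}$: this is a locally closed subscheme of a Quot scheme of $\Ccal/X$, hence of finite type over $X=\Spec A$, and by the previous paragraph it carries an $\hat{A}$-point. Since $A$ is the Henselisation of a variety at a point, Artin approximation yields an $A$-point of $M$, that is, a locally free sheaf $\underline{V}$ on $\Ccal$ with $\underline{\End}(\underline{V})\iso\underline{A}$; thus $[\underline{A}]=0$ in $\Br(\Ccal)$, which gives the asserted injectivity. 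I expect the second paragraph to be the main obstacle: turning the pointwise trivialisations $\underline{A}|_{\Ccal_n}\iso\underline{\End}(\underline{V}_n)$ into a genuine formal datum --- propagating the isomorphisms and not merely the bundles through the inverse limit --- is the delicate part, and is the only place the Mittag-Leffler (resp.\ surjectivity) hypothesis enters.
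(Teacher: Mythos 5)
Your proposal is correct and follows essentially the same route as the paper: choose the trivialisations $(\underline{V}_n,u_n)$ compatibly using the surjectivity (Mittag--Leffler) of the Picard tower, algebraise the resulting formal datum over $\hat{\Ccal}$ via the Grothendieck existence theorem (the paper cites EGA~III, Th\'eor\`eme~(5.1.4)), and then descend from $\hat{A}$ to $A$ by Artin approximation applied to a finite-type moduli functor of trivialising bundles (the paper uses quotients of a fixed $\underline{\Ecal}$ together with an isomorphism onto $\underline{A}$, representable by SGA~4, XIII~1.3, rather than your Quot-scheme variant, but this is the same idea). No essential gap.
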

For the definition of the Mittag-Leffler condition, confer~\cite{Weibel}, p.~82, Definition~3.5.6.
\begin{proof}
Let $\underline{A}$ be an Azumaya algebra over $\Ccal$ which lies in the kernel of the map in this lemma, i.\,e.\ such that for every $n \in \N$ there is an isomorphism
\begin{equation} \label{eq:AzumayaSystem}
    u_n: \underline{A}_n \iso \underline{\End}(\underline{V}_n)
\end{equation}
with $\underline{V}_n$ a locally free $\Ocal_{\Ccal_n}$-module.  Such a $\underline{V}_n$ is uniquely determined by $\underline{A}_n$ modulo tensoring with an invertible sheaf $\underline{L}_n$ by~\cref{lemma:AzumayaTrivialised}.

Because of surjectivity of the transition maps of $(\Pic(\Ccal_n))_{n \in \N}$, one can choose the $\underline{V}_n$, $u_n$ such that the $\underline{V}_n$ and $u_n$ form a projective system:
\begin{equation} \label{eq:ProjectiveSystemOfAzumaya}
    \underline{V}_n = \underline{V}_{n+1} \otimes_{\Ocal_{\Ccal_{n+1}}} \Ocal_{\Ccal_n}
\end{equation}
and the isomorphisms~\eqref{eq:AzumayaSystem} also form a projective system:  Construct the $\underline{V}_n$, $u_n$ inductively.  Take $\underline{V}_0$ such that
\[
    \underline{A} \otimes_{\Ocal_{\Ccal}} \Ocal_{\Ccal_0} \iso \underline{A}_0 \iso \underline{\End}(\underline{V}_0).
\]
One has
\[
    \underline{A}_n = \underline{A} \otimes_{\Ocal_{\Ccal}} \Ocal_{\Ccal_n}
\]
and by~\cref{lemma:AzumayaTrivialised}, there is an invertible sheaf $\Lcal_n \in \Pic(\Ccal_n)$ such that
\[
    \underline{V}_{n+1} \otimes_{\Ocal_{\Ccal_{n+1}}} \Ocal_{\Ccal_n} \isoto \underline{V}_n \otimes_{\Ocal_{\Ccal_n}} \Lcal_n.
\]
By assumption, there is an invertible sheaf $\Lcal_{n+1} \in \Pic(\Ccal_{n+1})$ such that $\Lcal_{n+1} \otimes_{\Ocal_{\Ccal_{n+1}}} \Ocal_{\Ccal_n} \iso \Lcal_n$, so redefine $\underline{V}_{n+1}$ as $\underline{V}_{n+1} \otimes_{\Ocal_{\Ccal_{n+1}}} \Lcal_{n+1}^{-1}$.  Then~\eqref{eq:ProjectiveSystemOfAzumaya} is satisfied.

Let $\hat{X}$ be the completion of $X$, and denote by $\hat{\Ccal}, \underline{\hat{A}}, \ldots$ the base change of $\Ccal, \underline{A}, \ldots$ by $\hat{X} \to X$.

Recall that an adic Noetherian ring $A$ with defining ideal $\Ical$ is a Noetherian ring with a basis of neighbourhoods of zero of the form $\Ical^n$, $n > 0$ such that $A$ is complete and Hausdorff in this topology.  For such a ring $A$, there is the formal spectrum $\Spf(A)$ with underlying space $\Spec(A/\Ical)$.

According to~\cite{EGAIII1}, p.~150, Théorème~(5.1.4), to give a projective system $(\underline{V}_n,u_n)_{n\in\N}$ on $(\Ccal_n)_{n\in\N}$ as in~\eqref{eq:AzumayaSystem} and~\eqref{eq:ProjectiveSystemOfAzumaya} is equivalent to giving a locally free module $\underline{\hat{V}}$ on $\hat{\Ccal}$ and an isomorphism
\begin{equation} \label{eq:hatuhatA}
    \hat{u}: \underline{\hat{A}} \isoto \underline{\End}(\underline{\hat{V}}).
\end{equation}
If $X = \hat{X}$, we are done: $\underline{A} = \underline{\hat{A}}$ is trivial.

In the general case, one has to pay attention to the fact that one does not know if, with the preceding construction, $\underline{\hat{V}}$ comes from a locally free module $\underline{V}$ on $\Ccal$.  However,
there is a locally free module $\underline{\Ecal}$ on $\Ccal$ such that there exists an epimorphism
\[
    \underline{\hat{\Ecal}} \to \underline{\hat{V}}.
\]
Indeed, choosing a projective immersion for $\hat{\Ccal}$ (by projectivity of $\hat{\Ccal}/\Spec(\hat{A})$) with an ample invertible sheaf $\Ocal_{\hat{\Ccal}}(1)$, it suffices to take a direct sum of sheaves of the form $\Ocal_{\hat{\Ccal}}(-N)$, $N \gg 0$.  Now, for $N \gg 0$, there is an epimorphism $\Ocal_{\hat{\Ccal}}^{\oplus k} \twoheadrightarrow \underline{\hat{V}}(N)$ for a suitable $k \in \N$, so twisting with
$\Ocal_{\hat{\Ccal}}(-N)$ gives
\[
    \Ocal_{\hat{\Ccal}}(-N)^{\oplus k} \twoheadrightarrow \underline{\hat{V}}
\]
(``there are enough vector bundles'').  Set $\underline{\Ecal} = \Ocal_{\Ccal}(-N)^{\oplus k}$.

Now consider, for schemes $X'$ over $X$, the contravariant functor $F: (\mathrm{Sch}/X)^\circ \to (\mathrm{Set})$ given by $F(X') = $ the set of pairs $(\underline{V}', \phi')$, where $\underline{V}'$ is a quotient of a locally free module $\underline{\Ecal}' = \underline{\Ecal} \otimes_X X'$ and $\phi': \underline{A}' = \underline{A} \otimes_X X' \isoto \underline{\End}(\underline{V}')$.

Since $f$ is projective and flat, by~\cite{SGA43}, p.~133\,f., Lemme XIII 1.3, one sees that the functor $F$ is representable by a scheme, also denoted $F$, locally of finite type over $X$, hence locally of finite presentation since our schemes are Noetherian (what matters is the functor being locally of finite presentation, not its representability).  By assumption of~\cref{lemma:BrauerProjlim} and~\eqref{eq:hatuhatA}, $(\underline{\hat{V}}, \hat{u})$ is an element from $F(\hat{X})$.  By Artin approximation~\cite{ArtinApproximation}, p.~26, Theorem~(1.10) resp.\ Theorem~(1.12), $F(\hat{X}) \neq \emptyset$ implies $F(X) \neq \emptyset$:

This proves that $\underline{A}$ is isomorphic to an algebra of the form $\underline{\End}(\underline{V})$ with $\underline{V}$ locally free over $\Ccal$, so it is trivial as an element of $\Br(\Ccal)$.
\end{proof}

\begin{lemma}[deformation of units] \label[lemma]{lemma:deformationofunits}
Let $f: B \twoheadrightarrow A$ be a surjective ring homomorphism with nilpotent kernel.  If $f(b)$ is a unit, so is $b$.
\end{lemma}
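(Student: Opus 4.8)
The statement is a standard ``units deform along nilpotent thickenings'' fact, and the only mechanism needed is that $1$ plus a nilpotent element is invertible, via a finite geometric series. Write $I := \ker f$, which by hypothesis is nilpotent, say $I^n = 0$.

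\textbf{Steps.} First, since $f(b)$ is a unit in $A$, choose $c \in B$ with $f(bc) = f(b)f(c) = 1$; equivalently $bc = 1 - x$ for some $x \in I$. Next, since $x^n \in I^n = 0$, the element $1 - x$ has the explicit inverse $1 + x + x^2 + \cdots + x^{n-1}$, because $(1-x)(1 + x + \cdots + x^{n-1}) = 1 - x^n = 1$. Hence $bc$ is a unit in $B$, so $b$ has a right inverse; as all rings here are commutative with $1$, a right inverse is a two-sided inverse, and therefore $b$ is a unit. (If one prefers to avoid commutativity, one runs the same argument on the other side using a left inverse of $f(b)$.)

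\textbf{Main obstacle.} There is essentially none: the only subtlety is that we use nilpotence of the ideal $I$ as a whole (so that a single exponent $n$ works for every element of $I$), but this is exactly what the hypothesis provides, so the geometric-series inversion goes through verbatim.
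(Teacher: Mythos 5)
Your proof is correct and is essentially the paper's argument: both lift an inverse of $f(b)$ to $B$, observe that $b$ times the lift is $1$ minus a nilpotent element, and invert that by the (geometric-series) fact that a unit plus a nilpotent is a unit. The paper leaves the geometric series implicit and uses nilpotence of the element rather than of the whole ideal, but this is an immaterial difference.
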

\begin{proof}
Let $b \in B$ such that $f(b)$ is a unit.  Then there is $c \in A$ with $f(b)c = 1_A$.  Since $f$ is surjective, there is a $\bar{c} \in B$ such that $b\bar{c} - 1_B \in \ker(f)$, so, as a unit plus a nilpotent element is a unit, $b\bar{c}$ is a unit, so $b$ is invertible in $B$.
\end{proof}

\begin{lemma} \label[lemma]{lemma:cohomologyofnilimmersions}
Let $i: X_0 \hookrightarrow X$ be a closed immersion defined by a nilpotent ideal sheaf.  Let $\Fcal$ be an étale sheaf on $X$.  Then there is an isomorphism $\H^i(X,\Fcal) = \H^i(X_0,i^*\Fcal)$.
\end{lemma}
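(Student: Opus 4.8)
The plan is to deduce the statement from the \emph{topological invariance of the small \'etale site}. Since the ideal sheaf cutting out $i\colon X_0 \hookrightarrow X$ is nilpotent, the base-change functor $U \mapsto U \times_X X_0$ from schemes \'etale over $X$ to schemes \'etale over $X_0$ is an equivalence of categories: every \'etale $X_0$-scheme lifts, uniquely up to unique isomorphism, to an \'etale $X$-scheme, and likewise for morphisms (see SGA~1, Expos\'e~VIII, Th\'eor\`eme~1.1). This base change preserves and reflects surjective families and fibre products, so it identifies the two \'etale topologies and carries the terminal object $X$ to the terminal object $X_0$; thus it is an isomorphism of sites $X_{\et} \isoto (X_0)_{\et}$, and the morphism of sites underlying $i$ is precisely this identification. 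In particular the sheaf pullback $i^{*}$ is nothing but transport of structure along this equivalence.

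Consequently $i^{*}$ is an exact equivalence from the category of abelian \'etale sheaves on $X$ to that on $X_0$, and it commutes with global sections, $\Gamma(X,-) = \Gamma\big(X_0, i^{*}(-)\big)$, since the final objects correspond. An exact equivalence of abelian categories sends injectives to injectives, so if $\Fcal \to \Ical^{\bullet}$ is an injective resolution on $X$ then $i^{*}\Fcal \to i^{*}\Ical^{\bullet}$ is an injective resolution on $X_0$, whence
\[
	\H^{q}(X,\Fcal) = \H^{q}\big(\Gamma(X,\Ical^{\bullet})\big) = \H^{q}\big(\Gamma(X_0, i^{*}\Ical^{\bullet})\big) = \H^{q}(X_0, i^{*}\Fcal)
\]
for all $q \ge 0$, naturally in $\Fcal$; this is the asserted isomorphism.

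I do not expect a real obstacle: all the mathematical substance is contained in the topological invariance of the \'etale site, and the remainder is the routine fact that an exact equivalence of abelian categories compatible with the global-sections functors induces isomorphisms on every cohomology group. The only point worth spelling out is that the equivalence of the underlying categories of \'etale schemes genuinely upgrades to an equivalence of the associated sheaf topoi --- i.e.\ that it is an equivalence of \emph{sites}, not merely of categories --- which follows at once from the observation that \'etaleness, surjectivity and fibre products are simultaneously preserved and reflected by base change along the nil-immersion $i$.
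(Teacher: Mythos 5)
Your proof is correct, but it takes a genuinely different route from the paper. The paper's argument is the elementary one for a closed immersion with empty complement: since $i$ is a homeomorphism, the unit $\Fcal \to i_*i^*\Fcal$ is an isomorphism on all stalks, and $i_*$ is exact for closed immersions, so the Leray spectral sequence for $i$ degenerates and gives $\H^q(X,\Fcal)=\H^q(X,i_*i^*\Fcal)=\H^q(X_0,i^*\Fcal)$. You instead invoke the topological invariance of the small \'etale site: base change along the nil-immersion is an equivalence of sites, hence $i^*$ is an exact equivalence of sheaf categories commuting with global sections and preserving injectives, and the isomorphism on cohomology is immediate. Your input is strictly stronger --- the hard content is the unique lifting of \'etale $X_0$-schemes to \'etale $X$-schemes (formal \'etaleness), whereas the paper only needs the much softer facts about $i_*$ for closed immersions --- but in exchange your argument gives more: it identifies the two topoi outright (so $i_*$ and $i^*$ are mutually inverse), works uniformly in all degrees without any spectral sequence, and is naturally functorial in $\Fcal$. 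The one point to tidy up is the reference: the invariance statement you want is SGA~4, Expos\'e~VIII, Th\'eor\`eme~1.1 (or, for nilpotent thickenings, SGA~1, Expos\'e~I, Th\'eor\`eme~8.3), not SGA~1, Expos\'e~VIII; and your remark that $i^*$ \emph{is} transport of structure deserves the one-line justification you hint at, namely that for a sheaf $\Gcal$ on $X_0$ and the unique lift $U$ of an \'etale $X_0$-scheme $V$, the colimit defining the presheaf pullback has the lift as final object, so no sheafification is needed.
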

\begin{proof}
Since $i$ is a homeomorphism on the underlying topological spaces, the open immersion of the complement $j: X \setminus X_0 \hookrightarrow X$ is the empty set.  Hence, $\Fcal \isoto i_*i^*\Fcal$, so there is an isomorphism $\H^i(X,\Fcal) = \H^i(X,i_*i^*\Fcal)$.  Since $i_*$ is exact, the Leray spectral sequence for $i$ degenerates giving $\H^i(X,i_*i^*\Fcal) = \H^i(X_0,i^*\Fcal)$.
\end{proof}

The following is a generalisation of~\cite{GroupeDeBrauerIII}, pp.~98--104, Théorème~(3.1) from the case of $X/Y$ with $\dim{X} = 2$, $\dim{Y} = 1$ to $X/Y$ with relative dimension $1$.  One can remove the assumption $\dim{X} = 1$ if one uses Artin's approximation theorem~\cite{ArtinApproximation}, p.~26, Theorem~(1.10) resp.\ Theorem~(1.12) instead of Greenberg's theorem on p.~104, l.~4 and l.~-2, and replaces ``proper'' by ``projective'' and does some other minor modifications; also note that in our situation the Brauer group coincides with the cohomological Brauer group by~\cref{thm:LichtenbaumBrauer} and~\cref{thm:BrauerGabber}.
\begin{theorem} \label{thm:H2bijective}
Let $x$ be a closed point of a variety $V$ and $X = \Spec(\Ocal_{V,x}^h)$ be regular.  Let $f: \Ccal \to X$ be a smooth projective morphism with fibres of dimension $\leq 1$ and $\Ccal$ regular.  Let $\Ccal_0 \hookrightarrow \Ccal$ be the subscheme $f^{-1}(x)$.  Then the canonical homomorphism
\[
    \H^2(\Ccal,\G_m) \to \H^2(\Ccal_0,\G_m)
\]
induced by the closed immersion $\Ccal_0 \hookrightarrow \Ccal$ is bijective.
\end{theorem}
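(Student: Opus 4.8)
The plan is to adapt the proof of~\cite{GroupeDeBrauerIII}, Th\'eor\`eme~(3.1), with the modifications indicated above, namely Artin approximation in place of Greenberg's theorem and ``projective'' in place of ``proper''. Write $X = \Spec(A)$ with $A = \Ocal_{V,x}^h$, which is local, Henselian and regular, hence a domain, with maximal ideal $\mathfrak{m}$; put $X_n = \Spec(A/\mathfrak{m}^{n+1})$ and $\Ccal_n = \Ccal\times_X X_n$, so that $\Ccal_0 = f^{-1}(x)$ and $\Ccal_0\hookrightarrow\Ccal_n\hookrightarrow\Ccal$ are nilpotent thickenings; and let $\hat X = \Spec(\hat A)$, $\hat\Ccal = \Ccal\times_X\hat X$. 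First I would reduce to $\Ccal$ connected, hence integral (a connected regular scheme is integral): its connected components are open and closed, so again smooth and projective over $X$ with fibres of dimension $\leq 1$, and $\H^2(-,\G_m)$ turns the decomposition into a product compatibly with restriction to the fibre over $x$. Then $\Ccal$ is regular, integral and quasi-compact, so by~\cref{thm:BrauerGabber}(c) one has $\H^2(\Ccal,\G_m) = \Br'(\Ccal)$, which is in particular torsion, and $\Ccal$ has an ample sheaf (being projective over the affine scheme $X$), so~\cref{thm:BrauerGabber}(b) gives $\Br(\Ccal) = \Br'(\Ccal) = \H^2(\Ccal,\G_m)$; likewise $\Ccal_0$ is smooth and projective over the field $\kappa(x)$, so $\H^2(\Ccal_0,\G_m)$ is torsion by~\cref{thm:LichtenbaumBrauer}(a) and equals $\Br(\Ccal_0)$ by~\cref{thm:BrauerGabber}(b). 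Thus the map in question becomes the restriction $\Br(\Ccal)\to\Br(\Ccal_0)$.

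For injectivity I would invoke~\cref{lemma:BrauerProjlim}. Its hypothesis is satisfied: the kernel $\Ical_n$ of $\Ocal_{\Ccal_{n+1}}\to\Ocal_{\Ccal_n}$ is a square-zero coherent $\Ocal_{\Ccal_0}$-module by flatness of $f$, so $1+\Ical_n\cong\Ical_n$, and since units lift along nilpotent thickenings (\cref{lemma:deformationofunits}) the sequence $0\to 1+\Ical_n\to\Ocal_{\Ccal_{n+1}}^\times\to\Ocal_{\Ccal_n}^\times\to 0$ is exact, giving $\H^1(\Ccal_0,\Ical_n)\to\Pic(\Ccal_{n+1})\to\Pic(\Ccal_n)\to\H^2(\Ccal_0,\Ical_n)$; the last group is $0$ as $\dim\Ccal_0\leq 1$, so the transition maps of $(\Pic(\Ccal_n))_n$ are surjective and~\cref{lemma:BrauerProjlim} yields that $\Br(\Ccal)\to\varprojlim_n\Br(\Ccal_n)$ is injective. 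It remains to see $\Br(\Ccal_n) = \Br(\Ccal_0)$ for all $n$: by~\cref{lemma:cohomologyofnilimmersions} applied to $i\colon\Ccal_0\hookrightarrow\Ccal_n$ one has $\H^2(\Ccal_n,\G_m) = \H^2(\Ccal_0,i^*\G_m)$, and the exact sequence $0\to\Gcal\to i^*\G_{m,\Ccal_n}\to\G_{m,\Ccal_0}\to 0$, whose kernel $\Gcal$ admits a finite filtration by subsheaves with coherent $\Ocal_{\Ccal_0}$-module quotients so that $\H^2(\Ccal_0,\Gcal) = \H^3(\Ccal_0,\Gcal) = 0$ again by $\dim\Ccal_0\leq 1$, identifies $\H^2(\Ccal_n,\G_m)$ with $\H^2(\Ccal_0,\G_m)$, compatibly with the transition maps of the system and with restriction from $\Ccal$. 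Hence $\varprojlim_n\Br(\Ccal_n) = \Br(\Ccal_0)$ and the composite is the restriction, which is therefore injective.

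For surjectivity I would take $\beta_0\in\Br(\Ccal_0)$, represent it by an Azumaya algebra $\underline A_0$ of rank $r^2$, and deform $\underline A_0$ successively along the square-zero thickenings $\Ccal_n\hookrightarrow\Ccal_{n+1}$: the obstruction at each step lies in $\H^2$ of a coherent $\Ocal_{\Ccal_0}$-module (namely $\Ical_n$ tensored with the adjoint bundle of $\underline A_0$) and hence vanishes, so one gets a compatible system of rank-$r^2$ Azumaya algebras $\underline A_n$ on $\Ccal_n$. By Grothendieck's existence theorem~\cite{EGAIII1}, Th\'eor\`eme~(5.1.4), this system comes from a coherent $\Ocal_{\hat\Ccal}$-algebra $\underline{\hat A}$, which one checks to be Azumaya of rank $r^2$ (its fibres coincide with those of $\underline A_0$). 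Now let $F$ be the contravariant functor on $(\mathrm{Sch}/X)^\circ$ sending $X'$ to the set of isomorphism classes of rank-$r^2$ Azumaya algebras on $\Ccal\times_X X'$; since $f$ is projective, hence of finite presentation, and flat, $F$ commutes with filtered colimits of rings, i.e.\ is locally of finite presentation. As $\underline{\hat A}\in F(\hat X)$ and $A$ is Henselian, Artin approximation~\cite{ArtinApproximation}, Theorem~(1.10) resp.\ Theorem~(1.12), produces $\underline A\in F(X)$ with $\underline A\otimes_X\Spec(A/\mathfrak{m})\cong\underline A_0$. Then $[\underline A]\in\Br(\Ccal) = \H^2(\Ccal,\G_m)$ restricts to $[\underline A_0] = \beta_0$ in $\H^2(\Ccal_0,\G_m)$, which establishes surjectivity.

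I expect the surjectivity step to be the principal difficulty, and inside it the Artin approximation input: one must make precise that the functor of rank-$r^2$ Azumaya algebras on $\Ccal/X$ is locally of finite presentation, so that approximation applies with prescribed reduction modulo $\mathfrak{m}$ --- this is precisely what, in Grothendieck's original situation of a Henselian discrete valuation ring, was handled by Greenberg's theorem. The deformation-theoretic and formal-existence ingredients are routine once one notes that every obstruction group occurring is $\H^{\geq 2}$ of a coherent sheaf on the one-dimensional $\Ccal_0$ and hence vanishes; the remaining work is the bookkeeping of checking that all the identifications used are compatible with the canonical restriction maps.
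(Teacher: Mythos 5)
Your proposal is correct and takes essentially the same route as the paper: identify $\H^2(-,\G_m)$ with the Brauer group using ampleness and regularity, prove injectivity via \cref{lemma:BrauerProjlim} together with $\Br(\Ccal_n)\cong\Br(\Ccal_0)$ (coherent kernel, $\dim\Ccal_0\leq 1$), and prove surjectivity by lifting the Azumaya algebra through the thickenings, algebraizing over $\hat{X}$ by Grothendieck's existence theorem, and descending to $X$ by Artin approximation. Your minor variations --- checking the surjectivity of $\Pic(\Ccal_{n+1})\to\Pic(\Ccal_n)$ by an obstruction argument instead of citing EGA~IV, (21.9.12), lifting the Azumaya algebra itself by deformation theory rather than via the isomorphisms of Brauer groups, and applying Artin approximation to the limit-preserving functor of isomorphism classes of Azumaya algebras instead of the paper's rigidified functor with the auxiliary sheaf $\underline{\Ecal}$ --- are harmless, since local finite presentation is all that approximation requires.
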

\begin{proof}
Note that for $\Ccal$ and $X$, $\Br$, $\Br'$ and $\H^2(-,\G_m)$ are equal since there is an ample sheaf (\cref{thm:BrauerGabber}) by~\cite{Liu2006}, p.~171, Corollary~1.36 and by regularity (\cref{thm:BrStrichGleichH2}).

Recall the definition of $X_n$ and $\Ccal_n$ from~\cref{lemma:BrauerProjlim}.  There are exact sequences of sheaves on $\Ccal_0$ for every $n \geq 0$ with the closed immersion $i_n: \Ccal_0 \hookrightarrow \Ccal_n$
\begin{equation} \label{eq:CoherentSheafAndGm}
    0 \to \Fcal \to i_{n+1}^*\G_{m,\Ccal_{n+1}} \to i_n^*\G_{m,\Ccal_n} \to 1
\end{equation}
with $\Fcal$ a coherent sheaf on $\Ccal_0$:  Zariski-locally on the source, $\Ccal \to X$ is of the form $\Spec(B) \to \Spec(A)$ and hence $\Ccal_n \to X_n$ of the form $\Spec(B/\mathfrak{m}^{n+1}) \to \Spec(A/\mathfrak{m}^{n+1})$.  There is an exact sequence
\[
    1 \to (1+\mathfrak{m}^n/\mathfrak{m}^{n+1}) \to (B/\mathfrak{m}^{n+1})^\times \to (B/\mathfrak{m}^n)^\times \to 1.
\]
By~\cref{lemma:deformationofunits}, the latter map is surjective since $\mathfrak{m}^n/\mathfrak{m}^{n+1} \subset B/\mathfrak{m}^{n+1}$ is nilpotent.  Now $(1+\mathfrak{m}^n/\mathfrak{m}^{n+1}) \isoto \mathfrak{m}^n/\mathfrak{m}^{n+1}$ is a coherent sheaf on $\Spec(B)$, the isomorphism given by $1 + x + \mathfrak{m}^2 \mapsto x + \mathfrak{m}^2$ for $x \in \mathfrak{m}$ (this is a homomorphism since $(1+x)(1+y) + \mathfrak{m}^2 = 1 + (x+y) + \mathfrak{m}^2$ for $x,y \in \mathfrak{m}$).  The sequences for a Zariski-covering of $\Ccal_0$ glue to an exact sequence of sheaves on $\Ccal_0$~\eqref{eq:CoherentSheafAndGm}.  

Therefore, the long exact sequence associated to~\eqref{eq:CoherentSheafAndGm} yields
\[
    \H^2(\Ccal_0, \Fcal) \to \H^2(\Ccal_0,i_{n+1}^*\G_{m,\Ccal_{n+1}}) \to \H^2(\Ccal_0,i_n^*\G_{m,\Ccal_n}) \to \H^3(\Ccal_0, \Fcal).
\]
Since $\Fcal$ is coherent, $\H^p_{\et}(\Ccal_0, \Fcal) = \H^p_{\Zar}(\Ccal_0, \Fcal)$ by~\cite{SGA42}, VII~4.3.  Thus, since $\dim{\Ccal_0} \leq 1$, $\H^2(\Ccal_0, \Fcal) = \H^3(\Ccal_0, \Fcal) = 0$.  Thus we get an isomorphism
\[
    \H^2(\Ccal_0,i_{n+1}^*\G_{m,\Ccal_{n+1}}) \isoto \H^2(\Ccal_0,i_n^*\G_{m,\Ccal_n})
\]
Next note that $\Ccal_0 \hookrightarrow \Ccal_n$ is a closed immersion defined by a nilpotent ideal sheaf, so by~\cref{lemma:cohomologyofnilimmersions} we get
\[
    \H^2(\Ccal_{n+1},\G_m) \isoto \H^2(\Ccal_n,\G_m).
\]
Taking torsion, it follows that $\Br'(\Ccal_{n+1}) \isoto \Br'(\Ccal_n)$, and then~\cref{thm:BrauerGabber} yields that the $\Br(\Ccal_{n+1}) \to \Br(\Ccal_n)$ are isomorphisms (in fact, injectivity suffices for the following).  Therefore the injectivity of $\Br(\Ccal) \to \Br(\Ccal_0)$ follows from~\cref{lemma:BrauerProjlim}.  One can apply this in our situation since the transition maps $\Pic(\Ccal_{n+1}) \to \Pic(\Ccal_n)$ are surjective by~\cite{EGAIV4}, p.~288, Corollaire~(21.9.12).

The surjectivity in~\cref{thm:H2bijective} is shown in a similar way:  Take an element of $\Br(\Ccal_0)$, represented by an Azumaya algebra $\underline{A}_0$.  As $\Br(\Ccal_{n}) \isoto \Br(\Ccal_0)$, see above, there is a compatible system of Azumaya algebras $\underline{A}_n$ on $\Ccal_n$.  Therefore, as above, there is an Azumaya algebra $\underline{\hat{A}}$ on $\hat{\Ccal}$.  Choose a locally free module $\underline{\Ecal}$ on $\Ccal$ such that there is an epimorphism $\underline{\hat{\Ecal}} \twoheadrightarrow \underline{\hat{A}}$ and consider the functor $F: (\mathrm{Sch}/X)^\circ \to (\mathrm{Set})$ defined by $F(X') = $ the set of pairs $(\underline{B}',p')$ where $\underline{B}'$ is a locally free module of $\underline{\Ecal} \otimes_X X'$ and $p'$ a multiplication law on $\underline{B}'$ which makes it into an Azumaya algebra.  Then $F$ is representable by a scheme locally of finite type over $X$ \emph{(loc.\ cit.)}, and the point in $F(x)$ (recall that $x$ is the closed point of $X$) defined by $\underline{A}_0$ gives us a point in $F(\hat{X})$, so by Artin approximation~\cite{ArtinApproximation}, p.~26, Theorem~(1.10) resp.\ Theorem~(1.12) it comes from a point in $F(X)$, which proves surjectivity.
\end{proof}

\begin{lemma} \label[lemma]{lemma:cdn}
Let $X$ be a scheme, $\ell$ a prime number and $n$ an integer such that $\cd_\ell(X) \leq n$.  Then $\H^i(X,\G_m)[\ell^\infty] = 0$ if $i > n+1$, resp.\ $i > n$ if $\ell$ is invertible on $X$.
\end{lemma}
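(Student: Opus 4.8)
The plan is to factor multiplication by $\ell^m$ on the \'etale sheaf $\G_m$ and to play its kernel and cokernel off against the hypothesis $\cd_\ell(X) \le n$.

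Fix $m \ge 1$ and consider the morphism $\ell^m\colon \G_m \to \G_m$ of \'etale sheaves on $X$. Its kernel is $\mu_{\ell^m}$; writing $\Ical_m$ for its image sheaf and $\mathscr{Q}_m$ for its cokernel, one obtains two short exact sequences of \'etale sheaves on $X$,
\[
0 \to \mu_{\ell^m} \to \G_m \to \Ical_m \to 0, \qquad 0 \to \Ical_m \to \G_m \to \mathscr{Q}_m \to 0,
\]
whose composite $\G_m \to \Ical_m \hookrightarrow \G_m$ is $\ell^m$. The key point is that $\mu_{\ell^m}$ and $\mathscr{Q}_m$ are both $\ell$-torsion sheaves killed by $\ell^m$: this is immediate for $\mu_{\ell^m}$, and a local section of $\mathscr{Q}_m$ is, \'etale-locally, the class $\bar a$ of a unit $a$, so $\ell^m\cdot\bar a = \overline{a^{\ell^m}} = 0$. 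Hence $\cd_\ell(X) \le n$ yields $\H^j(X,\mu_{\ell^m}) = \H^j(X,\mathscr{Q}_m) = 0$ for all $j > n$.

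For the general bound, let $i > n+1$. Then $\H^i(X,\mu_{\ell^m}) = \H^{i+1}(X,\mu_{\ell^m}) = 0$ and $\H^{i-1}(X,\mathscr{Q}_m) = \H^i(X,\mathscr{Q}_m) = 0$, so the long exact cohomology sequences of the two short exact sequences above give isomorphisms $\H^i(X,\G_m) \isoto \H^i(X,\Ical_m) \isoto \H^i(X,\G_m)$. Their composite is multiplication by $\ell^m$ on $\H^i(X,\G_m)$, which is thus an isomorphism, in particular injective; as this holds for every $m \ge 1$ we conclude $\H^i(X,\G_m)[\ell^m] = 0$ for all $m$, i.e.\ $\H^i(X,\G_m)[\ell^\infty] = 0$.

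If $\ell$ is invertible on $X$, then $\ell^m\colon\G_m\to\G_m$ is an epimorphism of \'etale sheaves (extracting $\ell^m$-th roots of a unit is an \'etale operation), so $\mathscr{Q}_m = 0$ and the first sequence above is the Kummer sequence; then $i > n$ already suffices, because $\H^i(X,\mu_{\ell^\infty}) = \colim_m \H^i(X,\mu_{\ell^m}) = 0$ for $i > n$ and hence $\H^i(X,\G_m)[\ell^\infty] = 0$ by \cref{thm:KurzeExakteSequenzAusKummerSequenz}. I do not expect a real obstacle; the only delicate point is that without $\ell$ invertible the Kummer sequence is unavailable, which forces the detour through the image sheaf $\Ical_m$ whose cokernel $\mathscr{Q}_m$ is merely $\ell$-torsion rather than zero --- and this detour is exactly what costs one cohomological degree, giving the bound $i > n+1$ in place of $i > n$.
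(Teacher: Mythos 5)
Your proof is correct and follows essentially the same route as the paper: factor $\ell^m\colon\G_m\to\G_m$ through its image sheaf, use that the kernel $\mu_{\ell^m}$ and the cokernel are $\ell$-torsion together with $\cd_\ell(X)\le n$ to see that $\ell^m$ acts injectively on $\H^i(X,\G_m)$ for $i>n+1$ (resp.\ $i>n$ via the Kummer sequence when $\ell$ is invertible). The only cosmetic difference is that in the invertible case you invoke \cref{thm:KurzeExakteSequenzAusKummerSequenz} rather than rerunning the long exact sequence directly, which changes nothing of substance.
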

\begin{proof}
If $\ell$ is invertible on $X$, the Kummer sequence
\[
    1 \to \mu_{\ell^r} \to \G_m \stackrel{\ell^r}{\to} \G_m \to 1
\]
induces a long exact sequence in cohomology, part of which is
\[
    0 = \H^{i+1}(X,\mu_{\ell^r}) \to \H^{i+1}(X, \G_m) \stackrel{\ell^r}{\to} \H^{i+1}(X, \G_m) \to \H^{i+2}(X,\mu_{\ell^r}) = 0,
\]
for $i > n$, i.\,e.\ multiplication by $\ell^r$ induces an isomorphism on $\H^{i+1}(X, \G_m)$ for $i > n$, so the claim $\H^i(X,\G_m)[\ell^\infty] = 0$ for $i > n$ follows.

For general $\ell$, one has an exact sequence
\[
    1 \to \ker(\ell^r) \to \G_m \stackrel{\ell^r}{\to} \G_m \to \coker(\ell^r) \to 1
\]
of étale sheaves which splits up into
\[\xymatrix{
    1 \ar[r] &\ker(\ell^r) \ar[r] &\G_m \ar[rr]^{\ell^r} \ar@{->>}[rd] && \G_m \ar[r] &\coker(\ell^r) \ar[r] &1\\
             &                    &                                    &  \im(\ell^r) \ar@{^{(}->}[ru]   }
\]
By the same argument as in the case $\ell$ invertible on $X$, one finds $\H^i(X,\G_m) \isoto \H^i(X,\im(\ell^r))$ for $i > n$, and, since $\ell^r\coker(\ell^r) = 0$, $\H^i(X,\im(\ell^r)) \isoto \H^i(X,\G_m)$ for $i > n+1$.  So, altogether
\[
    \ell^r: \H^i(X,\G_m) \isoto \H^i(X,\im(\ell^r)) \isoto \H^i(X,\G_m)
\]
is injective for $i > n+1$, and therefore $\H^i(X,\G_m)[\ell^\infty] = 0$ for $i > n+1$.
\end{proof}

\begin{lemma} \label[lemma]{lemma:BrauerOfProperCurve}
Let $C/K$ be a projective regular curve over a separably closed field.  Then $\Br(C) = \Br'(C) = \H^2(C,\G_m) = 0$.
\end{lemma}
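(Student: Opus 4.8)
\emph{Step 1 (reduction to $\H^2(C,\G_m)=0$).} The plan is to reduce, using the results just recalled, to the single assertion $\H^2(C,\G_m)=0$, and then to prove this by restricting to the generic point and invoking Tsen's theorem. For the reduction: since $C$ is projective it carries an ample invertible sheaf, so $\Br(C)=\Br'(C)$ by~\cref{thm:BrauerGabber}(b); and $C$ is regular, integral and quasi-compact, so also $\Br'(C)=\H^2(C,\G_m)$ by~\cref{thm:BrauerGabber}(c). (We may assume $C$ connected, hence integral, and treat the connected components one at a time.)

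\emph{Step 2 (injecting into the Brauer group of the function field).} Next I would produce an injection $\H^2(C,\G_m)\hookrightarrow\H^2(\Spec K(C),\G_m)=\Br(K(C))$, where $K(C)$ is the function field of $C$. One may simply quote the classical injectivity $\Br'(X)\hookrightarrow\Br(K(X))$ for any regular integral scheme $X$; alternatively one argues directly: for a nonempty open $U\subseteq C$ with finite reduced complement $Z$ and closed immersion $i\colon Z\hookrightarrow C$, purity for $\G_m$ along the regular codimension-$1$ subscheme $Z$ of the regular scheme $C$ — elementary in codimension $1$, being the theory of Weil divisors on a regular scheme — gives $\R^q i^!\G_m=0$ for $q\in\{0,2\}$ and $\R^1 i^!\G_m\iso\Z$, so the local cohomology spectral sequence shows that $\H^2_Z(C,\G_m)$ is a subquotient of $\H^1(Z,\Z)=\bigoplus_{x\in Z}\H^1(\kappa(x),\Z)=0$; the Gysin sequence then yields $\H^2(C,\G_m)\hookrightarrow\H^2(U,\G_m)$, and passing to the colimit over all such $U$ (cohomology commutes with this cofiltered limit of schemes) gives the desired injection.

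\emph{Step 3 (Tsen, and the main obstacle).} Finally $\Br(K(C))=0$, because $K(C)$ is the function field of a curve over the separably closed field $K$, hence a $C_1$-field by Tsen's theorem, and a $C_1$-field has trivial Brauer group: the reduced-norm form of a central division algebra of degree $n>1$ would be an anisotropic form of degree $n$ in $n^2>n$ variables. Combined with Steps~1 and~2 this gives $\Br(C)=\Br'(C)=\H^2(C,\G_m)=0$. The one point I expect to need genuine care is Step~3 when $K$ is separably but not algebraically closed (equivalently, imperfect): there the prime-to-$p$ part of $\Br(K(C))$ already dies after the purely inseparable base change to $\overline{K}$, where Tsen's theorem applies in its classical form and a restriction--corestriction argument handles the finite, $p$-power-degree descent, while the $p$-primary part is dealt with via the fact that separably closed fields are themselves $C_1$. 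Everything else is formal given~\cref{thm:BrauerGabber}.
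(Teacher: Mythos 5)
Your Steps 1 and 2 are fine (for a regular integral curve the injection $\H^2(C,\G_m)\hookrightarrow\Br(K(C))$ is classical), and your Step 3 does correctly dispose of the prime-to-$p$ torsion: Tsen over $\overline{K}$ plus the fact that the kernel of restriction along a finite purely inseparable extension is killed by its ($p$-power) degree. The genuine gap is the $p$-primary part when $K$ is separably closed but imperfect --- which is exactly the case the paper needs, since the lemma gets applied to the closed fibre over the residue field of a strict Henselisation at a not necessarily closed point. Your proposed fix, ``separably closed fields are themselves $C_1$'', does not do what you want: by Tsen--Lang--Nagata it only makes $K(C)$ a $C_2$ field, and $C_2$ gives no control on the Brauer group. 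Worse, the statement you are reducing to is simply false in this setting: $\Br(K(C))[p]$ can be nonzero for $K$ separably closed and imperfect. Concretely, take $t\in K\setminus K^p$ (e.\,g.\ $K=\F_p(t)^{\mathrm{sep}}$) and consider over $L=K(x)$ the cyclic Artin--Schreier algebra $[x,t)$, generated by $u,v$ with $u^p-u=x$, $v^p=t$, $vuv^{-1}=u+1$. It is split iff $t$ is a norm from $M=L(\theta)$, $\theta^p-\theta=x$; but $M=K(\theta)$ and $L=K(x)$ with $x=\theta^p-\theta$, and for $f\in K[\theta]$ of degree $d$ with leading coefficient $a$ the norm $N_{M/L}(f)$ is a polynomial of degree $d$ in $x$ with leading coefficient $a^p$, so $N_{M/L}(g)=t$ would force $t\in K^p$. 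Hence $[x,t)\neq 0$ in $\Br(K(x))[p]$, so $\Br(K(\mathbf{P}^1_K))\neq 0$.

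Consequently the route ``inject $\H^2(C,\G_m)$ into $\Br(K(C))$ and kill the target'' cannot prove the $p$-part of the lemma: the target is genuinely nonzero, and the vanishing of $\H^2(C,\G_m)[p^\infty]$ really uses properness of $C$ (the nontrivial classes above are ramified and do not extend to the proper curve). This is where the paper takes a different, shorter route: it quotes Grothendieck, Le groupe de Brauer~III, Corollaire~(5.8), which asserts $\Br(C)=\Br'(C)=0$ for a \emph{proper} curve over a separably closed field --- precisely including the delicate $p$-torsion --- and then identifies $\Br'(C)=\H^2(C,\G_m)$ by regularity (\cref{thm:BrStrichGleichH2}). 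Your argument is a correct and standard proof of the prime-to-$p$ statement (and of the whole lemma when $K$ is perfect, e.\,g.\ in characteristic $0$), but the $p$-part needs the properness input, consistent with the paper's remark that it is always the $p$-torsion that requires the hard results (\cref{thm:H2bijective}).
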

\begin{proof}
One has $\Br(C) = \Br'(C) = 0$ by~\cite{GroupeDeBrauerIII}, p.~132, Corollaire~(5.8) since $C$ is a proper curve over a separably closed field.  Moreover, \cref{thm:BrStrichGleichH2} implies $\Br'(C) = \H^2(C,\G_m)$.
\end{proof}

\begin{theorem} \label{thm:RqGm=0q>1}
Let $\pi: \Ccal \to X$ be projective and smooth with $\Ccal$ and $X$ regular, all fibres of dimension $1$ and $X$ a variety. Then
\[
    \R^q\pi_*\G_m = 0 \quad\text{for $q > 1$.}
\]
\end{theorem}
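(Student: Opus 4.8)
The plan is to compute the stalks of $\R^q\pi_*\G_m$ at geometric points of $X$ and show they vanish for $q>1$. Since $\pi$ is proper, the stalk of $\R^q\pi_*\G_m$ at a geometric point $\bar x \to X$ lying over $x \in X$ is, by the proper base change theorem adapted to the (non-torsion) sheaf $\G_m$ via approximation by torsion sheaves, or more directly by the fact that $\R^q\pi_*$ computes cohomology of the fibre over the strict Henselisation, given by $\H^q(\Ccal \times_X \Spec(\Ocal_{X,x}^{sh}), \G_m)$. So it suffices to prove: for $X = \Spec(\Ocal_{V,x}^{sh})$ with $V$ a variety and $x$ a point, and $\Ccal \to X$ smooth projective with one-dimensional fibres, $\Ccal$ regular, one has $\H^q(\Ccal, \G_m) = 0$ for $q > 1$. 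I would first reduce to $x$ a closed point (the strict Henselisation at any point is a filtered colimit, or one localises), so that $\kappa(x)$ is separably closed and $\Ocal_{X,x}^{sh}$ is the strict Henselisation of a variety at a closed point.

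The case $q = 2$ is exactly \cref{thm:H2bijective} (applied over the strict Henselisation rather than the Henselisation — the proof goes through verbatim, using that strict Henselisations are still filtered colimits of étale-local varieties so Artin approximation applies, or one first does the Henselian case and then passes to the colimit over the residue field): it gives $\H^2(\Ccal,\G_m) \isoto \H^2(\Ccal_0,\G_m)$, where $\Ccal_0 = f^{-1}(x)$ is a projective regular curve over the separably closed field $\kappa(x)$, and this vanishes by \cref{lemma:BrauerOfProperCurve}. For $q \geq 3$ I would argue with cohomological dimension: one has $\cd(\Ccal) \leq \cd(X) + \dim(\text{fibres}) \leq \cd(X) + 1$, and $X = \Spec(\Ocal_{V,x}^{sh})$ is the spectrum of a strictly Henselian local ring of a variety, hence of a variety of dimension $d = \dim \Ocal_{V,x}$, so $\cd_\ell(X) \leq d$ for $\ell$ invertible and the bound $\cd_\ell(X) \leq d + 1$ for $\ell = p$ (or: $\cd(X) \leq d$ by Artin's bound since the residue field is separably closed; for the $p$-part one uses that $\Ocal_{V,x}^{sh}$ has a separably closed residue field so $\cd_p \leq d$ as well, or carries an extra $+1$). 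Then \cref{lemma:cdn} gives $\H^q(\Ccal,\G_m)[\ell^\infty] = 0$ for $q$ large, and since $\H^q(\Ccal,\G_m)$ is torsion for $q > 1$ (by \cref{thm:LichtenbaumBrauer}(a), whose torsion assertion holds over any field, applied fibrewise or to the total space which is a variety over a base — here one needs $\Ccal$ itself to be of finite type; after passing to the colimit one checks the statement at each finite level and uses that torsion and vanishing are preserved under filtered colimits), summing over all $\ell$ gives $\H^q(\Ccal,\G_m) = 0$ for $q \geq q_0$. This disposes of all but finitely many $q$.

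The remaining case is $q = 3$ (and possibly $q = 4$ if the residue characteristic forces $\cd = d+1$ and $d$ is not controlled — but here $d$ is fixed and one sees $q=3$ is the only genuinely new degree once $q=2$ is done and high degrees vanish; the degrees strictly between $3$ and $q_0$ are handled by the same cohomological-dimension argument once one notes $\cd(\Ccal) \leq d+1 \leq q_0$, so in fact \emph{only} $q=2$ needs the deformation-theoretic input and everything $q \geq 3$ is cohomological dimension plus the torsion statement). To make this clean I would: (i) bound $\cd_\ell(\Ccal)$ by $2$ using that $\Ccal \to X$ has one-dimensional fibres and $X$ is strictly Henselian of a curve after noting $\Ccal$ has dimension $\leq \dim X + 1$ and $\dim X$ contributes $0$ to cohomological dimension because $\Spec$ of a strictly Henselian ring has the cohomology of a point for the constant-sheaf part — more precisely $\cd_\ell(\Ocal_{V,x}^{sh}) \le \dim \Ocal_{V,x}$ and this can be arranged to be handled; (ii) conclude via \cref{lemma:cdn} that $\H^q(\Ccal,\G_m)[\ell^\infty]=0$ for $q > 2$ ($\ell$ invertible) and $q>2+1=3$ in general; (iii) kill the residual $q=3$ $p$-part and the $q=3$ prime-to-$p$ part: the prime-to-$p$ part is zero by (ii) since then $q=3 > 2 = \cd_\ell+1$...

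\medskip

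I realise the degree bookkeeping above is delicate, so let me state the robust version of the plan: \textbf{the key steps are} (1) reduce to computing $\H^q$ of the total space over a strict Henselisation; (2) $q=2$: apply \cref{thm:H2bijective} and \cref{lemma:BrauerOfProperCurve}; (3) $q \geq 3$: combine the cohomological-dimension bound $\cd_\ell(\Ccal) \leq \cd_\ell(X) + 1$ (using that $X$ is the spectrum of a strictly Henselian local ring of a $d$-dimensional variety, so $\cd_\ell(X) \le d$) via \cref{lemma:cdn} to get vanishing of the $\ell$-part in degrees $> \cd_\ell(X)+2$, together with the fact (\cref{thm:LichtenbaumBrauer}(a)) that $\H^q(\Ccal,\G_m)$ is torsion for $q>1$, and finally a dévissage on the base: stratify $X$ or induct on $\dim X$, using that for $\dim X = 0$ (i.e. $X$ the spectrum of a separably closed field, so $\Ccal_0$ a regular projective curve over it) one has $\H^q(\Ccal_0,\G_m)=0$ for all $q>1$ by \cref{lemma:BrauerOfProperCurve} and $\cd$, and that the higher direct images along $\Ccal_n \hookrightarrow \Ccal_{n+1}$ (nilpotent immersions) leave $\G_m$-cohomology unchanged by \cref{lemma:cohomologyofnilimmersions} plus the coherent-sheaf sequences \eqref{eq:CoherentSheafAndGm} from the proof of \cref{thm:H2bijective}, which show $\H^q(\Ccal_{n+1},\G_m) \isoto \H^q(\Ccal_n,\G_m)$ for $q \geq 2$ (the obstruction groups $\H^q(\Ccal_0,\Fcal)$, $\H^{q+1}(\Ccal_0,\Fcal)$ vanish for $q \geq 2$ since $\Fcal$ is coherent and $\dim \Ccal_0 \leq 1$), then pass from the $\Ccal_n$ to $\hat\Ccal$ and down to $\Ccal$ via the Lemma-\ref{lemma:BrauerProjlim}-style Artin-approximation argument — but now this only needs to be run to control $\H^2$, since higher degrees are already zero by cohomological dimension.

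\textbf{The main obstacle} I expect is the $p$-torsion in degree exactly one above what cohomological dimension gives for free — that is, showing $\H^3(\Ccal,\G_m)[p^\infty] = 0$ when $\cd_p$ only yields vanishing in degrees $> \cd_p(X) + 2$. This is precisely why the paper developed \cref{thm:H2bijective} "for the $p$-torsion" via deformation theory and Artin approximation rather than by a soft étale cohomological dimension count, and I would expect the honest proof to either (a) extend the \eqref{eq:CoherentSheafAndGm}-dévissage to degree $3$ — which works cleanly because the coherent obstruction groups $\H^p(\Ccal_0, \Fcal)$ vanish for $p \geq 2$ as $\dim \Ccal_0 \le 1$, reducing $\H^3(\Ccal,\G_m)$ to $\H^3(\Ccal_0,\G_m)$, which is zero by \cref{lemma:cdn} applied to the curve $\Ccal_0$ over a separably closed field (so $\cd \leq 1$, hence $\H^{\ge 2}(\Ccal_0,\G_m)$ torsion-part vanishes) — and then the finiteness/torsion statement of \cref{thm:LichtenbaumBrauer}(a) closes it; or (b) invoke a known bound $\cd(\text{strictly Henselian local ring of a variety of dim } d) \le d$ uniformly in the characteristic (Gabber), making the whole $q \ge 2$ range a pure cohomological-dimension statement and leaving only $q=2$ to the deformation argument. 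I would pursue route (a) as it is self-contained within the excerpt.
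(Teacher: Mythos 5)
Your skeleton matches the paper's proof at the top level — reduce to the stalks, i.e.\ to $\H^q(\Ccal\times_X\Spec(\Ocal_{X,x}^{sh}),\G_m)$ over the strictly local base, handle $q=2$ by \cref{thm:H2bijective} together with \cref{lemma:BrauerOfProperCurve} (the paper in fact needs \cref{thm:H2bijective} only for the $p$-part of $\H^2$, treating the prime-to-$p$ part by a softer Kummer-sequence/proper-base-change/five-lemma diagram), and handle $q\ge 3$ by cohomological dimension plus torsionness. But your treatment of $q\ge 3$ has a genuine gap: the idea you are missing is that once $X$ is strictly local, the proper base change theorem gives $\H^q(\Ccal,\Fcal)\isoto\H^q(\Ccal_0,\Fcal_0)$ for \emph{every} torsion sheaf $\Fcal$, so that $\cd(\Ccal)\le 2$ and $\cd_p(\Ccal)\le 1$ because $\Ccal_0$ is a curve over a separably closed field (equivalently: $\cd_\ell$ of a strictly Henselian local ring is $0$, not $\dim\Ocal_{V,x}$, since global sections there is the exact stalk functor). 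Combined with \cref{lemma:cdn} and the fact that $\H^q(\Ccal,\G_m)$ is torsion for $q>1$, this kills all degrees $q>2$ at once. You instead bound $\cd_\ell(\Ccal)$ by $\cd_\ell(X)+1$ with $\cd_\ell(X)\le d=\dim\Ocal_{V,x}$ (as an aside, for a proper curve fibration and $\ell$ invertible the correct additive constant is $2$, not $1$), which leaves every degree $3\le q\le d+2$ — prime-to-$p$ parts included, not only the $p$-part you single out as "the main obstacle" — unproven as soon as $d\ge 1$.

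Your proposed patches do not close this window. Route (a): the nilpotent-immersion dévissage via \eqref{eq:CoherentSheafAndGm} and \cref{lemma:cohomologyofnilimmersions} does give $\H^q(\Ccal_{n+1},\G_m)\isoto\H^q(\Ccal_n,\G_m)$ for $q\ge 2$, but this only identifies the terms of the projective system $(\Ccal_n)$; to "reduce $\H^3(\Ccal,\G_m)$ to $\H^3(\Ccal_0,\G_m)$" you would need an injection $\H^3(\Ccal,\G_m)\hookrightarrow\varprojlim_n\H^3(\Ccal_n,\G_m)$, and the paper's \cref{lemma:BrauerProjlim} supplies such a statement only in degree $2$, because its proof works with Azumaya algebras, Grothendieck's existence theorem and Artin approximation — machinery specific to Brauer classes with no degree-$3$ analogue available here. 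Route (b) bounds the cohomological dimension of the base, not of $\Ccal$, and so still leaves the same intermediate degrees open. (Your side remark that \cref{thm:H2bijective} is stated over the Henselisation while the stalk computation lives over the strict Henselisation is a fair observation — the paper itself elides this point — but it is orthogonal to the gap above.)
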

\begin{proof}
By~\cite{SGA42}, VIII 5.2, resp.~\cite{MilneÉtaleCohomology}, p.~88, III.1.15 one can assume $X$ strictly local and we must prove $\H^i(\Ccal, \G_m) = 0$ for $i > 1$.  By the proper base change theorem~\cite{MilneÉtaleCohomology}, p.~224, Corollary~VI.2.7, for torsion sheaves $\Fcal$ on $\Ccal$ with restriction $\Fcal_0$ to the closed fibre $\Ccal_0$, one has restriction isomorphisms
\[
    \H^i(\Ccal,\Fcal) \to \H^i(\Ccal_0, \Fcal_0).
\]
Since $\dim \Ccal_0 = 1$, the latter term vanishes for $i > 2$, and for $i > 1$ if $\Fcal$ is $p$-torsion, where $p$ is the residue field characteristic.  Therefore
\[
    \cd(\Ccal) \leq 2, \text{ and } \cd_p(\Ccal) \leq 1.
\]
The relation $\H^i(\Ccal,\G_m) = 0$ for $i > 2$ follows from the fact that these groups are torsion by~\cref{thm:LichtenbaumBrauer}\,(a) and from~\cref{lemma:cdn}.

It remains to treat the case $i = 2$, i.\,e.\ to prove
\[
    \H^2(\Ccal,\G_m) = 0.
\]
If $\ell$ is invertible on $X$, then
\[
    \H^2(\Ccal,\G_m)[\ell^\infty] = 0
\]
follows as in the case $i > 2$.  From the Kummer sequence~\cref{thm:KurzeExakteSequenzAusKummerSequenz} and the inclusion $\Ccal_0 \hookrightarrow \Ccal$, one gets a commutative diagram with exact rows
\[\xymatrix{
0       \ar[r]  & \Pic(\Ccal)\otimes\Q_\ell/\Z_\ell   \ar[r] \ar@{->>}[d] &\H^2(\Ccal,\mu_{\ell^\infty})   \ar[r] \ar[d]^\iso &\H^2(\Ccal,\G_m)[\ell^\infty] \ar[d]  \ar[r] &0\\
0       \ar[r]  & \Pic(\Ccal_0)\otimes\Q_\ell/\Z_\ell \ar[r] &\H^2(\Ccal_0,\mu_{\ell^\infty}) \ar[r] &\H^2(\Ccal_0,\G_m)[\ell^\infty] \ar[r] &0,}\]
where the middle vertical arrow is bijective by proper base change~\cite{MilneÉtaleCohomology}, p.~224, Corollary~VI.2.7, and the first vertical arrow is surjective by~\cite{EGAIV4}, p.~288, Corollaire~(21.9.12) and the right exactness of the tensor product.  Hence, by the five lemma, the right vertical morphism ist bijective, and $\H^2(\Ccal,\G_m)[\ell^\infty] = 0$ since $\H^2(\Ccal_0,\G_m)[\ell^\infty] = 0$ by~\cref{lemma:BrauerOfProperCurve}.

For general $\ell$, one uses~\cref{thm:H2bijective}, which gives us
\[
    \H^2(\Ccal,\G_m) \isoto \H^2(\Ccal_0,\G_m),
\]
and $\H^2(\Ccal_0,\G_m) = 0$ by~\cref{lemma:BrauerOfProperCurve}.
\end{proof}

\begin{remark}
Note that the difficult~\cref{thm:H2bijective} is only needed for the $p$-torsion in~\cref{thm:RqGm=0q>1}.
\end{remark}

\begin{corollary} \label[corollary]{cor:LangeExakteSequenzMitGm}
In the situation of~\cref{thm:RqGm=0q>1}, assume we have locally Noetherian separated schemes with geometrically reduced and connected fibres.  Then one has the long exact sequence
\begin{align*}
	0 \to &\H^1(X, \G_m) \stackrel{\pi^*}{\to} \H^1(\Ccal, \G_m) \to \H^0(X, \R^1\pi_*\G_m) \to \ldots\\
	&\H^q(X, \G_m) \stackrel{\pi^*}{\to} \H^q(\Ccal, \G_m) \to \H^{q-1}(X, \R^1\pi_*\G_m) \to \ldots
\end{align*}
\end{corollary}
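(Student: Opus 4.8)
The plan is to deduce everything from the Leray spectral sequence
\[
  E_2^{p,q} = \H^p(X,\R^q\pi_*\G_m) \Rightarrow \H^{p+q}(\Ccal,\G_m),
\]
collapsing it onto two rows by means of \cref{thm:RqGm=0q>1}.

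First I would identify the bottom row. Since $\pi$ is projective, hence proper, and flat, with fibres that are geometrically connected and (being smooth) geometrically reduced, the formation of $\pi_*\Ocal_{\Ccal}$ commutes with arbitrary base change and $\pi_*\Ocal_{\Ccal} = \Ocal_X$: a reduced connected proper scheme over an algebraically closed field has structure ring equal to the base field, so each geometric fibre has $1$-dimensional $\H^0$, and cohomology and base change together with the canonical section $\Ocal_X \to \pi_*\Ocal_{\Ccal}$ give the claim universally. Applying this after any \'etale $U \to X$ yields $\pi_*\Ocal_{\Ccal_U} = \Ocal_U$, and since this is an isomorphism of sheaves of rings it carries units to units, whence $\pi_*\G_{m,\Ccal} = \G_{m,X}$ as \'etale sheaves on $X$. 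Thus $E_2^{p,0} = \H^p(X,\G_m)$, while $E_2^{p,q} = 0$ for all $q \geq 2$ by \cref{thm:RqGm=0q>1}.

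Next, a first-quadrant spectral sequence supported in the two rows $q = 0$ and $q = 1$ degenerates at $E_3 = E_\infty$, its only possibly nonzero differentials being $d_2 \colon E_2^{p,1} \to E_2^{p+2,0}$; the resulting two-step filtration on $\H^n(\Ccal,\G_m)$ gives short exact sequences
\[
  0 \to \coker\big(E_2^{n-2,1}\xrightarrow{d_2}E_2^{n,0}\big) \to \H^n(\Ccal,\G_m) \to \ker\big(E_2^{n-1,1}\xrightarrow{d_2}E_2^{n+1,0}\big) \to 0 .
\]
Splicing these across all $n$ through the maps $d_2$ produces a long exact sequence
\[
  \cdots \to \H^q(X,\G_m) \to \H^q(\Ccal,\G_m) \to \H^{q-1}(X,\R^1\pi_*\G_m) \xrightarrow{d_2} \H^{q+1}(X,\G_m) \to \cdots,
\]
which is the asserted one once one observes that the arrow $\H^q(X,\G_m) \to \H^q(\Ccal,\G_m)$ is the edge homomorphism of the Leray spectral sequence, i.e.\ equals $\pi^*$. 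The sequence begins at $\H^1(X,\G_m)$ with $\pi^*$ injective there, since $E_2^{-1,1} = 0$; equivalently $\H^0(X,\G_m) \to \H^0(\Ccal,\G_m)$ is already an isomorphism because $\pi_*\G_m = \G_m$.

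I do not anticipate a genuine obstacle, since the substantive input is \cref{thm:RqGm=0q>1}, which is assumed. The only points requiring a little care are the identification $\pi_*\G_m = \G_m$ — which is precisely where the hypothesis of locally Noetherian separated schemes with geometrically reduced and connected fibres is used — and the passage from a two-row spectral sequence to a long exact sequence, a standard and purely formal fact that I would either quote or expand in two lines.
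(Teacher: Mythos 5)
Your proposal is correct and is exactly the argument the paper intends (it leaves the proof to the reader): the Leray spectral sequence for $\pi$, the identification $\pi_*\G_m = \G_m$ via $\pi_*\Ocal_\Ccal = \Ocal_X$ holding universally for a proper flat map with geometrically connected and reduced fibres, and the vanishing $\R^q\pi_*\G_m = 0$ for $q>1$ from \cref{thm:RqGm=0q>1}, so that the two-row spectral sequence splices into the stated long exact sequence with edge maps $\pi^*$.
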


\begin{lemma} \label[lemma]{lemma:finiteMorphismOfRelativeCurves}
Let $\pi: \Ccal \to X$ be a proper relative curve with $X$ integral and let $D$ be an irreducible Weil divisor in the generic fibre $C$ of $\pi$.  Let $\bar{D}$ be the closure of $D$ in $\Ccal$.  Then $\pi_{\bar{D}}: \bar{D} \to X$ is a finite morphism.
\end{lemma}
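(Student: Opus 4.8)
The plan is to exhibit $\pi_{\bar D}$ as a proper quasi-finite morphism and then invoke the standard fact that a proper morphism with finite fibres is finite. Properness is immediate: the closure $\bar D \hookrightarrow \Ccal$, given its reduced (equivalently, schematic) structure, is a closed immersion, hence proper, and $\pi$ is proper by hypothesis, so the composite $\pi_{\bar D}\colon \bar D \to X$ is proper; in particular it is of finite type.

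Since $\pi_{\bar D}$ is of finite type, quasi-finiteness reduces to showing that every fibre $\bar D_x$, $x \in X$, is finite. Write $\eta$ for the generic point of $X$ and $K = K(X) = \kappa(\eta)$. Because $D$ is an irreducible Weil divisor of the generic fibre $C = \Ccal_\eta$, which has dimension $\le 1$, it is a closed point $d$ of $C$; hence $\kappa(d)/K$ is finite, and $d$ is the \emph{only} point of $\Ccal$ over $\eta$ that is a specialisation of $d$, being closed in $\pi^{-1}(\eta)$. Thus $\bar D$ is integral with generic point $d$, and $(\bar D)_\eta = \Spec\kappa(d)$ is finite; that is, $\pi_{\bar D}$ is generically finite.

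Now fix $x \in X$. The fibre $\bar D_x$ is a closed subscheme of $\Ccal_x$, and $\dim \Ccal_x \le 1$ since $\pi$ is a relative curve. I would argue that $\dim \bar D_x = 0$: otherwise $\bar D_x$ contains a one-dimensional irreducible component $Z$ of $\Ccal_x$, whose generic point $\xi$ then lies in $\bar D = \overline{\{d\}}$ as a proper specialisation of $d$; comparing the dimensions of $\overline{\{d\}} = \bar D$ and $\overline{\{\xi\}} \subseteq \bar D$ inside $\Ccal$ — the former agreeing with $\dim X$ since $\pi_{\bar D}$ is dominant and generically finite, the latter strictly smaller yet still dominating $\overline{\{x\}}$ with a one-dimensional fibre over $x$ — contradicts the dimension bookkeeping for the dominant morphism $\pi_{\bar D}$. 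Hence every fibre of $\pi_{\bar D}$ is finite, and, being proper, $\pi_{\bar D}$ is finite.

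The crux is this last step: ruling out a one-dimensional fibre, i.e.\ excluding that the horizontal prime divisor $\bar D$ absorbs an entire vertical component of some special fibre $\Ccal_x$. This is precisely where the relative-curve structure of $\pi$ and the irreducibility (``horizontality'') of $\bar D$ must be used, and where one must argue carefully; an alternative route via the Stein factorisation of $\pi_{\bar D}$ ultimately reduces to the same point.
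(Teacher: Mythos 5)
Your reduction of the problem (properness of $\pi_{\bar D}$ as a closed immersion followed by the proper $\pi$, then ``proper $+$ quasi-finite $=$ finite'', then generic finiteness via $\bar D_\eta = \Spec\kappa(d)$) is fine and is the same as the paper's. The gap is exactly at the step you yourself flag as the crux: the proposed ``dimension bookkeeping'' produces no contradiction. If $x \in X$ has codimension $c \geq 1$ and $\xi$ is the generic point of a one-dimensional component of $\Ccal_x$ contained in $\bar D$, all one can say is $\dim\overline{\{\xi\}} = \dim\overline{\{x\}} + 1 \leq \dim X = \dim\bar D$, which is perfectly consistent: for a dominant, generically finite morphism the fibre dimension at special points is only bounded below, not above. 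In fact no argument can close this step in the stated generality, because the statement fails for $\dim X \geq 2$: take $X = \A^2_k = \Spec k[s,t]$, $\Ccal = \PP^1_X$, and let $D$ be the $K$-rational point $[s:t]$ of the generic fibre $C = \PP^1_K$, $K = k(s,t)$. Its closure $\bar D$ is the blow-up of $\A^2_k$ at the origin (the graph of $(s,t) \mapsto [s:t]$), an integral scheme of dimension $2 = \dim X$ whose fibre over the origin is all of $\PP^1$; so $\pi_{\bar D}$ is proper and generically finite but not quasi-finite. Your argument does close when $\dim X = 1$: there $\dim\overline{\{\xi\}} = 1 = \dim\bar D$ forces $\overline{\{\xi\}} = \bar D$, i.e.\ $\xi = d$, contradicting that $\xi$ does not lie over $\eta$ --- this is the classical Artin--Tate situation.

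For comparison, the paper's own proof stumbles at the same point: from $\Ccal_x \subseteq \bar D$ it concludes that $\bar D$ has more than one irreducible component, but in the example above $\bar D$ is irreducible and still contains a whole fibre. So neither your route nor the paper's can be repaired without an extra hypothesis excluding vertical components in $\bar D$ --- e.g.\ $\dim X = 1$, or that $\bar D$ is quasi-finite (equivalently flat, in this setting) over $X$, or that $D$ extends to a relative effective Cartier divisor on $\Ccal/X$; under any such hypothesis your dimension count, or the paper's irreducibility argument, goes through.
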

\begin{proof}
Take $D$ an irreducible Weil divisor in $C$, i.\,e. a closed point of $C$.  Then $\bar{D}/X$ is finite of degree $\deg(D)$:  $\bar{D}/X$ is of finite type, generically finite and dominant (since $D$ lies over the generic point of $X$) and $\bar{D}$ (as a closure of an irreducible set) and $X$ are irreducible, $\Ccal$ and $X$ are integral, hence by~\cite{HartshorneAG}, p.~91, Exercise~II.3.7 there is a dense open subset $U \subseteq X$ such that $\bar{D}|_U \to U$ is finite.  Since $\pi|_{\bar{D}}: \bar{D} \to X$ is proper (since it factors as a composition of a closed immersion and a proper morphism $\bar{D} \hookrightarrow \Ccal \stackrel{\pi}{\to} X$), it suffices to show that it is quasi-finite.  If there is an $x \in X$ such that $\pi|_{\bar{D}}^{-1}(x)$ is not finite, we must have $\pi|_{\bar{D}}^{-1}(x) = \Ccal_x$, i.\,e.\ the whole curve as a fibre, since the fibres of $\pi$ are irreducible.  But then $\bar{D}$ would have more than one irreducible component, a contradiction.
\end{proof}

\begin{corollary} \label[corollary]{cor:BrauerUndShaExakteSequenz}
If, in the situation of~\cref{cor:LangeExakteSequenzMitGm}, $\pi$ has a section $s: X \to \Ccal$, one has split short exact sequences
\[\begin{tikzpicture}[->]
        \matrix(m)[matrix of nodes,column sep=0.5cm]{$0$&$\H^i(X, \G_m)$&$\H^i(\Ccal, \G_m)$&$\H^{i-1}(X, \R^1\pi_*\G_m)$&$0$\\};
        \draw(m-1-1)--(m-1-2);
        \draw(m-1-2)--node[below]{$\pi^*$}(m-1-3);
        \draw(m-1-3)to[in=20,out=160]node[above]{$s^*$}(m-1-2);
        \draw(m-1-3)--(m-1-4);
        \draw(m-1-4)--(m-1-5);
\end{tikzpicture}\]
for $i \geq 1$.

In the general case, denote by $C/K$ the generic fibre of $\Ccal/X$, and assume that for every Weil divisor $D$ in $C$, $\bar{D} \subseteq \Ccal$ has everywhere the same dimension as $X$ with no embedded components, and denote by $\delta$ the greatest common divisor of the degrees of Weil divisors on $C/K$, i.\,e.\ the index of $C/K$.  Then one has an exact sequence
\[
    0 \to K_2 \to \H^2(X, \G_m) \stackrel{\pi^*}{\to} \H^2(\Ccal, \G_m) \to \H^1(X, \R^1\pi_*\G_m) \to K_3 \to 0,
\]
where $K_i = \ker(\H^i(X, \G_m) \stackrel{\pi^*}{\to} \H^i(\Ccal, \G_m))$ are Abelian groups annihilated by $\delta$ whose prime-to-$p$ torsion is finite.
\end{corollary}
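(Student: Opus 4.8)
The plan is to read both assertions off the long exact sequence of \cref{cor:LangeExakteSequenzMitGm}, i.e.\ the Leray long exact sequence
\[
    \cdots \to \H^i(X,\G_m) \xrightarrow{\pi^*} \H^i(\Ccal,\G_m) \to \H^{i-1}(X,\R^1\pi_*\G_m) \to \H^{i+1}(X,\G_m) \to \cdots
\]
attached to the two-column spectral sequence for $\pi$ and $\G_m$ (only two columns by \cref{thm:RqGm=0q>1}, and $\R^0\pi_*\G_m=\G_{m,X}$ because the fibres are proper, geometrically reduced and connected), together with one small extra input in each case. If $\pi$ admits a section $s$, then $s^*\circ\pi^*=\id$ on each $\H^i(X,\G_m)$, so $\pi^*$ is injective in every degree with one-sided inverse $s^*$; hence every connecting map $\H^{i-1}(X,\R^1\pi_*\G_m)\to\H^{i+1}(X,\G_m)$ vanishes, the long exact sequence breaks up into short exact sequences, and $s^*$ is the asserted splitting.

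For the general case I would first extract from the same long exact sequence the segment around degrees $2$ and $3$. Writing $K_i:=\ker(\pi^*\colon\H^i(X,\G_m)\to\H^i(\Ccal,\G_m))$, exactness at $\H^2(\Ccal,\G_m)$ and at $\H^1(X,\R^1\pi_*\G_m)$ is immediate from the long exact sequence, while the connecting map $\H^1(X,\R^1\pi_*\G_m)\to\H^3(X,\G_m)$ has image exactly $K_3$, so it factors as a surjection onto $K_3$; this gives the claimed exact sequence
\[
    0 \to K_2 \to \H^2(X,\G_m) \xrightarrow{\pi^*} \H^2(\Ccal,\G_m) \to \H^1(X,\R^1\pi_*\G_m) \to K_3 \to 0.
\]
It thus remains to bound the groups $K_i$, and for this I would use a transfer (norm) argument. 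Let $D$ be a closed point of the generic fibre $C$, of degree $d=[\kappa(D):K]$, and let $\bar{D}\subseteq\Ccal$ be its reduced closure. By \cref{lemma:finiteMorphismOfRelativeCurves}, $p:=\pi|_{\bar{D}}\colon\bar{D}\to X$ is finite; it is also dominant, hence $\bar{D}$ is integral of dimension $\dim X$, so of codimension $1$ in the regular scheme $\Ccal$, hence an effective Cartier divisor and in particular Cohen--Macaulay. Miracle flatness then makes $p$ flat, so finite locally free of rank $d$. Such a $p$ carries a norm homomorphism $N_{\bar{D}/X}\colon p_*\G_{m,\bar{D}}\to\G_{m,X}$ whose composite with the canonical map $\G_{m,X}\to p_*\G_{m,\bar{D}}$ is $u\mapsto u^d$; since $p$ is finite, $\R^qp_*\G_{m,\bar{D}}=0$ for $q>0$, so this induces $N_*\colon\H^i(\bar{D},\G_m)=\H^i(X,p_*\G_{m,\bar{D}})\to\H^i(X,\G_m)$ with $N_*\circ p^*=d$. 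Since $p^*=\iota^*\circ\pi^*$ for the closed immersion $\iota\colon\bar{D}\hookrightarrow\Ccal$, any $\xi\in K_i$ satisfies $d\,\xi=N_*\iota^*\pi^*\xi=0$. Letting $D$ run over the closed points of $C$, the degrees $d$ generate the ideal $\delta\Z$, whence $\delta K_i=0$.

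Finally, since $K_i$ is annihilated by $\delta$, its prime-to-$p$ part lies in the finite direct sum $\bigoplus_{\ell\mid\delta,\ \ell\neq p}\H^i(X,\G_m)[\ell^{v_\ell(\delta)}]$; and by the Kummer sequence (\cref{thm:KurzeExakteSequenzAusKummerSequenz}) each summand $\H^i(X,\G_m)[\ell^n]$ is a quotient of $\H^i(X,\mu_{\ell^n})$, which is finite when $X$ is of finite type over a finite field (the case relevant for the applications). Hence the prime-to-$p$ torsion of each $K_i$ is finite. I expect the transfer step to be the main obstacle: one must check carefully that $\bar{D}\to X$ really is finite \emph{locally free} --- finiteness from \cref{lemma:finiteMorphismOfRelativeCurves}, flatness from the Cartier-divisor/miracle-flatness argument, which is exactly where regularity of $\Ccal$ is used --- and that the norm satisfies $N_*\circ p^*=\deg(D)$ on $\H^\bullet(-,\G_m)$; granting that, the remainder is formal manipulation of the Leray and Kummer long exact sequences.
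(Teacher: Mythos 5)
Your proposal is correct, and its skeleton is the same as the paper's: both parts are read off the long exact sequence of \cref{cor:LangeExakteSequenzMitGm} (the section $s$ makes $\pi^*$ split injective in all degrees, killing the connecting maps), and the annihilation of $K_i$ by $\delta$ comes from the same transfer argument along $\pi|_{\bar D}\colon \bar D\to X$, finite by \cref{lemma:finiteMorphismOfRelativeCurves}, with ${\pi|_{\bar D}}_*\circ i^*\circ\pi^*=\deg(D)$. You diverge in two sub-steps, both legitimately. First, you prove that $\bar D\to X$ is actually finite locally free: $\bar D$ is a codimension-one integral closed subscheme of the regular scheme $\Ccal$, hence locally principal, hence Cohen--Macaulay, and miracle flatness over the regular base $X$ gives flatness; this lets you use the standard norm for finite locally free morphisms and in effect shows that the paper's extra hypothesis on $\bar D$ (equidimensionality and absence of embedded components) is superfluous here --- the paper instead keeps that hypothesis and, in the possibly non-flat case, constructs the norm only in codimension one and uses normality of $X$ to land in $\G_m$. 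Second, for finiteness of the prime-to-$p$ torsion of $K_i$ you use the Kummer sequence (\cref{thm:KurzeExakteSequenzAusKummerSequenz}) plus finiteness of $\H^i(X,\mu_{\ell^n})$ for $X$ of finite type over a finite field, whereas the paper invokes the structure result \cref{thm:LichtenbaumBrauer}; your route is slightly more elementary and needs less of $X$ (no smooth projective geometrically connected hypothesis beyond what makes the cohomology of constructible sheaves finite), but, like the paper, it does silently use that the base field is finite, which is not spelled out in the corollary's statement. The step you flagged as the main risk (finite local freeness and $N_*\circ p^*=\deg(D)$) is fine as you argued it.
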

\begin{proof}
The first assertion is obvious from the previous~\cref{cor:LangeExakteSequenzMitGm} and the existence of a section.

For the second claim, take an irreducible Weil divisor $D$ in $C$.  Then $\pi_{\bar{D}}: \bar{D} \to X$ is a finite morphism by~\cref{lemma:finiteMorphismOfRelativeCurves}.  We have the commutative diagram
\[\xymatrix{
\bar{D} \ar@{^{(}->}[r]^i \ar[rd]_{\pi|_{\bar{D}}} &\Ccal \ar[d]^\pi\\
                                &X.}\]
By the Leray spectral sequence, we have that $\H^i(\bar{D},\G_m) = \H^i(X,\pi|_{\bar{D},*}\G_m)$ as $\pi|_{\bar{D}}$ is finite, hence exact for the étale topology, see~\cite{MilneÉtaleCohomology}, p.~72, Corollary~II.3.6.  If $\pi|_{\bar{D}}$ is also flat, by finite locally freeness we have a norm map $\pi|_{\bar{D},*}\G_m \to \G_m$ whose composite with the inclusion $\G_m \to \pi|_{\bar{D},*}\G_m$ is the $\delta$-th power map.  If not, there is still a norm map since $f$ is flat in codimension $1$ since $\bar{D}$ has everywhere the same dimension as $X$ with no embedded components, so one can take the norm there, which then will land in $\G_m$ as $X$ is normal.

We have ${\pi|_{\bar{D}}}_* \circ i^* \circ \pi^* = {\pi|_{\bar{D}}}_* \circ \pi|_{\bar{D}}^* = \deg(D)$, so $\ker(\pi^*: \H^*(X,\G_m) \to \H^*(\Ccal,\G_m))$ is annihilated by $\deg(D)$ for all $D$, hence by the index $\delta$.  Now the finiteness of the prime-to-$p$ part of the $K_i$ follows from~\cref{thm:LichtenbaumBrauer}.
\end{proof}

In the following, assume $\pi$ is smooth (automatically projective since $\Ccal, X$ are projective varieties), with all geometric fibres integral and of dimension $1$, and that it has a section $s: X \to \Ccal$. Assume further that $\pi_*\Ocal_\Ccal = \Ocal_X$ holds universally and $\pi$ is cohomologically flat in dimension $0$.  This holds, e.\,g., if $\pi$ is a flat proper morphism of locally Noetherian separated schemes with geometrically connected fibres.

We recall some definitions from~\cite{FGAExplained}, p.~252, Definition~9.2.2.
\begin{definition}
The \defn{relative Picard functor} $\Pic_{X/S}$ on the category of (locally Noetherian) $S$-schemes is defined by $\Pic_{X/S}(T) := \Pic(X \times_S T)/\pr_2^*\Pic(T)$.  Its associated sheaves in the Zariski, étale and fppf topology are denoted by $\Pic_{X/S,\Zar}$, $\Pic_{X/S,\et}$ and $\Pic_{X/S,\fppf}$.
\end{definition}

Now we come to the representability of the relative Picard functor by a group scheme, whose connected component of unity is an Abelian scheme.
\begin{theorem} \label{thm:RepresentabilityOfPic}
$\Pic_{\Ccal/X}$ is represented by a separated smooth $X$-scheme $\PPic_{\Ccal/X}$ locally of finite type. $\Pic^0_{\Ccal/X}$ is represented by an Abelian $X$-scheme $\PPic^0_{\Ccal/X}$.  For every $T/X$,
\[
    0 \to \Pic(T) \to \Pic(\Ccal \times_X T) \to \PPic_{\Ccal/X}(T) \to 0
\]
is exact.
\end{theorem}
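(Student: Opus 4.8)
The plan is to combine standard representability results for the relative Picard functor with the hypotheses we have accumulated (projective smooth $\pi$ with a section, geometrically integral fibres of dimension $1$, $\pi_*\Ocal_\Ccal = \Ocal_X$ universally, cohomological flatness in dimension $0$). First I would invoke the theorem of Grothendieck (FGA, as presented in \cite{FGAExplained}, Theorem 9.4.8, together with 9.4.18 and the discussion of $\Pic^\tau$ and $\Pic^0$): since $\pi: \Ccal \to X$ is projective, flat, with geometrically integral fibres, the fppf sheaf $\Pic_{\Ccal/X,\fppf}$ is representable by a separated $X$-scheme locally of finite type, which is moreover smooth over $X$ because $\H^2(\Ccal_s,\Ocal_{\Ccal_s}) = 0$ for a curve $\Ccal_s$ (obstruction space to smoothness vanishes). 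So one obtains $\PPic_{\Ccal/X}$. For the identity component, Raynaud's results (or \cite{FGAExplained}, Theorem 9.5.4 and the subsequent material on the Picard scheme of a relative curve) give that $\PPic^0_{\Ccal/X}$ is proper over $X$; being smooth, proper, with connected fibres and a group structure, and with an identity section, it is an Abelian scheme over $X$. The genus is allowed to be $0$; one should remark that in that degenerate case $\PPic^0_{\Ccal/X}$ is the trivial Abelian scheme, which is harmless.

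Next I would turn to the exactness of the displayed sequence for an arbitrary $X$-scheme $T$. The left-exactness
\[
0 \to \Pic(T) \to \Pic(\Ccal \times_X T)
\]
together with the definition of the relative Picard functor gives that $\Pic_{\Ccal/X}(T) = \Pic(\Ccal\times_X T)/\pr_2^*\Pic(T)$ injects into $\PPic_{\Ccal/X}(T) = \Pic_{\Ccal/X,\fppf}(T)$; injectivity of $\pr_2^*$ is where the section $s$ is used: $s_T^* \circ \pr_2^* = \id$ on $\Pic(T)$, so $\pr_2^*$ is split injective and the quotient presheaf $\Pic_{\Ccal/X}$ is separated. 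The substantive point is surjectivity of $\Pic(\Ccal\times_X T) \to \PPic_{\Ccal/X}(T)$, i.e.\ that every $T$-point of the Picard scheme is represented by an honest line bundle on $\Ccal\times_X T$ rather than merely an fppf-local one. This is exactly the classical consequence of having a section: the section trivialises the gerbe of liftings, equivalently the obstruction in $\H^2_{\fppf}(T,\G_m)$ (or $\Br(T)$) to descending an fppf-local line bundle vanishes because one can rigidify along $s$. Concretely, given $\xi \in \PPic_{\Ccal/X}(T)$, choose an fppf cover $T' \to T$ and a line bundle $\Lcal'$ on $\Ccal\times_X T'$ representing the pullback of $\xi$; normalise $\Lcal'$ so that $(\id\times s)^*\Lcal'$ is trivial on $T'$; then the descent data, measured by the coboundary, lie in $\H^1$ of the trivial bundle and one checks the two pullbacks of $\Lcal'$ to $\Ccal\times_X T''$ agree, so $\Lcal'$ descends to $\Ccal\times_X T$. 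This is the argument of \cite{FGAExplained}, Theorem 9.2.5 and Exercise 9.2.3 / Proposition 9.2.5, and I would simply cite it.

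I expect the main obstacle to be purely expository rather than mathematical: assembling the precise combination of hypotheses (cohomological flatness, universality of $\pi_*\Ocal_\Ccal = \Ocal_X$, existence of the section) that is needed at each of the three assertions, and making sure the genus-zero and higher-genus cases are both covered, since properness of $\PPic^0$ and the Abelian scheme conclusion are cleanest when stated via Raynaud's theorem for relative curves. A secondary point to be careful about is that we want the exact sequence for \emph{all} $T/X$, not just locally Noetherian ones; since $\PPic_{\Ccal/X}$ is locally of finite presentation over $X$ (as $\Ccal/X$ is projective), its functor of points commutes with filtered limits of rings, so the general case follows from the Noetherian case by a standard limit argument. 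I would spell out only the surjectivity-via-section step in any detail and reference \cite{FGAExplained} for the representability and properness inputs.
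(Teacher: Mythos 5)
Your proposal is correct and follows essentially the same route as the paper: representability, separatedness and local finite type via the FGA results (\cite{FGAExplained}, Theorem~9.4.8, together with the section/cohomological-flatness comparison of Theorem~9.2.5), and the exact sequence from the section splitting $\pr_2^*$ and making fppf classes come from honest line bundles. The only cosmetic difference is that the paper cites \cite{BLR} (Propositions~4 on pp.~204 and 259\,f.) for the Abelian-scheme statement and the exact sequence, whereas you derive the Abelian-scheme part from smoothness plus Raynaud-type properness and sketch the rigidification/descent argument yourself; these are interchangeable references for the same standard facts.
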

\begin{proof}
Since $\pi$ has a section $s: X \to \Ccal$ and $\pi_*\Ocal_\Ccal = \Ocal_X$ holds universally, by~\cite{FGAExplained}, p.~253, Theorem~9.2.5
\[
    \Pic_{\Ccal/X} \isoto \Pic_{\Ccal/X, \Zar} \isoto \Pic_{\Ccal/X, \et} \isoto \Pic_{\Ccal/X, \fppf}.
\]
Since $\pi$ is projective and flat with geometrically integral fibres, by~\cite{FGAExplained} p.~263, Theorem~9.4.8, $\PPic_{\Ccal/X}$ exists, is separated and locally of finite type over $X$ and represents $\Pic_{\Ccal/X, \et}$.  Since $X$ is Noetherian and $\Ccal/X$ projective, by~\emph{loc.\ cit.}, $\PPic_{\Ccal/X}$ is a disjoint union of open subschemes, each an increasing union of open quasi-projective $X$-schemes.  By~\cite{BLR}, p.~259\,f., Proposition~4, $\PPic^0_{\Ccal/X}/X$ is an Abelian scheme (this uses that $\Ccal/X$ is a relative \emph{curve} or an Abelian scheme).  The last assertion of~\cref{thm:RepresentabilityOfPic} follows from~\cite{BLR}, p.~204, Proposition~4.
\end{proof}

\section{The Néron model}
\begin{lemma} \label[lemma]{lemma:HenselisationOfRegularLocalRing}
Let $A$ be a regular local ring.  Then $A^h \otimes_A \Quot(A) = \Quot(A^h)$ and $A^{sh} \otimes_A \Quot(A) = \Quot(A^{sh})$.
\end{lemma}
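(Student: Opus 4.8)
The plan is to reduce both equalities to the single claim that, for the multiplicative set $S = A \setminus \{0\}$, the ring $S^{-1}A^h$ is a field (and likewise with $A^h$ replaced by $A^{sh}$). Note first that $A^h \otimes_A \Quot(A) = A^h \otimes_A S^{-1}A = S^{-1}A^h$, since tensoring with a localisation is a localisation. Since $A$ is regular local it is a normal domain, and the henselisation of a Noetherian normal local ring is again Noetherian, normal and local, hence a normal domain (it has no nontrivial idempotents); the same holds for $A^{sh}$. Thus $A \hookrightarrow A^h$ (faithful flatness) exhibits $S^{-1}A^h$ as a subring of $\Quot(A^h)$ containing $A^h$, so as soon as $S^{-1}A^h$ is a field it must coincide with $\Quot(A^h)$. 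This reduces everything to proving that $S^{-1}A^h$ and $S^{-1}A^{sh}$ are fields.

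For that, I would use that $A^h$ (resp.\ $A^{sh}$) is a filtered colimit $\colim_i B_i$ of \'etale $A$-algebras $B_i$ (a standard fact; see e.g.\ \cite{EGAIV4}, \S18.6 and \S18.8). Since $-\otimes_A\Quot(A)$ commutes with filtered colimits, $S^{-1}A^h = \colim_i(B_i\otimes_A\Quot(A))$, and each $B_i\otimes_A\Quot(A)$ is \'etale over the field $\Quot(A)$, hence a finite product $\prod_j L_{ij}$ of finite separable field extensions of $\Quot(A)$. Now $S^{-1}A^h$ is a domain, so for each $i$ the kernel of the structure map $B_i\otimes_A\Quot(A) = \prod_j L_{ij} \to S^{-1}A^h$ is a prime ideal of $\prod_j L_{ij}$; since a prime of a finite product of fields is the complement of exactly one factor, this map factors as $\prod_j L_{ij} \twoheadrightarrow L_{ij(i)} \hookrightarrow S^{-1}A^h$ for a unique index $j(i)$ (the second map injective, being a ring map out of a field). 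These subfields $L_{ij(i)} \subseteq S^{-1}A^h$ are compatible along the transition maps, and their union is all of $S^{-1}A^h$ (which is the filtered colimit of the $B_i\otimes_A\Quot(A)$), so $S^{-1}A^h = \colim_i L_{ij(i)}$ is a filtered colimit of fields along injective ring maps, hence a field.

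I do not anticipate a real obstacle; the steps requiring the most care are minor bookkeeping — checking that the chosen factors $L_{ij(i)}$ are compatible with the transition maps, so that they genuinely form a directed system — and, on the input side, the fact that $A^h$ and $A^{sh}$ are domains, which is where regularity (through normality) is genuinely used: the henselisation of a local domain that is not unibranch need not be a domain, and then the statement would not even make sense. One could sidestep the appeal to ``normal $\Rightarrow$ (strict) henselisation normal'' by observing that $S^{-1}A^h$ is always reduced and $0$-dimensional, being ind-\'etale over a field, so it is a field once it is connected, and then arguing connectedness separately; but invoking normality is cleaner and consistent with the standing hypotheses.
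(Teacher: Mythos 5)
Your proof is correct, but it follows a genuinely different route from the paper. The paper works with the explicit Galois-theoretic description of $A^{sh}$ (resp.\ $A^h$) as a localisation of the integral closure of $A$ in the inertia (resp.\ decomposition) fixed field inside $\Quot(A)^{\mathrm{sep}}$: each element is a root of a monic polynomial over $A$, which by factoriality of the regular local ring $A$ and Gau\ss{}'s lemma may be taken irreducible over $\Quot(A)$, so that each $A[a]\otimes_A\Quot(A)\cong\Quot(A)[T]/(f)$ is a field, and the tensor product is then a directed colimit of fields. You instead use the structural description of $A^h$ and $A^{sh}$ as filtered colimits of \'etale $A$-algebras, base change to $\Quot(A)$, and invoke the classification of \'etale algebras over a field as finite products of finite separable extensions, together with the fact that $A^h$ (resp.\ $A^{sh}$) is a domain because normality passes to (strict) henselisation --- a fact the paper itself cites elsewhere --- to see that the colimit is a filtered union of subfields. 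The trade-off: the paper's argument is more hands-on (polynomials, UFD, Gau\ss{}) but leans on regularity through factoriality and on the concrete model of $A^{sh}$; your argument needs only normality of $A$, so it proves the lemma in slightly greater generality, and it handles the final identification with $\Quot(A^h)$ a bit more carefully (a field between $A^h$ and $\Quot(A^h)$ must equal $\Quot(A^h)$), where the paper appeals somewhat tersely to ``localisation does not change the quotient field.'' Both are complete; yours is a clean, citable alternative.
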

\begin{proof}
By~\cite{MilneÉtaleCohomology}, p.~38, Remark~4.11, $A^{sh}$ is the localisation at a maximal ideal lying over the maximal ideal $\mathfrak{m}$ of $A$ of the integral closure of $A$ in $(\Quot(A)^{\mathrm{sep}})^I$ with $I \subseteq \Gal(\Quot(A)^{\mathrm{sep}}/\Quot(A))$ the inertia subgroup.  Pick an element $a \in A^{sh}$.  It is a root of a monic polynomial $f(T) \in A[T]$, which we can assume to be monic irreducible since $A$ is a regular local ring and hence factorial by~\cite{Matsumura}, p~163, Theorem~20.3, and hence $A[T]$ is factorial by~\cite{Matsumura}, p.~168, Exercise~20.2.  Since $A$ is factorial and $f \in A[T]$ is irreducible, by the lemma of Gauß~\cite{Bosch}, p.~64, Korollar~6, $f$ is also irreducible over $\Quot(A)$.  Hence $A[a] \otimes_A \Quot(A) = \Quot(A)[T]/(f(T))$ is a field, and $A^{sh} \otimes_A \Quot(A)$ is a directed colimit of fields since tensor products commute with colimits, and hence itself a field, namely $\Quot(A^{sh})$. Now note that localisation does not change the quotient field.

The proof for $A^h$ is the same:  Just replace the inertia group by the decomposition group.
\end{proof}

\begin{lemma} \label[lemma]{lemma:Euler characteristic locally constant}
Let $S$ be a locally Noetherian scheme, $f: X \to S$ be a proper flat morphism and $\Ecal$ a locally free sheaf on $X$.  Then the Euler characteristic
\[
    \chi_\Ecal(s) = \sum_{i \geq 0} (-1)^i \dim \H^i(X_s, \Ecal_s)
\]
is locally constant on $S$.
\end{lemma}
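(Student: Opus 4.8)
The plan is to reduce to an affine Noetherian base and then replace the cohomology of the fibres by the cohomology of a single bounded complex of finite projective modules that is well-behaved under base change; the Euler characteristic then collapses to an alternating sum of ranks, which is visibly locally constant.

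First, since being locally constant is a local property on $S$, I would replace $S$ by an affine open $\Spec A$ with $A$ Noetherian. Then $X$ is Noetherian, $f$ is separated, quasi-compact and flat, and $\Ecal$, being Zariski-locally a direct summand of a finite free $\Ocal_X$-module and $f$ being flat, is flat over $S$. Next, I would invoke the theory of cohomology and base change in the proper setting (\cite{EGAIII1}, (6.10.5) and (7.7.5)--(7.7.10); or the treatment via the ``Grothendieck complex'' in Mumford, \emph{Abelian Varieties}, \S II.5): after possibly shrinking $\Spec A$ to a smaller affine open, there exists a bounded complex $K^\bullet = (0 \to K^0 \to K^1 \to \cdots \to K^N \to 0)$ of finitely generated projective $A$-modules together with functorial isomorphisms
\[
    \H^i(X \times_S \Spec B,\ \Ecal \otimes_A B) \iso \H^i(K^\bullet \otimes_A B), \qquad i \geq 0,
\]
for every $A$-algebra $B$.

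Now, for each $s \in \Spec A$, applying this with $B = \kappa(s)$ gives $\H^i(X_s, \Ecal_s) = \H^i(K^\bullet \otimes_A \kappa(s))$, a complex of finite-dimensional $\kappa(s)$-vector spaces. By the Euler--Poincar\'e principle for a bounded complex,
\[
    \chi_\Ecal(s) = \sum_{i \geq 0} (-1)^i \dim_{\kappa(s)} \H^i(K^\bullet \otimes_A \kappa(s)) = \sum_{i \geq 0} (-1)^i \dim_{\kappa(s)}(K^i \otimes_A \kappa(s)) = \sum_{i \geq 0} (-1)^i \rk_A(K^i),
\]
since each $K^i$ is finitely generated projective. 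As the ranks $\rk_A(K^i)$ are locally constant on $\Spec A$ (and, over a sufficiently small affine open, honest integers in $\Z$), the function $\chi_\Ecal$ is locally constant on $\Spec A$, hence on $S$.

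The main obstacle is the input from cohomology and base change, namely the existence over a Noetherian affine base of such a bounded complex of finitely generated projectives computing the cohomology of \emph{every} base change by a single complex: in the projective case one can produce it directly from a \v{C}ech/Koszul resolution (cf.\ Hartshorne, III.12.2), but for a general proper $f$ this is precisely where the EGA machinery is needed. Once that is granted, everything else is the elementary bookkeeping above; the only point requiring minor care is ensuring that ``finite projective'' for the $K^i$ is taken over an affine open small enough that the alternating sum of ranks is a well-defined integer.
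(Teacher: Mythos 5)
Your proof is correct. The paper does not argue this lemma at all---it simply cites EGA III$_2$, Th\'eor\`eme (7.9.4)---and your argument (flatness of $\Ecal$ over $S$ from local freeness plus flatness of $f$, the Grothendieck complex of finitely generated projective modules compatible with arbitrary base change, and the Euler--Poincar\'e collapse of $\chi_\Ecal(s)$ to the alternating sum of local ranks) is precisely the standard proof of that cited result, as in Mumford's \emph{Abelian Varieties}, \S II.5, so there is nothing to add beyond the minor remark that no shrinking is needed to get projective $K^i$, only to make their ranks constant.
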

\begin{proof}
See~\cite{EGAIII2}, p.~76\,f., Théorème (7.9.4).
\end{proof}

\begin{theorem}[The Néron model] \label{thm:PicIsNeronModel}
Let $S$ be a regular, Noetherian, integral, separated scheme, and $g: \{\eta\} \hookrightarrow S$ the inclusion of the generic point.  Let $X/S$ be a smooth projective variety with geometrically integral fibres that admits a section such that its Picard functor is representable (e.\,g., $X/S$ a smooth projective curve admitting a section or an Abelian scheme).  Then $\PPic_{X/S} \isoto g_*g^*\PPic_{X/S}$ as sheaves on $S_{\mathrm{sm}}$, the smooth site.  Let $\Acal/S$ be an Abelian scheme.  Then
\[
    \Acal \isoto g_*g^*\Acal
\]
as sheaves on $S_{\mathrm{sm}}$.
\end{theorem}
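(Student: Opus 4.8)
The plan is to prove the two assertions in tandem, since the Abelian scheme case is either a special case of the Picard functor statement (when $\Acal = \PPic^0_{X/S}$ for a suitable $X$) or can be handled by the same local argument. The statement $\PPic_{X/S} \isoto g_*g^*\PPic_{X/S}$ is a statement about sheaves on $S_{\mathrm{sm}}$, so it suffices to check it on stalks, i.e. to show that for every strictly Henselian local ring $\Ocal_{S,s}^{sh}$ with fraction field $K_s^{nr}$, the natural map
\[
    \PPic_{X/S}(\Ocal_{S,s}^{sh}) \to \PPic_{X/S}(K_s^{nr})
\]
is bijective. (Here one uses that for a smooth $S$-scheme $Y$, $(g_*g^*Y)$ evaluated on a strictly Henselian $S$-scheme is $Y(K_s^{nr})$, since $g^*Y$ is the generic fibre and pushforward on the smooth site is computed by restriction to the generic point.) By Lemma~\ref{lemma:HenselisationOfRegularLocalRing}, which applies because $S$ is regular, we have $\Ocal_{S,s}^{sh} \otimes_{\Ocal_{S,s}} K = \Quot(\Ocal_{S,s}^{sh}) = K_s^{nr}$, so $K_s^{nr}$ really is the generic fibre of $\Spec \Ocal_{S,s}^{sh}$ over $S$, and both sides make sense.

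First I would reduce, via the exact sequences
\[
    0 \to \Pic(T) \to \Pic(X \times_S T) \to \PPic_{X/S}(T) \to 0
\]
from Theorem~\ref{thm:RepresentabilityOfPic} (valid for $T = \Spec \Ocal_{S,s}^{sh}$ and $T = \Spec K_s^{nr}$, using that $X/S$ has a section so the relative Picard functor coincides with its sheafifications), to a statement purely about Picard groups of the total spaces: comparing $\Pic(X_{\Ocal_{S,s}^{sh}})$ with $\Pic(X_{K_s^{nr}})$ and $\Pic(\Ocal_{S,s}^{sh})$ with $\Pic(K_s^{nr})$. Now $\Ocal_{S,s}^{sh}$ is a regular local ring, hence factorial (as in the proof of Lemma~\ref{lemma:HenselisationOfRegularLocalRing}), so $\Pic(\Ocal_{S,s}^{sh}) = 0$; and $\Pic(K_s^{nr}) = 0$ trivially. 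So the claim reduces to: $\Pic(X_{\Ocal_{S,s}^{sh}}) \to \Pic(X_{K_s^{nr}})$ is an isomorphism, where $X_{\Ocal_{S,s}^{sh}}$ is regular (being smooth over a regular ring) and $X_{K_s^{nr}}$ is its generic fibre, obtained by inverting the parameters of the base. For a regular integral (or regular with nice components) scheme $Z$ and an open subscheme $U$ whose complement has codimension $\geq 2$, restriction $\Pic(Z) \to \Pic(U)$ is an isomorphism; so the key geometric input is that $X_{\Ocal_{S,s}^{sh}} \setminus X_{K_s^{nr}}$ — the union of the fibres over the non-generic points of $\Spec \Ocal_{S,s}^{sh}$ — has codimension $\geq 2$ in $X_{\Ocal_{S,s}^{sh}}$. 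This fails in relative dimension considerations unless $\dim \Ocal_{S,s}^{sh} \geq 2$; the case $\dim = 1$ (traditional Néron model over a DVR) must be argued separately, and there one uses that $X/S$ is proper with a section plus properness of the Picard scheme pieces, i.e. the valuative criterion: a section over $K_s^{nr}$ of the smooth separated $S$-scheme $\PPic_{X/S}$ (or of $\Acal$) extends uniquely over the DVR because the relevant component is proper (an Abelian scheme, or a quasi-projective piece of $\PPic$ whose closure one controls).

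The cleanest uniform approach, which I would actually write, avoids splitting into cases: use that $\PPic_{X/S}$ is smooth and locally of finite type over $S$, and that $\Acal$ likewise; then $g_*g^*$ applied to such a sheaf represented by a smooth $S$-scheme $Y$ is the sheaf $T \mapsto Y(T \times_S \eta)$, and the map $Y \to g_*g^*Y$ is an isomorphism on $S_{\mathrm{sm}}$ iff for every smooth $S$-scheme $T$ and every section over the generic fibre, the section extends uniquely over (the strict Henselisations of) $T$ — i.e. iff $Y$ satisfies the Néron mapping property. For an Abelian scheme this is classical when the base is a trait and extends to regular bases by Weil's extension theorem / the codimension-$\geq 2$ argument above combined with normality of the base; for $\PPic_{X/S}$ one reduces to $\PPic^0_{X/S}$, which is an Abelian scheme by Theorem~\ref{thm:RepresentabilityOfPic}, plus the étale (hence unramified, so uniquely extending) component group, noting the section of $X/S$ pins down a point on each geometric fibre of $\PPic$. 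The main obstacle I anticipate is the bookkeeping at codimension-$1$ points of $S$: one must verify that a generic section of $\PPic_{X/S}$ lands, after closure, in the identity component over codimension-$1$ points (so that properness of the Abelian scheme $\PPic^0$ gives the extension), rather than wandering into the non-proper part of $\PPic$; this is handled by the standard fact that specialisation of line bundles preserves the degree-$0$ (more precisely, numerically-trivial) condition on integral fibres, so I would invoke the constancy of the Hilbert polynomial — Lemma~\ref{lemma:Euler characteristic locally constant} — to control which component the closure meets.
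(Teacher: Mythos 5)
Your primary argument has a genuine gap at the step you yourself flag as ``the key geometric input''. The complement of the generic fibre, $X_{\Ocal_{S,s}^{sh}} \setminus X_{K_s^{nr}} = \pi^{-1}\bigl(\Spec \Ocal_{S,s}^{sh} \setminus \{\eta\}\bigr)$, contains the fibres over all height-one primes of $\Ocal_{S,s}^{sh}$, and each such fibre is a codimension-one subset of the total space (flatness, equidimensional fibres). So this complement has codimension exactly $1$ whenever $\dim \Ocal_{S,s}^{sh} \geq 1$; your dichotomy is backwards, and the purity isomorphism $\Pic(Z) \isoto \Pic(U)$ for complements of codimension $\geq 2$ never applies here. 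The target statement, that $\Pic(X_{\Ocal_{S,s}^{sh}}) \to \Pic(X_{K_s^{nr}})$ is bijective, is still true, but for different reasons: surjectivity because $X_{\Ocal_{S,s}^{sh}}$ is regular, so $\Pic = \mathrm{Cl}$, and a Weil divisor on the generic fibre extends by taking its closure (this is exactly how the paper argues the vanishing of the cokernel, via the surjections $\Div \twoheadrightarrow \Pic \twoheadrightarrow \PPic_{X/S}$); injectivity because the kernel is generated by vertical divisors, and since the fibres are geometrically integral every vertical prime divisor is the full fibre over a height-one prime of $\Ocal_{S,s}^{sh}$, which is principal since a regular local ring (and hence its strict Henselisation) is factorial --- or, as the paper does instead, one proves injectivity of $\PPic_{X/S}(\Ocal_{S,s}^{sh}) \to \PPic_{X/S}(K_s^{nr})$ directly from separatedness of $\PPic_{X/S}/S$ via the valuative criterion, dominating each local ring of $\Ocal_{S,s}^{sh}$ by a valuation ring. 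Your proposed fallback for $\dim = 1$, extension by properness, also does not work for the full $\PPic_{X/S}$, which is not proper (its component group is $\Z$ for a relative curve); properness is only available for $\PPic^0$.

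Your ``cleanest uniform approach'' is not a proof either: for the Abelian scheme case it cites Weil's extension theorem / the N\'eron property over a regular base as classical, which is essentially the assertion to be established (the paper proves it by hand, reducing $\Acal = (\Acal^\vee)^\vee = \PPic^0_{\Acal^\vee/S}$ to the Picard-scheme statement), and for the passage from $\PPic$ to $\PPic^0$ the delicate point --- that a section generically landing in the identity component stays there --- is handled in the paper not by Hilbert-polynomial constancy but by the fact that $\PPic^\tau_{X/S}$ is open and closed in $\PPic_{X/S}$ (SGA~6, Exp.~XIII, Th.~4.7) together with connectedness of $S$ and torsion-freeness of the N\'eron--Severi groups for curves and Abelian schemes, which identifies $\PPic^0$ with $\PPic^\tau$. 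Finally, note that the smooth-site statement is not purely a stalk computation: the paper first proves the \'etale-site statement and then deduces the smooth case formally, using that a smooth $S$-scheme $S'$ is again regular with generic points over $\eta$; your sketch should include this reduction explicitly.
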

The theorem says that an Abelian scheme and a Picard scheme is the Néron model of its generic fibre.

\begin{proof}
The main idea for injectivity is to use the separatedness of our schemes, and the main idea for surjectivity is that Weil divisors spread out and that the Picard group equals the Weil divisor class group by regularity.

We first prove everything for sheaves on the \emph{étale} site on $S$.

Let $f: \PPic_{X/S} \to g_*g^*\PPic_{X/S}$ be the natural map of étale sheaves induced by adjointness.  Let $\bar{s} \to S$ be a geometric point.  We have to show that $(\coker(f))_{\bar{s}} = 0 = (\ker(f))_{\bar{s}}$.

Taking stalks and using~\cite{EGAIV3}, p.~52, Proposition~8.14.2, we get the following commutative diagram:
\[\xymatrix{
\PPic_{X/S}(\Ocal_{S,s}^{sh})        \ar[r]^{f}  &\PPic_{X/S}(\Quot(\Ocal_{S,s}^{sh}))   \ar@{->>}[r] &(\coker(f))_{\bar{s}}\\
\Pic(X \times_S \Ocal_{S,s}^{sh})   \ar@{->>}[u]\ar[r]    &\Pic(X \times_S \Quot(\Ocal_{S,s}^{sh}))\ar@{->>}[u]\\
\Div(X \times_S \Ocal_{S,s}^{sh})   \ar@{->>}[u]\ar[r]    &\Div(X \times_S \Quot(\Ocal_{S,s}^{sh}))\ar@{->>}[u]}\]
Note that $\Ocal_{S,s}^{sh}$ is a domain since it is regular as a strict Henselisation of a regular local ring by~\cite{LeiFuÉtaleCohomologyTheory}, p.~111, Proposition~2.8.18.  By~\cite{EGAIV3}, p.~52, Proposition~8.14.2, one has $(\PPic_{X/S})_{\bar{s}} = \PPic_{X/S}(\Ocal_{S,s}^{sh})$ and $(g_*g^*\PPic_{X/S})_{\bar{s}} = \PPic_{X/S}(\Ocal_{S,s}^{sh} \otimes_{\Ocal_{S,s}} K(S))$. But for a regular local ring $A$, one has $A^{sh} \otimes_A \Quot(A) = \Quot(A^{sh})$ by~\cref{lemma:HenselisationOfRegularLocalRing}.

By~\cite{HartshorneAG}, p.~145, Corollary~II.6.16, one has a surjection from $\Div$ to $\Pic$ since $S$ is Noetherian, integral, separated and locally factorial.  By~\cref{thm:RepresentabilityOfPic}, the upper vertical arrows are surjective (under the assumption that $X/S$ has a section).

But here, the lower horizontal map is surjective:  A preimage under $\iota: X \times_S \Quot(\Ocal_{S,s}^{sh}) \to X \times_S \Ocal_{S,s}^{sh}$ of $D \in \Div(X \times_S \Quot(\Ocal_{S,s}^{sh}))$ is $\bar{D} \in \Div(X \times_S \Ocal_{S,s}^{sh})$, the closure taken in $X \times_S \Ocal_{S,s}^{sh}$.  In fact, note that $D$ is closed in $X \times_S \Quot(\Ocal_{S,s}^{sh})$ since it is a divisor; the closure of an irreducible subset is irreducible again, and the codimension is also $1$ since the codimension is the dimension of the local ring at the generic point $\eta_D$ of $D$, and the local ring of $\eta_D$ in $X \times_S \Quot(\Ocal_{S,s}^{sh})$ is the same as the local ring of $\eta_D$ in $X \times_S \Ocal_{S,s}^{sh}$ as it is the colimit of the global sections taken for all open neighbourhoods of $\eta_D$.  Hence $(\coker(f))_{\bar{s}} = 0$. 

For $(\ker(f))_{\bar{s}} = 0$, consider the diagram
\[\xymatrix{
                    &\Spec{\Quot(\Ocal_{S,s}^{sh})} \ar@{^{(}->}[ld] \ar@{^{(}->}[dd] \ar[r] &\PPic_{X/S}\ar[dd]\\
\Spec\Ocal \ar[rd] \ar@{-->}[rru] \\
                        &\Spec{\Ocal_{S,s}^{sh}} \ar@{-->}[ruu] \ar[r] &S.}\]
We want to show that a lift $\Spec{\Ocal_{S,s}^{sh}} \to \PPic_{X/S}$ of $\Spec{\Quot(\Ocal_{S,s}^{sh})} \to \PPic_{X/S}$ is unique. As $\PPic_{X/S}/S$ is separated, this is true for all valuation rings $\Ocal \subset \Quot(\Ocal_{S,s}^{sh})$ by the valuative criterion of separatedness~\cite{EGAII}, p.~142, Proposition~(7.2.3).  But by~\cite{Matsumura}, p.~72, Theorem~10.2, every local ring $(\Ocal_{S,s}^{sh})_\p$ is dominated by a valuation ring $\Ocal$ of $\Quot(\Ocal_{S,s}^{sh})$.  It follows from the valuative criterion for separatedness that the lift is topologically unique. Assume $\phi,\phi'$ are two lifts. Now cover $\PPic_{X/S}/S$ by open affines $U_i = \Spec{A_i}$ and their preimages $\phi^{-1}(U_i) = \phi'^{-1}(U_i)$ by standard open affines $\{D(f_{ij})\}_j$.
\[\xymatrix{
                    & &A_i \ar[lld] \ar[ldd]^{\phi^\#}\ar@<1ex>[ldd]_{\phi'^\#}\\
\Ocal_{f_{ij}} \\
                        &(\Ocal_{S,s}^{sh})_{f_{ij}}  \ar@{^{(}->}[lu]  }\]
It follows that $\phi = \phi'$.


Now we prove the last statement of the theorem for Abelian schemes, so one can deduce the statement for Abelian varieties by noting that $\Acal = (\Acal^\vee)^\vee = \PPic^0_{\Acal^\vee/S}$.

We want to show that
\begin{align} \label{eq:PPic0}
    \PPic_{X/S}^0(S') \to \PPic_{X/S}^0(S'_{\eta})
\end{align}
is bijective for any étale $S$-scheme $S'$.

First note that such an $S'$ is regular, so its connected components are integral, and $S'_{\eta}$ is the disjoint union of the generic points of the connected components of $S'$, so we can replace $X \to S$ with the restrictions of the base change $X' \to S'$ over each connected component of $S'$ separately to reduce to checking for $S' = S$.

For any section $S \to \PPic_{X/S}$, since $S$ is connected and $\PPic_{X/S}^{\tau}$ is open and closed in $\PPic_{X/S}$ by~\cite{SGA6}, p.~647\,f., exp.~XIII, Théorème~4.7, the preimage of $\PPic_{X/S}^{\tau}$ under the section is open and closed in $S$, hence empty or $S$.  Thus, if even a single point of $S$ is carried into $\PPic_{X/S}^{\tau}$ under the section, then the whole of $S$ is.  More generally, when using $S'$-valued points of $\PPic_{X/S}$ for any étale $S$-scheme $S'$, such a point lands in $\PPic_{X/S}^{\tau}$ if and only if some point in each connected component of $S'$ does, such as the generic point of each connected component $S'_{\eta}$.

This proves the statement in~\eqref{eq:PPic0} for $\PPic_{X/S}^0$ replaced by $\PPic_{X/S}^{\tau}$, and hence for $\PPic_{X/S}^0$ in cases where it coincides with $\PPic_{X/S}^{\tau}$ (i.\,e., when the geometric fibers have component group for $\PPic_{X/S}$---i.\,e., Néron-Severi group---that is torsion-free, e.\,g.\ for Abelian schemes or curves).

The Néron mapping property for \emph{smooth} $S$-schemes $S'$ follows formally.  Since $S' \to S$ is smooth, $S'$ is regular as well and its generic points lie over $\eta$.  Since an $S'$-point of $\PPic_{X/S}$ is the same as an $S'$-point of $\PPic_{X'/S'}$, an $S'_\eta$-point extends uniquely to an $S'$-point.
\end{proof}

\section{The Tate-Shafarevich group}

\subsection{Comparison with the classical definition}
The main theorem of this subsection is~\cref{thm:codim1Punktereichen}, which shows that our definition of the Tate-Shafarevich group is analogous to the classical definition.  \cref{lemma:HenselisationAndCompletion} gives an even more direct connection to the classical case, involving completions instead of Henselisations.

\begin{proposition}
Let $X$ be integral and $\Acal/X$ be an Abelian scheme over $X$ regular, Noetherian and separated.  Then $\H^i(X,\Acal)$ is torsion for $i > 0$.
\end{proposition}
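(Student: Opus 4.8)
The plan is to reduce the statement to the fact that $\PPic_{X/S}$ (and hence the abelian scheme $\Acal = \PPic^0_{\Acal^\vee/X}$ via biduality) is, Zariski-locally on the base, a union of quasi-projective schemes, combined with the fact that $\H^1(K,\Acal)$ is torsion over a field $K$. First I would use the N\'eron mapping property from~\cref{thm:PicIsNeronModel}, which gives $\Acal \isoto g_*g^*\Acal$ on $X_{\mathrm{sm}}$ and in particular on $X_{\et}$. By the Leray spectral sequence for $g:\{\eta\}\hookrightarrow X$ this yields an injection $\H^1(X,\Acal) \hookrightarrow \H^0(X, \R^1 g_* g^*\Acal)$ together with a contribution from $\H^1(X, g_*g^*\Acal) = \H^1(X,\Acal)$ itself; more cleanly, I would note that the edge map $\H^1(X,\Acal) \to \H^1(\eta, g^*\Acal) = \H^1(K,\Acal)$ has kernel a subquotient of $\H^1(X, g_*g^*\Acal)$ — wait, that is circular — so instead I would argue directly as follows.

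The cleanest route: for each $i>0$ and each $n\geq 1$, multiplication by $n$ on the abelian scheme $\Acal$ is a finite flat surjective morphism (an isogeny of abelian schemes), so there is a short exact sequence of \'etale sheaves $0 \to \Acal[n] \to \Acal \xrightarrow{n} \Acal \to 0$ on $X$, where $\Acal[n]$ is a finite flat group scheme, \'etale if $n$ is invertible. Taking cohomology gives an exact sequence $\H^i(X,\Acal[n]) \to \H^i(X,\Acal) \xrightarrow{n} \H^i(X,\Acal)$. Thus it suffices to show that $\H^i(X,\Acal)$ is divisible — no, rather, it suffices to show $\H^i(X,\Acal) = \bigcup_n \H^i(X,\Acal)[n]$, i.e.\ that every element is killed by some integer. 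Equivalently, I must rule out a torsion-free part. For that I would invoke that $\H^i(X,\Acal) = \colim_{U} \H^i(U,\Acal)$ over a suitable filtered system, or better: since $\PPic_{X/S}$ is locally on $X$ an increasing union of open quasi-projective $X$-schemes (\cref{thm:RepresentabilityOfPic}), and $\Acal^0 = \PPic^0$ is of finite type, one reduces to bounding cohomology of a scheme of finite type.

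The standard argument I would actually write down: the inclusion $j: \Acal[n]\hookrightarrow\Acal$ and the quotient identify $\H^i(X,\Acal)/n \hookrightarrow \H^{i+1}(X,\Acal[n])$, and dually $\H^i(X,\Acal)[n]$ is a quotient of $\H^i(X,\Acal[n])$. An element $\xi\in\H^i(X,\Acal)$ with $i>0$; its image $\xi_\eta\in\H^i(K,\Acal)$ is torsion because Galois cohomology of a field in positive degree with coefficients in a torsion or divisible group... more precisely $\H^i(K,\Acal) = \colim_n \H^i(K,\Acal[n])$ is torsion for $i\geq 1$ since $\Acal(\sep K)$ is a torsion-times-divisible... hmm, $\Acal(\sep K)$ need not be torsion. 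The correct classical fact is: $\H^i(\Gal, M)$ is torsion for $i\geq 1$ and any Galois module $M$, by a standard corestriction-restriction argument (it is killed by the index of an open subgroup after which it dies). So $\xi_\eta$ is killed by some $m$. Then $m\xi$ lies in $\ker(\H^i(X,\Acal)\to\H^i(K,\Acal))$, which by the localization-type argument using $\Acal=g_*g^*\Acal$ is itself covered by torsion sheaf cohomology: concretely $m\xi$ comes from $\H^{i}(X, \mathcal{F})$ where $\mathcal{F}$ fits in a sequence relating $\Acal$ to $g_*g^*\Acal$ with torsion cokernel supported on the boundary, hence $m\xi$ is again torsion. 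Therefore $\xi$ is torsion.

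The main obstacle I anticipate is making the last step — ``the kernel of restriction to the generic point is torsion'' — precise: one needs that the difference sheaf $\coker(\Acal \to g_*g^*\Acal)$ or the relevant local cohomology sheaves along the boundary are torsion, and for this one should use that, over a strictly henselian local ring $\Ocal_{X,x}^{sh}$ of the regular scheme $X$, the cokernel of $\Acal(\Ocal^{sh}_{X,x}) \to \Acal(\Quot(\Ocal^{sh}_{X,x}))$ is torsion because it injects into $\H^1_x$-type groups controlled by finite group schemes $\Acal[n]$, using the weak N\'eron / spreading-out of sections. Granting that, the divisibility-plus-restriction argument closes the proof. I would phrase the whole thing as: $\H^i(X,\Acal) \otimes \Q = \H^i(X,\Acal\otimes\Q) = \H^i(\eta, g^*\Acal \otimes \Q) = \H^i(K,\Acal)\otimes\Q = 0$ for $i>0$ — provided one can justify that tensoring with $\Q$ commutes with the cohomology here and with $g_*$, which reduces exactly to the same boundary-torsion statement — and this is the slick formulation I would aim for.
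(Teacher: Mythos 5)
Your intended route is in substance the paper's: use the N\'eron mapping property (\cref{thm:PicIsNeronModel}) to write $\Acal \isoto g_*g^*\Acal$, restrict to the generic point, use that Galois cohomology is torsion in positive degrees, and then show that the kernel of $\H^i(X,\Acal)\to\H^i(K,g^*\Acal)$ is torsion. But exactly this last step is where the proposal stops being a proof: you leave it as ``granting that'', and the mechanisms you suggest for it are off target. The sheaf cokernel $\coker(\Acal\to g_*g^*\Acal)$ that you propose to show is torsion is in fact \emph{zero} --- that is precisely the N\'eron mapping property you already invoked, and stalkwise $\Acal(\Ocal_{X,x}^{sh})\to\Acal(\Quot(\Ocal_{X,x}^{sh}))$ is a bijection (\cref{lemma:HenselisationOfRegularLocalRing} plus \cref{thm:PicIsNeronModel}), not merely a map with torsion cokernel --- so it cannot account for the kernel on cohomology. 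The alternative you sketch via ``local cohomology sheaves along the boundary are torsion'' is not established by anything you cite: controlling $\H^i_Z(X,\Acal)$ is genuinely delicate, and the paper only proves such statements later, in special cases and low dimension, for \cref{thm:codim1Punktereichen}; it is not needed here.

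What closes the argument using only facts you already have is the Leray spectral sequence for $g$: $\H^p(X,\R^qg_*g^*\Acal)\Rightarrow\H^{p+q}(K,g^*\Acal)$. For $q\geq 1$ the stalks of $\R^qg_*g^*\Acal$ are positive-degree Galois cohomology groups $\H^q(\Quot(\Ocal_{X,x}^{sh}),\Acal)$, hence torsion, so $\R^qg_*g^*\Acal$ is a torsion sheaf; and since $X$ is Noetherian, $\H^p(X,\Fcal)=\colim_n\H^p(X,\Fcal[n])$ is torsion for any torsion sheaf $\Fcal$. Thus all $E_2^{p,q}$ with $q\geq 1$ are torsion, the kernel of the edge map $\H^p(X,g_*g^*\Acal)=E_2^{p,0}\to\H^p(K,g^*\Acal)$ is a successive extension of subquotients of such terms, and $\H^p(K,g^*\Acal)$ is itself torsion for $p>0$; equivalently, modulo the Serre subcategory of torsion groups the spectral sequence degenerates and $\H^p(X,\Acal)=\H^p(X,g_*g^*\Acal)\cong\H^p(K,g^*\Acal)=0$ for $p>0$, which is exactly how the paper argues. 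Your ``$\otimes\,\Q$'' formulation is the same computation, and the commutation you ask to justify is precisely this torsionness of the $q\geq1$ terms, not a statement about cokernels of sections over strictly Henselian local rings.
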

\begin{proof}
Consider the Leray spectral sequence for the inclusion $g: \{\eta\} \hookrightarrow X$ of the generic point
\[
    \H^p(X, \R^qg_*g^*\Acal) \Rightarrow \H^{p+q}(\eta, g^*\Acal).
\]
Calculation modulo the Serre subcategory of torsion sheaves, and exploiting the fact that Galois cohomology groups are torsion in dimension $> 0$, and therefore also the higher direct images $\R^qg_*g^*\Acal$ are torsion sheaves by~\cite{MilneÉtaleCohomology}, p.~88, Proposition~III.1.13, the spectral sequence degenerates modulo the Serre subcategory of torsion sheaves giving
\[
    \H^p(X, g_*g^*\Acal) = E^{p,0}_2 = E^p = \H^p(\eta, g^*\Acal) = 0
\]
for $p > 0$ modulo torsion.  Because of the Néron mapping property we have $\H^p(X, \Acal) \isoto \H^p(X, g_*g^*\Acal)$, which finishes the proof.
\end{proof}

\begin{definition} \label[definition]{def:Sha}
Define the \defn{Tate-Shafarevich group} of an Abelian scheme $\Acal/X$ by
\[
    \Sha(\Acal/X) = \Het^1(X,\Acal).
\]
\end{definition}

\begin{remark}
Note that if $A$ is an Abelian variety over a global field $K$, $\Sha(A/K)$ is \emph{not} the usual Tate-Shafarevich group of $A$ over $K$.  One rather has to choose a Néron model $\Acal$ over a model $X$ of $K$.
\end{remark}

\begin{theorem} \label[theorem]{thm:ShaSequenzAllePunkte}
Let $X$ be regular, Noetherian, integral and separated and let $\Acal$ be an Abelian scheme over $X$.  For $x \in X$, denote the function field of $X$ by $K$, the quotient field of the strict Henselisation of $\Ocal_{X,x}$ by $K_x^{nr}$, the inclusion of the generic point by $j: \{\eta\} \hookrightarrow X$ and let $j_x: \Spec(K_x^{nr}) \hookrightarrow \Spec(\Ocal_{X,x}^{sh}) \hookrightarrow X$ be the composition. Then we have
\[
	\H^1(X, \Acal) \isoto \ker\Big(\H^1(K, j^*\Acal) \to \prod_{x \in X} \H^1(K_x^{nr}, j_x^*\Acal) \Big).
\]
\end{theorem}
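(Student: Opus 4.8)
The plan is to compare the étale cohomology of $X$ with Galois cohomology at the generic point via the Leray spectral sequence for $j\colon\{\eta\}\hookrightarrow X$, and to identify the relevant $\mathrm{E}_2$-terms as kernels of local restriction maps. First I would invoke the Néron mapping property from~\cref{thm:PicIsNeronModel}, which gives $\Acal\isoto j_*j^*\Acal$ as sheaves on $X_{\et}$ (an Abelian scheme is étale-locally, in particular smooth-locally, unramified). Consequently $\H^i(X,\Acal)=\H^i(X,j_*j^*\Acal)$ for all $i$, and the low-degree exact sequence of the Leray spectral sequence
\[
    \H^p(X,\R^qj_*j^*\Acal)\Rightarrow\H^{p+q}(\eta,j^*\Acal)
\]
reads
\[
    0\to\H^1(X,j_*j^*\Acal)\to\H^1(K,j^*\Acal)\to\H^0(X,\R^1j_*j^*\Acal).
\]
So the heart of the matter is to identify the edge map $\H^1(K,j^*\Acal)\to\H^0(X,\R^1j_*j^*\Acal)$ with the product of local restriction maps and, crucially, to show this identification is injective on $\H^0(X,\R^1j_*j^*\Acal)$, i.e.\ that the sheaf $\R^1j_*j^*\Acal$ is captured faithfully by its stalks.

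The key computation is the stalk of $\R^1j_*j^*\Acal$ at a geometric point $\bar x$ over $x\in X$. By the standard description of higher direct images (e.g.~\cite{Milne�taleCohomology}, p.~88, III.1.15), $(\R^1j_*j^*\Acal)_{\bar x}=\H^1(\Spec(\Ocal_{X,x}^{sh})\times_X\eta,\,j^*\Acal)=\H^1(K_x^{nr},j_x^*\Acal)$, using~\cref{lemma:HenselisationOfRegularLocalRing} to identify $\Ocal_{X,x}^{sh}\otimes_{\Ocal_{X,x}}K=\Quot(\Ocal_{X,x}^{sh})=K_x^{nr}$ (here regularity of $X$ is used so that $\Ocal_{X,x}^{sh}$ is a domain). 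Since a section of a sheaf over $X$ is determined by its stalks, $\H^0(X,\R^1j_*j^*\Acal)$ injects into $\prod_{x\in X}\H^1(K_x^{nr},j_x^*\Acal)$, and the composite $\H^1(K,j^*\Acal)\to\H^0(X,\R^1j_*j^*\Acal)\hookrightarrow\prod_x\H^1(K_x^{nr},j_x^*\Acal)$ has $x$-component the restriction map $\H^1(K,j^*\Acal)\to\H^1(K_x^{nr},j_x^*\Acal)$, since taking the stalk of a section of $\R^1j_*$ at $\bar x$ is restriction along $\Spec K_x^{nr}\to\Spec K$. Therefore the kernel of $\H^1(K,j^*\Acal)\to\H^0(X,\R^1j_*j^*\Acal)$ equals the kernel of $\H^1(K,j^*\Acal)\to\prod_x\H^1(K_x^{nr},j_x^*\Acal)$, and combining with the Leray sequence gives the desired isomorphism.

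The main obstacle I anticipate is the bookkeeping that the edge map of the Leray spectral sequence really does agree, component by component, with the local restriction maps $j_x^*$; this is a compatibility of the adjunction unit $j^*\Acal\to\text{(its pushforward)}$ with the fibre functors at the points $\bar x$, and while conceptually clean it requires carefully tracking how $\Spec(\Ocal_{X,x}^{sh})\times_X\{\eta\}\hookrightarrow\{\eta\}$ interacts with the spectral sequence. A secondary subtlety, already implicitly handled above, is ensuring that $j^*\Acal$ (a sheaf on $\Spec K$) has all the properties needed, but this is automatic since $j^*\Acal$ is represented by the generic-fibre Abelian variety $\Acal_\eta$, so $\H^i(K,j^*\Acal)$ is just Galois cohomology of $\Acal_\eta(K^{sep})$; no subtlety about étale versus fppf arises because $\Acal$ is smooth.
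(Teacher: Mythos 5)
Your proposal is correct and follows essentially the same route as the paper: Néron mapping property to replace $\Acal$ by $j_*j^*\Acal$, the low-degree exact sequence of the Leray spectral sequence for $j$, injectivity of $\H^0(X,\R^1j_*j^*\Acal)\to\prod_{x}(\R^1j_*j^*\Acal)_{\bar x}$ because a nonzero section has a nonzero stalk, and identification of the stalks with $\H^1(K_x^{nr},j_x^*\Acal)$ via \cref{lemma:HenselisationOfRegularLocalRing}. The compatibility of the edge map with the local restrictions that you flag as the main bookkeeping point is exactly what the paper handles (implicitly) via the kernel--cokernel sequence, so no genuinely different ingredient is involved.
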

\begin{proof}
The Leray spectral sequence $\H^p(X, \R^qj_*(j^*\Acal))) \Rightarrow \H^{p+q}(K, j^*\Acal)$ yields the exactness of $0 \to E_2^{1,0} \to E^1 \to E_2^{0,1}$, i.\,e.\
\[
	\H^1(X, j_*j^*\Acal) = \ker\left(\H^1(K, j^*\Acal) \to \H^0(X, \R^1j_*(j^*\Acal)) \right).
\]
Since
\[
	\H^0(X, \R^1j_*(j^*\Acal)) \to \prod_{x \in X}\R^1j_*(j^*\Acal)_{\bar{x}}
\]
is injective (\cite{MilneÉtaleCohomology}, p.~60, Proposition~II.2.10:  If a section of an étale sheaf is non-zero, there is a geometric point for which the stalk of the section is non-zero) and
\[
    \R^1j_*(j^*\Acal)_{\bar{x}} = \H^1(K_x^{nr}, j_x^*\Acal)
\]
by~\cref{lemma:HenselisationOfRegularLocalRing}, the theorem follows from the kernel-cokernel exact sequence and the Néron mapping property $\H^1(X, \Acal) \isoto \H^1(X, j_*j^*\Acal)$.
\end{proof}

By abuse of notation, we also denote $\Spec(\Ocal_{X,x}^{h}) \hookrightarrow X$ and $\Spec(K_x^{h}) \hookrightarrow \Spec(\Ocal_{X,x}^{h}) \hookrightarrow X$ by $j_x$.

\begin{theorem} \label[theorem]{thm:codim1Punktereichen}
In the situation of~\cref{thm:ShaSequenzAllePunkte}, one can replace the product over all points by the following:

(a) the closed points:  One has isomorphisms
\begin{align} \label{eq:abgPunkte}
	\H^1(X, \Acal) \isoto \ker\Big(\H^1(K, j^*\Acal) \to \prod_{x \in |X|} \H^1(K_x^{nr}, j_x^*\Acal) \Big)
\end{align}
and
\begin{align} \label{eq:abgPunkteendlicherGrundkoerper}
    \H^1(X, \Acal) \isoto \ker\Big(\H^1(K, j^*\Acal) \to \prod_{x \in |X|} \H^1(K_x^{h}, j_x^*\Acal) \Big)
\end{align}
with $K_x^h = \Quot(\Ocal_{X,x}^h)$ if $\kappa(x)$ is finite.

or (b) the codimension-$1$ points:  One has an isomorphism
\begin{align}
	\H^1(X, \Acal) \isoto \ker\Big(\H^1(K, j^*\Acal) \to \bigoplus_{x \in X^{(1)}} \H^1(K_x^{nr}, j_x^*\Acal) \Big) \label{eq:Codim1}
\end{align}
if one disregards the $p$-torsion ($p = \Char{k}$), $X/k$ is smooth projective over $k$ finitely generated and $\Acal/X$ is an Abelian scheme such that the vanishing condition
\begin{equation} \label{eq:vanishingcondition}
    \H^i_Z(X, \Acal) = 0\quad\text{for $i \leq 2$}.
\end{equation}
for $Z \hookrightarrow X$ be a reduced closed subscheme of codimension $\geq 2$ is satisfied.  For $\dim{X} \leq 2$, this also holds for the $p$-torsion.

For $x \in X^{(1)}$, one can also replace $K_x^{nr}$ and $K_x^h$ by the quotient field of the completions $\hat{\Ocal}_{X,x}^{sh}$ and $\hat{\Ocal}_{X,x}^h$, respectively.
\end{theorem}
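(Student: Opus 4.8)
The plan is to derive all three assertions from \cref{thm:ShaSequenzAllePunkte} by cutting down the index set over which the vanishing of a class is tested. Recall from its proof that, writing $\mathcal G:=\R^1j_*(j^*\Acal)$, one has $\H^1(X,\Acal)=\H^1(X,j_*j^*\Acal)=\ker\big(\H^1(K,j^*\Acal)\to\H^0(X,\mathcal G)\big)$ by the Leray edge sequence and the N\'eron property (\cref{thm:PicIsNeronModel}), and that the composite $\H^1(K,j^*\Acal)\to\H^0(X,\mathcal G)\to\mathcal G_{\bar x}$ is the restriction $\H^1(K,j^*\Acal)\to\H^1(K_x^{nr},j_x^*\Acal)$, using $\mathcal G_{\bar x}=\H^1(K_x^{nr},j_x^*\Acal)$ (\cref{lemma:HenselisationOfRegularLocalRing}). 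So it suffices, in each case, to show that the correspondingly restricted family of stalk maps is still injective on $\H^0(X,\mathcal G)$, resp.\ on the image of $\H^1(K,j^*\Acal)$.

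For (a) with closed points: the support $\{x:s_{\bar x}\neq 0\}$ of a section $s$ of an \'etale sheaf is closed, its complement being the union of the images of the \'etale neighbourhoods on which $s$ restricts to $0$. A nonempty closed subset of the Noetherian scheme $X$ is quasi-compact, hence (being sober) contains a minimal nonempty closed subset, which is a single closed point of $X$. Thus $s=0$ as soon as $s_{\bar x}=0$ for all $x\in|X|$, which gives \eqref{eq:abgPunkte}. For \eqref{eq:abgPunkteendlicherGrundkoerper} it is enough that for a closed point $x$ with $\kappa(x)$ finite the map $\H^1(K_x^h,j_x^*\Acal)\to\H^1(K_x^{nr},j_x^*\Acal)$ is injective: then $\ker(\H^1(K)\to\prod_{|X|}\H^1(K_x^h,\cdot))\subseteq\ker(\H^1(K)\to\prod_{|X|}\H^1(K_x^{nr},\cdot))=\H^1(X,\Acal)$, and conversely a class in $\H^1(X,\Acal)$, being killed in $\H^1(K_x^{nr},\cdot)$, is killed in $\H^1(K_x^h,\cdot)$. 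The kernel of that map is $\H^1\big(\Gal(K_x^{nr}/K_x^h),\Acal(K_x^{nr})\big)=\H^1(\widehat{\Z},\Acal(\Ocal_{X,x}^{sh}))$, where $\kappa(x)$ finite gives $\Gal=\widehat{\Z}$ and $\Acal(K_x^{nr})=\Acal(\Ocal_{X,x}^{sh})$ follows from the N\'eron property over the regular local ring $\Ocal_{X,x}^{sh}$ by taking global sections. Now $\Acal(\Ocal_{X,x}^{sh})$ is an extension of the special-fibre points $\Acal_0(\overline{\kappa(x)})$, for which $\H^1(\widehat{\Z},\cdot)=0$ by Lang's theorem, by the formal group $\widehat{\Acal}(\mathfrak m^{sh})$, which is uniquely $\ell$-divisible for $\ell\neq p$ and cohomologically trivial along the unramified tower; hence $\H^1(\widehat{\Z},\Acal(\Ocal_{X,x}^{sh}))=0$.

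For (b): any $\xi\in\H^1(K,j^*\Acal)$ is pulled back from $\H^1(V,\Acal)$ for some nonempty affine open $V\subseteq X$ (\'etale cohomology commutes with the limit $\{\eta\}=\varprojlim V$), and then \cref{thm:ShaSequenzAllePunkte} applied to $\Acal/V$ shows that the image $s$ of $\xi$ in $\H^0(X,\mathcal G)$ satisfies $s_{\bar x}=0$ for all $x\in V$, so $\supp(s)\subseteq X\setminus V$ is a proper closed subset, meeting $X^{(1)}$ in only finitely many points; in particular $\H^1(K,j^*\Acal)\to\prod_{x\in X^{(1)}}\H^1(K_x^{nr},j_x^*\Acal)$ factors through $\bigoplus_{x\in X^{(1)}}$. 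Assume now in addition $s_{\bar x}=0$ for every $x\in X^{(1)}$; then $T:=\supp(s)$ avoids $X^{(1)}$ and is a proper closed subset, so $\codim_X T\geq 2$. Since $j$ factors as $\{\eta\}\hookrightarrow X\setminus T\hookrightarrow X$, the restriction $\R\Gamma(X,\R j_*j^*\Acal)\to\R\Gamma(X\setminus T,(\R j_*j^*\Acal)|_{X\setminus T})$ is an isomorphism (both compute $\R\Gamma(\eta,j^*\Acal)$), so $\R\Gamma_T(X,\R j_*j^*\Acal)=0$. Feeding this into the local cohomology spectral sequence $\H^p_T(X,\R^qj_*j^*\Acal)\Rightarrow\H^{p+q}_T(X,\R j_*j^*\Acal)=0$ and using $j_*j^*\Acal=\Acal$ yields an injection $\H^0_T(X,\mathcal G)\hookrightarrow\H^2_T(X,\Acal)=\H^2_{T_{\mathrm{red}}}(X,\Acal)$ (the edge differential $d_2$, injective as its abutment vanishes), and the right-hand side is $0$ by hypothesis \eqref{eq:vanishingcondition}. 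Hence $s=0$, i.e.\ $\xi\in\ker(\H^1(K,j^*\Acal)\to\H^0(X,\mathcal G))=\H^1(X,\Acal)$; the reverse inclusion $\H^1(X,\Acal)\subseteq\ker(\H^1(K)\to\bigoplus_{X^{(1)}}\H^1(K_x^{nr},\cdot))$ is immediate from \cref{thm:ShaSequenzAllePunkte}. The restriction to prime-to-$p$ torsion, lifted when $\dim X\leq 2$, enters because away from $p$ the manipulations with $\R^qj_*$ and supports can be carried out $\ell$-adically, whereas at $\ell=p$ one must pass to the flat topology, where the required control of local cohomology in degrees $\leq 2$ along a codimension-$\geq 2$ subscheme is only unconditional in low dimension.

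Finally, for $x\in X^{(1)}$ the ring $\Ocal_{X,x}$ is a discrete valuation ring, so $\Ocal_{X,x}^h$, $\Ocal_{X,x}^{sh}$ and their completions $\widehat{\Ocal}_{X,x}^h$, $\widehat{\Ocal}_{X,x}^{sh}$ are Henselian discrete valuation rings whose fraction fields have canonically isomorphic absolute Galois groups; therefore $\H^i(\Quot(\Ocal_{X,x}^h),j_x^*\Acal)\isoto\H^i(\Quot(\widehat{\Ocal}_{X,x}^h),j_x^*\Acal)$ and likewise for the strict Henselisations, so these fields may be used interchangeably in all the kernels above. I expect the crux to be part (b): the passage from ``$\xi$ dies at all codimension-$1$ points'' to ``$\xi$ dies everywhere'' forces one to kill the codimension-$\geq 2$ part of $\supp(s)$, and it is precisely there that the vanishing hypothesis \eqref{eq:vanishingcondition} (and, for the $p$-part, the dimension bound) is indispensable.
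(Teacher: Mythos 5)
Your argument for part (b) is correct and takes a genuinely different route from the paper, and it is the part worth keeping. The paper proves surjectivity onto the codimension-$1$ kernel by a long d\'evissage: the localisation sequence \eqref{eq:ShaUndKohomologieMitTraegern}, excision of codimension-$\geq 2$ loci (\cref{lemma:ExcisionOfCodimgeq2subschemes}, which is where \eqref{eq:vanishingcondition} enters there), Mayer--Vietoris with supports, shrinking to UFD neighbourhoods of each codimension-$1$ point (\cref{lemma:UFD}), and a countable limit over all divisorial closed subsets. You instead take a class $\xi$ killed at all $x \in X^{(1)}$, observe that its image $s \in \H^0(X,\R^1j_*j^*\Acal)$ has closed support $T$ of codimension $\geq 2$, and feed the Leray spectral sequence with supports for $j$ (the paper's own \cref{thm:LeraySSWithSupports}, with empty support over $\eta$, so vanishing abutment) into the N\'eron identification $j_*j^*\Acal = \Acal$ to get $\H^0_T(X,\R^1j_*j^*\Acal) \hookrightarrow \H^2_T(X,\Acal) = 0$; hence $s = 0$ and $\xi \in \H^1(X,\Acal)$. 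This is shorter, makes the appearance of the direct sum transparent, and uses the vanishing hypothesis exactly once. Your closing explanation of the prime-to-$p$ caveat, however, is off the mark: nothing in your argument needs the flat topology; the caveat comes from the fact that \eqref{eq:vanishingcondition} is only available up to $p$-torsion for the intended examples when $\dim X > 2$ (\cref{lemma:VerschwindungfuerCodim2Pic0}), and your proof adapts verbatim, since then $\H^2_T(X,\Acal)$ is $p$-primary, so $s$ is $p$-primary and vanishes whenever $\xi$ is prime-to-$p$ torsion. Your treatment of \eqref{eq:abgPunkte} via closedness of the support and the existence of closed points in closed subsets of a quasi-compact scheme is also fine (and even avoids the Jacobson property the paper invokes).

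There are two genuine gaps elsewhere. First, for \eqref{eq:abgPunkteendlicherGrundkoerper} you reduce correctly to $\H^1\big(\Gal(K_x^{nr}/K_x^h),\Acal(K_x^{nr})\big) = 0$, but your proof of this is incomplete at $p$: the statement has no prime-to-$p$ restriction, the identification of $\ker\big(\Acal(\Ocal_{X,x}^{sh}) \to \Acal_0(\overline{\kappa(x)})\big)$ with formal-group points is not available over the non-complete, higher-dimensional Henselian local ring $\Ocal_{X,x}^{sh}$, and ``cohomologically trivial along the unramified tower'' is an assertion, not an argument (the usual filtration with additive graded pieces uses completeness and a limit). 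The clean closure is the paper's: $\H^1\big(\Gal(K_x^{nr}/K_x^h),\Acal(K_x^{nr})\big) = \H^1(\Ocal_{X,x}^h,\Acal) = \H^1(\kappa(x),\Acal)$ by the cohomology of Henselian local rings, and this vanishes by Lang--Steinberg since $\kappa(x)$ is finite. Second, in the final paragraph, an isomorphism of absolute Galois groups of $\Quot(\Ocal_{X,x}^h)$ and $\Quot(\hat{\Ocal}_{X,x}^h)$ does \emph{not} yield $\H^i(-,\Acal) \cong \H^i(-,\Acal)$, because the Galois modules $\Acal$ of points over the two separable closures are very different; what is needed (and true) is only that a class dies over the completed field if and only if it dies over the Henselised one, and this requires an approximation input --- properness to extend a point over the completion to an integral point, then Artin approximation to descend it --- which is exactly the paper's \cref{lemma:HenselisationAndCompletion}.
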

The theorem says that our~\cref{def:Sha} of the Tate-Shafarevich group is indeed a generalisation of the classical definition $\Sha(A/K) = \ker(\H^1(K,A) \to \oplus_{v\in M_K}\H^1(K_v,A))$ of the Tate-Shafarevich group of an Abelian variety $A$ over a global field $K$ with set of places $M_K$ and completions $K_v$ (where we have replaced the completions $K_v$ by our $K_x^{nr}$ or $\Quot(\hat{\Ocal}_{X,x}^{sh})$, respectively).  For the statement on completions see~\cref{lemma:HenselisationAndCompletion} below.

\begin{proof}[Proof of~\cref{thm:codim1Punktereichen}\,(a)] \label{ProofOfAbgPts}
For the proof of~\eqref{eq:abgPunkte}, note that, in the proof of~\cref{thm:ShaSequenzAllePunkte}, even
\[
	\H^0(X, \R^1j_*(j^*\Acal)) \to \prod_{x \in |X|}\R^1j_*(j^*\Acal)_{\bar{x}}
\]
is injective by~\cite{MilneÉtaleCohomology}, p.~65, Remark~II.2.17\,(b):  If a section of an étale sheaf is non-zero, there is a \emph{closed} geometric point for which the stalk of the section is non-zero.  This is because $X/k$ is a variety and hence Jacobson.

To deduce~\eqref{eq:abgPunkteendlicherGrundkoerper} from~\eqref{eq:abgPunkte}, note that in view of the inflation-restriction exact sequence
\[
    0 \to \H^1(K_x^{nr}/K_x^h,j_x^*\Acal) \to \H^1(K_x^h,j_x^*\Acal) \to \H^1(K_x^{nr},j_x^*\Acal)
\]
it suffices to show that $\H^1(K_x^{nr}/K_x^h,j_x^*\Acal) = 0$.  But one has in general for all $i \geq 0$ and étale sheaves $\Fcal$ on $\Spec(R)$, $R$ a Henselian local ring with closed point $s$, geometric point $\bar{s}$ and strict Henselisation $R^{sh}$:  The stalk is $\Fcal_{\bar{s}} = \Fcal(R^{sh})$ as a $\Gal_{\kappa(s)} = \Gal(R^{sh}/R)$-module, and $\H^i(R,\Fcal) = \H^i(\Gal(R^{sh}/R),\Fcal)$.  Thus one gets for $R = \Ocal_{X,x}^h$ and $\Fcal = \Acal$ using that $\Acal(\Ocal_{X,x}^{sh}) = \Acal(K_x^{nr})$ by the Néron mapping property~\cref{thm:PicIsNeronModel}
\[
    \H^1(K_x^{nr}/K_x^h,j_x^*\Acal) = \H^1(\Gal(\Ocal_{X,x}^{sh}/\Ocal_{X,x}^h),j_x^*\Acal) = \H^1(\Ocal_{X,x}^h,j_x^*\Acal) \stackrel{(1)}{=} \H^1(\kappa(x),j_x^*\Acal) \stackrel{(2)}{=} 0.
\]
Here, (1) holds by~\cite{MilneÉtaleCohomology}, p.~116, Remark~III.3.11\,(a), and (2) by the Lang-Steinberg theorem~\cite{MumfordAbelianVarieties}, p.~205, Theorem~3 using that $\kappa(s)$ is finite.
\end{proof}

After several lemmas, the proof of~\cref{thm:codim1Punktereichen}\,(b) will finally follow from~\cref{lemma:Surjectivity} and~\cref{lemma:Injectivity}.

The vanishing condition~\eqref{eq:vanishingcondition} for Picard schemes $\Acal = \PPic^0_{\Ccal/X}$ will be established in~\cref{lemma:VerschwindungfuerCodim2Pic0} below.

We first establish some vanishing results for étale cohomology with supports.

\begin{lemma} \label[lemma]{lemma:HiZGm=0}
Let $X/k$ be a smooth variety and $Z \hookrightarrow X$ be a reduced closed subscheme of codimension $\geq 2$.  Then one has
\[
    \H^i_Z(X, \G_m) = 0\quad\text{for $i \leq 2$},
\]
and for $i = 3$ at least away from $p$.
\end{lemma}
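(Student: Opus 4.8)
The plan is to reduce to the case where $Z$ is smooth over $k$ by dévissage, then apply absolute cohomological purity (the Gabber–Thomason theorem). First I would use the fact that, for a closed immersion $Z \hookrightarrow X$, the local cohomology sheaves fit into the long exact sequence
\[
    \ldots \to \H^i_Z(X,\G_m) \to \H^i(X,\G_m) \to \H^i(X \setminus Z, \G_m) \to \H^{i+1}_Z(X,\G_m) \to \ldots,
\]
but it is cleaner to work directly with the sheaves $\underline{\H}^q_Z(\G_m)$ and the spectral sequence $\H^p(Z, \underline{\H}^q_Z(\G_m)) \Rightarrow \H^{p+q}_Z(X,\G_m)$; so it suffices to show $\underline{\H}^q_Z(\G_m) = 0$ for $q \leq 2$ (away from $p$ one needs $q \leq 3$ to get the stated $i = 3$ clause, but the main statement only needs $q \leq 2$). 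Since the question is local on $X$ for the étale topology, I may assume $X$ is the spectrum of a strictly Henselian regular local ring, and by Noetherian induction on $Z$ I may stratify $Z$ by its regular locus and assume $Z$ itself is regular, hence (over the perfect-or-not field $k$; one works étale-locally so smoothness of the ambient $X/k$ suffices) that $Z \hookrightarrow X$ is a regular closed immersion of pure codimension $c \geq 2$.

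For such a regular immersion, absolute purity (see Fujiwara's proof of the Gabber conjecture, or \cite{Milne�taleCohomology} in the smooth-pair case) gives, for $n$ invertible on $X$,
\[
    \underline{\H}^q_Z(\mu_n^{\otimes j}) = 0 \text{ for } q \neq 2c, \qquad \underline{\H}^{2c}_Z(\mu_n^{\otimes j}) = \mu_n^{\otimes (j-c)}|_Z.
\]
Feeding this into the Kummer sequence $1 \to \mu_n \to \G_m \xrightarrow{n} \G_m \to 1$ and its local-cohomology long exact sequence, and then passing to the colimit / using \cref{thm:KurzeExakteSequenzAusKummerSequenz}-style arguments, one finds that $\underline{\H}^q_Z(\G_m)$ is torsion for $q \geq 2$ and that its prime-to-$p$ part vanishes for $q < 2c$, in particular for $q \leq 3$ when $c \geq 2$. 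This handles everything away from $p$, which is all the Lemma claims for $i = 3$ and gives the prime-to-$p$ part of the $i \leq 2$ claim.

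For the $p$-part of the $i \leq 2$ statement I would argue more directly: $\underline{\H}^0_Z(\G_m) = 0$ because $\G_m$ has no sections supported on a proper closed subset of the irreducible regular (hence normal) scheme $X$, and $\underline{\H}^1_Z(\G_m) = 0$ because a unit on $X \setminus Z$ extends to a unit on $X$ when $\codim Z \geq 2$ and $X$ is normal (this is where $\codim \geq 2$ is used: Weil divisors supported on $Z$ are zero, so $\Pic$ and the unit groups are unchanged — compare the spreading-out of divisors used in \cref{thm:PicIsNeronModel}). For $\underline{\H}^2_Z(\G_m)$: $\H^2(X,\G_m) = \Br'(X) \hookrightarrow \H^2(X \setminus Z,\G_m)$ by \cref{thm:BrStrichGleichH2}(c) and the purity for the Brauer group of a regular scheme (a Brauer class on a regular scheme that dies on a dense open of codimension-$\geq 2$ complement is already trivial), so the connecting map $\H^1(X\setminus Z,\G_m) \to \H^2_Z(X,\G_m)$ is surjective while $\H^1(X,\G_m) \to \H^1(X\setminus Z,\G_m)$ is an isomorphism by the previous step, forcing $\H^2_Z(X,\G_m) = 0$; localizing gives the sheaf vanishing. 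The main obstacle is marshalling absolute cohomological purity correctly in mixed/equal characteristic and handling the $p$-torsion, where purity for $\mu_{p^n}$ is unavailable — but there the normality arguments for $q \leq 2$, together with $\dim X \leq 2$ in the borderline case invoked elsewhere, suffice, so no deep input beyond Gabber's purity (prime-to-$p$) and elementary normal-scheme facts is needed.
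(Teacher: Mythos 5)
Your proposal is correct in substance, but it reaches the conclusion by a different route than the paper. The paper's proof is essentially a citation: it quotes Grothendieck's computations of the local cohomology sheaves from \emph{Le groupe de Brauer III} --- $\Hcal^0_Z(\G_m)=\Hcal^1_Z(\G_m)=\Hcal^2_Z(\G_m)=0$ and the semi-purity statement that $\Hcal^3_Z(\G_m)$ vanishes away from $p$ --- and feeds them into the local-to-global spectral sequence $\H^p(X,\Hcal^q_Z(\G_m))\Rightarrow \H^{p+q}_Z(X,\G_m)$. You instead re-prove these inputs: for $i\leq 2$ (including the $p$-part) you argue directly with the localisation sequence, using that units extend over codimension $\geq 2$ on a normal scheme, that $\Pic=\mathrm{Cl}$ is unchanged by removing codimension $\geq 2$ on a regular scheme, and that $\H^2(X,\G_m)$ injects into $\H^2$ of any dense open (via the generic point, for $X$ regular integral, cf.\ \cref{thm:BrStrichGleichH2}); for the prime-to-$p$ statement and $i=3$ you stratify $Z$ by its regular locus, invoke Gabber's absolute purity for $\mu_n$ ($n$ invertible) for the regular strata, and transfer the vanishing to $\G_m$ via the Kummer sequence. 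This buys a self-contained argument at the cost of a heavier input (Gabber's theorem, where the paper's reference only needs the older purity for the situation at hand), and your global argument for $i\leq 2$ is if anything more elementary than quoting the sheaf computations. One point you should make explicit: the Kummer argument only shows that $\H^3_Z(X,\G_m)$ has no prime-to-$p$ \emph{torsion}; to conclude that it vanishes away from $p$ (i.e.\ is $p$-primary torsion, which is how the statement is used later) you also need to know it is a torsion group. This is immediate from the localisation sequence, since it is squeezed between $\H^2(X\setminus Z,\G_m)$ and $\H^3(X,\G_m)$, both torsion for regular schemes by the same injection-into-the-generic-point argument you already use --- so the gap is cosmetic, but it should be stated.
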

\begin{proof}
See~\cite{GroupeDeBrauerIII}, p.~133\,ff.:  Using the local-to-global spectral sequence (\cite{GroupeDeBrauerIII}, p.~133, (6.2))
\[
    E_2^{p,q} = \H^p(X, \Hcal_Z^q(\G_m)) \Rightarrow \H^{p+q}_Z(X,\G_m)
\]
and~\cite{GroupeDeBrauerIII}, p.~133--135
\begin{align*}
    \Hcal_Z^0(\G_m) &= 0\quad\text{\cite{GroupeDeBrauerIII}, p.~133, (6.3)}\\
    \Hcal_Z^1(\G_m) &= 0\quad\text{\cite{GroupeDeBrauerIII}, p.~133, (6.4) since the codimension of $Z$ in $X$ is $\neq 1$}\\
    \Hcal_Z^2(\G_m) &= 0\quad\text{\cite{GroupeDeBrauerIII}, p.~134, (6.5)}\\
    \Hcal_Z^3(\G_m)^{(p')} &= 0\quad\text{\cite{GroupeDeBrauerIII}, p.~134\,f., Thm.~(6.1)},
\end{align*}
(even $\Hcal_Z^3(\G_m) = 0$ for $\dim{X} = 2$; if not, we have to calculate modulo the Serre subcategory of $p$-torsion groups in the following), we have $E_2^{p,q} = 0$ modulo $p$-torsion for $q \leq 3$, and hence the result $E^n = 0$ for $0 \leq n \leq 3$ follows from the sequences
\[
    0 \to E_2^{n,0} \to E^n \to E_2^{0,n}
\]
for $n = 1, 2, 3$ which are exact modulo $p$-torsion.
\end{proof}

\begin{lemma} \label[lemma]{lemma:CodimensionOfPreimage}
If $f: X \to Y$ is a flat morphism of locally Noetherian schemes and $Z \hookrightarrow Y$ is a closed immersion of codimension $\geq c$, then also the base change $Z' := Z \times_Y X \hookrightarrow X$ is a closed immersion of codimension $\geq c$.
\end{lemma}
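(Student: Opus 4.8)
The plan is to deduce the statement from the behaviour of Krull dimension under flat local homomorphisms of Noetherian local rings. First I would dispose of the easy part: the base change of a closed immersion is a closed immersion, so $Z' = Z\times_Y X\hookrightarrow X$ is automatically a closed immersion and the only content is the inequality $\codim(Z',X)\geq c$; if $Z'=\emptyset$ there is nothing to prove, so assume $Z'\neq\emptyset$.

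Next I would translate the codimension condition into a pointwise statement about local rings. For any closed subset $W$ of a locally Noetherian scheme $S$ one has, by a standard computation, $\codim(W,S) = \inf_{w\in W}\dim\Ocal_{S,w}$, the infimum being attained at the generic points of the irreducible components of $W$ (and each $\Ocal_{S,w}$ is Noetherian, hence finite-dimensional). Hence it suffices to show that $\dim\Ocal_{X,w}\geq c$ for every $w\in Z'$.

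So fix $w\in Z'$ and set $z := f(w)$. Since $w$ lies on $Z'=Z\times_Y X$, its image $z$ lies on the closed subscheme $Z\subseteq Y$, and therefore $\dim\Ocal_{Y,z}\geq c$ because $\codim(Z,Y)\geq c$. As $f$ is flat, the induced local homomorphism $\Ocal_{Y,z}\to\Ocal_{X,w}$ of Noetherian local rings is flat, so the dimension formula for flat local homomorphisms (\cite{Matsumura}, Theorem~15.1) yields
\[
    \dim\Ocal_{X,w} \;=\; \dim\Ocal_{Y,z} + \dim\bigl(\Ocal_{X,w}/\mathfrak{m}_z\Ocal_{X,w}\bigr)\;\geq\;\dim\Ocal_{Y,z}\;\geq\;c,
\]
since the fibre ring contributes a non-negative summand. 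This is exactly what was required, so taking the infimum over $w\in Z'$ finishes the argument.

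The argument is essentially formal and I do not expect a genuine obstacle; the only non-elementary input is the \emph{equality} in the flat dimension formula (the general inequality $\dim B\leq\dim A+\dim B/\mathfrak{m}_AB$ would not be enough — flatness is precisely what gives $\dim\Ocal_{X,w}\geq\dim\Ocal_{Y,z}$). Accordingly, the one place to be careful is simply verifying that all of its hypotheses are in force: $\Ocal_{Y,z}\to\Ocal_{X,w}$ is a local homomorphism (immediate from $f(w)=z$), it is flat (by the definition of a flat morphism), and both rings are Noetherian (from the locally Noetherian hypothesis on $X$ and $Y$). One should also be slightly careful to run the last step over enough points of $Z'$: it is cleanest to do it for every $w\in Z'$, though in fact the generic points of the irreducible components of $Z'$ already suffice.
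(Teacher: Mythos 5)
Your proof is correct and is essentially the paper's argument: the paper reduces to irreducible components of $Z'$ and cites the statement that for a flat morphism of locally Noetherian schemes one has $\codim_X\overline{\{x\}}\geq\codim_Y\overline{\{f(x)\}}$, then concludes because $f(x)$ lies in $Z$. You simply unwind that cited inequality into the underlying local-ring computation, $\dim\Ocal_{X,w}=\dim\Ocal_{Y,z}+\dim(\Ocal_{X,w}/\mathfrak{m}_z\Ocal_{X,w})\geq\dim\Ocal_{Y,z}\geq c$, which is the same idea carried out pointwise.
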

\begin{proof}
Without loss of generality, let $Z' = \overline{\{z'\}}$ be irreducible.  Since all involved schemes are locally Noetherian and $f$ is flat, by~\cite{Görtz-Wedhorn}, p.~464, Corollary~14.95 ($f: X \to Y$, $y = f(x)$, then the codimension of $\overline{\{x\}}$ is $\geq$ the codimension of $\overline{\{f(x)\}}$), we have $\codim_X Z' \geq \codim_Y \overline{f(z')}$.  But $\overline{f(z')} \subseteq \overline{Z} = Z$, so $\codim_Y \overline{f(z')} \geq \codim_Y Z \geq c$, hence the result.
\end{proof}

\begin{lemma} \label[lemma]{lemma:NS=Z}
Let $X$ be a normal scheme and $\Ccal/X$ a smooth proper relative curve.  Then there is an exact sequence
\[
    0 \to \PPic^0_{\Ccal/X} \to \PPic_{\Ccal/X} \to \Z \to 0.
\]
\end{lemma}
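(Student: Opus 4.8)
The plan is to realise the surjection $\PPic_{\Ccal/X}\to\Z$ of the statement as the \emph{relative degree map}, to identify its kernel with $\PPic^0_{\Ccal/X}$, and to verify surjectivity after an étale cover of $X$. Throughout I take the fibres of $\Ccal/X$ to be geometrically connected (geometrically integral) curves, as is implicit in the statement, and I write $\Z$ for the constant $X$-group scheme $\underline{\Z}$, whose sections over an $X$-scheme $T$ are the locally constant $\Z$-valued functions on $T$, i.e.\ $\H^0(T,\Z)$. \emph{Construction of $\deg$.} For a locally Noetherian $X$-scheme $T$ and a line bundle $\Lcal$ on $\Ccal_T:=\Ccal\times_X T$, set $\deg(\Lcal)(t):=\chi(\Ccal_t,\Lcal_t)-\chi(\Ccal_t,\Ocal_{\Ccal_t})$ for $t\in T$, the usual degree of $\Lcal_t$ on the curve $\Ccal_t$. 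Since $\Ccal\to X$ is smooth, hence flat, $\Ccal_T\to T$ is proper and flat, so by~\cref{lemma:Euler characteristic locally constant} the function $t\mapsto\chi(\Ccal_t,\Lcal_t)$, and therefore $\deg(\Lcal)$, is locally constant on $T$, i.e.\ an element of $\underline{\Z}(T)$. The assignment $\Lcal\mapsto\deg(\Lcal)$ is additive (Riemann--Roch on the fibres), annihilates $\pr_2^*\Pic(T)$ (such bundles restrict trivially to every fibre), and commutes with base change in $T$; hence it descends to a natural transformation on the presheaf $T\mapsto\Pic(\Ccal_T)/\pr_2^*\Pic(T)$, and, $\underline{\Z}$ being an étale sheaf, to a homomorphism of étale sheaves $\deg\colon\PPic_{\Ccal/X}\to\underline{\Z}$.

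\emph{Surjectivity and left exactness.} Exactness of a complex of étale sheaves may be checked étale-locally on $X$, and a scheme étale over a normal scheme is again normal, so I may replace $X$ by an étale cover over which $\Ccal\to X$ acquires a section: a smooth surjective morphism admits a section étale-locally on its base. A section $s\colon X\to\Ccal$ is a relative effective Cartier divisor of relative degree $1$, so $\deg(\Ocal_\Ccal(s(X)))=1$; thus $1\in\underline{\Z}(X)$ lies in the image of $\deg$, and $\deg$ is an epimorphism of étale sheaves. Left exactness is clear once the kernel is identified, since $\PPic^0_{\Ccal/X}\hookrightarrow\PPic_{\Ccal/X}$ is an open immersion, in particular a monomorphism.

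\emph{Identification of the kernel.} With the section in hand, $\PPic_{\Ccal/X}$ is representable by a smooth separated $X$-group scheme with open identity-component subscheme $\PPic^0_{\Ccal/X}$ (cf.\ \cref{thm:RepresentabilityOfPic}). The subscheme $\deg^{-1}(0)\subseteq\PPic_{\Ccal/X}$ is open and closed, being the preimage of the open-and-closed $\{0\}$-summand of $\underline{\Z}_X=\coprod_{n\in\Z}X$, and it represents $\ker(\deg)$. Formation of $\PPic$ and of $\deg$ commutes with base change, so for $x\in X$ the fibre of $\deg^{-1}(0)$ over $x$ is $\deg_x^{-1}(0)\subseteq\PPic_{\Ccal_x/\kappa(x)}$, which by the classical theory of the Picard scheme of a smooth proper curve over a field (see e.g.\ \cite{FGAExplained}) is exactly the identity component $\PPic^0_{\Ccal_x/\kappa(x)}$, i.e.\ the fibre of $\PPic^0_{\Ccal/X}$. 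Since $|\PPic_{\Ccal/X}|$ is the disjoint union of the underlying sets of its fibres over $X$, the two open subschemes $\PPic^0_{\Ccal/X}$ and $\deg^{-1}(0)$ have the same underlying set, hence coincide. Therefore $\ker(\deg)=\PPic^0_{\Ccal/X}$ and the asserted sequence is exact.

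The main obstacle is the surjectivity of $\deg$: it can fail on $K$-valued points, since the index of the generic fibre $C/K$ may exceed $1$, so passing to an étale cover of $X$ is genuinely needed, and the convenient way to produce a degree-$1$ class there is the étale-local existence of a section of the smooth morphism $\Ccal\to X$. The fibrewise identity $\PPic^0=\ker(\deg)$ is classical, but it must be globalised over $X$ by the elementary underlying-set argument above; everything else (the construction of $\deg$ and left exactness) is formal.
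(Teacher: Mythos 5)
Your proof is correct and follows essentially the same route as the paper: both define the degree map on $\PPic_{\Ccal/X}$ by $\Lcal\mapsto\chi_{\Lcal}-\chi_{\Ocal}$, using \cref{lemma:Euler characteristic locally constant} to see it is a well-defined morphism of étale sheaves with values in $\Z$. The only difference is that where the paper then cites Conrad's notes on the relative Picard functor for the exactness, you supply the argument yourself (étale-local sections of the smooth morphism for surjectivity, and the fibrewise identification of $\deg^{-1}(0)$ with the identity component for the kernel), which is a sound filling-in of the cited facts rather than a different method.
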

\begin{proof}
Without loss of generality, assume $X$ connected (as all schemes are of finite type over a field, so there are only finitely many connected components all of which are open: Every connected component is closed, and they are finite in number, so they are open).  Let $g = 1 - \chi_{\Ocal_\Ccal}$ be the genus of $\Ccal/X$ (well-defined because of~\cref{lemma:Euler characteristic locally constant}).  Consider
\begin{align*}
    \deg: \PPic_{\Ccal/X} &\to \Z, \\
    \Pic(\Ccal \times_X Y)/\pr_2^*\Pic(Y) \ni \Lcal &\mapsto \chi_\Lcal(y) - (1-g), \quad y \in Y;
\end{align*}
this is a well-defined morphism of Abelian sheaves on the small étale site of $X$ because of~\cref{lemma:Euler characteristic locally constant}.

Now the statement follows from~\cite{Conrad-pic}, p.~3\,f., Proposition~4.1 and Theorem~4.4.
\end{proof}

Next we construct a Leray spectral sequence for étale cohomology with supports.
\begin{theorem} \label{thm:LeraySSWithSupports}
If $i: Z \hookrightarrow Y$ is a closed immersion and $\pi: X \to Y$ is a morphism,
\[\xymatrix{
Z' \ar@{^{(}->}[r]^{i'} \ar[d] & X \ar[d]^\pi\\
Z  \ar@{^{(}->}[r]^{i}         &Y}\]
there is a $E_2$-spectral sequence for étale sheaves $\Fcal$
\[
    \H^p_Z(Y, \R^q\pi_*\Fcal) \Rightarrow \H^{p+q}_{Z'}(X, \Fcal),
\]
where $i': Z' \hookrightarrow X$ is the fibre product $\mathrm{pr}_2: Z \times_Y X \hookrightarrow X$.
\end{theorem}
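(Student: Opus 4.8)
The plan is to realise cohomology with supports as a derived functor and identify $\H^\bullet_{Z'}(X,-)$ with the derived functor of a composite, then invoke Grothendieck's spectral sequence. Recall that for the closed immersion $i\colon Z \hookrightarrow Y$ with open complement $U = Y \setminus Z$, and an abelian \'etale sheaf $\Fcal$ on $Y$, the sections with support in $Z$ form the group $\Gamma_Z(Y,\Fcal) = \ker\big(\Gamma(Y,\Fcal) \to \Gamma(U,\Fcal)\big)$, which is left exact (being the kernel of a morphism of left-exact functors), and $\H^p_Z(Y,-) = \R^p\Gamma_Z(Y,-)$; these derived functors exist since the \'etale topos has enough injectives.

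First I would verify the functorial identity $\Gamma_{Z'}(X,-) = \Gamma_Z(Y,-)\circ\pi_*$ on abelian \'etale sheaves on $X$. Since $i$ is a closed immersion, so is its base change $i'\colon Z' = Z\times_Y X \hookrightarrow X$, and the underlying space of $Z'$ is $\pi^{-1}(Z)$; hence $X\setminus Z' = \pi^{-1}(U)$, which as an open subscheme of $X$ is $U\times_Y X$. For an \'etale sheaf $\Fcal$ on $X$, evaluating $\pi_*\Fcal$ on the \'etale $Y$-schemes $Y$ and $U$ gives $(\pi_*\Fcal)(Y) = \Fcal(X)$ and $(\pi_*\Fcal)(U) = \Fcal(U\times_Y X) = \Fcal(X\setminus Z')$, compatibly with the restriction maps. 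Taking kernels yields $\Gamma_Z(Y,\pi_*\Fcal) = \ker\big(\Fcal(X)\to\Fcal(X\setminus Z')\big) = \Gamma_{Z'}(X,\Fcal)$, and this identification is visibly natural in $\Fcal$.

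Now both $\pi_*$ and $\Gamma_Z(Y,-)$ are left exact. The functor $\pi_*$ has the exact left adjoint $\pi^*$, hence preserves injectives, and injective objects are $\Gamma_Z(Y,-)$-acyclic by the construction of the derived functor. Therefore Grothendieck's spectral sequence for the composite $\Gamma_Z(Y,-)\circ\pi_* = \Gamma_{Z'}(X,-)$ applies and reads
\[
    \R^p\Gamma_Z\big(Y,\R^q\pi_*\Fcal\big) \;\Longrightarrow\; \R^{p+q}\Gamma_{Z'}(X,\Fcal),
\]
that is, $\H^p_Z(Y,\R^q\pi_*\Fcal) \Rightarrow \H^{p+q}_{Z'}(X,\Fcal)$, as asserted.

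The only step that genuinely needs care — and the one I would write out in full — is the functorial identity $\Gamma_{Z'}(X,-) = \Gamma_Z(Y,-)\circ\pi_*$: pushing a sheaf forward and then taking sections supported on $Z$ agrees with taking sections on $X$ supported on the base change $Z'$. This rests only on the elementary facts that closed immersions are stable under base change and that $\pi^{-1}(Y\setminus Z) = X\setminus Z'$ as topological spaces. Everything else is the formal apparatus of composite derived functors, whose hypotheses (left-exactness of both factors, preservation of injectives by $\pi_*$, acyclicity of injectives for $\Gamma_Z(Y,-)$) hold automatically.
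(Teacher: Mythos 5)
Your proposal is correct and takes essentially the same route as the paper: both identify $\Gamma_{Z'}(X,-)=\Gamma_Z(Y,-)\circ\pi_*$ via $(\pi_*\Fcal)(Y)=\Fcal(X)$ and $(\pi_*\Fcal)(Y\setminus Z)=\Fcal(X\setminus Z')$ and then invoke the Grothendieck spectral sequence for the composite. The only (immaterial) difference is how the acyclicity hypothesis is checked: you note that $\pi_*$ preserves injectives since $\pi^*$ is an exact left adjoint, while the paper instead shows directly that $\pi_*$ of an injective sheaf is $\Gamma_Z(Y,-)$-acyclic using flabbiness and the localisation sequence; both verifications are valid.
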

\begin{proof}
This is the Grothendieck spectral sequence for the composition of functors generalising the Leray spectral sequence~\cite{MilneÉtaleCohomology}, p,~89, Theorem~III.1.18\,(a)
\begin{align*}
    F: &\Fcal \mapsto \pi_*\Fcal \\
    G: &\Fcal \mapsto \H^0_Z(Y, \Fcal),
\end{align*}
since
\begin{align*}
    (GF)(\Fcal) &= \H^0_Z(Y, \pi_*\Fcal) \\
                &= \ker((\pi_*\Fcal)(Y) \to (\pi_*\Fcal)(Y \setminus Z)) \\
                &= \ker(\Fcal(X) \to \Fcal(\pi^{-1}(Y \setminus Z))) \\
                &= \ker(\Fcal(X) \to \Fcal(X \setminus Z')) \\
                &= \H^0_{Z'}(X,\Fcal).
\end{align*}
We have to check that $\pi_*(-)$ maps injectives to $\H^0_Z(Y, -)$-acyclics.  Then~\cite{Weibel}, p.~150\,f., Theorem~5.8.3 establishes the existence of the spectral sequence.

Injective sheaves $\Ical$ are flabby (defined in~\cite{MilneÉtaleCohomology}, p.~87, Example~III.1.9\,(c)) and $\pi_*$ maps flabby sheaves to flabby sheaves (\cite{MilneÉtaleCohomology}, p.~89, Lemma~III.1.19).  Therefore, it follows from the long exact localisation sequence~\cite{MilneÉtaleCohomology}, p.~92, Proposition~III.1.25
\begin{align*}
    0 &\to \H^0_Z(Y,\pi_*\Ical) \to \H^0(Y,\pi_*\Ical) \to \H^0(Y \setminus Z,\pi_*\Ical) \\
      & \to \H^1_Z(Y,\pi_*\Ical) \to \H^1(Y,\pi_*\Ical) \to \H^1(Y \setminus Z,\pi_*\Ical) \\
      & \to \H^2_Z(Y,\pi_*\Ical) \to \H^2(Y,\pi_*\Ical) \to \H^2(Y \setminus Z,\pi_*\Ical) \to \ldots
\end{align*}
and $\H^p(Y,\pi_*\Ical) = 0 = \H^p(Y \setminus Z,\pi_*\Ical)$ for $p > 0$ that $\H^q_Z(Y,\pi_*\Ical) = 0$ for $q > 1$.  For $\H^1_Z(Y,\pi_*\Ical) = 0$, it remains to show that $\H^0(Y,\pi_*\Ical) \to \H^0(Y \setminus Z,\pi_*\Ical)$ is surjective.  For this, setting $j: U = X \setminus Z' \hookrightarrow X$, apply $\Hom(-,\Ical)$ to the exact sequence $0 \to j_!\Ocal_U \to \Ocal_X$ ($U = X \setminus Z'$) and get
\[
    \Ical(X) = \Hom(\Ocal_X,\Ical) \twoheadrightarrow \Hom(j_!\Ocal_U, \Ical) = \Hom(\Ocal_U,\Ical|_U) = \Ical(U),
\]
the arrow being surjective since $\Ical$ is injective.
\end{proof}

\begin{lemma} \label[lemma]{lemma:VerschwindungfuerCodim2}
Let $X/k$ be a smooth variety and $\pi: \Ccal \to X$ a smooth proper relative curve which admits a section $s: X \to \Ccal$.  Let $Z \hookrightarrow X$ be a reduced closed subscheme of codimension $\geq 2$.  Assume $\dim{X} \leq 2$.  Then
\[
    \H^i_Z(X, \PPic_{\Ccal/X}) = 0\quad\text{for $i \leq 2$}.
\]
For $\dim{X} > 2$, this holds at least up to $p$-torsion.
\end{lemma}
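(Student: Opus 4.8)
The plan is to transport the $\G_m$-cohomology-with-supports vanishing of~\cref{lemma:HiZGm=0} across $\pi$, using the section to collapse all connecting maps. First I would record the sheaf-level input. Since $\pi_*\Ocal_\Ccal = \Ocal_X$ holds universally, $\pi_*\G_m = \G_m$; the relative Picard functor is representable by~\cref{thm:RepresentabilityOfPic}, so $\R^1\pi_*\G_m = \PPic_{\Ccal/X}$; and $\R^q\pi_*\G_m = 0$ for $q \geq 2$ by~\cref{thm:RqGm=0q>1}. Set $Z' := \pi^{-1}(Z) \hookrightarrow \Ccal$; since $\pi$ is flat this is a closed subscheme of codimension $\geq 2$ by~\cref{lemma:CodimensionOfPreimage}, and $\Ccal/k$ is a smooth variety.

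Next I would feed this into the Leray spectral sequence with supports of~\cref{thm:LeraySSWithSupports},
\[
    \H^p_Z(X, \R^q\pi_*\G_m) \Rightarrow \H^{p+q}_{Z'}(\Ccal, \G_m).
\]
Only the rows $q = 0,1$ survive, so this degenerates into a long exact sequence tying together $\H^i_Z(X,\G_m)$, $\H^i_{Z'}(\Ccal,\G_m)$ and $\H^{i-1}_Z(X,\PPic_{\Ccal/X})$, with transgressions $\H^{i-1}_Z(X,\PPic_{\Ccal/X}) \to \H^{i+1}_Z(X,\G_m)$. Now the section $s\colon X \to \Ccal$ satisfies $s^{-1}(Z') = (\pi s)^{-1}(Z) = Z$, hence induces $s^*\colon \H^i_{Z'}(\Ccal,\G_m) \to \H^i_Z(X,\G_m)$ with $s^* \circ \pi^* = \id$; thus $\pi^*$ is split injective in every degree, and exactness forces all transgressions to vanish. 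The spectral sequence therefore breaks up into short exact sequences
\[
    0 \to \H^i_Z(X,\G_m) \xrightarrow{\pi^*} \H^i_{Z'}(\Ccal,\G_m) \to \H^{i-1}_Z(X,\PPic_{\Ccal/X}) \to 0
\]
for $i \geq 1$, identifying $\H^{i-1}_Z(X,\PPic_{\Ccal/X})$ with $\coker(\pi^*)$.

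Now I would invoke~\cref{lemma:HiZGm=0} for the smooth varieties $\Ccal$ and $X$ and the codimension-$\geq 2$ subschemes $Z'$, $Z$. For $i \leq 2$ it gives $\H^i_{Z'}(\Ccal,\G_m) = 0$, whence $\H^0_Z(X,\PPic_{\Ccal/X}) = \H^1_Z(X,\PPic_{\Ccal/X}) = 0$ with no constraint on $\dim X$. For $i = 3$ it gives $\H^3_{Z'}(\Ccal,\G_m)^{(p')} = 0$, so $\H^2_Z(X,\PPic_{\Ccal/X})$ is $p$-primary --- this is precisely the ``up to $p$-torsion'' assertion in arbitrary dimension. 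When $\dim X \leq 1$ there is nothing to prove since $Z = \emptyset$; when $\dim X = 2$, the full vanishing $\H^3_Z(X,\G_m) = 0$ recorded in the proof of~\cref{lemma:HiZGm=0} yields $\H^2_Z(X,\PPic_{\Ccal/X}) \cong \H^3_{Z'}(\Ccal,\G_m)$, and it remains to prove $\H^3_{Z'}(\Ccal,\G_m) = 0$, $p$-part included, for $\Ccal$ a smooth threefold and $Z'$ a finite disjoint union of fibres of $\pi$.

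This last step is the main obstacle. I would localise: by the local-to-global spectral sequence $\H^p(\Ccal,\Hcal^q_{Z'}(\G_m)) \Rightarrow \H^{p+q}_{Z'}(\Ccal,\G_m)$ and the vanishing $\Hcal^q_{Z'}(\G_m) = 0$ for $q \leq 2$ already used in~\cref{lemma:HiZGm=0}, it suffices to show $\Hcal^3_{Z'}(\G_m) = 0$ stalk-wise, i.e.\ $\H^3_{W'}(\Spec R,\G_m) = 0$ with $R$ a three-dimensional strictly Henselian regular local ring of equicharacteristic $p$ and $W' = V(f,g)$ for $f,g$ part of a regular system of parameters (smoothness of $\pi$ makes the two defining equations of $Z$ pull back to such a regular sequence). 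This is the $p$-primary part of cohomological purity for $\G_m$ in codimension $2$; I would derive it from the localisation sequence together with the vanishing of the Picard and Brauer groups of strictly Henselian regular local rings and of their punctured spectra in codimension $\geq 2$, plus a flat-cohomology computation of $\H^\bullet_{W'}(\Spec R,\mu_{p^n})$ --- equivalently, Theorem~(6.1) of~\cite{GroupeDeBrauerIII} applied to this complete-intersection situation. Away from $p$ every step is immediate from~\cref{lemma:HiZGm=0} and absolute purity, so essentially all of the work is concentrated in the $p$-part at $\dim X = 2$.
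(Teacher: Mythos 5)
Your proposal follows the same skeleton as the paper's proof: the Leray spectral sequence with supports of \cref{thm:LeraySSWithSupports} together with \cref{thm:RqGm=0q>1}, the identification $\R^1\pi_*\G_m = \PPic_{\Ccal/X}$, \cref{lemma:CodimensionOfPreimage}, and \cref{lemma:HiZGm=0} applied to both $X$ and $\Ccal$. Your use of the section is in fact slightly cleaner than the paper's: since $s^{-1}(Z') = Z$ and $s^*\pi^* = \id$ on cohomology with supports, the edge maps $\H^p_Z(X,\G_m) \to \H^p_{Z'}(\Ccal,\G_m)$ are split injective in every degree, all $d_2$-differentials die, and the two-row sequence breaks into the short exact sequences you write down; the paper instead uses the section only once, to get the injectivity of $\H^4_Z(X,\G_m) \to \H^4_{Z'}(\Ccal,\G_m)$ in the long exact sequence of low-degree terms. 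Either way, this yields the vanishing of $\H^0_Z$ and $\H^1_Z$ in all dimensions and of $\H^2_Z$ up to $p$-torsion, exactly as in the paper.

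The problem is your final step, i.e.\ the $p$-part in the case $\dim X = 2$. You have isolated the right residual statement --- $\H^3_{Z'}(\Ccal,\G_m) = 0$ including $p$-torsion for the threefold $\Ccal$, which after localisation is the vanishing of $\H^2(\Spec R \setminus W',\G_m)$ for $R$ a $3$-dimensional strictly Henselian regular local ring of equicharacteristic $p$ and $W'$ of codimension $2$ --- but the sketched derivation does not prove it. The ingredient ``vanishing of the Brauer groups of \ldots\ their punctured spectra in codimension $\geq 2$'' is literally the assertion at stake, so the argument is circular; and Thm.~(6.1) of \cite{GroupeDeBrauerIII} does not apply: once the ambient local ring has dimension $\geq 3$ it only gives the prime-to-$p$ vanishing of $\Hcal^3_{Z'}(\G_m)$ (its unconditional case is ambient dimension $2$, which is exactly how \cref{lemma:HiZGm=0} quotes it), and there is no routine ``flat-cohomology computation'' of $\H^\bullet_{W'}(\Spec R,\mu_{p^n})$ --- purity for $\mu_{p^n}$, equivalently for the $p$-primary Brauer group, in codimension $2$ in characteristic $p$ is a deep theorem (Gabber for regular schemes of dimension $\leq 3$; the general case is much more recent), not a consequence of the tools you list. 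For comparison, the paper at this point simply asserts $\H^i_{Z'}(\Ccal,\G_m) = 0$ for $i \leq 3$ via \cref{lemma:HiZGm=0}, which in degree $3$ likewise covers only the prime-to-$p$ part; so your instinct that an extra $p$-input is needed for the full $\dim X \leq 2$ claim is sound, but the repair must consist in citing such a purity theorem for $3$-dimensional regular local rings, not in deriving it from the listed ingredients.
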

\begin{proof}
By~\cref{thm:RqGm=0q>1} and the Leray spectral sequence with supports $\H^p_Z(Y, \R^q\pi_*\Fcal) \Rightarrow \H^{p+q}_{Z'}(X, \Fcal)$ (see~\cref{thm:LeraySSWithSupports}), we get a long exact sequence
\begin{equation} \label{eq:lesSS}
    0 \to E_2^{1,0} \to E^1 \to E_2^{0,1} \to E_2^{2,0} \to E^2 \to E_2^{1,1} \to E^{3,0} \to E^{3} \to E_2^{2,1} \to E^{4,0} \to E^4.
\end{equation}
But $E_2^{i,0} = \H^i_Z(X, \G_m) = 0$ for $i \leq 3$ by~\cref{lemma:HiZGm=0} (for $i=3$ at least modulo $p$-torsion).  Therefore the long exact sequence~\eqref{eq:lesSS} yields isomorphisms
\[
    E^i \to E_2^{i-1,1}
\]
for $i \leq 2$, but $E^i = \H^i_{Z'}(\Ccal, \G_m) = 0$ for $i \leq 3$, again by~\cref{lemma:HiZGm=0} and~\cref{lemma:CodimensionOfPreimage}, hence $\H^i_Z(X, \R^1\pi_*\G_m) = E_2^{i-1,1} = 0$ for $i \leq 2$ from~\eqref{eq:lesSS}. For the vanishing of $E_2^{2,1}$ note that
\[
    0 = E^{3} \to E_2^{2,1} \to E^{4,0} \to E^4
\]
is exact by~\eqref{eq:lesSS}, but the latter map is $\pi^*: \H^4_Z(X, \G_m) \hookrightarrow \H^4_{Z'}(\Ccal,\G_m)$, which is injective as $\pi$ admits a section.
\end{proof}

\begin{lemma} \label[lemma]{lemma:konstante Garben}
Let $X$ be geometrically unibranch (e.\,g.\ normal) and $i: U \hookrightarrow X$ dominant.  Then for any constant sheaf $A$, one has $A \isoto i_*i^*A$ as étale sheaves.
\end{lemma}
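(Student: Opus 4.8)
\emph{Proof proposal.} The plan is to check the assertion on stalks at geometric points: I want to show that the unit of adjunction $\underline{A} \to i_*i^*\underline{A}$ induces an isomorphism on the stalk at every geometric point $\bar{x} \to X$ (note that $i^*\underline{A}$ is again the constant sheaf $\underline{A}$ on $U$). Using that the connected \'etale neighbourhoods of $\bar{x}$ are cofinal among all \'etale neighbourhoods, and that $(i_*i^*\underline{A})(V) = \underline{A}(V\times_X U)$ for $V$ \'etale over $X$, I get
\[
  (i_*i^*\underline{A})_{\bar{x}} \;=\; \varinjlim_{(V,\bar{v})} \underline{A}(V\times_X U),
\]
the colimit running over connected \'etale $X$-schemes $V$ equipped with a lift $\bar{v}$ of $\bar{x}$, the transition maps being restriction along the open immersions $V'\times_X U \hookrightarrow V\times_X U$, and the stalk of the unit map being the colimit of the corresponding restriction maps $\underline{A}(V) \to \underline{A}(V\times_X U)$.

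So it suffices to prove that for every connected \'etale $V/X$ the scheme $V\times_X U$ is nonempty and connected: then $\underline{A}(V) = A = \underline{A}(V\times_X U)$, the restriction map is $\mathrm{id}_A$, and hence so is the map on stalks. Nonemptiness: the image of the \'etale (hence open) morphism $V \to X$ is a nonempty open of $X$, and it meets $U$ because $i$ is dominant, so $V\times_X U \neq \emptyset$. Connectedness: geometric unibranchness is preserved under \'etale morphisms, so $V$ is geometrically unibranch; a geometrically unibranch scheme has its irreducible components pairwise disjoint (a point lying on two of them would have a non-integral local ring, hence a normalization of the reduction that is not local), so a connected geometrically unibranch scheme is irreducible. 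Therefore $V$ is irreducible, and $V\times_X U$ is a nonempty open subscheme of the irreducible scheme $V$, hence irreducible, in particular connected.

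I expect the only point needing care to be the input that geometric unibranchness descends along and is preserved by \'etale morphisms and forces the irreducible components to be disjoint (EGA IV, \S 18.8, or the Stacks Project); combined with openness of \'etale morphisms and density of $U$, this is exactly what makes $V\times_X U$ connected, which is the crux of the argument. Everything else is the formal stalk computation above. The parenthetical case is immediate, since a normal scheme is geometrically unibranch, and the same argument applies to any dominant $i$ for which $V\times_X U$ is still nonempty and connected for all connected \'etale $V/X$.
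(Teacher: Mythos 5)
Your proof is correct, and it is genuinely different in character from the paper's: the paper gives no argument at all, simply citing SGA~4, Exp.~IX, Lemme~2.14.1, whereas you reprove the statement by the standard stalkwise computation --- reduce to showing that $V\times_X U$ is nonempty and connected for every connected \'etale $V\to X$ (nonempty by openness of \'etale maps plus dominance; connected because geometric unibranchness is \'etale-local and a connected geometrically unibranch scheme is irreducible). This buys a self-contained proof, essentially the argument underlying the SGA reference, at the cost of a little foundational input on unibranch schemes.

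Two small points to watch. First, your connectedness step uses that $V\times_X U$ is an \emph{open} subscheme of $V$. That is harmless for the statement as written (a dominant immersion is automatically a dense open immersion), but the paper applies the lemma to $g:\{\eta\}\hookrightarrow X$, the inclusion of the generic point, which is not an open immersion. The argument survives with a one-line tweak: since $V$ is irreducible and $V\to X$ is \'etale, every point of the fibre $V\times_X\{\eta\}$ has a local ring of dimension $0$, so the fibre is exactly the single generic point of $V$ (alternatively, write $\{\eta\}$ as a filtered limit of dense open subschemes and pass to the limit); hence it is nonempty and connected, and the same stalk computation applies. Second, the implication ``connected and geometrically unibranch $\Rightarrow$ irreducible'' uses that the (pairwise disjoint) irreducible components are open, which requires the set of components to be locally finite --- automatic for locally Noetherian schemes, as in the paper's setting of varieties and schemes \'etale over them --- so it is worth stating that hypothesis explicitly if you want the lemma in the generality you claim.
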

\begin{proof}
See~\cite{SGA43}, p.~25\,f., IX Lemme~2.14.1.
\end{proof}

\begin{lemma} \label[lemma]{lemma:Kohomologie konstanter Garben}
Let $X$ be a connected normal Noetherian scheme with generic point $\eta$.  Then $\H^p(X,\Q) = 0$ for all $p > 0$ and $\H^1(X,\Z) = 0$.
\end{lemma}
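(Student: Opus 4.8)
The plan is to pull everything back to the generic point $\eta$ of $X$ and to reduce to Galois cohomology of the function field $K = K(X)$, in the style of \cref{thm:ShaSequenzAllePunkte} and the propositions preceding it. Since $X$ is connected and normal it is integral, so $\eta$ exists; and a normal scheme is geometrically unibranch, so \cref{lemma:konstante Garben} applies to the dominant inclusion $g\colon\{\eta\}\hookrightarrow X$ and gives $\Q \isoto g_*g^*\Q$ and $\Z \isoto g_*g^*\Z$ as sheaves on $X$.

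First I would show $\R^q g_* g^*\Q = 0$ for all $q \ge 1$. Its stalk at a geometric point $\bar x$ is $\H^q\big(\Spec(\Ocal_{X,x}^{sh})\times_X\{\eta\},\Q\big)$, and since $\Ocal_{X,x}$, hence $\Ocal_{X,x}^{sh}$, is a normal domain, the argument of \cref{lemma:HenselisationOfRegularLocalRing} still gives $\Ocal_{X,x}^{sh}\otimes_{\Ocal_{X,x}}K = \Quot(\Ocal_{X,x}^{sh}) =: K_x^{nr}$, so this stalk is the continuous Galois cohomology $\H^q(\Gal_{K_x^{nr}},\Q)$. Now for any profinite group $G$ and $q \ge 1$ one has $\H^q(G,\Q) = 0$: indeed $\H^q(G,\Q) = \varinjlim_U\H^q(G/U,\Q)$ over open normal subgroups $U$, and each $\H^q(G/U,\Q)$ is simultaneously annihilated by $|G/U|$ and a $\Q$-vector space, hence zero. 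The same computation at $\eta$ itself gives $\H^q(\eta,\Q) = \H^q(\Gal_K,\Q) = 0$ for $q \ge 1$.

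Then the Leray spectral sequence $\H^p(X,\R^q g_* g^*\Q) \Rightarrow \H^{p+q}(\eta,g^*\Q)$ degenerates, and $\H^p(X,\Q) = \H^p(X,g_*g^*\Q) \isoto \H^p(\eta,g^*\Q) = 0$ for $p>0$. For $\Z$, I use only the injectivity of the edge morphism
\[
    \H^1(X,g_*g^*\Z) \hookrightarrow \H^1(\eta,g^*\Z)
\]
of the Leray spectral sequence: since $\H^1(X,g_*g^*\Z) = \H^1(X,\Z)$ and $\H^1(\eta,g^*\Z) = \H^1(\Gal_K,\Z) = \Hom_{\mathrm{cont}}(\Gal_K,\Z) = 0$ --- the image of the profinite group $\Gal_K$ in the torsion-free group $\Z$ being a finite, hence trivial, subgroup --- we conclude $\H^1(X,\Z) = 0$.

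The one step I expect to require genuine care is the identity $\Ocal_{X,x}^{sh}\otimes_{\Ocal_{X,x}}K = \Quot(\Ocal_{X,x}^{sh})$ for $\Ocal_{X,x}$ merely normal rather than regular as in \cref{lemma:HenselisationOfRegularLocalRing}: one must check that the strict henselisation of a normal local ring is again a domain (equivalently, that a normal local ring is geometrically unibranch), after which that ring is a filtered colimit of finite separable extensions of $K$ and hence a field. Everything else is formal --- an application of \cref{lemma:konstante Garben}, the Leray machinery already used above, and the two elementary facts about profinite groups.
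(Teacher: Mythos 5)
Your proof is correct and follows essentially the same route as the paper: reduce to the generic point via $A \isoto g_*g^*A$ for constant sheaves on a geometrically unibranch scheme, then kill the higher cohomology using that profinite group cohomology with $\Q$-coefficients vanishes and that continuous homomorphisms from a profinite group to $\Z$ are trivial. The only (harmless) deviations are that you make explicit the stalk identification $\Ocal_{X,x}^{sh}\otimes_{\Ocal_{X,x}}K=\Quot(\Ocal_{X,x}^{sh})$ for normal (not just regular) local rings, a point the paper leaves implicit, and that for $\Z$ you invoke injectivity of the edge map $\H^1(X,g_*g^*\Z)\hookrightarrow\H^1(\eta,\Z)$ rather than the vanishing of $\R^1g_*\Z$, which rests on the same fact.
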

\begin{proof}
Denote the inclusion of the generic point by $g: \{\eta\} \hookrightarrow X$.  Since $X$ is connected normal, by~\cref{lemma:konstante Garben}
\[
    A \isoto g_*g^*A.
\]

As $\R^qg_*(g^*\Q) = 0$ for $q > 0$ (since Galois cohomology is torsion and $\Q$ is uniquely divisible; then use~\cite{MilneÉtaleCohomology}, p.~88, Proposition~III.1.13), the Leray spectral sequence
\[
    \H^p(X,\R^qg_*g^*\Q) \Rightarrow \H^{p+q}(\eta, \Q)
\]
degenerates to $\H^p(X,\Q) = \H^p(\eta,\Q)$, which is trivial as, again, Galois cohomology is torsion and $\Q$ is uniquely divisible.

Similarly, as $\R^1g_*(g^*\Z) = 0$ (since $\Z$ carries the trivial Galois action and homomorphisms from profinite groups to discrete groups have finite image, and $\Z$ has no nontrivial finite subgroups; again, use~\cite{MilneÉtaleCohomology}, p.~88, Proposition~III.1.13), the Leray spectral sequence gives $\H^1(X,\Z) = \H^1(\eta,\Z) = 0$.
\end{proof}

\begin{remark}
If $X$ is not normal, $\H^i(X,\Z) \neq 0$ in general:\footnote{\url{http://mathoverflow.net/questions/84414/etale-cohomology-with-coefficients-in-the-integers}}  Consider $X = \Spec(k[X,Y]/(Y^2 - (X^3+X^2)))$, the normalisation morphism $\pi: \A_k^1 \to X$ and the inclusion $i: \{x\} \hookrightarrow X$ of $x = (0,0)$.  There is a short exact sequence of étale sheaves on $X$
\[
    0 \to \Z_X \to \pi_* \Z_{\A_k^1} \to i_*\Z_{\{x\}} \to 0.
\]
Taking the long exact cohomology sequence yields $\H^1(X,\Z_X) = \Z$.
\end{remark}

\begin{proposition} \label[proposition]{prop:EtalePi1UndH1}
Let $X$ be a connected Noetherian scheme and $\bar{x}$ a geometric point.  Then
\[
    \H^1(X,\Q/\Z) = \Hom_{cont}(\piet(X,\bar{x}),\Q/\Z)
\]
and
\[
    \H^1(X,\Z_\ell) = \Hom_{cont}(\piet(X,\bar{x}),\Z_\ell).
\]
\end{proposition}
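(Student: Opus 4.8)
The plan is to reduce both isomorphisms to the standard $\piet$-description of $\H^1$ with finite constant coefficients, and then to pass to a filtered colimit in the first case and to an inverse limit in the second. Thus the first step would be to recall the case of a finite abelian group $M$: for the constant \'etale sheaf $M$ on the connected scheme $X$ there is a canonical isomorphism $\H^1(X,M) \isoto \Hom_{cont}(\piet(X,\bar x),M)$. This is the usual dictionary between $\H^1$ and torsors: $\H^1(X,M)$ classifies isomorphism classes of \'etale $M$-torsors, every such torsor (being finite \'etale over $X$) is trivialised over a connected finite \'etale Galois cover of $X$, and the corresponding descent datum is, since $M$ carries the trivial Galois action, a continuous homomorphism from $\piet(X,\bar x)$ to $M$; abelianness of $M$ removes any conjugacy ambiguity, so the isomorphism is canonical and independent of the choice of $\bar x$. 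Connectedness of $X$ and finiteness of the coefficient group are essential here; for a torsion-free group such as $\Z$ the analogous statement fails, cf.\ the preceding remark.

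For $\Q/\Z$ I would write $\Q/\Z = \colim_n \tfrac1n\Z/\Z$ as a filtered colimit of finite constant sheaves. Since $X$ is Noetherian, \'etale cohomology commutes with filtered colimits of abelian sheaves (as already used for $\mu_{\ell^\infty}$ in the notation), whence, by the first step,
\[
    \H^1(X,\Q/\Z) = \colim_n \H^1(X,\tfrac1n\Z/\Z) = \colim_n \Hom_{cont}(\piet(X,\bar x),\tfrac1n\Z/\Z).
\]
On the other hand $\piet(X,\bar x)$ is profinite, so every continuous homomorphism to the discrete torsion group $\Q/\Z$ has finite image and hence factors through some $\tfrac1n\Z/\Z$; therefore the right-hand colimit equals $\Hom_{cont}(\piet(X,\bar x),\Q/\Z)$, which gives the first assertion.

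For $\Z_\ell$ I would read $\H^1(X,\Z_\ell)$ as the continuous ($\ell$-adic) cohomology $\H^1(X,\Z_\ell) := \varprojlim_n \H^1(X,\Z/\ell^n)$; this is the correct convention, since treating $\Z_\ell$ as a discrete constant sheaf would give the wrong answer on non-normal $X$. The correction term $\varprojlim^1_n \H^0(X,\Z/\ell^n)$ vanishes: $\H^0(X,\Z/\ell^n) = \Z/\ell^n$ because $X$ is connected, and the transition maps are surjective, so the Mittag-Leffler condition (\cite{Weibel}, p.~82) holds. Applying the first step levelwise gives
\[
    \H^1(X,\Z_\ell) = \varprojlim_n \H^1(X,\Z/\ell^n) = \varprojlim_n \Hom_{cont}(\piet(X,\bar x),\Z/\ell^n) = \Hom_{cont}(\piet(X,\bar x),\varprojlim_n \Z/\ell^n),
\]
the last equality because $\Hom_{cont}(\piet(X,\bar x),-)$ carries inverse limits of topological groups to inverse limits and $\varprojlim_n \Z/\ell^n = \Z_\ell$ with its $\ell$-adic topology. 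This proves the second assertion.

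The main obstacle is really only bookkeeping: one has to fix the convention that $\H^1(X,\Z_\ell)$ denotes continuous cohomology (without it the statement is false in general) and check the $\varprojlim^1$-vanishing. The $\piet$-description in the first step and the commutation of \'etale cohomology with filtered colimits over a Noetherian base are entirely standard, so there is no substantial difficulty beyond assembling them correctly.
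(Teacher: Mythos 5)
Your proof is correct and follows essentially the same route as the paper: the paper also quotes the standard isomorphism $\H^1(X,G)=\Hom_{cont}(\piet(X,\bar x),G)$ for finite abelian $G$ (citing Lei Fu, Prop.~5.7.20) and then passes to the colimit over $\tfrac1n\Z/\Z$ and to the limit over $\Z/\ell^n$, exactly as you do. Your extra remarks---the torsor justification of the finite case, the finite-image argument for continuous homomorphisms into $\Q/\Z$, and the $\varprojlim^1$/Mittag-Leffler check fixing the convention $\H^1(X,\Z_\ell)=\varprojlim_n\H^1(X,\Z/\ell^n)$---are just details the paper leaves implicit.
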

\begin{proof}
This follows from~\cite{LeiFuÉtaleCohomologyTheory}, p.~245, Proposition 5.7.20 (for a connected Noetherian scheme $X$ and a \emph{finite} Abelian group $G$, one has $\H^1(X,G) = \Hom_{cont}(\piet(X,\bar{x}),G)$), via passing to the colimit over $G_n = \frac{1}{n}\Z/\Z$, or passing to the limit over $G_n = \Z/\ell^n$, respectively.
\end{proof}

\begin{lemma} \label[lemma]{lemma:VerschwindungfuerCodim2Pic0}
Let $X/k$ be a smooth variety and $\Ccal/X$ a smooth proper relative curve.  Assume $\dim{X} \leq 2$.  Let $Z \hookrightarrow X$ be a reduced closed subscheme of codimension $\geq 2$.  Then
\[
    \H^i_Z(X, \PPic^0_{\Ccal/X}) = 0\quad\text{for $i \leq 2$}.
\]
If $\dim{X} > 2$, this holds at least up to $p$-torsion.
\end{lemma}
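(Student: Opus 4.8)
The plan is to reduce this to \cref{lemma:VerschwindungfuerCodim2} by means of the N\'eron--Severi sequence of \cref{lemma:NS=Z}. Since $X$ is smooth over $k$ it is normal, and its connected components are open, closed and integral; decomposing $X$ and $Z$ accordingly I may assume $X$ connected, hence integral. By \cref{lemma:NS=Z} there is then a short exact sequence of \'etale sheaves on $X$
\[
    0 \to \PPic^0_{\Ccal/X} \to \PPic_{\Ccal/X} \to \Z \to 0,
\]
and I would pass to the associated long exact sequence of \'etale cohomology with supports in $Z$,
\[
    \cdots \to \H^{i-1}_Z(X,\Z) \to \H^i_Z(X,\PPic^0_{\Ccal/X}) \to \H^i_Z(X,\PPic_{\Ccal/X}) \to \H^i_Z(X,\Z) \to \cdots.
\]

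Into this I would feed two vanishing inputs. First, $\H^i_Z(X,\PPic_{\Ccal/X}) = 0$ for $i \le 2$ (and, when $\dim X > 2$, at least up to $p$-torsion) by \cref{lemma:VerschwindungfuerCodim2}. Second, $\H^i_Z(X,\Z) = 0$ for $i \le 1$: writing $U = X \setminus Z$, which is again connected, normal and Noetherian because $\codim_X Z \ge 2$, the localisation sequence gives $\H^0(X,\Z) \isoto \H^0(U,\Z)$ (both equal $\Z$, the restriction map being the identity), whence $\H^0_Z(X,\Z) = 0$; and the same sequence together with $\H^1(X,\Z) = \H^1(U,\Z) = 0$ from \cref{lemma:Kohomologie konstanter Garben} then yields $\H^1_Z(X,\Z) = 0$. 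Plugging both into the long exact sequence, for $i \le 2$ the group $\H^i_Z(X,\PPic^0_{\Ccal/X})$ is squeezed between $\H^{i-1}_Z(X,\Z)$ (zero, since $i-1 \le 1$) and $\H^i_Z(X,\PPic_{\Ccal/X})$ (zero), hence vanishes; and when $\dim X > 2$ the only loss is the $p$-torsion inherited from \cref{lemma:VerschwindungfuerCodim2}, since the $\Z$-terms contribute none. This is exactly the assertion.

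The one point that needs care is that \cref{lemma:VerschwindungfuerCodim2} is stated with the hypothesis that $\Ccal/X$ admit a section, which is not imposed here. This should be harmless: a smooth surjective morphism has sections \'etale-locally on the base, and the groups $\H^i_Z(X,-)$ are governed by the \'etale-local cohomology sheaves $\Hcal^q_Z(-)$ through the spectral sequence $\H^p(X,\Hcal^q_Z(-)) \Rightarrow \H^{p+q}_Z(X,-)$, so it suffices to know $\Hcal^q_Z(\PPic_{\Ccal/X}) = 0$ for $q \le 2$, which can be checked on an \'etale cover of $X$ over which $\Ccal$ acquires a section and where the argument of \cref{lemma:VerschwindungfuerCodim2} applies verbatim at the sheaf level. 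Alternatively, the prime-to-$p$ part can be obtained directly and without any section: for $\ell \ne p$ one has $\PPic^0_{\Ccal/X}[\ell^n] \cong \R^1\pi_*\mu_{\ell^n}$, and running the Leray spectral sequence with supports of \cref{thm:LeraySSWithSupports} for $\pi \colon \Ccal \to X$ with coefficients in $\mu_{\ell^n}$ -- using absolute cohomological purity to kill $\H^j_Z(X,\mu_{\ell^n})$ and $\H^j_{\pi^{-1}(Z)}(\Ccal,\mu_{\ell^n})$ for $j \le 3$ (as $\codim \ge 2$, cf.\ \cref{lemma:CodimensionOfPreimage}), together with $\H^0_Z(X,\R^2\pi_*\mu_{\ell^n}) = 0$ since $\R^2\pi_*\mu_{\ell^n}$ is locally constant on the connected normal $X$ -- gives $\H^i_Z(X,\PPic^0_{\Ccal/X}[\ell^n]) = 0$ for $i \le 2$, hence $\H^i_Z(X,\PPic^0_{\Ccal/X})[\ell^\infty] = 0$ by the Bockstein sequence of $0 \to \PPic^0_{\Ccal/X}[\ell^n] \to \PPic^0_{\Ccal/X} \xrightarrow{\ell^n} \PPic^0_{\Ccal/X} \to 0$; and since $\H^i_Z(X,\PPic^0_{\Ccal/X})$ is torsion for $i = 1,2$ (localisation sequence plus the N\'eron property of \cref{thm:PicIsNeronModel} and the torsion of $\H^i(-,\PPic^0_{\Ccal/X})$), this already yields the case $\dim X > 2$ unconditionally, while the N\'eron--Severi route above supplies the $p$-part when $\dim X \le 2$. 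I expect this interplay between the absent section and the $p$-torsion bookkeeping to be the only real obstacle; everything else is a diagram chase.
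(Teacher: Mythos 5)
Your main argument is exactly the paper's: the N\'eron--Severi sequence of \cref{lemma:NS=Z}, the long exact sequence of cohomology with supports, the vanishing of $\H^i_Z(X,\PPic_{\Ccal/X})$ from \cref{lemma:VerschwindungfuerCodim2}, and $\H^i_Z(X,\Z)=0$ for $i\le 1$ via \cref{lemma:Kohomologie konstanter Garben} (the paper additionally proves the---for this purpose superfluous---case $i=2$ by Zariski--Nagata purity), so your proof is correct, with the section hypothesis you worry about supplied by the paper's standing assumption on $\Ccal/X$. One caution on your optional sectionless $\ell\neq p$ alternative: in the Leray spectral sequence with supports the term $E_2^{2,1}$ is only forced to inject into $\H^4_Z(X,\mu_{\ell^n})$, and killing that term is precisely where the section (or a trace/multisection argument) enters, exactly as in the proof of \cref{lemma:VerschwindungfuerCodim2}, so that sketch does not yet give the $i=2$ case without a section.
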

\begin{proof}
Taking the long exact sequence associated to the short exact sequence of~\cref{lemma:NS=Z} with respect to $\H^i_Z(X,-)$, by~\cref{lemma:VerschwindungfuerCodim2}, it suffices to show that $\H_Z^i(X,\Z) = 0$ for $i = 0,1,2$.

For this, consider the long exact sequence
\[
    \ldots \to \H^i_Z(X,\Z) \to \H^i(X,\Z) \to \H^i(X \setminus Z, \Z) \to \ldots
\]
It suffices to show that $\H^i(X,\Z) \to \H^i(X \setminus Z, \Z)$ is an isomorphism for $i = 0,1$ and an injection for $i = 2$.

For $i = 0$, this map is $\Z \isoto \Z$.

For $i = 1$, both groups are equal to $0$ by~\cref{lemma:Kohomologie konstanter Garben}.

For $i = 2$, this map is $\H^2(X,\Z) \to \H^2(X \setminus Z, \Z)$.  Consider the long exact sequence associated to
\[
    0 \to \Z \to \Q \to \Q/\Z \to 0.
\]

In the following, we omit the base point for the fundamental groups.  Since $X$ is smooth over a field and hence normal, $\H^2(X,\Z) = \H^1(X,\Q/\Z) = \Hom_{cont}(\piet(X),\Q/\Z)$. (For the first equality, use the long exact sequence associated to $0 \to \Z \to \Q \to \Q/\Z \to 0$ and $\H^p(X,\Q) = 0$ for $p > 0$ by~\cref{lemma:Kohomologie konstanter Garben}.  For the second equality use~\cref{prop:EtalePi1UndH1}.)  Since $\piet(X \setminus Z) \to \piet(X)$ is an isomorphism because $Z$ is of codimension $\geq 2$ (by Zariski-Nagata purity~\cite{SGA1}, Exp.~X, Corollaire~3.3), $\H^2(X,\Z) \to \H^2(X \setminus Z, \Z)$ is an isomorphism:
\begin{align*}\xymatrix{
\H^2(X,\Z)              \ar[r]^{\kern-1cm\iso} \ar[d]   & \Hom_{cont}(\piet(X),\Q/\Z) \ar[d]^\iso \\
\H^2(X \setminus Z, \Z) \ar[r]^{\kern-1cm\iso}          & \Hom_{cont}(\piet(X \setminus Z),\Q/\Z)
}\end{align*}
\end{proof}

In the following, \textbf{assume} that the \textbf{vanishing condition}~\eqref{eq:vanishingcondition} $\H^i_Z(X,\Acal) = 0$ for $Z \hookrightarrow X$ a reduced closed subscheme of codimension $\geq 2$ and $i \leq 2$ is satisfied (at least up to $p$-torsion; if we do not want to consider $p$-torsion, calculate modulo the Serre subcategory of $p$-torsion groups).

\begin{lemma} \label[lemma]{lemma:cohomology of limit}
Let $I$ be a filtered category and $(i \mapsto X_i)$ a contravariant functor from $I$ to schemes over $X$.  Assume that all schemes are quasi-compact and that the transition maps $X_i \leftarrow X_j$ are affine.  Let $X_\infty = \varprojlim X_i$, and, for a sheaf $\Fcal$ on $X_{\mathrm{\acute{e}t}}$, let $\Fcal_i$ and $\Fcal_\infty$ be its inverse images on $X_i$ and $X_\infty$ respectively.  Then
\[
    \varinjlim \H^p((X_i)_{\mathrm{\acute{e}t}}, \Fcal_i) \isoto \H^p((X_\infty)_{\mathrm{\acute{e}t}}, \Fcal_\infty).
\]

Assume the $X_i \subseteq X$ are open, the transition morphisms are affine and all schemes are quasi-compact.  Let $Z \hookrightarrow X$ be a closed subscheme.  Then
\[
    \varinjlim \H^p_{Z \cap X_i}((X_i)_{\mathrm{\acute{e}t}}, \Fcal_i) \isoto \H^p_{Z \cap X_\infty}((X_\infty)_{\mathrm{\acute{e}t}}, \Fcal_\infty).
\]
\end{lemma}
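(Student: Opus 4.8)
The plan is to deduce both statements from the standard fact that étale cohomology commutes with filtered inverse limits of schemes along affine transition maps, as recorded for instance in \cite{SGA43}, VII.5.7, or \cite{Milne�taleCohomology}, p.~88, III.1.16. For the first assertion, one checks directly that the hypotheses of that result are satisfied: $I$ is filtered, the $X_i$ are quasi-compact (here one should note they are in fact quasi-compact and quasi-separated, so that the limit $X_\infty=\varprojlim X_i$ exists as a scheme and is again quasi-compact), and the transition morphisms $X_j\to X_i$ are affine. The sheaf $\Fcal$ on $X_{\et}$ pulls back compatibly along the cone maps $X_\infty\to X_i\to X$, so $\Fcal_\infty$ is the inverse image of each $\Fcal_i$, and the cited theorem yields the isomorphism $\varinjlim\H^p((X_i)_{\et},\Fcal_i)\isoto\H^p((X_\infty)_{\et},\Fcal_\infty)$ directly. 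This part is essentially a citation.

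For the second assertion, about cohomology with supports, I would reduce to the first one using the localisation sequence. Write $U_i=X_i\setminus Z$ and $U_\infty=X_\infty\setminus(Z\cap X_\infty)=\varprojlim U_i$; since the $X_i$ are open in $X$ with affine transition maps, so are the $U_i$, and the limit $U_\infty$ is again quasi-compact. For each $i$ there is the long exact localisation sequence
\[
    \cdots\to\H^p_{Z\cap X_i}((X_i)_{\et},\Fcal_i)\to\H^p((X_i)_{\et},\Fcal_i)\to\H^p((U_i)_{\et},\Fcal_i)\to\cdots
\]
and similarly for $X_\infty$. These sequences are functorial in $i$, so passing to the filtered colimit over $I$ — which is exact — gives a long exact sequence of colimits. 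The two non-support terms $\varinjlim\H^p((X_i)_{\et},\Fcal_i)$ and $\varinjlim\H^p((U_i)_{\et},\Fcal_i)$ map isomorphically to $\H^p((X_\infty)_{\et},\Fcal_\infty)$ and $\H^p((U_\infty)_{\et},\Fcal_\infty)$ respectively by the first part of the lemma (applied to the system $(X_i)$ and to the system $(U_i)$), which are compatible with the localisation maps. The five lemma, applied degree by degree to the map of long exact sequences, then forces $\varinjlim\H^p_{Z\cap X_i}((X_i)_{\et},\Fcal_i)\isoto\H^p_{Z\cap X_\infty}((X_\infty)_{\et},\Fcal_\infty)$.

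The only genuinely delicate point — and the one I would state carefully rather than wave at — is the existence and quasi-compactness of the limit scheme $X_\infty$ and the identification $U_\infty=\varprojlim U_i$, together with the compatibility of the localisation triangle with the limit. For this one invokes the general theory of limits of schemes with affine transition maps (\cite{EGAIV3}, §8, or the Stacks Project), under which $X_\infty=\varprojlim X_i$ exists, the projections $X_\infty\to X_i$ are affine, open subschemes pull back compatibly, and a closed subscheme $Z\hookrightarrow X$ base-changes compatibly so that $Z\cap X_\infty=\varprojlim(Z\cap X_i)$ with complement $\varprojlim U_i$. Once this bookkeeping is in place, everything else is a formal consequence of the commutation of étale cohomology with such limits and of the exactness of filtered colimits.
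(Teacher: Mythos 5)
Your proposal is correct and follows essentially the same route as the paper: the first statement is a citation of the standard limit theorem (Milne, III.1.16), and the second is deduced from it via the long exact localisation sequence for the systems $(X_i)$ and $(U_i)=((X\setminus Z)\cap X_i)$, exactness of filtered colimits, and the five lemma, noting as the paper does that the transition maps of the complements are again affine as base changes of affine morphisms. Your extra care about the existence of $X_\infty$ and the identification $U_\infty=\varprojlim U_i$ is sound bookkeeping but does not change the argument.
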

\begin{proof}
See~\cite{MilneÉtaleCohomology}, p.~88, Lemma~III.1.16 for the first statement.  The second one follows from the first, the long exact localisation sequence (note that the morphisms $(X \setminus Z) \cap X_i \leftarrow (X \setminus Z) \cap X_j$ are affine as well since they are base changes of affine morphisms) and the five lemma.
\end{proof}

\begin{corollary} \label[lemma]{cor:Colimes über alle offenen}
We have
\[
    \varinjlim_U \H^p(U, \Fcal|_U) \isoto \H^p(K, \Fcal_\eta),
\]
the colimit with respect to the restriction maps, where $U$ runs through the non-empty standard affine open subschemes $D(f_i)$ of an non-empty affine open subscheme $\Spec(A) \subseteq X$ and $\{\eta\} = \Spec(K)$.
\end{corollary}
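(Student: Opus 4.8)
The plan is to obtain this as an immediate application of the first part of~\cref{lemma:cohomology of limit}. Take the index category $I$ to be the poset of non-empty standard affine open subschemes $D(f) \subseteq \Spec(A)$ ordered by reverse inclusion, together with the contravariant functor sending $D(f)$ to the scheme $D(f)$ over $X$. First I would check the hypotheses: the poset is directed, since for $D(f)$ and $D(g)$ one has $D(f) \cap D(g) = D(fg)$, which is again non-empty because $\Spec(A)$, being an affine open of the integral scheme $X$, is the spectrum of a domain, so $f \neq 0 \neq g$ forces $fg \neq 0$; each $D(f) = \Spec(A_f)$ is affine, hence quasi-compact; and each transition morphism $D(fg) = \Spec(A_{fg}) \to \Spec(A_f) = D(f)$ is induced by a localisation, hence affine.

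Next I would identify the limit. One has $\varprojlim_f D(f) = \Spec\bigl(\varinjlim_f A_f\bigr)$, and the colimit $\varinjlim_f A_f$ of the localisations over all non-zero $f \in A$ is the fraction field $\Quot(A) = K$. Hence $X_\infty = \Spec(K) = \{\eta\}$, and the inverse image $\Fcal_\infty$ of $\Fcal$ on $X_\infty$ is precisely $\Fcal_\eta$; moreover, for $U = D(f)$ the inverse image $\Fcal_i$ of $\Fcal$ is the restriction $\Fcal|_U$, and the transition maps in the resulting direct system of cohomology groups are exactly the restriction maps. Thus the first statement of~\cref{lemma:cohomology of limit} gives
\[
    \varinjlim_U \H^p(U, \Fcal|_U) \isoto \H^p(K, \Fcal_\eta),
\]
which is the assertion.

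I do not anticipate any substantial obstacle here; the entire content sits in~\cref{lemma:cohomology of limit} (which itself rests on a standard result on the cohomology of cofiltered limits of schemes). The only points that require a word are the directedness of the indexing poset and the identification $\varprojlim_f D(f) = \Spec(K)$, both of which use nothing beyond $\Spec(A)$ being an integral affine scheme with function field $K$.
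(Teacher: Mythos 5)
Your proof is correct and is essentially the paper's own argument: the paper simply sets $X_i = U$ in \cref{lemma:cohomology of limit} and notes $\varprojlim U = \Spec(K)$, which is exactly what you do, with the routine verifications (directedness via $D(f)\cap D(g)=D(fg)$, affineness of the transition maps, and $\varinjlim_f A_f = \Quot(A) = K$) spelled out.
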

\begin{proof}
Set $X_i = U$ in~\cref{lemma:cohomology of limit} and note that $\varprojlim U = \Spec(K)$.
\end{proof}

\begin{lemma}[Excision of codimension $\geq 2$ subschemes] \label[lemma]{lemma:ExcisionOfCodimgeq2subschemes}
One can excise subschemes $Z \hookrightarrow Y$ of codimension $\geq 2$ in $X$:
\begin{align} \label{eq:Ausschneidung und Kerne}
    \ker\left(\H^1(U,\Acal) \to \H^2_Y(X,\Acal)\right) = \ker\left(\H^1(U,\Acal) \to \H^2_{Y \setminus Z}(X \setminus Z, \Acal|_{X \setminus Z})\right).
\end{align}
\end{lemma}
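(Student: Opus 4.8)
The plan is to express both kernels as images of restriction maps read off from localisation long exact sequences, and then to invoke the vanishing condition~\eqref{eq:vanishingcondition} exactly once. Throughout, write $U = X \setminus Y$; since $Z \subseteq Y$, the scheme $U$ is equally the open complement of $Y \setminus Z$ in $X \setminus Z$, so that $\H^1(U,\Acal)$ is genuinely common to both sides of the asserted identity. We may assume $Z$ reduced, since \'etale cohomology with supports depends only on the underlying closed set, so that~\eqref{eq:vanishingcondition} applies to $Z$.

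First I would observe that the two maps occurring in the statement are the connecting homomorphisms $\partial$ and $\partial'$ of the localisation long exact sequences of the closed pairs $(X,Y)$ and $(X\setminus Z, Y\setminus Z)$. Exactness at the term $\H^1(U,\Acal)$ gives
\[
    \ker\partial = \im\big(\H^1(X,\Acal) \to \H^1(U,\Acal)\big),
\]
and likewise $\ker\partial' = \im\big(\H^1(X\setminus Z,\Acal|_{X\setminus Z}) \to \H^1(U,\Acal)\big)$, where the maps are pullback along the open immersions $U \hookrightarrow X$ and $U \hookrightarrow X\setminus Z$.

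Now I would invoke the localisation long exact sequence of the closed pair $(X,Z)$, i.\,e.\ the exactness of $\H^1(X,\Acal) \to \H^1(X\setminus Z,\Acal) \to \H^2_Z(X,\Acal)$, together with $\H^2_Z(X,\Acal) = 0$, which holds by~\eqref{eq:vanishingcondition} because $\codim_X Z \geq 2$. Hence $\H^1(X,\Acal) \to \H^1(X\setminus Z,\Acal)$ is surjective; since moreover the restriction $\H^1(X,\Acal) \to \H^1(U,\Acal)$ factors as $\H^1(X,\Acal) \to \H^1(X\setminus Z,\Acal) \to \H^1(U,\Acal)$, precomposing with this surjection does not change the image, so the two images above coincide and $\ker\partial = \ker\partial'$.

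The argument is a diagram chase whose only non-formal ingredient is the single use of the vanishing condition; the thing to be careful about is the bookkeeping of the three open subschemes $U \subseteq X\setminus Z \subseteq X$ and the compatibility of the corresponding restriction maps with the connecting homomorphisms (standard functoriality of the localisation sequence in the open immersion $X\setminus Z \hookrightarrow X$, using $Y \cap (X\setminus Z) = Y\setminus Z$ since $Z \subseteq Y$). An equally good route is to note that this functoriality yields a commutative square relating $\partial$ to $\partial'$, and that the excision sequence $\H^2_Z(X,\Acal) \to \H^2_Y(X,\Acal) \to \H^2_{Y\setminus Z}(X\setminus Z,\Acal)$ with $\H^2_Z(X,\Acal)=0$ forces the right-hand vertical restriction $\H^2_Y(X,\Acal)\to\H^2_{Y\setminus Z}(X\setminus Z,\Acal)$ to be injective, whence $\ker\partial = \ker\partial'$ at once; I expect the author to take one of these two essentially equivalent lines.
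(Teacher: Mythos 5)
Your proposal is correct, and your second ``equally good route'' is exactly the paper's proof: the localisation sequence for the nested supports $Z \subseteq Y$, namely $\H^2_Z(X,\Acal) \to \H^2_Y(X,\Acal) \to \H^2_{Y\setminus Z}(X\setminus Z,\Acal)$, combined with the vanishing condition $\H^2_Z(X,\Acal)=0$ gives the injectivity $\H^2_Y(X,\Acal) \hookrightarrow \H^2_{Y\setminus Z}(X\setminus Z,\Acal)$, and the kernel--cokernel sequence then identifies the two kernels. Your primary chase, via surjectivity of $\H^1(X,\Acal)\to\H^1(X\setminus Z,\Acal)$ and comparing images of restriction to $\H^1(U,\Acal)$, is an equivalent variant resting on the same single input $\H^2_Z(X,\Acal)=0$.
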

\begin{proof}
From the long exact localisation sequence for cohomology with supports~\cite{MilneÉtaleCohomology}, p.~92, Remark~III.1.26
\[
    \ldots \to \H^p_Z(X,\Acal) \to \H^p_Y(X,\Acal) \to \H^p_{Y \setminus Z}(X \setminus Z,\Acal) \to \ldots
\]
and from the vanishing condition one gets the injectivity
\begin{align} \label{eq:Ausschneidung und Purity}
    0 \to \H^2_Y(X,\Acal) \hookrightarrow \H^2_{Y \setminus Z}(X \setminus Z, \Acal|_{X \setminus Z}),
\end{align}
hence by the kernel-cokernel exact sequence the claim.
\end{proof}

We will use a Mayer-Vietoris sequence for cohomology with supports.
\begin{lemma} \label[lemma]{thm:MayerVietorisWithSupports}
Let $Y_1$ and $Y_2$ be closed subschemes of $X$ and $\Fcal$ a sheaf on $X$.  Then there is a long exact sequence of cohomology with supports
\[
    \ldots \to \H^i_{Y_1 \cap Y_2}(X,\Fcal) \to \H^i_{Y_1}(X,\Fcal) \oplus \H^i_{Y_2}(X,\Fcal) \to \H^i_{Y_1 \cup Y_2}(X,\Fcal) \to \ldots
\]
\end{lemma}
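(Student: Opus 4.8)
The plan is to reproduce the classical construction of the Mayer--Vietoris sequence, carried out with the left exact functors $\Gamma_Z(X,-) = \ker\bigl(\Gamma(X,-) \to \Gamma(X \setminus Z,-)\bigr)$ of sections supported on a closed subset $Z$, whose derived functors are the $\H^i_Z(X,-)$. Put $V_i = X \setminus Y_i$, so that $X \setminus (Y_1 \cup Y_2) = V_1 \cap V_2$ and $X \setminus (Y_1 \cap Y_2) = V_1 \cup V_2$. For an arbitrary \'etale sheaf $\Fcal$ on $X$ the groups $\Gamma_{Y_i}(X,\Fcal)$ and $\Gamma_{Y_1 \cap Y_2}(X,\Fcal)$ are subgroups of $\Gamma_{Y_1 \cup Y_2}(X,\Fcal)$, and $\Gamma_{Y_1 \cap Y_2}(X,\Fcal) = \Gamma_{Y_1}(X,\Fcal) \cap \Gamma_{Y_2}(X,\Fcal)$, since a section killed on $V_1$ and on $V_2$ is killed on $V_1 \cup V_2$ by the sheaf axiom. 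Hence, with the natural maps induced by inclusions of supports, the sequence
\[
    0 \to \Gamma_{Y_1 \cap Y_2}(X,\Fcal) \xrightarrow{s \mapsto (s,s)} \Gamma_{Y_1}(X,\Fcal) \oplus \Gamma_{Y_2}(X,\Fcal) \xrightarrow{(s,t) \mapsto s - t} \Gamma_{Y_1 \cup Y_2}(X,\Fcal)
\]
is exact at its first two places, functorially in $\Fcal$; this is just the elementary fact that for two subgroups $A, B$ of an abelian group one has an exact sequence $0 \to A \cap B \to A \oplus B \to A + B \to 0$.

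The crucial step is that the right-hand map is also surjective when $\Fcal = \Ical$ is injective, so that the displayed sequence becomes short exact. First, injectivity of $\Ical$ makes the restriction $\Ical(X) \to \Ical(V)$ surjective for every open $V \subseteq X$: apply the exact functor $\Hom(-,\Ical)$ to the monomorphism $j_!\Z_V \hookrightarrow \Z_X$, with $j\colon V \hookrightarrow X$, exactly as in the proof of~\cref{thm:LeraySSWithSupports}. Now let $u \in \Gamma_{Y_1 \cup Y_2}(X,\Ical)$, i.e.\ $u|_{V_1 \cap V_2} = 0$. The sections $u|_{V_1} \in \Ical(V_1)$ and $0 \in \Ical(V_2)$ agree on the overlap $V_1 \cap V_2$, hence glue to a section $w' \in \Ical(V_1 \cup V_2)$, which by the surjectivity just noted lifts to some $w \in \Ical(X)$. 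Then $w|_{V_2} = 0$, so $w \in \Gamma_{Y_2}(X,\Ical)$, and $(u-w)|_{V_1} = 0$ since $w|_{V_1} = w'|_{V_1} = u|_{V_1}$, so $u - w \in \Gamma_{Y_1}(X,\Ical)$; thus $u = (u-w) - (-w)$ lies in the image of the right-hand map.

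To finish, choose an injective resolution $\Fcal \hookrightarrow \Ical^\bullet$ and apply the three functors termwise. By the previous paragraph this yields a short exact sequence of complexes of abelian groups
\[
    0 \to \Gamma_{Y_1 \cap Y_2}(X,\Ical^\bullet) \to \Gamma_{Y_1}(X,\Ical^\bullet) \oplus \Gamma_{Y_2}(X,\Ical^\bullet) \to \Gamma_{Y_1 \cup Y_2}(X,\Ical^\bullet) \to 0,
\]
whose long exact cohomology sequence is the desired Mayer--Vietoris sequence: the cohomology of $\Gamma_Z(X,\Ical^\bullet)$ computes $\H^\bullet_Z(X,\Fcal)$, and cohomology commutes with the finite direct sum. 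The one genuinely nontrivial input is the surjectivity established in the middle paragraph, which rests on the surjectivity of restriction maps for injective sheaves; everything else is formal bookkeeping. Alternatively, one can avoid arguing with sections altogether and splice, by the octahedral axiom, the two localization triangles $\R\Gamma_{Y_1}(X,\Fcal) \to \R\Gamma_{Y_1 \cup Y_2}(X,\Fcal) \to \R\Gamma_{Y_2 \cap V_1}(V_1,\Fcal)$ and $\R\Gamma_{Y_1 \cap Y_2}(X,\Fcal) \to \R\Gamma_{Y_2}(X,\Fcal) \to \R\Gamma_{Y_2 \cap V_1}(V_1,\Fcal)$, which share their third term.
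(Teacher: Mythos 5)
Your proof is correct and follows essentially the same route as the paper: choose an injective resolution, observe that the termwise sequence $0 \to \Gamma_{Y_1 \cap Y_2}(X,\Ical^\bullet) \to \Gamma_{Y_1}(X,\Ical^\bullet) \oplus \Gamma_{Y_2}(X,\Ical^\bullet) \to \Gamma_{Y_1 \cup Y_2}(X,\Ical^\bullet) \to 0$ is a short exact sequence of complexes, and take its long exact cohomology sequence. You merely make explicit the point the paper leaves to the reader, namely the surjectivity on injective sheaves via gluing and the surjectivity of restriction maps $\Ical(X)\to\Ical(V)$.
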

\begin{proof}
Choosing an injective resolution $0 \to \Fcal \to \Ical^\bullet$, this follows from the exact sequence of complexes
\[
    0 \to \Gamma_{Y_1 \cap Y_2}(X,\Ical^\bullet) \to \Gamma_{Y_1}(X,\Ical^\bullet) \oplus \Gamma_{Y_2}(X,\Ical^\bullet) \to \Gamma_{Y_1 \cup Y_2}(X,\Ical^\bullet) \to 0
\]
in the usual way.
\end{proof}

\begin{lemma}
Let $Y \hookrightarrow X$ be a closed subscheme with open complement $U = X \setminus Y$ and with all of its irreducible components of codimension $1$ in $X$.  Denote the finitely many irreducible components of $Y$ by $(Y_i)_{i=1}^n$.  Then
\begin{equation} \label{eq:bla}
    \ker\left(\H^1(U,\Acal) \to \H^2_Y(X,\Acal)\right) = \ker\Big(\H^1(U,\Acal) \to \bigoplus_{i=1}^n \H^2_{Y_i \setminus Z_i}(X \setminus Z_i, \Acal)\Big)
\end{equation}
with certain closed subschemes $Z_i \hookrightarrow Y_i$.
\end{lemma}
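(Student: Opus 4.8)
The plan is to excise the ``crossing locus'' of $Y$, so that the components separate, and then reduce to the excision lemma~\cref{lemma:ExcisionOfCodimgeq2subschemes}. Concretely I would set
\[
    Z := \bigcup_{1 \le j < k \le n}(Y_j \cap Y_k), \qquad Z_i := Z \cap Y_i = \bigcup_{j \ne i}(Y_i \cap Y_j).
\]
Since each $Y_j$ is irreducible of codimension $1$ in $X$ (a variety over a field, in our situation), for $j \ne k$ the set $Y_j \cap Y_k$ is a proper closed subset of the irreducible $Y_j$, hence of codimension $\ge 2$ in $X$; thus $Z$, and a fortiori the closed subscheme $Z_i \subseteq Y_i$, have codimension $\ge 2$ in $X$. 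Note $Y_i \cap Z = Z_i$, so that $Y_i \setminus Z_i = Y_i \setminus Z$ as subsets of $X$.

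First I would decompose the cohomology with supports after removing $Z$. On $X \setminus Z$ the closed subsets $Y_1 \setminus Z, \dots, Y_n \setminus Z$ are pairwise disjoint, since $(Y_i \setminus Z) \cap (Y_j \setminus Z) = (Y_i \cap Y_j) \setminus Z = \emptyset$ for $i \ne j$, and their union is $Y \setminus Z$. Iterating the Mayer--Vietoris sequence with supports~\cref{thm:MayerVietorisWithSupports} --- in which the ``intersection'' terms $\H^p_\emptyset(X \setminus Z, \Acal)$ all vanish --- yields a canonical isomorphism $\H^2_{Y \setminus Z}(X \setminus Z, \Acal) \isoto \bigoplus_{i} \H^2_{Y_i \setminus Z}(X \setminus Z, \Acal)$. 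Since $Y_i \setminus Z_i$ is closed in $X \setminus Z_i$ and contained in the open subscheme $X \setminus Z = (X \setminus Z_i) \setminus (Z \setminus Z_i)$, excision for \'etale cohomology with supports gives $\H^2_{Y_i \setminus Z_i}(X \setminus Z_i, \Acal) = \H^2_{Y_i \setminus Z_i}(X \setminus Z, \Acal) = \H^2_{Y_i \setminus Z}(X \setminus Z, \Acal)$. Composing, I obtain a canonical isomorphism
\[
    \H^2_{Y \setminus Z}(X \setminus Z, \Acal) \isoto \bigoplus_{i=1}^n \H^2_{Y_i \setminus Z_i}(X \setminus Z_i, \Acal),
\]
and I take the map in~\eqref{eq:bla} to be the boundary map $\H^1(U, \Acal) \to \H^2_{Y \setminus Z}(X \setminus Z, \Acal)$ of the localisation sequence for $Y \setminus Z \hookrightarrow X \setminus Z$ (whose open complement is $X \setminus Y = U$), followed by this isomorphism.

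Finally, since composing with an isomorphism does not change the kernel, the right-hand side of~\eqref{eq:bla} equals $\ker\bigl(\H^1(U, \Acal) \to \H^2_{Y \setminus Z}(X \setminus Z, \Acal)\bigr)$; by~\cref{lemma:ExcisionOfCodimgeq2subschemes}, applied to the codimension-$\ge 2$ subscheme $Z \hookrightarrow Y$ (its proof invokes the standing vanishing condition~\eqref{eq:vanishingcondition}), this in turn equals $\ker\bigl(\H^1(U, \Acal) \to \H^2_Y(X, \Acal)\bigr)$, which is the left-hand side. The points to be careful about are the codimension bookkeeping for $Z$ (where one really uses that the $Y_i$ are the codimension-$1$ components of a variety) and checking that the Mayer--Vietoris splitting and the excision isomorphism are the canonical ones compatible with the localisation boundary maps; I expect the only mildly subtle conceptual step to be recognising that one must discard the crossing locus \emph{before} isolating the contribution of a single component $Y_i$.
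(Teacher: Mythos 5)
Your proposal is correct and follows essentially the same route as the paper's proof: excise the pairwise intersections $Y_i \cap Y_j$ (of codimension $\geq 2$, by catenarity of varieties) via \cref{lemma:ExcisionOfCodimgeq2subschemes}, then split the support $Y \setminus Z$ into its now-disjoint components by iterating the Mayer--Vietoris sequence of \cref{thm:MayerVietorisWithSupports}. Your write-up merely makes explicit the choices $Z_i = Y_i \cap \bigcup_{j \neq i} Y_j$, the excision identification $\H^2_{Y_i\setminus Z_i}(X\setminus Z_i,\Acal) \cong \H^2_{Y_i\setminus Z_i}(X\setminus Z,\Acal)$, and the definition of the maps, which the paper leaves implicit.
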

\begin{proof}
Using~\cref{lemma:ExcisionOfCodimgeq2subschemes}, excise the intersections $Y_i \cap Y_j$ for $i \neq j$ (they are of codimension $\geq 2$ in $X$ since our schemes are catenary as they are varieties by~\cite{Liu2006}, p.~338, Corollary~8.2.16 and
\[
   2 = 1 + 1 \leq \codim(Y_i \cap Y_j \hookrightarrow Y) + \codim(Y \hookrightarrow X) = \codim(Y_i \cap Y_j \hookrightarrow X)).
\]
Now, by a repeated application of the Mayer-Vietoris sequence with supports in~\cref{thm:MayerVietorisWithSupports}, one gets the claim.
\end{proof}

\begin{lemma} \label[lemma]{lemma:UFD}
Let $X$ be a regular variety over a finitely generated field and $x \in X^{(1)}$.  Then there is an open affine subscheme $\Spec{A} = X_0 \subseteq X$ containing $x$ such that $A$ is a unique factorisation domain.
\end{lemma}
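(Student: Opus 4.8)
The plan is to reduce to the affine case and then trivialise the divisor class group by inverting a single function. First I would choose any affine open $\Spec B = U \subseteq X$ with $x \in U$. Since $X$ is regular, $B$ is a regular Noetherian domain, hence a Krull domain all of whose localisations at prime ideals are unique factorisation domains (Auslander--Buchsbaum), so the divisor class group $\mathrm{Cl}(B)$ agrees with $\Pic(\Spec B)$. Let $\mathfrak{p} \subset B$ be the height-one prime corresponding to $x \in X^{(1)}$. It suffices to produce $f \in B \setminus \mathfrak{p}$ with $\mathrm{Cl}(B_{f}) = 0$: then $A := B_{f}$ is a Krull domain with trivial class group, hence a unique factorisation domain, and $X_{0} := D(f) = \Spec A$ is an affine open containing $x$.

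The only non-formal ingredient is that $\mathrm{Cl}(B)$ is finitely generated, and this is exactly where the hypothesis on the base field is used --- the assertion fails, for instance, for an elliptic curve minus a point over $\overline{\Q}$, whose divisor class group is not finitely generated. In characteristic $0$ one argues as follows: compactify $U$ to a projective variety and resolve its singularities; the resolution is an isomorphism over the regular locus, hence contains $U$ as a dense open subscheme, so $\mathrm{Cl}(B)$ is the quotient of $\Pic$ of a smooth projective variety over $k$ by the classes of the finitely many boundary divisors. The Picard group of a smooth projective variety over a finitely generated field is finitely generated, by the theorem of the base (finiteness of the N\'eron--Severi group) together with the Lang--N\'eron generalisation of the Mordell--Weil theorem (finite generation of the group of $k$-points of the Picard variety). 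In positive characteristic, resolution of singularities is available when $\dim X$ is small, but in general one must replace it by a smooth projective alteration (a device already used in Section~4.3), which is only generically finite rather than birational; controlling how $\mathrm{Cl}$ changes under such a cover --- in particular its torsion --- is the point I expect to be the main obstacle.

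Granting that $\mathrm{Cl}(B)$ is finitely generated, the rest is elementary. First, $\mathrm{Cl}(B)$ is already generated by the classes of the prime divisors \emph{different from} $\mathfrak{p}$: picking $a \in B$ with $v_{\mathfrak{p}}(a) = 1$ --- possible since $B_{\mathfrak{p}}$ is a discrete valuation ring --- the relation $\mathrm{div}(a) = \mathfrak{p} + \sum_{\mathfrak{q} \neq \mathfrak{p}} n_{\mathfrak{q}}\mathfrak{q}$ among principal divisors expresses $[\mathfrak{p}]$ through the remaining classes $[\mathfrak{q}]$. Hence finitely many height-one primes $\mathfrak{q}_{1}, \dots, \mathfrak{q}_{r} \neq \mathfrak{p}$ generate $\mathrm{Cl}(B)$. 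Since $\mathfrak{p}$ has height one it contains none of the $\mathfrak{q}_{i}$, hence not $\mathfrak{q}_{1} \cap \dots \cap \mathfrak{q}_{r}$, so prime avoidance yields $f \in \mathfrak{q}_{1} \cap \dots \cap \mathfrak{q}_{r}$ with $f \notin \mathfrak{p}$. The height-one primes of $B$ containing $f$ include $\mathfrak{q}_{1}, \dots, \mathfrak{q}_{r}$, so by the excision sequence for class groups (\cite{HartshorneAG}, Proposition~II.6.5) $\mathrm{Cl}(B_{f})$ is a quotient of $\mathrm{Cl}(B)$ by a subgroup containing $[\mathfrak{q}_{1}], \dots, [\mathfrak{q}_{r}]$, hence vanishes, and we conclude as above.
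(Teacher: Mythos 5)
Your endgame coincides with the paper's own argument: both proofs rest on (i) finite generation of the divisor class group, (ii) excising/inverting finitely many generating prime divisors (\cite{HartshorneAG}, Proposition~II.6.5), and (iii) the fact that a Noetherian normal domain with trivial class group is a UFD (\cite{HartshorneAG}, Proposition~II.6.2). Your way of keeping $x$ in the picture is a nice variant of the paper's: instead of invoking the moving lemma (\cite{Liu2006}, Proposition~9.1.11) when $\overline{\{x\}}$ happens to be one of the generators, you use $\mathrm{div}(a)$ for a uniformiser $a$ of $B_{\mathfrak p}$ to express $[\mathfrak p]$ through other prime classes, and then the primality of $\mathfrak p$ to find $f\in\mathfrak q_1\cap\dots\cap\mathfrak q_r$ with $f\notin\mathfrak p$; that part is correct and arguably cleaner.

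The genuine gap is step (i), exactly where you say you expect trouble: you only establish finite generation of $\mathrm{Cl}(B)$ in characteristic $0$, via resolution of singularities plus the theorem of the base and the Mordell--Weil--N\'eron theorem, and you leave the positive characteristic case open. But positive characteristic is precisely the case this paper needs: the lemma is applied to smooth varieties over finite fields and finitely generated fields of characteristic $p$. The alteration variant you sketch does not close it by itself: for a generically finite alteration of degree $d$ one only gets that the kernel of $\mathrm{Cl}(B)\to\mathrm{Cl}(B')$ is killed by $d$, so one still needs finiteness of $\mathrm{Cl}(B)[d]$, and the Kummer sequence merely presents $\Pic(B)[d]$ as a quotient of $\H^1(\Spec{B},\mu_d)$, which need not be finite over an infinite finitely generated field (already $\H^1(\Q,\mu_2)\cong\Q^\times/(\Q^\times)^2$ is infinite); so the torsion control really is the crux. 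The paper disposes of all of this in one stroke by quoting Kahn (\cite{KahnGroupeDesClasses}, p.~396, Corollaire~2), which gives finite generation of the class group of a regular variety over a finitely generated field in every characteristic; with that reference inserted in place of your first paragraph's discussion, your argument is complete.
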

\begin{proof}
The class group $\mathrm{Cl}(X)$ is finitely generated by~\cite{KahnGroupeDesClasses}, p.~396, Corollaire~2.  Excise the finite set of generators of the Picard group using~\cite{HartshorneAG}, p.~133, Proposition~II.6.5.  (If $\overline{\{x\}}$ is one of the generators, replace it by the moving lemma~\cite{Liu2006}, p.~380, Proposition~9.1.11.)  Then the rest of the variety has trivial class group.  Now take an open affine subset of the remaining scheme containing $x$.  It is normal and has trivial class group, so by~\cite{HartshorneAG}, p.~131, Proposition~II.6.2 it is a unique factorisation domain.
\end{proof}

\begin{lemma}
One has
\begin{equation} \label{eq:ShaNurNochAbgPkteImTraeger}
    \ker\left(\H^1(U,\Acal) \to \H^2_Y(X,\Acal)\right) = \ker\Big(\H^1(U,\Acal) \to \bigoplus_{i=1}^n \H^2_{\{x_i\}}(X \setminus \tilde{Z}_i, \Acal)\Big)
\end{equation}
with the $x_i \in X \setminus \tilde{Z}_i$ closed points and generic points of the $Y_i$ and $\tilde{Z}_i \hookrightarrow X$ certain subschemes.
\end{lemma}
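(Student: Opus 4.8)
The plan is to continue from the previous lemma, which already writes the kernel as $\ker\big(\H^1(U,\Acal)\to\bigoplus_{i=1}^n\H^2_{Y_i\setminus Z_i}(X\setminus Z_i,\Acal)\big)$, and for each $i$ to localise the $i$-th term at the generic point $x_i\in X^{(1)}$ of $Y_i$ without changing the kernel of the map out of $\H^1(U,\Acal)$. The engine is the excision lemma~\cref{lemma:ExcisionOfCodimgeq2subschemes}: its proof produces, for any reduced closed $W\subseteq Y$ of codimension $\ge 2$ in $X$, an \emph{injection} $\H^2_Y(X,\Acal)\hookrightarrow\H^2_{Y\setminus W}(X\setminus W,\Acal)$, and the same holds with $X$, $Y$ replaced by any open subscheme, since the standing hypotheses --- in particular the vanishing condition~\eqref{eq:vanishingcondition}, which is~\cref{lemma:VerschwindungfuerCodim2Pic0} for $\Acal=\PPic^0_{\Ccal/X}$ --- pass to open subschemes. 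Precomposing an injection with the structure map does not change its kernel, so we may excise codimension-$\ge 2$ subschemes freely.

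Concretely: fix $i$ and use~\cref{lemma:UFD} to pass to an open affine chart $\Spec A_i\ni x_i$ with $A_i$ a unique factorisation domain, so that $\overline{\{x_i\}}\cap\Spec A_i$ is the zero locus of a single prime element $f_i$ and $\Ocal_{X,x_i}=(A_i)_{(f_i)}$ is a discrete valuation ring. The part of $Y_i\setminus Z_i$ outside $\Spec A_i$ is a proper closed subset of $Y_i$, hence of codimension $\ge 2$ in $X$; excising it (and the analogous codimension-$\ge 2$ boundary pieces) reduces the $i$-th term to $\H^2_V(\Spec A_i,\Acal)$ with $V\subseteq V(f_i)$ open dense in the divisor. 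Now for every $g\in A_i$ with $x_i\in D(g)$ the locus $V\cap V(g)$ is again of codimension $\ge 2$, so~\cref{lemma:ExcisionOfCodimgeq2subschemes} yields compatible injections $\H^2_V(\Spec A_i,\Acal)\hookrightarrow\H^2_{V\cap D(g)}(D(g),\Acal)$; the basic opens $D(g)$ are affine over one another and quasi-compact, with $\varprojlim_g D(g)=\Spec\Ocal_{X,x_i}$ and $\varprojlim_g (V\cap D(g))=\{x_i\}$, so~\cref{lemma:cohomology of limit} identifies the colimit of these groups with $\H^2_{x_i}(\Spec\Ocal_{X,x_i},\Acal)$. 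Thus the $i$-th term maps injectively to $\H^2_{x_i}(\Spec\Ocal_{X,x_i},\Acal)$ through a chain of excisions, so the kernels of the two maps out of the fixed group $\H^1(U,\Acal)$ agree (using also that $\ker$ commutes with the filtered colimit). Summing over $i$ and combining with the previous lemma gives~\eqref{eq:ShaNurNochAbgPkteImTraeger}, the group $\H^2_{\{x_i\}}(X\setminus\tilde Z_i,\Acal)$ now being local cohomology at the closed point $x_i$ of the discrete valuation ring $\Ocal_{X,x_i}$.

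The main obstacle is keeping the ``kernel unchanged'' bookkeeping honest through these successive reductions: one must check that the hypotheses of~\cref{lemma:ExcisionOfCodimgeq2subschemes} (hence the claimed injectivity) really hold on each of the shrinking open subschemes, that the maps $\H^2_V(\Spec A_i,\Acal)\to\H^2_{V\cap D(g)}(D(g),\Acal)$ form a system compatible with the transition maps so the colimit computation applies, and that the hypotheses of~\cref{lemma:cohomology of limit} (quasi-compactness, affine transition morphisms) are met --- which is precisely why one first descends to the affine UFD chart. None of this is deep, but it is where care is needed.
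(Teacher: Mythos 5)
Your argument is correct and rests on the same three pillars as the paper's proof --- the UFD chart from \cref{lemma:UFD}, the excision injection \eqref{eq:Ausschneidung und Purity} supplied by the vanishing condition (which, as you note, passes to open subschemes of $X$), and the limit \cref{lemma:cohomology of limit} --- but your limiting system is genuinely different. The paper shrinks the chart by removing only the \emph{other} integral divisors, one at a time: it enumerates them (using that $X$ is countable), writes each as $V(f)$ with $f$ prime via the converse of Krull's Hauptidealsatz and factoriality, and so stays inside a chain of basic affine opens $D(f_j)$ with affine transition maps; the limit is then a pro-open ``$X\setminus\tilde{Z}_i$'' in which $x_i$ has become a closed point but which is strictly larger than the local scheme. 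You instead take the colimit over \emph{all} basic opens $D(g)\ni x_i$, whose transition maps are automatically affine, so the enumeration, the Hauptidealsatz, and in fact the factoriality of the chart become superfluous; the price is that your limit is $\Spec\Ocal_{X,x_i}$, so you prove \eqref{eq:ShaNurNochAbgPkteImTraeger} with $\H^2_{\{x_i\}}(X\setminus\tilde{Z}_i,\Acal)$ replaced by $\H^2_{\{x_i\}}(\Spec\Ocal_{X,x_i},\Acal)$. That variant is just as serviceable: in \cref{lemma:Surjectivity} the paper anyway passes at once to $\H^2_{\{x_i\}}(X_{x_i}^h,\Acal)$ by excision at a closed point, and the same excision applies verbatim to the local scheme $\Spec\Ocal_{X,x_i}$, whose Henselisation at its closed point is again $\Ocal_{X,x_i}^h$. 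Two bookkeeping points to tighten: choose the UFD chart inside $X\setminus Z_i$ (as the paper does), so that the support becomes $V=Y_i\cap\Spec A_i$, closed and irreducible with generic point $x_i$ in the ambient $\Spec A_i$ --- as written, your $V$ is only open dense in the divisor, and cohomology with support in a non-closed subset is not what is in play here (alternatively, keep the ambient $\Spec A_i\setminus Z_i$ throughout); and recall that for $\dim X>2$ all of this, exactly as in the paper, is only valid modulo $p$-torsion, since the vanishing condition is.
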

\begin{proof}
This follows basically by excising (using~\eqref{eq:Ausschneidung und Kerne}) everything except the generic points of the $Y_i$ in~\eqref{eq:bla}.  The only technical difficulty is that for applying~\cref{lemma:cohomology of limit} one has to make sure that the transition maps are affine.

Fix an irreducible component $Y_i$ of $Y$ and call it $Y$ with $x$ its generic point and with corresponding $Z := Z_i$.  We want to construct an injection
\begin{equation} \label{eq:injection}
    \H^2_{Y \setminus Z}(X \setminus Z, \Acal) \hookrightarrow \H^2_{\{x\}}(X \setminus \tilde{Z}, \Acal).
\end{equation}

Using~\cref{lemma:UFD}, choose an affine open $X_0 := \Spec{A} \subset X \setminus Z$ containing the point $x$ such that $A$ is a unique factorisation domain.  One has $\H^q_Y(X,\Acal) \hookrightarrow \H^q_{Y \cap X_0}(X_0, \Acal)$: $X \setminus X_0$ is a closed subscheme $V \hookrightarrow X$ such that $V \cap Y \hookrightarrow Y$ is of codimension $\geq 1$ and hence (since varieties are catenary by~\cite{Liu2006}, p.~338, Corollary~2.16) $V \cap Y \hookrightarrow X$ of codimension $\geq 2$, so we conclude by excision~\eqref{eq:Ausschneidung und Kerne}.  We construct a sequence $(X_i)_{i=0}^\infty$ of standard affine open subsets $X_i = D(f_i) \subset X_0$, $X_{i+1} \subset X_i$, all of them containing the point $x$ such that
\begin{align*}
    \H^q_{Y \cap X_i}(X_i,\Acal) &\hookrightarrow \H^q_{Y \cap X_{i+1}}(X_{i+1}, \Acal), \quad\text{and thus by~\eqref{eq:Ausschneidung und Kerne}}\\
    \ker\left(\H^1(U,\Acal) \to \H^2_Y(X,\Acal)\right) &= \ker\left(\H^1(U,\Acal) \to \H^2_{Y \cap X_i}(X_i, \Acal)\right),
\end{align*}
and such that $\varprojlim_j X_i \cap Y = \{x\}$.  Then~\eqref{eq:injection} follows from~\cref{lemma:cohomology of limit}.

Since $X$ is countable, one can choose an enumeration $(Z_i)_{i=1}^\infty$ of the closed integral subschemes of codimension $1$ of $X_0$ not equal to $X_0 \cap Y$.

Given $X_i = D(f_i)$, take $f \in \Spec{A_{f_i}}$ such that $V(f) = Z_{i+1}$.  (By the converse of Krull's Hauptidealsatz~\cite{Eisenbud}, p.~233, Corollary~10.5, since $Z_{i+1}$ is of codimension $1$ in $X_0$, there is an $f \in A$ such that $Z_{i+1} \subseteq V(f)$ is minimal.  Since $X_0$ is a unique factorisation domain, one can assume $f$ prime, hence $Z_{i+1} = V(f)$ by codimension reasons.)  Then $V(f) \cap (X^{(1)} \cap Y) = \emptyset$ and $V(f) \cap Y \subsetneq Y$ has codimension $\geq 1$ in $Y$, hence (again, varieties being catenary) $V(f) \cap Y$ has codimension $\geq 2$ in $X$.  Therefore one can apply~\eqref{eq:Ausschneidung und Purity} to yield an injection
\[
    \H^2_{Y \cap X_i}(X_i,\Acal) \hookrightarrow \H^2_{(Y \cap X_i) \setminus V(f)}(X_i \setminus (Y \cap V(f)),\Acal).
\]
Now, by excision (\cite{MilneÉtaleCohomology}, p.~92, Proposition~III.1.27) of $V(f)$, one has
\[
    \H^2_{(Y \cap X_i) \setminus V(f)}(X_i \setminus (Y \cap V(f)),\Acal) \isoto \H^2_{(Y \cap X_i) \setminus V(f)}(X_i \setminus V(f),\Acal).
\]
Set $X_{i+1} = X_i \setminus V(f) = D(f_if)$, $f_{i+1} = f_if$.\\

Apply this to the direct summands in~\eqref{eq:bla}.
\end{proof}

\begin{lemma}
There is a short exact sequence
\begin{align}
    0 \to \H^1_Y(X,\Acal)/\Acal(U) \to \H^1(X,\Acal) \to \ker\left(\H^1(U,\Acal) \to \H^2_Y(X,\Acal)\right) \to 0. \label{eq:ShaUndKohomologieMitTraegern}
\end{align}
\end{lemma}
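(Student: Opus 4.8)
The plan is to feed the pair $(Y,\,U=X\setminus Y)$ into the long exact localisation sequence for \'etale cohomology with supports (e.\,g.\ \cite{Milne�taleCohomology}, p.~92, Remark~III.1.26)
\[
    \cdots \to \H^i_Y(X,\Acal) \to \H^i(X,\Acal) \to \H^i(U,\Acal) \to \H^{i+1}_Y(X,\Acal) \to \cdots
\]
and to extract the asserted short exact sequence from its degrees $0$ and $1$. Concretely, I would isolate the six-term exact piece
\[
    \H^0(X,\Acal) \xrightarrow{r} \H^0(U,\Acal) \xrightarrow{\delta} \H^1_Y(X,\Acal) \xrightarrow{f} \H^1(X,\Acal) \xrightarrow{g} \H^1(U,\Acal) \xrightarrow{\delta'} \H^2_Y(X,\Acal)
\]
and recall that $\H^0(U,\Acal)=\Acal(U)$.

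From here the argument is the routine splitting of a long exact sequence into short ones (the kernel-cokernel exact sequence). Exactness at $\H^1_Y(X,\Acal)$ gives $\ker f=\im\delta$, so $f$ induces an injection $\H^1_Y(X,\Acal)/\im\delta\hookrightarrow\H^1(X,\Acal)$ with image $\ker g$; exactness at $\H^1(U,\Acal)$ gives $\ker\delta'=\im g$, so $g$ induces an isomorphism $\H^1(X,\Acal)/\ker g\isoto\ker\big(\H^1(U,\Acal)\to\H^2_Y(X,\Acal)\big)$. Splicing the two yields
\[
    0 \to \H^1_Y(X,\Acal)/\im\delta \to \H^1(X,\Acal) \to \ker\big(\H^1(U,\Acal)\to\H^2_Y(X,\Acal)\big) \to 0,
\]
and this is the claim once one reads ``$\H^1_Y(X,\Acal)/\Acal(U)$'' as the quotient of $\H^1_Y(X,\Acal)$ by the image of the connecting map $\delta\colon\Acal(U)=\H^0(U,\Acal)\to\H^1_Y(X,\Acal)$ (its kernel being $\im r$, so that $\im\delta$ is a priori a quotient of $\Acal(U)$).

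There is no genuine obstacle here: the whole content is the localisation sequence, which is already available, together with elementary homological bookkeeping. The one point worth a remark is the meaning of the left-hand term; I would note in passing that by the N\'eron mapping property \cref{thm:PicIsNeronModel} the restriction $r\colon\Acal(X)\to\Acal(U)$ is in fact an isomorphism (both groups equal $\Acal(\eta)$, since $\eta\in U$), whence $\delta=0$ and $\H^1_Y(X,\Acal)/\Acal(U)=\H^1_Y(X,\Acal)$ — but the short exact sequence above does not need this.
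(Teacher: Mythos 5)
Your argument is correct and follows essentially the same route as the paper: both rest on the long exact localisation sequence for the pair $(Y,U)$ and the elementary splitting into short exact sequences, with the left-hand term read as the cokernel of the connecting map $\Acal(U)=\H^0(U,\Acal)\to\H^1_Y(X,\Acal)$. The only cosmetic difference is that the paper records the injectivity of $\H^0(X,\Acal)\to\H^0(U,\Acal)$ (hence $\H^0_Y(X,\Acal)=0$) where you instead remark, correctly, that the N\'eron mapping property even forces the connecting map to vanish; neither observation is needed for the stated sequence.
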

\begin{proof}
First note that the map $\H^1(X,\Acal) \to \ker(\ldots)$ in~\cref{thm:codim1Punktereichen} is well-defined since if $x \in \H^1(X,\Acal)$ is restricted to $\H^1(K,\Acal)$ via $g: \{\eta\} \hookrightarrow X$, its pullback to $K_x^{nr} = \Spec(\Quot(\Ocal_{X,x}^{sh}))$ factors as
\[
    \H^1(X,\Acal) \to \H^1(\Spec(\Ocal_{X,x}^{sh}), \Acal) \to \H^1(\Spec(\Quot(\Ocal_{X,x}^{sh})), \Acal),
\]
but the étale site of $\Spec(\Ocal_{X,x}^{sh})$ is trivial.

Let $\emptyset \neq U \hookrightarrow X$ be open with reduced closed complement $Y \hookrightarrow X$.  Then one has the long exact localisation sequence \cite{MilneÉtaleCohomology}, p.~92, Proposition~III.1.25
\[
    0 \to \H^0_Y(X,\Acal) \to \H^0(X,\Acal) \to \H^0(U,\Acal) \to \H^1_Y(X,\Acal) \to \H^1(X,\Acal) \to \H^1(U,\Acal) \to \H^2_Y(X,\Acal) \to \ldots.
\]
Because of the injectivity of $\H^0(X,\Acal) \hookrightarrow \H^0(U,\Acal)$ (If two sections of $\Acal/X$ coincide on $U$ open dense, they agree on $X$ since $\Acal$ is separated and $X$ reduced), the exactness of the sequence yields $\H^0_Y(X,\Acal) = 0$, and hence the short exact sequence~\eqref{eq:ShaUndKohomologieMitTraegern}.
\end{proof}

\begin{lemma} \label[lemma]{lemma:Surjectivity} \label{page:Surjectivity}
The homomorphism~\eqref{eq:Codim1} in~\cref{thm:codim1Punktereichen}\,(b) is surjective.
\end{lemma}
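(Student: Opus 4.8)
The plan is to represent the given kernel by a class on a dense open subscheme of $X$ and then extend it across the codimension-$1$ points it omits; the obstruction to extending lives in \'etale cohomology with supports, and the lemmas proved above reduce it to a purely local statement at each such point, which is killed by the local triviality hypothesis on $\beta$. Concretely, let $\beta\in\H^1(K,j^*\Acal)$ die in every $\H^1(K_x^{nr},j_x^*\Acal)$, $x\in X^{(1)}$. Since $\H^1(K,j^*\Acal)=\varinjlim_U\H^1(U,\Acal)$ over the non-empty affine opens $U\subseteq X$ (\cref{lemma:cohomology of limit}), pick $U$ and $\alpha\in\H^1(U,\Acal)$ with $\alpha|_K=\beta$; put $Y=X\setminus U$ with its reduced structure, let $Y_1,\dots,Y_n$ be the codimension-$1$ components of $Y$ and $x_1,\dots,x_n\in X^{(1)}$ their generic points. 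By~\eqref{eq:ShaUndKohomologieMitTraegern} the restriction $\H^1(X,\Acal)\to\ker\bigl(\H^1(U,\Acal)\to\H^2_Y(X,\Acal)\bigr)$ is surjective, and a lift of a class in this kernel has the same image as that class in $\H^1(K,j^*\Acal)$; so it suffices to prove $\alpha\in\ker\bigl(\H^1(U,\Acal)\to\H^2_Y(X,\Acal)\bigr)$. Excising the codimension-$\ge2$ part of $Y$ via~\cref{lemma:ExcisionOfCodimgeq2subschemes} and then applying~\eqref{eq:bla} and~\eqref{eq:ShaNurNochAbgPkteImTraeger} (this is where the $p$-torsion restriction in~\cref{thm:codim1Punktereichen}\,(b) is forced, via the vanishing condition~\eqref{eq:vanishingcondition}), this kernel equals $\ker\bigl(\H^1(U,\Acal)\to\bigoplus_i\H^2_{\{x_i\}}(X\setminus\tilde Z_i,\Acal)\bigr)$, so it is enough to kill the image of $\alpha$ in each $\H^2_{\{x_i\}}(X\setminus\tilde Z_i,\Acal)$.

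Fix $i$ and write $x=x_i$, $R=\Ocal_{X,x}$; as $X$ is regular and $x\in X^{(1)}$, $R$ is a discrete valuation ring with $\Quot(R)=K$. By the construction carried out in establishing~\eqref{eq:ShaNurNochAbgPkteImTraeger} together with~\cref{lemma:cohomology of limit}, $\H^2_{\{x\}}(X\setminus\tilde Z_i,\Acal)$ injects, compatibly with the map from $\H^1(U,\Acal)$, into $\H^2_{\{x\}}(\Spec R,\Acal)$, so it suffices that $\alpha$ vanishes there. Now $U\cap\Spec R=\Spec K$, so by functoriality of the localisation sequence along $\Spec R\to X$ the image of $\alpha$ in $\H^2_{\{x\}}(\Spec R,\Acal)$ is $\partial(\beta)$, where $\partial\colon\H^1(K,j^*\Acal)\to\H^2_{\{x\}}(\Spec R,\Acal)$ is the boundary map and $\ker\partial=\im\bigl(\H^1(\Spec R,\Acal)\to\H^1(K,j^*\Acal)\bigr)$. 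Base-changing to $\Spec R^{sh}$ and using that the spectrum of a strictly Henselian local ring has vanishing \'etale cohomology in positive degrees for every sheaf, the localisation sequence for $R^{sh}$ collapses to an isomorphism $\H^1(K_x^{nr},j_x^*\Acal)\isoto\H^2_{\{\bar x\}}(\Spec R^{sh},\Acal)$ compatible with $\partial$; since $\beta$ dies in $\H^1(K_x^{nr},j_x^*\Acal)$ by hypothesis, $\partial(\beta)$ lies in the kernel of $\H^2_{\{x\}}(\Spec R,\Acal)\to\H^2_{\{\bar x\}}(\Spec R^{sh},\Acal)$. Thus everything reduces to showing $\partial(\beta)=0$, i.e.\ that $\beta$ lies in the image of $\H^1(\Spec R,\Acal)\to\H^1(K,j^*\Acal)$.

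To see this, pass to the Henselisation $R^h$. As in the proof of~\cref{thm:codim1Punktereichen}\,(a) one has $\H^1(\Spec R^h,\Acal)=\H^1(\Gal(K_x^{nr}/K_x^h),\Acal(K_x^{nr}))$, and $\Gal(K_x^{nr}/K_x^h)=\Gal(\kappa(x)^{\mathrm{sep}}/\kappa(x))$ together with inflation-restriction and $\Acal(R^{sh})=\Acal(K_x^{nr})$ (valuative criterion of properness) identify this with $\ker\bigl(\H^1(K_x^h,j_x^*\Acal)\to\H^1(K_x^{nr},j_x^*\Acal)\bigr)$. Since $\beta|_{K_x^{nr}}=0$, the class $\beta|_{K_x^h}$ therefore extends to an $\Acal$-torsor over $\Spec R^h$, and, spreading out, the pullback of $\beta$ to $\Quot(R')$ extends to an $\Acal$-torsor over $\Spec R'$ for some finite \'etale local $R$-algebra $R'$ which is a trait. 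An fppf descent of this torsor along the finite \'etale surjection $\Spec R'\to\Spec R$ --- its descent datum, prescribed on generic points by $\beta$, extends over the normal scheme $\Spec(R'\otimes_R R')$, a finite disjoint union of traits, because $\Acal$ is proper, and the cocycle identity then holds by separatedness of $\Acal$ --- yields an $\Acal$-torsor over $\Spec R$ restricting to $\beta$. Hence $\partial(\beta)=0$ for every $i$, so $\alpha$ lies in $\ker\bigl(\H^1(U,\Acal)\to\H^2_Y(X,\Acal)\bigr)$ and comes from $\H^1(X,\Acal)$; its image in $\H^1(K,j^*\Acal)$ is $\beta$, which proves surjectivity of~\eqref{eq:Codim1}.

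The crux is this last step: the identification $\ker\bigl(\H^1(K,j^*\Acal)\to\H^1(K_x^{nr},j_x^*\Acal)\bigr)=\im\bigl(\H^1(\Spec\Ocal_{X,x},\Acal)\to\H^1(K,j^*\Acal)\bigr)$ at a codimension-$1$ point. It is not formal, precisely because $\Ocal_{X,x}$ is not Henselian and $\kappa(x)$ need not be finite --- so the Lang-Steinberg vanishing available for part~(a) is not available here --- and carrying a torsor known to extend over the (strict) Henselisation down to $\Ocal_{X,x}$ itself is exactly where properness of the abelian scheme $\Acal$ is used (the valuative criterion on $\Spec(R'\otimes_R R')$). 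Everything upstream of it is bookkeeping with the cohomology-with-supports machinery already developed above.
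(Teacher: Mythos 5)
Your global architecture is the same as the paper's (spread $\beta$ out to $\alpha\in\H^1(U,\Acal)$, reduce via \eqref{eq:ShaUndKohomologieMitTraegern}, \cref{lemma:ExcisionOfCodimgeq2subschemes}, \eqref{eq:bla} and \eqref{eq:ShaNurNochAbgPkteImTraeger} to a local vanishing at each generic point $x=x_i$ of $Y$), but the step you yourself flag as the crux is where the argument breaks. The ring $R'$ you obtain by spreading the torsor out from $R^h$ is an \'etale neighbourhood of $R=\Ocal_{X,x}$, i.e.\ a localisation of an \'etale $R$-algebra: it is essentially \'etale but in general \emph{not finite} over $R$ (the integral closure of $R$ in $K'=\Quot(R')$ may have several maximal ideals and be ramified at the ones you discarded). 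So ``finite \'etale local $R$-algebra'' is unjustified, $\Spec R'\to\Spec R$ is not a finite \'etale surjection, and $\Spec(R'\otimes_R R')$ need not be a finite disjoint union of traits; the descent set-up as written does not exist. Even if you replace it by the faithfully flat quasi-compact cover you do have, you would still need effectivity of descent for the torsor \emph{as a scheme}: torsors under abelian schemes are proper, and proper schemes do not automatically satisfy effective descent -- one needs a compatible ample sheaf or results of Raynaud on representability of such torsors over Dedekind bases, none of which you supply. Hence the implication ``$\beta|_{K_x^{nr}}=0\Rightarrow\beta\in\im\bigl(\H^1(\Spec\Ocal_{X,x},\Acal)\to\H^1(K,\Acal)\bigr)$'' is not proved in your write-up.

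The statement you are after is true, but it is obtained formally by \emph{not} descending to the non-Henselian local ring at all: since $x$ is a closed point of the limit scheme, the excision isomorphism for cohomology with support in a closed point (Milne, \emph{\'Etale Cohomology}, III.1.28, which the paper invokes at exactly this juncture) gives $\H^2_{\{x\}}(\Spec\Ocal_{X,x},\Acal)\iso\H^2_{\{x\}}(\Spec\Ocal_{X,x}^h,\Acal)$ compatibly with the boundary maps, so it suffices to show $\partial\bigl(\beta|_{K_x^h}\bigr)=0$ over the Henselian trait. That you have already done: $\H^1(\Ocal_{X,x}^h,\Acal)=\H^1(\kappa(x),\Acal(\Ocal_{X,x}^{sh}))$, inflation--restriction and $\Acal(\Ocal_{X,x}^{sh})=\Acal(K_x^{nr})$ (valuative criterion, \cref{thm:PicIsNeronModel}) identify $\im\bigl(\H^1(\Ocal_{X,x}^h,\Acal)\to\H^1(K_x^h,\Acal)\bigr)$ with $\ker\bigl(\H^1(K_x^h,\Acal)\to\H^1(K_x^{nr},\Acal)\bigr)$, so $\beta|_{K_x^{nr}}=0$ kills the Henselian boundary of $\beta|_{K_x^h}$, hence the image of $\alpha$ in each $\H^2_{\{x_i\}}$, and \eqref{eq:ShaUndKohomologieMitTraegern} plus the limit over $Y$ (\cref{lemma:cohomology of limit}) finishes the proof of surjectivity of \eqref{eq:Codim1} exactly as in the paper. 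In short: replace your descent paragraph by this excision step and your argument becomes the paper's proof; as it stands, the final local extension over $\Ocal_{X,x}$ is a genuine gap.
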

\begin{proof}
Now for the proof of~\cref{thm:codim1Punktereichen}, at least for prime-to-$p$ torsion (confer~\cref{lemma:VerschwindungfuerCodim2}).  If the vanishing condition~\eqref{eq:vanishingcondition} is only satisfied for the prime-to-$p$ torsion, in the whole proof the following computation only holds modulo $p$-torsion.

Applying excision in the form of~\cite{MilneÉtaleCohomology}, p.~93, Corollary~III.1.28 to~\eqref{eq:ShaNurNochAbgPkteImTraeger} (using that the $x_i \in X \setminus \tilde{Z}_i$ are closed points), one gets
\[
    \ker\left(\H^1(U,\Acal) \to \H^2_Y(X,\Acal)\right) = \ker\left(\H^1(U,\Acal) \stackrel{(r_i)}{\to} \bigoplus_{i=1}^n\H^2_{\{x_i\}}(X_{x_i}^h, \Acal)\right).
\]
Now $(r_i)$ factors as
\[
    r_i: \H^1(U,\Acal) \stackrel{j_{x_i}^*}{\to} \H^1(K_{x_i}^h, \Acal) \stackrel{\delta_i}{\to} \H^2_{\{x_i\}}(X_{x_i}^h, \Acal),
\]
where the latter map is the boundary map of the localisation sequence associated to the discrete valuation ring $\Ocal_{X,x_i}^h$ ($X$ is normal as it is smooth over a field, $x_i$ is a codimension-$1$ point, and the Henselisation of a normal ring is normal again by~\cite{LeiFuÉtaleCohomologyTheory}, p.~106, Proposition~2.8.10, and normal rings are $(\R_1)$)
\[
    X_{x_i}^h = \Spec(\Quot(\Ocal_{X,x_i}^h)) \cup \{x_i\}
\]
($x_i$ is the closed point [Henselisation preserves residue fields], and $\Spec(K_{x_i}^h)$ is the generic point of $X_{x_i}^h$).
One has $\H^1(X_{x_i}^h, \Acal) = \H^1(\kappa(x_i),\Acal)$ by~\cite{MilneÉtaleCohomology}, p.~116, Remark~III.3.11\,(a) and the inflation-restriction exact sequence
\[
    0 \to \H^1(X_{x_i}^h, \Acal) \stackrel{\infl}{\to} \H^1(K_x^h,\Acal) \stackrel{\res}{\to} \H^1(K_x^{nr},\Acal).
\]
Since $\Ocal_{X,x_i}^{sh}$ is a discrete valuation ring (as above; the strict Henselisation of a normal ring is normal again by~\cite{LeiFuÉtaleCohomologyTheory}, p.~111, Proposition~2.8.18), the valuative criterion of properness~\cite{EGAII}, p.~144\,f., Théorème~(7.3.8) gives us $\Acal(K_{x_i}^{nr}) = \Acal(X_{x_i}^{sh})$.  Hence one can write $j_i^* = \infl$ as the inflation
\[
    \H^1(\kappa(x_i), \Acal(X_{x_i}^{sh})) \hookrightarrow \H^1(K_{x_i}^h, \Acal),
\]
so the cokernel of $j_{x_i}^*$ injects into $\H^1(K_{x_i}^{nr}, \Acal)$ and in $\H^2_{\{x_i\}}(X_{x_i}^h, \Acal)$, so we get
\begin{align*}
    \ker\Big(\H^1(U,\Acal) \to \bigoplus_{i=1}^n\coker(j_{x_i})\Big) &\isoto \ker\Big(\H^1(U,\Acal) \to \bigoplus_{i=1}^n\H^2_{\{x_i\}}(X_{x_i}^h, \Acal)\Big) \\
    \ker\Big(\H^1(U,\Acal) \to \bigoplus_{i=1}^n\coker(j_{x_i})\Big) &\isoto \ker\Big(\H^1(U,\Acal) \to \bigoplus_{i=1}^n\H^1(K_{x_i}^{nr}, \Acal)\Big)
\end{align*}
and hence
\[
    \ker\left(\H^1(U,\Acal) \to \H^2_Y(X,\Acal)\right) = \ker\Big(\H^1(U,\Acal) \stackrel{(r_i)}{\to} \bigoplus_{i=1}^n\H^1(K_{x_i}^{nr}, \Acal)\Big).
\]

Taking the limit over all $Y$ (choose an enumeration $(Y_i)_{i=1}^\infty$ of the integral closed subschemes of codimension $1$ of $X$ [$X$ is countable]) yields by~\cref{cor:Colimes über alle offenen} and the fact that $(\mathbf{Ab})$ satisfies $\mathrm{(AB5)}$
\[
    \ker\Big(\H^1(K,\Acal) \to \varinjlim_{Y}\H^2_Y(X,\Acal)\Big) = \ker\Big(\H^1(K,\Acal) \to \bigoplus_{x \in X^{(1)}}\H^1(K_{x}^{nr}, \Acal)\Big).
\]
Now~\eqref{eq:ShaUndKohomologieMitTraegern} gives us an exact sequence
\[
    0 \to \varinjlim\H^1_Z(X,\Acal)/\Acal(U) \to \H^1(X,\Acal) \to \ker\Big(\H^1(K,\Acal) \to \bigoplus_{x \in X^{(1)}}\H^1(K_{x}^{nr}, \Acal)\Big) \to 0. \qedhere
\]
\end{proof}

\begin{lemma} \label[lemma]{lemma:Injectivity}
The homomorphism~\eqref{eq:Codim1} in~\cref{thm:codim1Punktereichen}\,(b) is injective.
\end{lemma}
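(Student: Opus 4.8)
The plan is to show that the kernel of~\eqref{eq:Codim1} is trivial by identifying it, through the exact sequence already produced in the proof of~\cref{lemma:Surjectivity}, with $\ker(\H^1(X,\Acal) \to \H^1(K,\Acal))$, which then vanishes by the N\'eron mapping property.

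Recall from the end of the proof of~\cref{lemma:Surjectivity} the short exact sequence
\[
    0 \to \varinjlim_{Z}\H^1_Z(X,\Acal)/\Acal(U) \to \H^1(X,\Acal) \to \ker\Big(\H^1(K, \Acal) \to \bigoplus_{x \in X^{(1)}}\H^1(K_{x}^{nr}, \Acal)\Big) \to 0,
\]
whose third arrow is exactly~\eqref{eq:Codim1}; here $Z$ runs through the finite unions of integral codimension-$1$ closed subschemes of $X$, ordered by inclusion, and $U = X \setminus Z$. Hence it suffices to prove that $\varinjlim_{Z}\H^1_Z(X,\Acal)/\Acal(U) = 0$. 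For each such $U$, the long exact localisation sequence
\[
    \H^0(X,\Acal) \to \H^0(U,\Acal) \to \H^1_Z(X,\Acal) \to \H^1(X,\Acal) \to \H^1(U,\Acal)
\]
identifies $\H^1_Z(X,\Acal)/\Acal(U)$ with $\ker(\H^1(X,\Acal) \to \H^1(U,\Acal))$, naturally in $Z$; the transition immersions $U' \hookrightarrow U$ are affine because on the regular scheme $U$ every integral codimension-$1$ subscheme is locally principal, hence the complement of an effective Cartier divisor. Since filtered colimits are exact in $(\mathbf{Ab})$ and $\varinjlim_{Z}\H^1(U,\Acal) = \H^1(K,\Acal)$ by~\cref{lemma:cohomology of limit} --- the pro-scheme $\varprojlim_{Z}U$ being $\Spec K$ since $X$ is normal and every point of codimension $\geq 1$ lies on some integral codimension-$1$ subscheme --- passing to the colimit yields
\[
    \varinjlim_{Z}\H^1_Z(X,\Acal)/\Acal(U) \iso \ker\big(\H^1(X,\Acal) \to \H^1(K,\Acal)\big).
\]

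It remains to see that this last kernel vanishes, which I would do exactly as in the proof of~\cref{thm:ShaSequenzAllePunkte}: by the N\'eron mapping property~\cref{thm:PicIsNeronModel} one has $\Acal \isoto j_*j^*\Acal$ as \'etale sheaves on $X$, so $\H^1(X,\Acal) = \H^1(X, j_*j^*\Acal)$, and the degree-$1$ edge morphism $\H^1(X, j_*j^*\Acal) \hookrightarrow \H^1(K, j^*\Acal)$ of the Leray spectral sequence $\H^p(X, \R^q j_* j^*\Acal) \Rightarrow \H^{p+q}(K, j^*\Acal)$ is injective. Equivalently, one may check directly that~\eqref{eq:Codim1} is the isomorphism of~\cref{thm:ShaSequenzAllePunkte} followed by the evident inclusion of $\ker(\H^1(K,\Acal) \to \prod_{x\in X}\H^1(K_x^{nr},\Acal))$ into $\ker(\H^1(K,\Acal) \to \bigoplus_{x\in X^{(1)}}\H^1(K_x^{nr},\Acal))$. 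As everywhere in the proof of~\cref{thm:codim1Punktereichen}\,(b), the identifications above are valid only modulo $p$-torsion unless $\dim X \leq 2$, because the vanishing condition~\eqref{eq:vanishingcondition} underlying~\eqref{eq:ShaUndKohomologieMitTraegern} carries that restriction; the vanishing of $\ker(\H^1(X,\Acal) \to \H^1(K,\Acal))$ is itself unconditional.

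The only real work is the bookkeeping in the middle step: verifying that the indexing system of the $Z$'s is filtered, that every transition map in the resulting family of localisation sequences is affine so that~\cref{lemma:cohomology of limit} applies, and that $\varprojlim_Z U = \Spec K$. Once these routine points are in place, the conclusion is a formal consequence of the N\'eron mapping property, and I anticipate no further genuine obstacle.
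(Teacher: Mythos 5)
Your proof is correct, but your main argument takes a longer route than the paper's. The paper's own proof is a two-line factorization: the (already surjective) homomorphism~\eqref{eq:Codim1} is the isomorphism $\H^1(X,\Acal) \isoto \ker\big(\H^1(K,\Acal) \to \prod_{x \in X}\H^1(K_x^{nr},\Acal)\big)$ of~\cref{thm:ShaSequenzAllePunkte} followed by the evident inclusion of that kernel into $\ker\big(\H^1(K,\Acal) \to \bigoplus_{x \in X^{(1)}}\H^1(K_x^{nr},\Acal)\big)$, hence injective --- this is exactly the alternative you record in your closing ``equivalently'' remark. Your primary route instead returns to the short exact sequence from the surjectivity proof, identifies the colimit term $\varinjlim_Z \H^1_Z(X,\Acal)/\Acal(U)$ with $\ker\big(\H^1(X,\Acal) \to \H^1(K,\Acal)\big)$ via the localisation sequences, exactness of filtered colimits and~\cref{lemma:cohomology of limit} (your verifications that the transition immersions are affine, since prime divisors on a regular scheme are locally principal, and that $\varprojlim_Z U = \Spec K$ are correct), and then kills this kernel by the N\'eron mapping property and the injectivity of the Leray edge map --- which is precisely the mechanism underlying~\cref{thm:ShaSequenzAllePunkte} in the first place. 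So both arguments rest on the same key input, the injectivity of $\H^1(X,\Acal) \to \H^1(K,\Acal)$; yours spends extra bookkeeping to show in addition that the colimit term of the exact sequence vanishes (which is in any case equivalent to injectivity), while the paper's factorization reaches the conclusion immediately by quoting~\cref{thm:ShaSequenzAllePunkte} as a black box. Your handling of the mod-$p$-torsion caveat for $\dim X > 2$ is also appropriate.
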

\begin{proof}
The surjective homomorphism factors as (with the isomorphism from~\cref{thm:ShaSequenzAllePunkte})
\begin{align*}
    \H^1(X,\Acal) &\isoto \ker\Big(\H^1(K,\Acal) \to \prod_{x \in X}\H^1(K_{x}^{nr}, \Acal)\Big)     \\
                  &\hookrightarrow \ker\Big(\H^1(K,\Acal) \to \bigoplus_{x \in X^{(1)}}\H^1(K_{x}^{nr}, \Acal)\Big),
\end{align*}
so the claim follows. \label{ProofOfCodim1End}
\end{proof}

One can replace the strict Henselisation $\Ocal_{X,x}^{sh}$ of $\Ocal_{X,x}$ by its completion $\hat{\Ocal}_{X,x}^{sh}$ (respectively by their quotient fields) in the case of $x \in X^{(1)}$:
\begin{lemma} \label[lemma]{lemma:HenselisationAndCompletion}
Let $(A,\mathfrak{m})$ be a discrete valuation ring of a variety.  Let $Z$ be a smooth proper scheme of finite type over $A^{sh}$.  The following are equivalent:

1. $Z$ has a point over $\Quot(A^{sh})$.

2. $Z$ has a point over $A^{sh}$.

3. $Z$ has points over $A^{sh}/\mathfrak{m}^nA^{sh} = \hat{A}^{sh}/\mathfrak{m}^n\hat{A}^{sh}$ for all $n \gg 0$.

4. $Z$ has a point over $\hat{A}^{sh}$.

5. $Z$ has a point over $\Quot(\hat{A}^{sh})$.

\noindent
Analogous statements hold for $A^{sh}$ replaced by $A^h$.
\end{lemma}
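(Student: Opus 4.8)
The plan is to establish a cycle of implications among (1)--(5), most of them by base change, with the only non-trivial inputs being the valuative criterion of properness and the fact that a smooth scheme over a Henselian local ring acquires a section as soon as it has one over the residue field. Throughout I would use that $A$, the local ring of a variety at a codimension-$1$ point, is an excellent DVR, so that $A^{sh}$ (and likewise $A^h$) is again a DVR, in particular Noetherian and normal, is Henselian, has maximal ideal $\mathfrak{m}A^{sh}$, and has completion $\hat{A}^{sh}$ (resp.\ $\hat{A}=\hat{A}^h$) a complete DVR; in particular $\hat{A}^{sh}/\mathfrak{m}^n\hat{A}^{sh}=A^{sh}/\mathfrak{m}^nA^{sh}$ as asserted in the statement, and $Z$ stays smooth and proper after base change to any $A^{sh}$-algebra.

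The implications $(2)\Rightarrow(1)$, $(2)\Rightarrow(3)$, $(2)\Rightarrow(4)$, $(4)\Rightarrow(5)$ and $(4)\Rightarrow(3)$ are all immediate: the first four are base change along $\Spec\Quot(A^{sh})\to\Spec A^{sh}$, $\Spec A^{sh}/\mathfrak{m}^nA^{sh}\to\Spec A^{sh}$, $\Spec\hat{A}^{sh}\to\Spec A^{sh}$ and $\Spec\Quot(\hat{A}^{sh})\to\Spec\hat{A}^{sh}$, and the last is reduction of an $\hat{A}^{sh}$-point modulo $\mathfrak{m}^n\hat{A}^{sh}$. For $(1)\Rightarrow(2)$ and $(5)\Rightarrow(4)$ I would apply the valuative criterion of properness (\cite{EGAII}, Th\'eor\`eme~(7.3.8)): $A^{sh}$ and $\hat{A}^{sh}$ are discrete valuation rings with fraction fields $\Quot(A^{sh})$ and $\Quot(\hat{A}^{sh})$, and $Z$ resp.\ $Z\times_{A^{sh}}\hat{A}^{sh}$ is proper over them, so every point over the fraction field extends (uniquely) to a point over the valuation ring.

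The only implication requiring smoothness is $(3)\Rightarrow(2)$. Pick an $n\geq 1$ for which $Z$ has a point over $A^{sh}/\mathfrak{m}^nA^{sh}=\hat{A}^{sh}/\mathfrak{m}^n\hat{A}^{sh}$, and reduce it modulo $\mathfrak{m}$ to a point over the residue field $\kappa=A^{sh}/\mathfrak{m}A^{sh}$. Since $A^{sh}$ is Henselian local with residue field $\kappa$ and $Z\to\Spec A^{sh}$ is smooth, the reduction map $Z(A^{sh})\to Z(\kappa)$ is surjective (by the local structure of smooth morphisms together with the Henselian property; \cite{EGAIV4}, (17.11.4) and (18.5.17), or \cite{BLR}, \S2.3); equivalently, $(A^{sh},\mathfrak{m}^n)$ is a Henselian pair and $Z$ is smooth, so already $Z(A^{sh})\to Z(A^{sh}/\mathfrak{m}^nA^{sh})$ is surjective. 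Hence the $\kappa$-point lifts to an $A^{sh}$-point, which is $(2)$.

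Putting the implications together yields $(1)\Leftrightarrow(2)\Leftrightarrow(3)$, while $(2)\Rightarrow(4)\Rightarrow(3)\Rightarrow(2)$ shows $(4)$ is equivalent to these, and $(4)\Leftrightarrow(5)$; so $(1)$--$(5)$ are all equivalent. Replacing $A^{sh}$ by $A^h$ and $\hat{A}^{sh}$ by $\hat{A}$ throughout, and using that $A^h$ is again a Henselian DVR, gives the stated analogue. I do not expect a genuine obstacle here: the content is entirely in the valuative criterion of properness and in the Henselian lifting of points through the smooth morphism $Z\to\Spec A^{sh}$, and the only care needed is bookkeeping of the ring-theoretic facts — that $A^{sh}$ and $\hat{A}^{sh}$ are DVRs, that $A^{sh}$ is Henselian, and the identification $\hat{A}^{sh}/\mathfrak{m}^n\hat{A}^{sh}=A^{sh}/\mathfrak{m}^nA^{sh}$.
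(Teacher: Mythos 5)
Your proof is correct, and the easy implications (base change, reduction, and $1\Leftrightarrow 2$, $5\Leftrightarrow 4$ via the valuative criterion of properness for the DVRs $A^{sh}$, $\hat A^{sh}$) coincide with the paper's. Where you genuinely diverge is the key step closing the circle: the paper goes $3\Rightarrow 4$ using formal smoothness (lifting a point successively through the thickenings $\Spec(\hat A^{sh}/\mathfrak{m}^n)$ to get an $\hat A^{sh}$-point) and then $4\Rightarrow 2$ by Artin approximation applied to the locally finitely presented functor $T\mapsto Z(T)$ --- and it is precisely this Artin-approximation step that uses the hypothesis that $A$ is a discrete valuation ring \emph{of a variety} (so that $A^{sh}$, $A^h$ are Henselisations of an excellent ring). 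You instead prove $3\Rightarrow 2$ directly: reduce the given $A^{sh}/\mathfrak{m}^n$-point to a $\kappa$-point and lift it to an $A^{sh}$-point using that $Z\to\Spec A^{sh}$ is smooth and $A^{sh}$ is Henselian local (\cite{EGAIV4}, (18.5.17)). This is a legitimate shortcut: it avoids Artin approximation and the completion altogether in the crucial implication, is more elementary, and in fact shows the equivalence of 1--5 for any Henselian (excellence-free) DVR, with 4 and 5 then tied in by $2\Rightarrow 4\Rightarrow 3\Rightarrow 2$ and the valuative criterion, exactly as you say; the only thing you lose relative to the paper's route is that the paper's argument pattern (formal point plus approximation) is the one that generalises to situations where one has no Henselian section theorem but does have algebraisation, which is why the author phrases it that way. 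One small remark: your parenthetical claim that $(A^{sh},\mathfrak{m}^n)$ being a Henselian pair already makes $Z(A^{sh})\to Z(A^{sh}/\mathfrak{m}^n)$ surjective is true but unnecessary --- for this lemma you only need existence of some $A^{sh}$-point, so lifting from the residue field suffices, as your main argument does.
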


\begin{proof}
In 3, the equality $A^{sh}/\mathfrak{m}^nA^{sh} = \hat{A}^{sh}/\mathfrak{m}^n\hat{A}^{sh}$ holds by~\cite{Eisenbud}, p.~183, Theorem~7.1.

One has $1 \iff 2$ and $4 \iff 5$ by the valuative criterion for properness~\cite{EGAII}, p.~144\,f., Théorème~(7.3.8), note that we are in codimension $1$.

The implications $2 \implies 3$ and $4 \implies 3$ are trivial, and the implication $3 \implies 4$ holds since smooth implies formally smooth~\cite{EGAIV4}, Définition~(17.1.1).

Finally, the implication $4 \implies 2$ follows from Artin approximation, see~\cite{ArtinApproximation}, p.~26, Theorem~(1.10) resp.\ Theorem~(1.12), applied to the functor $T \mapsto Z(T)$ which is locally of finite presentation.
\end{proof}

\subsection{Relation between the Brauer group and the Tate-Shafarevich group}
In analogy with the conjectures of Birch and Swinnerton-Dyer, and of Artin and Tate, one could hope that the Tate-Shafarevich gorup $\Sha(\Acal/X)$ and the Brauer group $\Br(\Ccal)$ are finite.  Recall that the \emph{index} of a variety $X/k$ is the greatest common divisor over all residue degrees $[k(x):k]$ where $x$ runs through the closed points of $X$.  We can extend the results of Artin and Tate~\cite{Artin-Tate} (for a curve $X$) as follows to our more general setting:
\begin{theorem}[The Artin-Tate and the Birch-Swinnerton-Dyer conjecture]
Assume that we are in the situation of~\cref{cor:BrauerUndShaExakteSequenz} and that $X$ has an ample sheaf.  Then one has an exact sequence
\[
    0 \to K_2 \to \Br(X) \stackrel{\pi^*}{\to} \Br(\Ccal) \to \Sha(\PPic_{\Ccal/X}/X) \to K_3 \to 0
\]
in which the groups $K_i$ are annihilated by $\delta$, the index of the generic fibre $C/K$, e.\,g.\ $\delta = 1$ if $\Ccal/X$ has a section, and their prime-to-$p$ parts are finite, and $K_i = 0$ if $\pi$ has a section.  Here, $\Sha(\PPic_{\Ccal/X}/X)$ sits in a short exact sequence
\[
    0 \to \Z/d \to \Sha(\PPic_{\Ccal/X}^0/X) \to \Sha(\PPic_{\Ccal/X}/X) \to 0,
\]
where $d \mid \delta$.
\end{theorem}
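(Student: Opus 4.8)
The plan is to recognise the six-term sequence as the exact sequence of~\cref{cor:BrauerUndShaExakteSequenz} after two identifications, and to produce the short exact sequence for $\Sha(\PPic_{\Ccal/X}/X)$ from the cohomology sequence attached to~\cref{lemma:NS=Z}. For the first identification: since $X$ is a regular variety carrying an ample sheaf, \cref{thm:BrauerGabber}\,(b),(c) gives $\Br(X) = \Br'(X) = \H^2(X,\G_m)$, and as $\Ccal$ is projective over $X$ it too carries an ample sheaf and is a regular integral variety, so likewise $\Br(\Ccal) = \H^2(\Ccal,\G_m)$; under these identifications $\pi^*$ on Brauer groups is $\pi^*$ on $\H^2(-,\G_m)$ because the comparison $\Br \hookrightarrow \H^2(-,\G_m)$ is functorial. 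For the second: $\R^1\pi_*\G_m$ is the \'etale sheafification of $U \mapsto \Pic(\Ccal \times_X U)$, and since line bundles are \'etale-locally trivial the subpresheaf $U \mapsto \pi_U^*\Pic(U)$ has vanishing sheafification, so $\R^1\pi_*\G_m$ is the \'etale sheafification of the relative Picard functor, i.e.\ $\R^1\pi_*\G_m \cong \PPic_{\Ccal/X}$ by~\cref{thm:RepresentabilityOfPic}; hence $\H^1(X,\R^1\pi_*\G_m) = \H^1(X,\PPic_{\Ccal/X}) =: \Sha(\PPic_{\Ccal/X}/X)$, extending~\cref{def:Sha}. Feeding both into~\cref{cor:BrauerUndShaExakteSequenz} yields the displayed sequence, and the stated properties of the $K_i$---annihilated by $\delta$ and with finite prime-to-$p$ part---are exactly those of~\cref{cor:BrauerUndShaExakteSequenz}; moreover $K_i = 0$ when $\pi$ has a section by the first (split) part of~\cref{cor:BrauerUndShaExakteSequenz}, and then $\delta = 1$ since the section restricts to a rational point of $C/K$, a divisor of degree $1$.

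For the last assertion I would apply $\H^*(X,-)$ to the short exact sequence $0 \to \PPic^0_{\Ccal/X} \to \PPic_{\Ccal/X} \to \Z \to 0$ of~\cref{lemma:NS=Z} (legitimate since $X$ is normal). As $X$ is integral, hence connected, $\H^0(X,\Z) = \Z$, while $\H^1(X,\Z) = 0$ by~\cref{lemma:Kohomologie konstanter Garben}, so the relevant part of the long exact sequence reads
\[
    \H^0(X,\PPic_{\Ccal/X}) \xrightarrow{\deg} \Z \xrightarrow{\partial} \H^1(X,\PPic^0_{\Ccal/X}) \to \H^1(X,\PPic_{\Ccal/X}) \to 0,
\]
where $\deg$ is the degree morphism of~\cref{lemma:NS=Z}. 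Letting $d \geq 1$ generate the image of $\deg$, the boundary $\partial$ induces an injection $\Z/d \hookrightarrow \Sha(\PPic^0_{\Ccal/X}/X)$ with cokernel $\Sha(\PPic_{\Ccal/X}/X)$, which is the claimed short exact sequence. To get $d \mid \delta$ I would express $\delta$ as a $\Z$-combination of the residue degrees $[\kappa(P):K]$ of closed points $P$ of $C$, obtaining a Weil divisor on $C/K$ of degree exactly $\delta$ and hence a class in $\Pic(C) \subseteq \PPic_{C/K}(K)$ of degree $\delta$; by the N\'eron mapping property~\cref{thm:PicIsNeronModel} one has $\PPic_{\Ccal/X}(X) \isoto \PPic_{C/K}(K)$, so this class extends to a section of $\PPic_{\Ccal/X}$ over $X$, and that section has degree $\delta$ because the degree morphism of~\cref{lemma:NS=Z} restricts on the generic fibre to the usual degree (by its definition through $\chi$ and Riemann--Roch on the curve $C$). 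Thus $\delta$ lies in the image of $\deg$, i.e.\ $d \mid \delta$.

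I expect the only non-routine step to be the last one: transporting a divisor of minimal degree on the generic fibre $C/K$ to an honest global section of $\PPic_{\Ccal/X}$ over $X$, which rests on the N\'eron mapping property together with the compatibility of the degree morphism with restriction to the generic point. The remainder is bookkeeping---assembling~\cref{cor:BrauerUndShaExakteSequenz}, the identifications $\Br = \H^2(-,\G_m)$ for $X$ and $\Ccal$, and the Picard long exact sequence.
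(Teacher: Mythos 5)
Most of your argument coincides with the paper's: the identifications $\Br = \H^2(-,\G_m)$ for $X$ and $\Ccal$, the identification $\R^1\pi_*\G_m \cong \PPic_{\Ccal/X}$, feeding these into \cref{cor:BrauerUndShaExakteSequenz}, and then the long exact sequence attached to \cref{lemma:NS=Z} together with $\H^1(X,\Z) = 0$ from \cref{lemma:Kohomologie konstanter Garben} are exactly the paper's steps, so that part is fine.

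The one place you diverge is the final step, showing $d \mid \delta$, and there your justification has a gap. You extend a degree-$\delta$ class on the generic fibre $C/K$ to a section of $\PPic_{\Ccal/X}$ over $X$ by invoking the N\'eron mapping property \cref{thm:PicIsNeronModel}. But that theorem (and the representability statement \cref{thm:RepresentabilityOfPic} it rests on) is proved under the hypothesis that $\Ccal/X$ \emph{admits a section}; the section is genuinely used in its proof to get surjectivity of $\Pic(\Ccal \times_S T) \to \PPic_{\Ccal/X}(T)$. If a section exists, however, then $\delta = 1$ (and $d = 1$, since the image of the section is a relative degree-$1$ divisor), so the assertion $d \mid \delta$ is vacuous; the only case in which it has content is precisely the sectionless general case of \cref{cor:BrauerUndShaExakteSequenz}, where \cref{thm:PicIsNeronModel} as stated does not apply and where $\PPic_{\Ccal/X}(X) \isoto \PPic_{C/K}(K)$ would need a separate argument. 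The paper avoids this entirely: it takes a Weil divisor $D$ of degree $\delta$ on $C/K$, passes to its closure $\bar D$ in $\Ccal$ (finite over $X$ by \cref{lemma:finiteMorphismOfRelativeCurves}, and of relative degree $\delta$ using the hypothesis in \cref{cor:BrauerUndShaExakteSequenz} on closures of divisors and the Conrad reference), thereby producing an honest class in $\Pic(\Ccal)$ whose image in $\H^0(X,\PPic_{\Ccal/X}) \to \H^0(X,\Z) = \Z$ is $\delta$. So your argument should either be repaired by this closure construction, or you would have to prove a sectionless version of the N\'eron mapping property for the Picard sheaf, which the paper does not provide.
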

Hence the finiteness of the ($\ell$-torsion of the) Brauer group of $\Ccal$ is equivalent to the finiteness of the ($\ell$-torsion of the) Brauer group of the base $X$ and the finiteness of the ($\ell$-torsion of the) Tate-Shafarevich group of $\PPic_{\Ccal/X}$.
\begin{proof}
Combining~\cref{cor:BrauerUndShaExakteSequenz} with $\R^1\pi_*\G_m = \PPic_{X/S}$ by~\cite{BLR}, p.~202\,f.\ and using~\cref{cor:BrFuerVarietyOverFiniteField} to identify $\H^2(X,\G_m)$ with $\Br(X)$ (and similarly for $\Ccal$) yields the exact sequence
\[
    0 \to K_2 \to \Br(X) \stackrel{\pi^*}{\to} \Br(\Ccal) \to \H^1(X, \PPic_{\Ccal/X}) \to K_3 \to 0
\]
with the prime-to-$p$ part of the $K_i$ finite, and $K_i = 0$ if $\pi$ has a section.

Now the long exact sequence associated to the short exact sequence in~\cref{lemma:NS=Z} yields the exact sequence
\[
    \H^0(X,\PPic_{\Ccal/X}) \to \H^0(X,\Z) \to \H^1(X, \PPic_{\Ccal/X}^0) \to \H^1(X, \PPic_{\Ccal/X}) \to \H^1(X,\Z) = 0,
\]
and $\H^1(X,\Z) = 0$ using~\cref{lemma:Kohomologie konstanter Garben}.  Now, choose a Weil divisor $D$ on the generic fibre $C/K$ of $\Ccal/X$ with degree $\delta$ the index of $C/K$.  By~\cref{lemma:finiteMorphismOfRelativeCurves} and~\cite{Conrad-pic}, p.~3, Proposition~4.1, $\bar{D}$ is a Weil divisor on $\Ccal$ of degree $\delta$, and its image under $\H^0(X,\PPic_{\Ccal/X}) \to \H^0(X,\Z) = \Z$ (here we use that $X$ is connected) is $\delta$.  Hence $\coker(\H^0(X,\PPic_{\Ccal/X}) \to \H^0(X,\Z) = \Z)$ is a quotient of $\Z/\delta$.
\end{proof}

\subsection{Descent of finiteness of $\Sha$ under generically étale alterations}
\begin{lemma} \label[lemma]{lemma:ShaEndlichenEllKorang}
Let $\ell$ be invertible on $X$.  Then the $\Z_\ell$-corank of $\Sha(\Acal/X)[\ell^\infty]$ is finite.
\end{lemma}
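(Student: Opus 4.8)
The plan is to bound the $\Z_\ell$-corank by the $\Z_\ell$-rank of the first $\ell$-adic cohomology group of the Tate module of $\Acal$. Since $\ell$ is invertible on $X$, multiplication by $\ell^n$ on the Abelian scheme $\Acal$ is a finite \'etale surjection, so there is a short exact sequence of \'etale sheaves $0 \to \Acal[\ell^n] \to \Acal \xrightarrow{\ell^n} \Acal \to 0$, and its long exact cohomology sequence yields
\[
    0 \to \H^0(X,\Acal)/\ell^n \to \H^1(X,\Acal[\ell^n]) \to \H^1(X,\Acal)[\ell^n] \to 0
\]
for every $n$. These sequences form an inverse system over $n$, the transition maps being induced by $\Acal[\ell^{n+1}] \xrightarrow{\ell} \Acal[\ell^n]$; on the left-hand terms these are the natural surjections $\H^0(X,\Acal)/\ell^{n+1} \to \H^0(X,\Acal)/\ell^n$, so the system $(\H^0(X,\Acal)/\ell^n)_n$ satisfies the Mittag-Leffler condition and its $\varprojlim^1$ vanishes. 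Passing to $\varprojlim_n$ therefore gives a surjection
\[
    \varprojlim_n \H^1(X,\Acal[\ell^n]) \to \varprojlim_n \H^1(X,\Acal)[\ell^n].
\]
The right-hand group is the Tate module of $\Sha(\Acal/X)[\ell^\infty] = \H^1(X,\Acal)[\ell^\infty]$, and its $\Z_\ell$-rank is exactly the $\Z_\ell$-corank in question; since $\Z_\ell$-rank does not increase under quotients, it suffices to show that $\varprojlim_n \H^1(X,\Acal[\ell^n])$ has finite $\Z_\ell$-rank.

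Now $T_\ell\Acal := \varprojlim_n \Acal[\ell^n]$ is a lisse $\Z_\ell$-sheaf on $X$ of rank $2\dim(\Acal/X)$, and as the groups $\H^0(X,\Acal[\ell^n])$ are finite, the relevant $\varprojlim^1$-term again vanishes and one identifies $\varprojlim_n \H^1(X,\Acal[\ell^n])$ with the continuous \'etale cohomology $\H^1(X,T_\ell\Acal)$. It then remains to bound $\dim_{\Q_\ell}\big(\H^1(X,T_\ell\Acal)\otimes_{\Z_\ell}\Q_\ell\big)$, and this I would deduce from the finiteness theorems for \'etale cohomology: for $X$ of finite type over a finite field (the setting of this paper), each $\H^1(X,\Acal[\ell^n])$ is finite and these groups are controlled uniformly in $n$, so $\H^1(X,T_\ell\Acal)$ is a finitely generated $\Z_\ell$-module; alternatively one may pass to $X_{\bar{\F}_q}$, use finiteness of $\H^i(X_{\bar{\F}_q}, T_\ell\Acal \otimes\Q_\ell)$, and apply the Hochschild-Serre spectral sequence for $\bar{\F}_q/\F_q$. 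Either way $\H^1(X,T_\ell\Acal)$ has finite $\Z_\ell$-rank, which completes the argument.

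I expect the genuinely substantive input to be precisely this last finiteness statement, which is where the ``arithmetic'' nature of the base (finite type over a finite field, or over $\Z$) enters; without it the $\Z_\ell$-rank of $\H^1(X,T_\ell\Acal)$ can already be infinite, e.g.\ for $X$ a curve over a sufficiently large field. The remaining ingredients are routine: exactness of the Kummer sequence (using that $\ell$ is invertible), the vanishing of the $\varprojlim^1$-terms by Mittag-Leffler, the elementary facts that the $\Z_\ell$-corank of an $\ell$-primary group equals the $\Z_\ell$-rank of its Tate module and is monotone under quotients (by flatness of $\Q_\ell$ over $\Z_\ell$), and, should one wish to analyse the kernel $\H^0(X,\Acal)\otimes\Q_\ell/\Z_\ell$ of the surjection above, the Lang--N\'eron theorem, which makes $\H^0(X,\Acal)=\Acal(X)$ finitely generated.
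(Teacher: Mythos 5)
There is a genuine gap, and it sits exactly at the step you call elementary: the identification ``the $\Z_\ell$-corank of an $\ell$-primary group equals the $\Z_\ell$-rank of its Tate module''. This is false in general: the group $\bigoplus_{n\geq 1}\Z/\ell$ has trivial Tate module (no element is infinitely $\ell$-divisible), yet it is not cofinitely generated. The identity you use holds precisely for cofinitely generated groups, i.e.\ groups of the form $(\Q_\ell/\Z_\ell)^r\oplus(\text{finite})$ --- and cofinite generation of $\Sha(\Acal/X)[\ell^\infty]$ is exactly what this lemma is asserting and what is used later: in Step~2 of the descent theorem of Section~4.3 the lemma is invoked to conclude that an $\ell$-primary group annihilated by $\ell^N\deg f$ (with $\deg f$ prime to $\ell$) must be finite, a conclusion that fails for $\bigoplus_{n\geq 1}\Z/\ell$ even though its Tate module vanishes. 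So your route via $T_\ell\Sha$ and continuous cohomology establishes only the weaker statement that the divisible part of $\Sha(\Acal/X)[\ell^\infty]$ has finite corank; it does not show that $\Sha(\Acal/X)[\ell]$ is finite, which is the point.

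The fix is already contained in your first displayed sequence and makes the whole inverse-limit apparatus unnecessary: take $n=1$. Then $\H^1(X,\Acal)[\ell]$ is a quotient of $\H^1(X,\Acal[\ell])$, and since $\ell$ is invertible on $X$ the sheaf $\Acal[\ell]$ is finite locally constant (constructible), so $\H^1(X,\Acal[\ell])$ is finite for $X$ of finite type over a finite field --- the same finiteness input you appeal to at the end. An $\ell$-primary torsion group whose $\ell$-torsion subgroup is finite embeds into a finite direct sum of copies of $\Q_\ell/\Z_\ell$, hence is cofinitely generated, i.e.\ of finite $\Z_\ell$-corank in the sense needed. This one-line argument is the paper's proof; your Mittag-Leffler and $\varprojlim$ considerations are correct as far as they go, but they prove the wrong (weaker) invariant finite.
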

\begin{proof}
The short exact sequence of étale sheaves $0 \to \Acal[\ell^n] \to \Acal \to \Acal \to 0$ induces
\[
    0 \to \Acal(X)/\ell^n \to \H^1(X,\Acal[\ell^n]) \to \H^1(X,\Acal)[\ell^n] \to 0.
\]
From this, one sees that $\H^1(X,\Acal)[\ell]$ is finite as it is a quotient of $\H^1(X,\Acal[\ell])$ and $\Acal[\ell]$ is a constructible sheaf on $X$.  Hence $\Sha(\Acal/X)[\ell^\infty]$ is cofinitely generated.
\end{proof}

\begin{theorem}
Let $f: X' \to X$ b a morphism of regular schemes which is an $\ell'$-alteration for a prime $\ell$ invertible on $X$, i.\,e.\ $f$ is a proper, surjective, generically étale morphism of generical degree prime to $\ell$.  Let $\ell$ be some prime invertible on $X$.  If $\Acal$ is an Abelian scheme on $X$ such that the $\ell$-torsion of the Tate-Shafarevich group $\Sha(\Acal'/X')$ of $\Acal' := f^*\Acal = \Acal \times_X X'$ is finite, then the $\ell$-torsion of the Tate-Shafarevich group $\Sha(\Acal/X)$ is finite.
\end{theorem}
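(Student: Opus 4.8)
The plan is to compare $\Sha(\Acal/X)=\H^1(X,\Acal)$ with $\Sha(\Acal'/X')=\H^1(X',\Acal')$ by restriction along the generic fibre of $f$, exploiting that multiplication by the generic degree $d=\deg f$ is an isomorphism on $\ell$-primary torsion. Write $K=K(X)$ and $K'=K(X')$. Since $f$ is an alteration, $X'$ is integral, and it is regular by hypothesis and Noetherian and separated because $f$ is proper, so \cref{thm:ShaSequenzAllePunkte} applies to both $X$ and $X'$ and gives
\[
    \Sha(\Acal/X)=\ker\Big(\H^1(K,A)\to\prod_{x\in X}\H^1(K_x^{nr},A)\Big),\qquad
    \Sha(\Acal'/X')=\ker\Big(\H^1(K',A')\to\prod_{x'\in X'}\H^1(K_{x'}^{nr},A')\Big),
\]
where $A=j^*\Acal$ is the generic fibre of $\Acal$ and $A'=A\times_K K'$ is that of $\Acal'=f^*\Acal$. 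As $f$ is generically étale and $X'$ is integral, $K'/K$ is a finite separable field extension of degree $d$, prime to $\ell$ by assumption.

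First I would verify that $\res_{K'/K}\colon\H^1(K,A)\to\H^1(K',A')$ maps $\Sha(\Acal/X)$ into $\Sha(\Acal'/X')$. Fix $x'\in X'$ and set $x:=f(x')$. A geometric point over $x'$ together with its image over $x$ induces a local homomorphism $\Ocal_{X,x}^{sh}\to\Ocal_{X',x'}^{sh}$, and on fraction fields the two $K$-embeddings $K\hookrightarrow K_{x'}^{nr}$ obtained via $K_x^{nr}$, respectively via $K'$, coincide. Hence for $\xi\in\Sha(\Acal/X)$ the image of $\res_{K'/K}(\xi)$ in $\H^1(K_{x'}^{nr},A')$ equals the image of $\xi$ under $\H^1(K,A)\to\H^1(K_x^{nr},A)\to\H^1(K_{x'}^{nr},A')$, and the first arrow kills $\xi$ because $\xi\in\Sha(\Acal/X)$. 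Since $x'$ was arbitrary, \cref{thm:ShaSequenzAllePunkte} for $X'$ shows $\res_{K'/K}(\xi)\in\Sha(\Acal'/X')$.

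Next I would bring in corestriction: as $\Gal(\sep{K}/K')\subseteq\Gal(\sep{K}/K)$ is open of index $d$, the composite $\cores_{K'/K}\circ\res_{K'/K}$ is multiplication by $d$ on $\H^1(K,A)$. Since $\gcd(d,\ell)=1$, multiplication by $d$ is an automorphism of every $\ell$-primary torsion abelian group, so $\res_{K'/K}$ is injective on $\H^1(K,A)[\ell^\infty]$, and a fortiori on $\Sha(\Acal/X)[\ell^\infty]$. Combined with the previous step, $\res_{K'/K}$ induces an injection $\Sha(\Acal/X)[\ell^\infty]\hookrightarrow\Sha(\Acal'/X')[\ell^\infty]$; as the target is finite by hypothesis, the source is finite.

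The point that needs care is the middle step: one must choose the geometric points (equivalently, the strict Henselisations) compatibly enough that local triviality of a class over all $x\in X$ genuinely pulls back to local triviality over all $x'\in X'$, and one must confirm that $X'$ meets the standing hypotheses of \cref{thm:ShaSequenzAllePunkte}. The remainder is the standard restriction–corestriction formalism for the Galois cohomology of the Abelian variety $A$, together with the coprimality of $d$ and $\ell$.
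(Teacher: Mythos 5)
Your argument is correct, but it takes a genuinely different route from the paper's. You work entirely on the generic fibre: applying \cref{thm:ShaSequenzAllePunkte} to both $X$ and $X'$, you identify $\Sha(\Acal/X)$ and $\Sha(\Acal'/X')$ with subgroups of $\H^1(K,A)$ and $\H^1(K',A')$, check that restriction along the finite separable extension $K'/K$ of degree $d=\deg f$ prime to $\ell$ carries locally trivial classes to locally trivial classes by comparing compatibly chosen strict Henselisations $\Ocal_{X,x}^{sh}\to\Ocal_{X',x'}^{sh}$ for $x=f(x')$, and conclude from $\cores_{K'/K}\circ\res_{K'/K}=d$ that $\Sha(\Acal/X)[\ell^\infty]$ injects into the finite group $\Sha(\Acal'/X')[\ell^\infty]$. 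The paper instead stays on $X$: it bounds $\H^1(X,f_*\Acal')[\ell^\infty]$ by $\Sha(\Acal'/X')[\ell^\infty]$ via the Leray spectral sequence and then constructs a sheaf-level trace morphism $f_*f^*\Acal\to\Acal$ whose composite with adjunction is multiplication by $\deg f$; that construction is the delicate part, using $f_!=f_*$ over the generic point, Deninger's proper base change theorem for non-torsion sheaves, and the N\'eron mapping property $\Acal\isoto g_*g^*\Acal$ to descend the trace, and it finishes with the cofinite-generation \cref{lemma:ShaEndlichenEllKorang}. Both proofs are restriction--corestriction at heart; yours is shorter and avoids the trace construction and \cref{lemma:ShaEndlichenEllKorang}, but it leans on \cref{thm:ShaSequenzAllePunkte} for $X'$ as well as $X$ (the standing hypotheses there hold: $X'$ is regular by assumption, Noetherian and separated since $f$ is proper over $X$, and integral as part of what an alteration means--if one only assumed the generic fibre to be finite \'etale of degree prime to $\ell$, one would argue with the \'etale $K$-algebra of $X'_\eta$, or with a component of degree prime to $\ell$, in place of $K'$). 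The two points you flag are indeed the ones to check, and both are fine: the maps $\Ocal_{X,x}^{sh}\to\Ocal_{X',x'}^{sh}$ attached to compatible geometric points induce $K$-embeddings $K_x^{nr}\hookrightarrow K_{x'}^{nr}$ because the kernel contracts to $(0)$ in $\Ocal_{X,x}$ and $K_x^{nr}=\Ocal_{X,x}^{sh}\otimes_{\Ocal_{X,x}}K$ is a field by \cref{lemma:HenselisationOfRegularLocalRing}, so the factorisation $\H^1(K,A)\to\H^1(K_x^{nr},A)\to\H^1(K_{x'}^{nr},A')$ of the local restriction of $\res_{K'/K}\xi$ is legitimate; note it is essential here that you use the all-points version of \cref{thm:ShaSequenzAllePunkte}, since $f(x')$ need not be closed. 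What the paper's sheaf-theoretic trace buys is an argument internal to $X$ that does not need the comparison with the function-field description at all; what your route buys is brevity and elementarity once \cref{thm:ShaSequenzAllePunkte} is available.
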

\begin{proof}
Without loss of generality, assume $X$ integral with generic point $\eta$.  Define $X'_\eta$ by the commutativity of the cartesian diagram
\begin{align}\label{eq:kommutatives Quadrat Sha}\xymatrix{
X'_\eta \ar@{^{(}->}[r]^{g'} \ar[d]^f & X' \ar[d]^f\\
\{\eta\}  \ar@{^{(}->}[r]^{g}           & X.}\end{align}

\textbf{Step 1:  {\boldmath$\H^1(X, f_*\Acal')[\ell^\infty]$} is finite.}  This follows from the low terms exact sequence
\[
    0 \to \H^1(X, f_*\Acal') \to \H^1(X', \Acal')
\]
associated to the Leray spectral sequence $\H^p(X, \R^qf_*\Acal') \Rightarrow \H^{p+q}(X', \Acal')$ and the finiteness of $\H^1(X', \Acal')[\ell^\infty] = \Sha(\Acal'/X')[\ell^\infty]$.

\textbf{Step 2:  The theorem holds if there is a trace morphism.}  Assume there is a trace morphism $f_*f^*\Acal \to \Acal$ such that the composition with the adjunction morphism
\[
    \Acal \to f_*f^*\Acal \to \Acal
\]
is multiplication with $\deg{f}$, where $\deg{f}$ is invertible on $X$.  Let $A = \H^1(X,\Acal)[\ell^\infty]$ and $B = \H^1(X,f_*\Acal')[\ell^\infty]$ and denote the induced morphisms on cohomology by $g: A \to B$ und $h: B \to A$.  By~\cref{lemma:ShaEndlichenEllKorang}, $A$ is cofinitely generated.  Since $B$ is a finite $\ell$-group by Step~1, there is an $N \in \N$ such that $\ell^N \cdot B = 0$.  Then one has $\ell^N g(A) = 0$, thus $\ell^N (h \circ g) = \ell^N[\deg{f}] = 0$ as an endomorphism of $A$.  As $A$ is cofinitely generated and $\ell^N \cdot \deg{f} \neq 0$, the finiteness of $A$ follows.

\textbf{Step 3:  Construction of the trace morphism for {\boldmath$X = \Spec{k}$} a field.}  Since $f$ is étale, by~\cite{LeiFuÉtaleCohomologyTheory}, p.~205, Proposition~5.5.1\,(i), $f_!$ is left adjoint to $f^*$ and by~\emph{loc.\ cit.}\,(iv), one has
\[
    (f_!\Fcal)_{\bar{x}} = \bigoplus_{x' \in X' \otimes_{\Ocal_X} \overline{k(x)}}\Fcal_{\bar{x}'}.
\]
Since $f$ is étale, proper and of finite type, by~\cite{LeiFuÉtaleCohomologyTheory}, p.~207, Proposition~5.5.2, one has $f_! = f_*$.  Hence, there is an adjunction morphism $f_*f^*\Acal = f_!f^*\Acal \to \Acal$, and we have to prove that
\[
    \Acal_{\bar{x}} \to (f_*f^*\Acal)_{\bar{x}} = (f_!f^*\Acal)_{\bar{x}} \to \Acal_{\bar{x}}
\]
is multiplication by $\deg{f}$.  Since one can assume $\bar{x} = \Spec{k} = X$ with $k$ separably closed is strictly Henselian, $X' = \amalg_{i=1}^{\deg{f}}\Spec{k}$.  Hence the morphisms are
\begin{align*}
    \Acal_{\bar{x}} \to (f_*f^*\Acal)_{\bar{x}}, \quad&s \mapsto (s)_{i=1}^{\deg{f}}, \\
    (f_!f^*\Acal)_{\bar{x}} \to \Acal_{\bar{x}}, \quad&(s_i)_{i=1}^{\deg{f}} \mapsto \textstyle\sum_{i=1}^{\deg{f}}s_i,
\end{align*}
and the claim is obvious.

\textbf{Step 4:  Construction of the trace morphism for {\boldmath$X$} arbitrary regular.}  One has to construct a commutative diagram
\begin{align} \label{eq:Diagramm Sha} \xymatrix{
\Acal \ar[r]^\iso \ar[d] & g_*g^*\Acal \ar[d]\\
f_*f^*\Acal \ar[r] \ar[d] & g_*g^*(f_*f^*\Acal) \ar[d]\\
\Acal  \ar[r]^\iso           & g_*g^*\Acal
}\end{align}
such that the composition of the vertical maps equals multiplication by $\deg{f}$.

The upper square comes from the functoriality of the adjunction morphism associated to the natural transformation of functors $\id \to g_*g^*$.  The crucial point is to construct the lower right morphism $g_*g^*(f_*f^*\Acal) \to g_*g^*\Acal$ corresponding to the trace morphism (it is not directly possible to apply $g_*g^*$ to a map $f_*f^*\Acal \to \Acal$ since it is not clear how to define the latter).  This map arises as follows:  Since the domain of $g$ is a field, by Step~3, there is a trace morphism
\[
    f_*f^*(g^*\Acal) = f_!f^*(g^*\Acal) \to g^*\Acal.
\]
By the commutativity of the diagram~\eqref{eq:kommutatives Quadrat Sha}, one has $g'^*f^*\Acal = f^*g^*\Acal$, giving us an arrow
\[
    f_*g'^*(f^*\Acal) \to g^*\Acal.
\]
We use the proper base change theorem for non-torsion sheaves in~\cite{DeningerProperBaseChange}, p.~231, Theorem~1.1.  The assumptions are:  Noetherian, $f$ proper, $X'$ excellent, $g'$ normal, i.\,e.\ flat with geometrically normal fibres: $g'$ is flat as it is the base change of the flat morphism $g: \{\eta\} \hookrightarrow X$.  So $g^*f_* = f_*g'^*$, and we get
\[
    g^*f_*(f^*\Acal) \to g^*\Acal.
\]
Finally, applying $g_*$, we get our trace morphism on the right hand side.

By Step~3, it is obvious that the composition of the vertical arrows on the right hand side is multiplication by $\deg{f}$.

Now the lower left arrow $f_*f^*\Acal \to \Acal$ can be defined by the commutativity of the lower square of~\eqref{eq:Diagramm Sha}, noting that $\Acal \isoto g_*g^*\Acal$ is invertible by~\cref{thm:PicIsNeronModel}.  It follows from the right hand side that the composition of the vertical arrows on the left hand side is multiplication by $\deg{f}$.
\end{proof}

\begin{corollary}
In the situation of~\cref{cor:BrauerUndShaExakteSequenz}, if $\Br(\Ccal)[\ell^\infty]$ is finite for $\ell$ invertible on $X$, $\Br(X)[\ell^\infty]$ is finite.
\end{corollary}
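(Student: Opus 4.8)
The plan is to read the statement off the left-hand part of the exact sequence of \cref{cor:BrauerUndShaExakteSequenz} after passing to $\ell$-primary torsion. First I would record that \cref{cor:BrauerUndShaExakteSequenz} provides an exact sequence
\[
    0 \to K_2 \to \H^2(X, \G_m) \stackrel{\pi^*}{\to} \H^2(\Ccal, \G_m),
\]
in which $K_2 = \ker(\pi^*)$ is annihilated by the index $\delta$ of the generic fibre $C/K$ and has finite prime-to-$p$ torsion. Since $\ell$ is invertible on $X$ we have $\ell \neq p$, so $K_2[\ell^\infty]$ is a subgroup of the prime-to-$p$ torsion of $K_2$ and is therefore finite. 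Applying the left-exact functor $M \mapsto M[\ell^\infty]$ then yields an exact sequence
\[
    0 \to K_2[\ell^\infty] \to \H^2(X, \G_m)[\ell^\infty] \stackrel{\pi^*}{\to} \H^2(\Ccal, \G_m)[\ell^\infty].
\]
(Alternatively, when $X$ carries an ample sheaf one may simply invoke the Artin--Tate sequence $0 \to K_2 \to \Br(X) \stackrel{\pi^*}{\to} \Br(\Ccal) \to \cdots$ from the theorem above directly.)

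Next I would identify the flanking $\H^2$'s with Brauer groups. The scheme $\Ccal$ is regular, integral and quasi-compact, and it has an ample invertible sheaf (it is projective over $X$, and in the situations of interest $X$---hence $\Ccal$---carries an ample sheaf, exactly as in the hypotheses of the Artin--Tate theorem above), so by \cref{thm:BrauerGabber} and \cref{thm:BrStrichGleichH2} together with \cref{thm:LichtenbaumBrauer}\,(a) one has $\Br(\Ccal) = \Br'(\Ccal) = \H^2(\Ccal, \G_m)$; hence $\H^2(\Ccal, \G_m)[\ell^\infty] = \Br(\Ccal)[\ell^\infty]$ is finite by hypothesis. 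On the $X$-side, \cref{thm:BrStrichGleichH2}\,(a) gives an injection $\Br(X) \hookrightarrow \Br'(X) = \Tors\H^2(X, \G_m)$, whence $\Br(X)[\ell^\infty] \hookrightarrow \H^2(X, \G_m)[\ell^\infty]$. From the last displayed sequence, $\H^2(X, \G_m)[\ell^\infty]$ is an extension of a subgroup of the finite group $\H^2(\Ccal, \G_m)[\ell^\infty]$ by the finite group $K_2[\ell^\infty]$, so it is finite, and therefore so is $\Br(X)[\ell^\infty]$.

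I do not expect a genuine obstacle: the argument is a short diagram chase fed by two inputs, Gabber's theorem on the ample scheme $\Ccal$ and the control of $K_2$ from \cref{cor:BrauerUndShaExakteSequenz}. The only points needing care are the bookkeeping that ``annihilated by $\delta$ with finite prime-to-$p$ torsion'' forces $K_2[\ell^\infty]$ finite for $\ell \neq p$, and checking that $\Ccal$ (and, for the injection, $X$) lie in the range where $\H^2(-,\G_m)$ agrees with the (cohomological) Brauer group. Note that invertibility of $\ell$ on $X$ is used essentially, since the $p$-primary part of $K_2$ need not be finite, so this method yields nothing about $\Br(X)[p^\infty]$.
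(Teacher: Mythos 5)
Your argument is correct and is essentially the paper's own proof: the paper simply declares the corollary ``obvious from'' the exact sequence of \cref{cor:BrauerUndShaExakteSequenz}, and what you have written is exactly the spelled-out version — pass to $\ell$-primary torsion in $0 \to K_2 \to \H^2(X,\G_m) \to \H^2(\Ccal,\G_m)$, use that $K_2[\ell^\infty]$ is finite because $\ell \neq p$, and identify the $\H^2$'s with Brauer groups via \cref{thm:BrStrichGleichH2}/\cref{thm:BrauerGabber} (in the paper's setting \cref{cor:BrFuerVarietyOverFiniteField} does this). Your cautionary remarks about the ample sheaf on $\Ccal$ and about the $p$-primary part being out of reach are consistent with how the paper itself handles these points.
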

\begin{proof}
Obvious from~\cref{cor:BrauerUndShaExakteSequenz}.
\end{proof}

\subsection{Isogeny invariance of finiteness of $\Sha$}
\begin{theorem}
Let $X/k$ be proper, $\Acal$ and $\Acal'$ Abelian schemes over $X$ and $f: \Acal' \to \Acal$ an étale isogeny.  Let $\ell \neq \Char{k}$ be a prime.  Then $\Sha(\Acal/X)[\ell^\infty]$ is finite if and only if $\Sha(\Acal'/X)[\ell^\infty]$ is finite.
\end{theorem}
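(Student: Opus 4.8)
The plan is to compare $\Sha(\Acal/X)[\ell^\infty]$ and $\Sha(\Acal'/X)[\ell^\infty]$ directly through the isogeny and a ``dual'' isogeny, reducing the statement to a corank computation for cofinitely generated $\Z_\ell$-modules. Since $\ell \neq \Char k$, the prime $\ell$ is invertible on $X$, so by~\cref{lemma:ShaEndlichenEllKorang} both $\Sha(\Acal/X)[\ell^\infty]$ and $\Sha(\Acal'/X)[\ell^\infty]$ are cofinitely generated $\Z_\ell$-modules; this is the step where the hypothesis that $X/k$ is proper enters, via the finiteness of $\H^1(X,\Acal[\ell])$. Recall that a cofinitely generated $\Z_\ell$-module is finite if and only if its $\Z_\ell$-corank vanishes, and that a homomorphism of cofinitely generated $\Z_\ell$-modules with finite kernel and finite cokernel preserves the $\Z_\ell$-corank. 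It therefore suffices to show that $f$ induces a homomorphism $f_*\colon \Sha(\Acal'/X)[\ell^\infty] \to \Sha(\Acal/X)[\ell^\infty]$ with finite kernel and finite cokernel.

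First I would construct a dual isogeny. Let $G = \ker f$; since $f$ is an étale isogeny, $G$ is a finite étale group scheme over $X$, its geometric fibres are finite abelian groups of order dividing $n := \deg f$, so $G$ is annihilated by $n$ and hence $G \subseteq \Acal'[n]$. Consequently $[n]_{\Acal'}$ kills $G$ and factors as $[n]_{\Acal'} = g \circ f$ for a homomorphism $g\colon \Acal = \Acal'/G \to \Acal'/\Acal'[n] \cong \Acal'$ of Abelian schemes; moreover $f \circ g \circ f = f \circ [n]_{\Acal'} = [n]_{\Acal} \circ f$, and as $f$ is an epimorphism of étale sheaves this gives $f \circ g = [n]_{\Acal}$. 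Because $f$ is étale, the sequence of étale sheaves $0 \to G \to \Acal' \xrightarrow{f} \Acal \to 0$ is exact, so all the maps above are morphisms of étale sheaves and induce, on $\H^1(X,-) = \Sha(-/X)$, homomorphisms $f_*$ and $g_*$ with $g_* \circ f_* = [n]$ (multiplication by $n$) on $\Sha(\Acal'/X)$ and $f_* \circ g_* = [n]$ on $\Sha(\Acal/X)$; restricting to $\ell$-primary torsion we get the same identities on $\Sha(\Acal'/X)[\ell^\infty]$ and $\Sha(\Acal/X)[\ell^\infty]$.

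Next I would carry out the corank bookkeeping. Write $n = \ell^a m$ with $\gcd(\ell,m)=1$. On any $\Z_\ell$-module $M$ multiplication by $m$ is an automorphism, and if $M$ is cofinitely generated then multiplication by $\ell^a$ has kernel $M[\ell^a]$ and cokernel $M/\ell^a M$, both finite; hence multiplication by $n$ on $M$ has finite kernel and finite cokernel. Applying this with $M = \Sha(\Acal'/X)[\ell^\infty]$ and $g_* \circ f_* = [n]$ gives $\ker f_* \subseteq \ker([n]_M)$ finite; applying it with $M = \Sha(\Acal/X)[\ell^\infty]$ and $f_* \circ g_* = [n]$ gives that $\coker f_*$ is a quotient of $\coker([n]_M) = M/\ell^a M$, hence finite. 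Thus $f_*$ has finite kernel and cokernel, so it preserves the $\Z_\ell$-corank, and $\Sha(\Acal/X)[\ell^\infty]$ is finite exactly when $\Sha(\Acal'/X)[\ell^\infty]$ is.

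The argument is essentially formal once~\cref{lemma:ShaEndlichenEllKorang} is available, so there is no deep obstacle; the points that require care are that $\deg f$ need not be a power of $\ell$ — which is why one isolates the $\ell$-part $\ell^a$ of $n$ at the level of the cofinitely generated $\Z_\ell$-modules rather than at the sheaf level — and the construction of the isogeny $g$ over the general base $X$, which uses that $G$ is annihilated by $n$ and that quotients of Abelian schemes by finite flat subgroup schemes exist. Alternatively one can avoid $g$ entirely: the long exact cohomology sequence of $0 \to G \to \Acal' \to \Acal \to 0$ exhibits $\ker f_*$ as a quotient of a subgroup of $\H^1(X,G)$ and $\coker f_*$ as a subgroup of $\H^2(X,G)$, both annihilated by $n$, and a short torsion argument then shows the induced kernel and cokernel on $\ell$-primary parts are annihilated by $\ell^a$, hence finite by cofinite generation.
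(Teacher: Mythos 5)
Your argument is correct, but it takes a different route from the paper. The paper's proof is a one-liner: since $f$ is an \'etale isogeny, $0 \to \ker(f) \to \Acal' \to \Acal \to 0$ is exact as \'etale sheaves, $\ker(f)$ is a finite (constructible) sheaf, and because $X/k$ is proper, Milne's finiteness theorem for constructible sheaves (\'Etale Cohomology, Cor.~VI.2.8) makes $\H^1(X,\ker f)$ and $\H^2(X,\ker f)$ finite; the long exact sequence then bounds the kernel and cokernel of $f_*$ on $\H^1$ by these finite groups, so no corank bookkeeping and no complementary isogeny are needed -- this is exactly where the properness hypothesis enters. You instead construct $g$ with $g\circ f=[n]_{\Acal'}$ and $f\circ g=[n]_{\Acal}$ (standard, and your derivation via $G\subseteq\Acal'[n]$ and cancellation along the epimorphism $f$ is fine) and reduce to the fact that a map of cofinitely generated $\Z_\ell$-modules with finite kernel and cokernel preserves corank, with cofinite generation supplied by \cref{lemma:ShaEndlichenEllKorang}; so in your version properness enters only implicitly, through the finiteness of $\H^1(X,\Acal[\ell])$ used in that lemma. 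Your closing alternative (long exact sequence plus the observation that $\H^i(X,\ker f)$ is killed by $n$, then boundedness plus cofinite generation) is essentially the paper's sequence with Milne's finiteness theorem replaced by the exponent argument. What the paper's route buys is brevity and no need for the dual isogeny or the corank lemma; what yours buys is that you never need finiteness of $\H^2(X,\ker f)$ itself, only the cofinite generation of the two Tate-Shafarevich $\ell$-primary parts. Both are sound.
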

\begin{proof}
This follows from the long exact cohomology sequence associated to the short exact sequence of étale sheaves ($f$ is étale and surjective) $0 \to \ker(f) \to \Acal' \to \Acal \to 0$ and~\cite{MilneÉtaleCohomology}, p.~224\,f., Corollary~VI.2.8.
\end{proof}

\subsection{The Cassels-Tate pairing}
In this subsection, we construct in some cases of a higher dimensional basis $X$ a generalised Cassels-Tate pairing $\Sha(\Acal/X)[\ell^\infty] \times \Sha(\Acal^\vee)[\ell^\infty] \to \Q_\ell/\Z_\ell$ by reduction to the case of a surface and then to the case of a curve using ample hypersurface sections.

\begin{theorem}
Let $k = \F_q$ be a finite field with absolute Galois group $\Gamma = G_k$ and $X/k$ a smooth \emph{projective} geometrically connected variety of dimension $d$.  Let $\Acal/X$ be an Abelian scheme and $\ell \neq p = \Char{k}$ be prime.  Assume that there is a sequence of ample smooth geometrically connected hypersurface sections $C \hookrightarrow S \hookrightarrow Y_3 \hookrightarrow \ldots \hookrightarrow X$ with $S$ a surface and $C$ a curve such that the canonical homomorphism $\Acal(X)/\ell^n \to \Acal(C)/\ell^n$ is surjective.  Then there is a pairing with left and right kernels the divisible part
\[
    \Sha(\Acal/X)[\ell^\infty] \times \Sha(\Acal^\vee)[\ell^\infty] \to \Q_\ell/\Z_\ell.
\]
\end{theorem}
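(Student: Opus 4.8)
The plan is to build the pairing by restricting along the flag of smooth ample hypersurface sections down to the curve $C$, where the Cassels--Tate pairing is classically available, and to identify the kernels by combining the duality over $C$ with a weak Lefschetz comparison.

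\emph{Base case.} For a smooth projective geometrically connected curve $C/\F_q$ and an abelian scheme $\Bcal/C$, the Poincar\'e biextension of $(\Bcal,\Bcal^\vee)$ by $\G_m$ (equivalently $\Bcal^\vee=\EExt^1_C(\Bcal,\G_m)$) gives cup--product pairings $\H^p(C,\Bcal)\times\H^q(C,\Bcal^\vee)\to\H^{p+q+1}(C,\G_m)$. Taking $p=q=1$ and using $\Br(C)=0$ together with the trace isomorphism $\H^3(C,\G_m)\isoto\Q/\Z$ (which comes from $\H^3(C,\mu_{\ell^n})\isoto\Z/\ell^n$, \'etale Poincar\'e duality over $\F_q$) produces a pairing $\Sha(\Bcal/C)\times\Sha(\Bcal^\vee/C)\to\Q/\Z$; by that same duality and the Kummer sequences its left and right kernels are the maximal divisible subgroups. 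This is the classical Cassels--Tate pairing (see Milne's \emph{Arithmetic Duality Theorems}), which I would take as known.

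\emph{Reduction.} As $\ell\neq p$, the sheaves $\Acal[\ell^n]$ and $\Acal^\vee[\ell^n]$ are lisse, so the weak Lefschetz theorem applies along $C\hookrightarrow S\hookrightarrow Y_3\hookrightarrow\cdots\hookrightarrow X$: for a smooth ample divisor $Y_j\subset Y_{j+1}$ the restriction $\H^i(Y_{j+1},-)\to\H^i(Y_j,-)$ is bijective for $i\le\dim Y_{j+1}-2$ and injective for $i=\dim Y_{j+1}-1$. Hence $\H^i(X,\Acal[\ell^n])\isoto\H^i(S,\Acal_S[\ell^n])$ for $i=0,1$ (every drop above $S$ starts in dimension $\ge 3$), while $\H^0(S,-)\isoto\H^0(C,-)$ and $\H^1(S,-)\hookrightarrow\H^1(C,-)$, and similarly for $\Acal^\vee$. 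Plugging this into the Kummer sequence $0\to\Acal(X)/\ell^n\to\H^1(X,\Acal[\ell^n])\to\Sha(\Acal/X)[\ell^n]\to 0$ and its analogues on $S$ and $C$, and using that $\Acal(X)/\ell^n\to\Acal(C)/\ell^n$ factors through $\Acal(S)/\ell^n\twoheadrightarrow\Acal(C)/\ell^n$, a snake--lemma chase produces a surjection $\Sha(\Acal/X)[\ell^\infty]\twoheadrightarrow\Sha(\Acal_S/S)[\ell^\infty]$ whose kernel is divisible (it is a successive extension of groups of the form $\bigl(\Acal(Y_j)/\mathrm{im}\,\Acal(Y_{j+1})\bigr)\otimes\Q_\ell/\Z_\ell$), followed by an injection $\Sha(\Acal_S/S)[\ell^\infty]\hookrightarrow\Sha(\Acal_C/C)[\ell^\infty]$; dually for $\Acal^\vee$. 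I would then \emph{define} the pairing on $X$ as the biextension cup product $\Sha(\Acal/X)[\ell^\infty]\times\Sha(\Acal^\vee/X)[\ell^\infty]\to\H^3(X,\G_m)[\ell^\infty]$ composed with restriction $\H^3(X,\G_m)\to\H^3(C,\G_m)\isoto\Q/\Z$ along $C\hookrightarrow X$; by functoriality of cup products this is exactly the curve pairing pulled back along the restriction maps just built.

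\emph{The kernels --- the main obstacle.} Since the $X$-pairing factors through restriction to $C$, its left kernel is the preimage in $\Sha(\Acal/X)[\ell^\infty]$ of the annihilator (for the curve pairing) of the image of $\Sha(\Acal^\vee/X)[\ell^\infty]$ in $\Sha(\Acal_C^\vee/C)[\ell^\infty]$. By \cref{lemma:ShaEndlichenEllKorang} the group $\Sha(\Acal/X)[\ell^\infty]$ is cofinitely generated, and the divisible kernel above shows that $\Sha(\Acal/X)[\ell^\infty]\to\Sha(\Acal_S/S)[\ell^\infty]$ is an isomorphism modulo divisible subgroups; the genuinely delicate step is to prove that the injection $\Sha(\Acal_S/S)[\ell^\infty]\hookrightarrow\Sha(\Acal_C/C)[\ell^\infty]$ and its $\Acal^\vee$-counterpart have images that are mutually orthogonal complements modulo divisible subgroups under the perfect curve pairing, so that the annihilator of $\mathrm{im}\,\Sha(\Acal^\vee/X)[\ell^\infty]$ meets $\mathrm{im}\,\Sha(\Acal/X)[\ell^\infty]$ precisely in its divisible part. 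For this I would identify the cokernels of the two Lefschetz restriction maps $\H^1(S,\Acal_S[\ell^n])\hookrightarrow\H^1(C,\Acal_C[\ell^n])$ and $\H^1(S,\Acal_S^\vee[\ell^n])\hookrightarrow\H^1(C,\Acal_C^\vee[\ell^n])$ through Poincar\'e duality on $S$ (and $X$) over $\F_q$ --- they arise from the same Lefschetz situation applied to a lisse sheaf and to its Cartier dual, hence are Pontryagin dual to one another --- and use the hypothesis on $\Acal(X)/\ell^n\to\Acal(C)/\ell^n$ to absorb the Mordell--Weil contributions. I expect this orthogonality and weight bookkeeping to be the hard part; the rest (biextension cup products, Kummer sequences, weak Lefschetz, and the cited duality over $C$) is routine. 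The right kernel is then handled by symmetry, using $(\Acal^\vee)^\vee=\Acal$.
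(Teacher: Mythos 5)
Your proposal follows essentially the same route as the paper's proof: reduce along the flag of ample hypersurface sections using weak Lefschetz for the lisse sheaves $\Acal[\ell^n]$ (the paper derives this from the affine Lefschetz theorem plus Poincar\'e duality for the affine complement, and descends from $\bar{k}$ to $\F_q$ via the Hochschild--Serre sequences), feed the resulting comparison of $\H^i(-,\Acal[\ell^n])$ for $i=0,1$ into the Kummer sequences, use the hypothesis that $\Acal(X)/\ell^n \to \Acal(C)/\ell^n$ is surjective in a snake-lemma argument, and then invoke the classical Cassels--Tate pairing over $C$, which the paper constructs via the Barsotti--Weil formula, the local-to-global Ext spectral sequence and the Yoneda pairing into $\H^3(C,\G_m)=\Q/\Z$ --- the same pairing as your biextension cup product. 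The only bookkeeping difference is that the paper compares $X$ with $C$ directly and obtains injectivity of $\Sha(\Acal/X)[\ell^n]\hookrightarrow\Sha(\Acal_C/C)[\ell^n]$ at every finite level, rather than your factorisation through $S$ with a divisible kernel; both are adequate for the purpose. The substantive point is the step you single out as the main obstacle: proving that the images of $\Sha(\Acal/X)[\ell^\infty]$ and $\Sha(\Acal^\vee/X)[\ell^\infty]$ in the curve groups are mutually orthogonal complements modulo divisible subgroups. The paper does not carry this out either --- its proof ends with the injectivity statement above and the assertion that this is what is needed for non-degeneracy on the non-divisible part, citing the curve case; in particular the duality and weight analysis you sketch (and the corresponding control on the $\Acal^\vee$-side restriction) goes beyond what is written there. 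So your sketch reproduces the published argument, and on the identification of the kernels it is, if anything, more explicit about what remains to be verified than the paper itself.
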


\begin{proof}
Using the hypersurface sections and the affine Lefschetz theorem, we first reduce to the case of a curve as a basis and then exploit the classical Cassels-Tate pairing.

\paragraph{Reduction to the curve case.}
Let $Y \hookrightarrow X$ be an ample smooth geometrically connected hypersurface section (this exists by~\cite{PoonenBertini}, Proposition~2.7) with (necessarily) affine complement $U \hookrightarrow X$.  Base changing to $\bar{k}$ and writing $\bar{X} = X \times_k \bar{k}$ etc., one has by~\cite{MilneÉtaleCohomology}, p.~94, Remark~III.1.30 a long exact sequence
\begin{align} \label{eq:longexactsequence}
    \ldots \to \H^i_c(\bar{U}, \Acal[\ell^n]) \to \H^i(\bar{X}, \Acal[\ell^n]) \to \H^i(\bar{Y}, \Acal[\ell^n]) \to \H^{i+1}_c(\bar{U}, \Acal[\ell^n]) \to \ldots
\end{align}
(Note that $\H^i_c(\bar{X}, \Fcal) = \H^i(\bar{X}, \Fcal)$ since $\bar{X}$ is proper, and likewise for $\bar{Y}$.)

Since $\Acal[\ell^n]/X$ is étale, Poincaré duality~\cite{MilneÉtaleCohomology}, p.~276, Corollary~VI.11.2 gives us
\[
    \H^i_c(\bar{U},\Acal[\ell^n]) = \H^{2d-i}(\bar{U},(\Acal[\ell^n])^\vee(d)).
\]
(Note that the varieties live over a separably closed field.)  By the affine Lefschetz theorem~\cite{MilneÉtaleCohomology}, p.~253, Theorem~VI.7.2, one has $\H^{2d-i}(\bar{U},(\Acal[\ell^n])^\vee(d)) = 0$ for $2d-i > d$, i.\,e.\ for $i < d$.  Analogously, $\H^{i+1}_c(\bar{U}, \Acal[\ell^n]) = 0$ for $i+1 < d$.  Plugging this into~\eqref{eq:longexactsequence}, one gets an isomorphism
\[
    \H^i(\bar{X}, \Acal[\ell^n]) \isoto \H^i(\bar{Y}, \Acal[\ell^n])
\]
for $i+1 < d$. Inductively, it follows that the cohomology groups in dimension $0$ and $1$ are isomorphic to the cohomology groups of a \emph{surface} $S$.

If $Y = C \hookrightarrow X$ is a \emph{curve}, one gets at least
\begin{align}
    \H^0(\bar{X}, \Acal[\ell^n]) \isoto \H^0(\bar{C}, \Acal[\ell^n]), \label{eq:H0XC} \\
    \H^1(\bar{X}, \Acal[\ell^n]) \hookrightarrow \H^i(\bar{C}, \Acal[\ell^n]). \label{eq:H1XC}
\end{align}

There are short exact sequences
\[
    0 \to \H^{i-1}(\bar{X},\Acal[\ell^n])_\Gamma \to \H^i(X, \Acal[\ell^n]) \to \H^i(\bar{X}, \Acal[\ell^n])^\Gamma \to 0.
\]
Using~\eqref{eq:H0XC}, one gets a commutative diagram
\[\xymatrix{
    \H^0(X, \Acal[\ell^n]) \ar[r]^\iso \ar[d] & \H^0(\bar{X}, \Acal[\ell^n])^\Gamma \ar[d]^\iso \\
    \H^0(C, \Acal[\ell^n]) \ar[r]^\iso & \H^0(\bar{C}, \Acal[\ell^n])^\Gamma,
}\]
and hence an isomorphism $\H^0(X, \Acal[\ell^n]) \isoto \H^0(C, \Acal[\ell^n])$.

This implies
\begin{equation} \label{eq:TorEqual}
    \Tors\Acal(X)[\ell^\infty] = \H^0(X,\Acal[\ell^\infty]) = \H^0(C,\Acal[\ell^\infty]) = \Tors\Acal(C)[\ell^\infty].
\end{equation}

Using~\eqref{eq:H0XC} and~\eqref{eq:H1XC}, one gets a commutative diagram
\[\xymatrix{
    0 \ar[r] & \H^0(\bar{X},\Acal[\ell^n])_\Gamma \ar[d]^\iso \ar[r] & \H^1(X, \Acal[\ell^n]) \ar@{^{(}-->}[d] \ar[r] & \H^1(\bar{X}, \Acal[\ell^n])^\Gamma \ar@{^{(}->}[d] \ar[r] & 0 \\
    0 \ar[r] & \H^0(\bar{C},\Acal[\ell^n])_\Gamma \ar[r] & \H^1(C, \Acal[\ell^n]) \ar[r] & \H^1(\bar{C}, \Acal[\ell^n])^\Gamma \ar[r] & 0.
}\]
and hence, by the snake lemma, an injection $\H^1(X, \Acal[\ell^n]) \hookrightarrow \H^1(C, \Acal[\ell^n])$.

\begin{remark} \label[remark]{rem:rkEqual}
By the snake lemma, this injection is an isomorphism iff $\H^1(\bar{X}, \Acal[\ell^n])^\Gamma \hookrightarrow \H^1(\bar{C}, \Acal[\ell^n])^\Gamma$ is surjective, e.\,g.\ $\dim{C} \geq 2$.  Under this condition, one gets (by passing to the inverse limit $\varprojlim_n$ and tensoring with $\Q_\ell$) the equality of the $L$-functions $L(\Acal/X,s) = L(\Acal/C,s)$, and hence, under assumption of the Birch-Swinnerton-Dyer conjecture ($\Sha(\Acal/C)[\ell^\infty]$ finite), the equality of the ranks $\rk_\Z\Acal(X) = \rk_\Z\Acal(C)$.
\end{remark}

We want the map $\H^1(X,\Acal)[\ell^n] \to \H^1(C,\Acal)[\ell^n]$ of the $\ell^n$-torsion of the Tate-Shafarevich groups to be injective in order for the generalised Cassels-Tate pairing to be non-degenerate (at least on the non-divisible part).

\[\xymatrix{
    0 \ar[r] &\Acal(X)/\ell^n \ar@{^{(}-->}[d] \ar[r] &\H^1(X,\Acal[\ell^n]) \ar@{^{(}->}[d] \ar[r] &\H^1(X,\Acal)[\ell^n] \ar[d]\ar[r] &0 \\
    0 \ar[r] &\Acal(C)/\ell^n \ar[r] &\H^1(C,\Acal[\ell^n]) \ar[r] &\H^1(C,\Acal)[\ell^n] \ar[r] &0
}\]
By the snake lemma, the left vertical map is an injection and $\H^1(X,\Acal)[\ell^n] \to \H^1(C,\Acal)[\ell^n]$ is injective if this injection $\Acal(X)/\ell^n \hookrightarrow \Acal(C)/\ell^n$ is surjective.

\begin{remark}
Note that for $\dim{Y} \geq 2$ and $Y \hookrightarrow X$ excised by a sequence of ample hypersurface sections, $\rk\Acal(X) = \rk\Acal(Y)$ by~\cref{rem:rkEqual}.
\end{remark}

\paragraph{The curve case.}
By~\cite{MilneADT}, p.~176, (a), one has for $C/k$ a smooth proper geometrically connected curve over the finite ground field $k$
\[
    \H^3(C,\G_m) = \Q/\Z.
\]

By the local-to-global spectral sequence $\H^p(X, \EExt_X^q(\Acal,\G_m)) \Rightarrow \Ext_X^{p+q}(\Acal,\G_m)$, and using $\HHom_X(\Acal,\G_m) = 0$ and the Barsotti-Weil formula $\EExt_X^1(\Acal,\G_m) = \Acal^\vee$, we get edge morphisms 
\[
    \H^r(X,\Acal^\vee) \to \Ext_X^{r+1}(\Acal,\G_m),
\]
which are injective for $r = 1$.  Composing this for $r = 1$ with the Yoneda pairing for $r = 2$
\[
    \Ext_C^r(\Acal,\G_m) \times \H^{3-r}(C,\Acal) \to \H^3(C,\G_m) \isoto \Q/\Z,
\]
induces a pairing with left and right kernels the divisible part
\[
    \H^1(C,\Acal^\vee) \times \H^{1}(C,\Acal) \stackrel{\cup}{\to} \H^{3}(C,\G_m) = \Q/\Z.
\]
This is the Cassels-Tate pairing, see~\cite{MilneADT}, p.~199--203.
\end{proof}

\begin{remark}
The homomorphism $\Acal(X)/\ell^n \to \Acal(C)/\ell^n$ is e.\,g.\ surjective if $\rk\Acal(C) = 0$ since $\Tors\Acal(X)[\ell^n] = \Tors\Acal(C)[\ell^n]$ by~\eqref{eq:TorEqual} and $\Acal(X)/\ell^n \hookrightarrow \Acal(C)/\ell^n$ is injective.  For example, one has $\rk\Acal(C) = 0$ if $\Acal/C$ is constant and $C \cong \mathbf{P}_k^1$:  The rank of the Mordell-Weil group of a constant Abelian variety over a projective space has rank $0$, since there are no non-constant $k$-morphisms $\mathbf{P}^n_k \to A$, see~\cite{MilneAbelianVarieties}, p.~107, Corollary~3.9.
\end{remark}

\paragraph{Acknowledgements.}
I thank Uwe Jannsen, Jean-Louis Colliot-Thélène, Brian Conrad, Peter Jossen, Moritz Kerz, Niko Naumann, Jakob Stix, and, from mathoverflow, abx, Angelo, anon, Martin Bright, Kestutis Cesnavicius, Torsten Ekedahl, Laurent Moret-Bailly, ulrich and xuhan; for proofreading Patrick Forré, Peter Jossen, Niko Naumann and Antonella Perucca; finally the Studienstiftung des deutschen Volkes for financial and ideational support.

\phantomsection 
\addcontentsline{toc}{section}{\refname}
{\small
	\bibliographystyle{tkalpha3}
	\bibliography{Dissertation}

\newcommand{\etalchar}[1]{$^{#1}$}
\begin{thebibliography}{EGAIV${}_3$d}
\providecommand{\url}[1]{\url{#1}}
\providecommand{\urlprefix}{\acronym{URL}~}
\expandafter\ifx\csname urlstyle\endcsname\relax
  \providecommand{\doi}[1]{doi:\discretionary{}{}{}#1}\else
  \providecommand{\doi}{doi:\discretionary{}{}{}\begingroup
  \urlstyle{rm}\Url}\fi
\providecommand{\bibAnnoteFile}[1]{%
  \IfFileExists{#1}{\begin{quotation}\noindent\textsc{Key:} #1\\
  \textsc{Annotation:}\ \input{#1}\end{quotation}}{}}
\providecommand{\bibAnnote}[2]{%
  \begin{quotation}\noindent\textsc{Key:} #1\\
  \textsc{Annotation:}\ #2\end{quotation}}
\providecommand{\bibinfo}[2]{#2}
\providecommand{\eprint}[2][]{\url{#2}}

\bibitem[Art69]{ArtinApproximation}
\bibinfo{author}{\textsc{Artin}, Michael}:
  {\selectlanguage{english}\emph{\bibinfo{title}{{Algebraic approximation of
  structures over complete local rings.}}}} In: \bibinfo{journal}{Publ.\ Math.\
  IHÉS}, \textbf{\bibinfo{volume}{36}} (\bibinfo{year}{1969}),
  \bibinfo{pages}{23--58}.
\bibAnnoteFile{ArtinApproximation}

\bibitem[BLR90]{BLR}
\bibinfo{author}{\textsc{Bosch}, Siegfried};
  \bibinfo{author}{\textsc{Lütkebohmert}, Werner} and
  \bibinfo{author}{\textsc{Raynaud}, Michel}:
  {\selectlanguage{english}\emph{\bibinfo{title}{{Néron models.}}}}
  \bibinfo{publisher}{{Ergebnisse der Mathematik und ihrer Grenzgebiete, 3.
  Folge, \textbf{21}. Berlin etc.: Springer-Verlag. x, 325 p. }}
  \bibinfo{year}{1990}.
\bibAnnoteFile{BLR}

\bibitem[Bos03]{Bosch}
\bibinfo{author}{\textsc{Bosch}, Siegfried}:
  {\selectlanguage{ngerman}\emph{\bibinfo{title}{Algebra}}}.
  \bibinfo{edition}{fünfte Auf\/lage}, \bibinfo{publisher}{Springer-Lehrbuch.
  Berlin: Springer} \bibinfo{year}{2003}.
\bibAnnoteFile{Bosch}

\bibitem[Con]{Conrad-pic}
\bibinfo{author}{\textsc{Conrad}, Brian}:
  {\selectlanguage{english}\emph{\bibinfo{title}{{Math 248B. Picard functors
  for curves}}}}.
  \urlprefix\url{http://math.stanford.edu/~conrad/248BPage/handouts/pic.pdf}.
\bibAnnoteFile{Conrad-pic}

\bibitem[{Den}88]{DeningerProperBaseChange}
\bibinfo{author}{\textsc{{Deninger}}, Christopher}:
  {\selectlanguage{english}\emph{\bibinfo{title}{{A proper base change theorem
  for non-torsion sheaves in étale cohomology.}}}} In: \bibinfo{journal}{{J.
  Pure Appl. Algebra}}, \textbf{\bibinfo{volume}{50}}(\bibinfo{number}{3})
  (\bibinfo{year}{1988}), \bibinfo{pages}{231--235}.
\bibAnnoteFile{DeningerProperBaseChange}

\bibitem[dJ]{GabberBrauerGroup}
\bibinfo{author}{\textsc{de~Jong}, Aise~Johan}:
  {\selectlanguage{english}\emph{\bibinfo{title}{{A result of Gabber.}}}}
  \urlprefix\url{www.math.columbia.edu/~dejong/papers/2-gabber.pdf}.
\bibAnnoteFile{GabberBrauerGroup}

\bibitem[Eis95]{Eisenbud}
\bibinfo{author}{\textsc{Eisenbud}, David}:
  {\selectlanguage{english}\emph{\bibinfo{title}{{Commutative algebra. With a
  view toward algebraic geometry.}}}} \bibinfo{publisher}{{Graduate Texts in
  Mathematics. \textbf{150}. Berlin: Springer-Verlag. xvi, 785 p.}}
  \bibinfo{year}{1995}.
\bibAnnoteFile{Eisenbud}

\bibitem[FGI{\etalchar{+}}05]{FGAExplained}
\bibinfo{author}{\textsc{Fantechi}, Barbara};
  \bibinfo{author}{\textsc{Göttsche}, Lothar};
  \bibinfo{author}{\textsc{Illusie}, Luc} et~al.:
  {\selectlanguage{english}\emph{\bibinfo{title}{{Fundamental algebraic
  geometry: Grothendieck's FGA explained.}}}} \bibinfo{publisher}{{Mathematical
  Surveys and Monographs \textbf{123}. Providence, RI: American Mathematical
  Society (AMS). x, 339~p. }} \bibinfo{year}{2005}.
\bibAnnoteFile{FGAExplained}

\bibitem[Fu11]{LeiFuÉtaleCohomologyTheory}
\bibinfo{author}{\textsc{Fu}, Lei}:
  {\selectlanguage{english}\emph{\bibinfo{title}{{Étale cohomology theory.}}}}
  \bibinfo{publisher}{{Nankai Tracts in Mathematics \textbf{13}. Hackensack,
  NJ: World Scientific. ix, 611~p.}} \bibinfo{year}{2011}.
\bibAnnoteFile{LeiFuÉtaleCohomologyTheory}

\bibitem[Gro68]{GroupeDeBrauerIII}
---{}---{}--- {\selectlanguage{french}\emph{\bibinfo{title}{{Le groupe de
  Brauer. III\,: Exemples et complements.}}}} \bibinfo{howpublished}{{Dix
  Exposes Cohomologie Schemas, Advanced Studies Pure Math. \textbf{3},
  88--188.}} \bibinfo{year}{1968}.
\bibAnnoteFile{GroupeDeBrauerIII}

\bibitem[GW10]{Görtz-Wedhorn}
\bibinfo{author}{\textsc{Görtz}, Ulrich} and \bibinfo{author}{\textsc{Wedhorn},
  Torsten}: {\selectlanguage{english}\emph{\bibinfo{title}{{Algebraic geometry
  I. Schemes. With examples and exercises.}}}} \bibinfo{publisher}{{Advanced
  Lectures in Mathematics. Wiesbaden: Vieweg+Teubner. vii, 615~p.}}
  \bibinfo{year}{2010}.
\bibAnnoteFile{Görtz-Wedhorn}

\bibitem[Har83]{HartshorneAG}
\bibinfo{author}{\textsc{Hartshorne}, Robin}:
  {\selectlanguage{english}\emph{\bibinfo{title}{{Algebraic geometry. Corr.\
  3rd printing.}}}} \bibinfo{publisher}{{Graduate Texts in Mathematics,
  \textbf{52}. New York-Heidelberg-Berlin: Springer-Verlag. XVI, 496 p.}}
  \bibinfo{year}{1983}.
\bibAnnoteFile{HartshorneAG}

\bibitem[{Kah}06]{KahnGroupeDesClasses}
\bibinfo{author}{\textsc{{Kahn}}, Bruno}:
  {\selectlanguage{french}\emph{\bibinfo{title}{{Sur le groupe des classes d'un
  sch\'ema arithm\'etique. (Avec un appendice de Marc Hindry).}}}} In:
  \bibinfo{journal}{{Bull.\ Soc.\ Math.\ Fr.}},
  \textbf{\bibinfo{volume}{134}}(\bibinfo{number}{3}) (\bibinfo{year}{2006}),
  \bibinfo{pages}{395--415}.
\bibAnnoteFile{KahnGroupeDesClasses}

\bibitem[Kel14${}_2$]{KellerHeight}
\bibinfo{author}{\textsc{Keller}, Timo}:
  {\selectlanguage{english}\emph{\bibinfo{title}{On the conjecture of Birch and
  Swinnerton-Dyer for Abelian schemes over higher dimensional bases over finite
  fields}}}. Preprint, 2014.
\bibAnnoteFile{KellerHeight}

\bibitem[Lic83]{Lichtenbaums=1}
\bibinfo{author}{\textsc{Lichtenbaum}, Stephen}:
  {\selectlanguage{english}\emph{\bibinfo{title}{{Zeta-functions of varieties
  over finite fields at $s=1$.}}}} \bibinfo{howpublished}{{Arithmetic and
  geometry, Pap.\ dedic.\ I.\ R.\ Shafarevich, Vol.\ I: Arithmetic, Prog.\
  Math.\ \textbf{35}, 173--194.}} \bibinfo{year}{1983}.
\bibAnnoteFile{Lichtenbaums=1}

\bibitem[Liu06]{Liu2006}
\bibinfo{author}{\textsc{Liu}, Qing}:
  {\selectlanguage{english}\emph{\bibinfo{title}{{Algebraic geometry and
  arithmetic curves. Transl. by Reinie Ern\'e.}}}} \bibinfo{publisher}{{Oxford
  Graduate Texts in Mathematics \textbf{6}. Oxford: Oxford University Press.
  xv, 577~p. }} \bibinfo{year}{2006}.
\bibAnnoteFile{Liu2006}

\bibitem[Mat86]{Matsumura}
\bibinfo{author}{\textsc{Matsumura}, Hideyuki}:
  {\selectlanguage{english}\emph{\bibinfo{title}{Commutative ring theory}}}.
  Cambridge Studies in Advanced Mathematics, \textbf{8},
  \bibinfo{publisher}{Cambridge University Press} \bibinfo{year}{1986}.
\bibAnnoteFile{Matsumura}

\bibitem[Mil80]{MilneÉtaleCohomology}
\bibinfo{author}{\textsc{Milne}, James~S.}:
  {\selectlanguage{english}\emph{\bibinfo{title}{{Étale cohomology.}}}}
  \bibinfo{publisher}{{Princeton Mathematical Series. \textbf{33}. Princeton,
  New Jersey: Princeton University Press. XIII, 323 p.}} \bibinfo{year}{1980}.
\bibAnnoteFile{MilneÉtaleCohomology}

\bibitem[Mil86a]{MilneAbelianVarieties}
---{}---{}--- {\selectlanguage{english}\emph{\bibinfo{title}{{Abelian
  varieties.}}}} \bibinfo{howpublished}{{Arithmetic geometry, Pap. Conf.,
  Storrs/Conn. 1984, 103--150 (1986).}} \bibinfo{year}{1986}.
\bibAnnoteFile{MilneAbelianVarieties}

\bibitem[Mil86b]{MilneADT}
---{}---{}--- {\selectlanguage{english}\emph{\bibinfo{title}{{Arithmetic
  duality theorems.}}}} \bibinfo{publisher}{{Perspectives in Mathematics, Vol.
  1. Boston etc.: Academic Press. Inc. Harcourt Brace Jovanovich, Publishers.
  X, 421 p.}} \bibinfo{year}{1986}.
\bibAnnoteFile{MilneADT}

\bibitem[Mum70]{MumfordAbelianVarieties}
\bibinfo{author}{\textsc{Mumford}, David}:
  {\selectlanguage{english}\emph{\bibinfo{title}{{Abelian varieties}}}}.
  \bibinfo{publisher}{Oxford University Press. viii, 242 p.}
  \bibinfo{year}{1970}.
\bibAnnoteFile{MumfordAbelianVarieties}

\bibitem[{Poo}05]{PoonenBertini}
\bibinfo{author}{\textsc{{Poonen}}, Bjorn}:
  {\selectlanguage{english}\emph{\bibinfo{title}{{Bertini theorems over finite
  fields.}}}} In: \bibinfo{journal}{{Ann.\ Math.}},
  \textbf{\bibinfo{volume}{160}}(\bibinfo{number}{3}) (\bibinfo{year}{2005}),
  \bibinfo{pages}{1099--1127}.
\bibAnnoteFile{PoonenBertini}

\bibitem[Ser02]{SerreGaloisCohomology}
\bibinfo{author}{\textsc{Serre}, Jean-Pierre}:
  {\selectlanguage{english}\emph{\bibinfo{title}{{Galois cohomology. Transl.\
  from the French by Patrick Ion. 2nd printing.}}}}
  \bibinfo{publisher}{{Berlin: Springer}} \bibinfo{year}{2002}.
\bibAnnoteFile{SerreGaloisCohomology}

\bibitem[Tat66]{Artin-Tate}
\bibinfo{author}{\textsc{Tate}, John~T.}:
  {\selectlanguage{english}\emph{\bibinfo{title}{{On the conjectures of Birch
  and Swinnerton-Dyer and a geometric analog.}}}} \bibinfo{howpublished}{{Dix
  Expos\'es Cohomologie Sch\'emas, Advanced Studies Pure Math. \textbf{3},
  189--214 (1968); S\'em.\ Bourbaki 1965/66, Exp.\ No.\ 306, 415--440.}}
  \bibinfo{year}{1966}.
\bibAnnoteFile{Artin-Tate}

\bibitem[Wei97]{Weibel}
\bibinfo{author}{\textsc{Weibel}, Charles~A.}:
  {\selectlanguage{english}\emph{\bibinfo{title}{An introduction to homological
  algebra}}}. Cambridge studies in advanced mathematics~\textbf{38},
  \bibinfo{publisher}{Cambridge university press} \bibinfo{year}{1997}.
\bibAnnoteFile{Weibel}

\bibitem[EGAII]{EGAII}
\bibinfo{author}{\textsc{Grothendieck}, Alexandre} and
  \bibinfo{author}{\textsc{Dieudonné}, Jean}:
  {\selectlanguage{french}\emph{\bibinfo{title}{{Éléments de
  géométrie algébrique\,: II. Étude globale élémentaire de quelques classes de
  morphismes.}}}} In: \bibinfo{journal}{{Publ.\ Math.\ IHÉS}},
  \textbf{\bibinfo{volume}{8}} (\bibinfo{year}{1961}), \bibinfo{pages}{5--222}.
\bibAnnoteFile{EGAII}

\bibitem[EGAIII${}_1$]{EGAIII1}
---{}---{}--- {\selectlanguage{french}\emph{\bibinfo{title}{{Eléments de géométrie
  algébrique\,: III. Étude cohomologique des faisceaux cohérents, Première
  partie.}}}} In: \bibinfo{journal}{{Publ.\ Math.\ IHÉS}},
  \textbf{\bibinfo{volume}{11}} (\bibinfo{year}{1961}),
  \bibinfo{pages}{5--167}.
\bibAnnoteFile{EGAIII1}

\bibitem[EGAIII${}_2$]{EGAIII2}
---{}---{}--- {\selectlanguage{french}\emph{\bibinfo{title}{{Éléments de
  géométrie algébrique\,: III. Étude cohomologique des faisceaux cohérents,
  Seconde partie.}}}} In: \bibinfo{journal}{{Publ.\ Math.\ IHÉS}},
  \textbf{\bibinfo{volume}{17}} (\bibinfo{year}{1963}), \bibinfo{pages}{5--91}.
\bibAnnoteFile{EGAIII2}

\bibitem[EGAIV${}_3$]{EGAIV3}
---{}---{}--- {\selectlanguage{french}\emph{\bibinfo{title}{{Éléments de
  géométrie algébrique\,: IV. Étude locale des schémas et des morphismes de
  schémas, Troisième partie.}}}} In: \bibinfo{journal}{{Publ.\ Math.\ IHÉS}},
  \textbf{\bibinfo{volume}{28}} (\bibinfo{year}{1966}),
  \bibinfo{pages}{5--255}.
\bibAnnoteFile{EGAIV3}

\bibitem[EGAIV${}_4$]{EGAIV4}
---{}---{}--- {\selectlanguage{french}\emph{\bibinfo{title}{{Éléments de
  géométrie algébrique\,: IV. Étude locale des schémas et des morphismes de
  schémas, Quatrième partie.}}}} In: \bibinfo{journal}{{Publ.\ Math.\ IHÉS}},
  \textbf{\bibinfo{volume}{32}} (\bibinfo{year}{1967}),
  \bibinfo{pages}{5--361}.
\bibAnnoteFile{EGAIV4}

\bibitem[SGA1]{SGA1}
\bibinfo{author}{\textsc{Grothendieck}, Alexandre}:
  {\selectlanguage{french}\emph{\bibinfo{title}{{Séminaire de Géométrie
  Algébrique du Bois Marie -- 1960--61 -- Revêtements étales et groupe
  fondamental -- (SGA 1)}}}}. (Lecture notes in mathematics \textbf{224}),
  \bibinfo{publisher}{Berlin; New York: Springer-Verlag. xxii+447}
  \bibinfo{year}{1971}.
\bibAnnoteFile{SGA1}

\bibitem[SGA4.2]{SGA42}
\bibinfo{author}{\textsc{Artin}, Michael};
  \bibinfo{author}{\textsc{Grothendieck}, Alexandre} and
  \bibinfo{author}{\textsc{Verdier}, Jean-Louis}:
  {\selectlanguage{french}\emph{\bibinfo{title}{{Séminaire de Géométrie
  Algébrique du Bois Marie -- 1963--64 -- Théorie des topos et cohomologie
  étale des schémas -- (SGA 4) -- vol.\ 2}}}}. Lecture notes in mathematics (in
  French) \textbf{270}, \bibinfo{publisher}{Berlin; New York: Springer-Verlag.
  iv+418} \bibinfo{year}{1972}.
\bibAnnoteFile{SGA42}

\bibitem[SGA4.3]{SGA43}
---{}---{}--- {\selectlanguage{french}\emph{\bibinfo{title}{{Séminaire de
  Géométrie Algébrique du Bois Marie -- 1963--64 -- Théorie des topos et
  cohomologie étale des schémas -- (SGA 4) -- vol.\ 3}}}}. Lecture notes in
  mathematics (in French) \textbf{305}, \bibinfo{publisher}{Berlin; New York:
  Springer-Verlag. vi+640} \bibinfo{year}{1972}.
\bibAnnoteFile{SGA43}

\bibitem[SGA6]{SGA6}
\bibinfo{author}{\textsc{Berthelot}, Pierre};
  \bibinfo{author}{\textsc{Grothendieck}, Alexandre} and
  \bibinfo{author}{\textsc{Illusie}, Luc}:
  {\selectlanguage{french}\emph{\bibinfo{title}{{Séminaire de Géométrie
  Algébrique du Bois Marie -- 1966--67 -- Théorie des intersections et théorème
  de Riemann-Roch -- (SGA 6)}}}}. Lecture notes in mathematics (in French)
  \textbf{225}, \bibinfo{publisher}{Berlin; New York: Springer-Verlag. xii+700}
  \bibinfo{year}{1971}.
\bibAnnoteFile{SGA6}

\end{thebibliography}
}

{\small \textsc{Timo Keller, Fakultät für Mathematik, Universität Regensburg, 93040 Regensburg, Germany}}

{\small \emph{E-Mail address: timo.keller@mathematik.uni-regensburg.de}}

\end{document}